\numberwithin{subsection}{section}
\numberwithin{equation}{subsection}
\newtheorem{thm}{Theorem}[subsection]
\newtheorem{lemma}[thm]{Lemma}
\newtheorem{assumption}[thm]{Assumption}
\newtheorem{cor}[thm]{Corollary}
\newtheorem{example}[thm]{Example}
\newtheorem{prop}[thm]{Proposition}
\newtheorem{convention}[thm]{Convention}
\newtheorem{problem}[thm]{Problem}
\theoremstyle{definition}
\newtheorem{definition}[thm]{Definition}
\theoremstyle{remark}
\newtheorem*{rem*}{Remark}
\newtheorem{rem}[thm]{Remark}
\newcommand{\mr}{{\mathbb R}}
\newcommand{\mn}{{\mathbb N}}
\newcommand{\mz}{{\mathbb Z}}
\newcommand{\mc}{{\mathbb C}}
\newcommand{\md}{{\mathbb D}}
\newcommand{\mt}{\mathbb{T}}
\newcommand{\eps}{\varepsilon}
\newcommand{\vphi}{\varphi} \renewcommand{\Im}{\operatorname{Im}}
\renewcommand{\Re}{\operatorname{Re}}
\renewcommand{\Im}{\operatorname{Im}}
\renewcommand{\ker}{\operatorname{Ker}}
\newcommand{\ran}{\operatorname{Ran}}
\newcommand{\sign}{\operatorname{sign}}
\newcommand{\hil}{\mathcal{H}}
\newcommand{\supp}{\operatorname{supp}}
\newcommand{\dist}{\operatorname{dist}}
\newcommand{\rank}{\operatorname{Rank}}
\newcommand{\num}{\operatorname{Num}}
\newcommand{\dom}{\operatorname{Dom}}
\newcommand{\detp}{\operatorname{det}_{\lceil p \rceil}}
\newcommand{\bdd}{\mathcal{B}}
\newcommand{\cld}{\mathcal{C}}
\newcommand{\dotcup}{\mathbin{\dot{\cup}}}
\begin{document}

\title[Eigenvalues of non-selfadjoint operators]{Eigenvalues of non-selfadjoint operators: A comparison of two approaches}

\author[M. Demuth]{Michael Demuth}
\address{Institute for Mathematics\\
Clausthal University of Technology\\
Clausthal\\
Germany.}
\email{demuth@math.tu-clausthal.de}

\author[M. Hansmann]{Marcel Hansmann}
\address{Faculty of Mathematics\\
Chemnitz University of Technology\\
Chemnitz\\
Germany.}
\email{marcel.hansmann@mathematik.tu-chemnitz.de}

\author[G. Katriel]{Guy Katriel}
\address{Department of Mathematics\\
Braude College\\
Karmiel\\
 Israel}
\email{katriel@braude.ac.il}

\begin{abstract}
The central problem we consider is the distribution of eigenvalues of closed linear operators which are not selfadjoint, with a focus on those operators
which are obtained as perturbations of selfadjoint linear operators. Two methods are explained and elaborated. One approach uses complex analysis to
study a holomorphic function whose zeros can be identified with the eigenvalues of the linear operator. The second method is an operator
theoretic approach involving the numerical range. General results obtained by the two
methods are derived and compared. Applications to non-selfadjoint  Jacobi and Schr\"odinger operators are considered. Some possible directions for future
research are discussed.
\end{abstract}

\maketitle

\tableofcontents

\section{Introduction}

\numberwithin{equation}{section}

The importance of eigenvalues and eigenvectors is clear to every student of mathematics, science or engineering. As a simple example, consider a linear dynamical system which is described by an equation of the form
\begin{equation}\label{linear}u_t=Lu,\end{equation}
where $u(t)$ is an element in a linear space $X$ and $L$ a linear operator in $X$. If we can find an eigenpair $v\in X$, $\lambda \in \mc$ with
$Lv=\lambda v$, then we have solved (\ref{linear}) with the initial condition $u(0)=v$: $u(t)=e^{\lambda t}v$. If we can find a whole basis
of eigenvectors, we have solved (\ref{linear}) for any initial condition $u(0)=u_0$ by decomposing $u_0$ with respect to this basis. So the knowledge of the eigenvalues of $L$
(or more generally, the analysis of its spectrum) is essential for the understanding of the corresponding system.

The \textit{spectral analysis} of linear operators has a quite long history, as everybody interested in the field is probably aware of. Still, we think that it can be worthwhile to begin this introduction with a short historical survey, which  will also help to put the present article in its proper perspective. The origins of spectral analysis  can be traced back at least as far as the  work of D'Alembert and Euler (1740-50's) on vibrating strings, where
eigenvalues correspond to frequencies of vibration, and eigenvectors correspond to modes of vibration. When the vibrating string's density and tension is not uniform,
the eigenvalue problem involved becomes much more challenging, and an early landmark of spectral theory is Sturm and Liouville's (1836-1837) analysis of general one-dimensional
problems on bounded intervals, showing the existence of an infinite sequence of eigenvalues. This naturally gave rise to questions about corresponding
results for differential operators on higher-dimensional domains, with the typical problem being the eigenvalues of the Laplacian on a bounded domain with Dirichlet boundary conditions. The existence of the first eigenvalue for this problem was obtained by Schwartz (1885), and of the the second eigenvalue by Picard (1893), and
it was Poincar\'e (1894) who obtained existence of all eigenvalues and their basic properties.  Inspired by Poincar\'es work, Fredholm (1903) undertook
the study of the spectral theory of integral operators. Hilbert (1904-1910), generalizing the work of Fredholm, introduced the ideas of quadratic forms
 on infinite dimensional linear spaces and of completely continuous forms (compact operators in current terminology).
He also realized that spectral analysis cannot be performed in terms of eigenvalues alone, developing the notion of continuous spectrum, which was prefigured in
Wirtinger's (1897) work on Hill's equation. Weyl's (1908) work on integral equations on unbounded intervals further stresses the importance of the
continuous spectrum.  The advent of quantum mechanics, formulated axiomatically by von Neumann (1927), who was the first to introduce the notion of an abstract Hilbert space,
brought selfadjoint operators into the forefront of interest.
 Kato's \cite{MR0041010} rigorous proof of the
selfadjointness of physically relevant Schr\"odinger operators was a starting point for the mathematical study of particular operators. In the
context of quantum mechanics, eigenvalues have special significance, as they correspond to discrete energy levels, and thus form the basis for the quantization
phenomenon, which in the pre-Schr\"odinger quantum theory had to be postulated a-priori. In recent years, non-selfadjoint operators are also becoming increasingly
important in the study of quantum mechanical systems, as they arise naturally in, e.g., the optical model of nuclear scattering or  the study of the behavior of unstable lasers (see \cite{b_Davies07} and references therein).
 
As this brief sketch\footnote{The interested reader can find much more information (and detailed references) in Mawhin's account \cite{Mawhin} on the origins of spectral analysis.} of some highlights of the (early) history of spectral theory shows, eigenvalues, eigenvectors, and the spectrum provide an endless source of fascination for both mathematicians and physicists.
At the most general level one may ask, given a class $\cld$ of linear operators (which in our case will always operate in a Hilbert space), what can be said about the spectrum of operators $L\in \cld$? Of course, the more restricted is
the class of operators considered the more we can say, and the techniques available for studying different classes $\cld$ can vary enormously.
For example, an important part of the work of Hilbert is a theory of selfadjoint compact operators, which in
particular characterizes their spectrum as an infinite sequence of real eigenvalues. Motivated by various applications, this class of operators can be restricted or broadened to yield other classes worth studying. For example, the study of eigenvalues of the Dirichlet problem in a bounded domain is a restriction of the class of compact selfadjoint eigenvalue problems, which yields a rich theory relating the eigenvalues to the geometrical properties of the domain in question. As far as broadening the class of operators goes, one can consider selfadjoint operators which are not compact, leading to a vast domain of study which is of great importance to a variety of areas of application, perhaps the most prominent being quantum mechanics.  One can also
consider compact operators which are not selfadjoint (and which might act in general Banach spaces), leading to a field of research in which natural sub-classes of the class of compact operators
are defined and their sets of eigenvalues are studied (see e.g. the classical works of Gohberg and Krein \cite{b_Gohberg69} or Pietsch \cite{MR917067}).

One may also lift both the assumption of selfadjointness {\it{and}} that of compactness. However, {\it{some}} restriction on the class of
operators considered must be made in order to be able to say anything nontrivial about the spectrum. The classes of operators that we will
be considering here are those that arise by perturbing bounded or unbounded (in most cases selfadjoint) operators  with no isolated eigenvalues by operators which are (relatively) compact, for example operators of the form $A=A_0+M$, where $A_0$ is a bounded operator with spectrum $\sigma(A_0)=[a,b]$ and $M$ is a compact operator in a
certain Schatten class. More precisely, we will be interested in the isolated eigenvalues of such operators $A$ and in their rate of accumulation to the essential spectrum $[a,b]$. We will study this rate by analyzing eigenvalue moments of the form 
\begin{equation}
  \label{eq:AA1}
 \sum_{\lambda\in \sigma_d(A)} (\dist(\lambda,[a,b]))^p, \qquad p > 0,
\end{equation}
where $\sigma_d(A)$ is the set of discrete eigenvalues, and by bounding these moments in terms of the Schatten norm of the perturbation $M$.

It is well known that the summation of two `simple' operators can generate an operator whose spectrum is quite difficult to understand, even in case that both operators are selfadjoint. In our case, at least one of the operators will be non-selfadjoint, so the huge toolbox of the selfadjoint theory (containing, e.g., the spectral theorem, the decomposition of the spectrum into its various parts or the variational characterization of the eigenvalues) will not be available. This will make the problem even more demanding and also indicates that we cannot expect to obtain as much information on the spectrum as in the selfadjoint case. At this point we cannot resist quoting E.~B.~Davies, who in the preface of his book \cite{b_Davies07} on the spectral theory of non-selfadjoint operators described the differences between the selfadjoint and the non-selfadjoint theory:  "Studying non-selfadjoint operators is like being a vet rather than a doctor: one has to acquire a much wider range of knowledge, and to accept that one cannot expect to have as high a rate of success when confronted with particular cases".\\

\noindent In our previous work, which we review in this paper, we have developed and explored two quite different approaches to obtain results on the
distribution of eigenvalues of non-selfadjoint operators. One approach, which has also benefitted from (and relies heavily on) some related work of Borichev, Golinskii and Kupin \cite{Borichev08}, involves the construction of a holomorphic function whose zeros coincide with the eigenvalues of the operator of interest (the `perturbation determinant') and the
study of these zeros by employing results of complex analysis. The second is an operator-theoretic approach using the concept
of numerical range. One of our main aims in this paper is to present these two methods side by side, and to examine the advantages of each of them
in terms of the results they yield. We shall see that each of these methods has certain advantages over the other.

The plan of this paper is as follows. In Chapter \ref{cha:preliminaries} we recall fundamental concepts and results of functional analysis and operator theory that will be used. In Chapter \ref{cha:complex} we discuss results on zeros of complex functions that will later be used to obtain results on eigenvalues. In particular, we begin this chapter with a short explanation why results from complex analysis can be used to obtain estimates on eigenvalue moments of the form (\ref{eq:AA1}) in the first place. Next, in Chapter \ref{sec:complex}, we develop the complex-analysis approach to obtaining results on eigenvalues of perturbed operators, obtaining results of varying degrees of
generality for Schatten-class perturbations of selfadjoint bounded operators and for relatively-Schatten perturbations of non-negative operators.
A second, independent, approach to  obtaining eigenvalue estimates via operator-theoretic arguments is exposed in Chapter \ref{sec:operator}, and applied
to the same classes of operators. In Chapter \ref{sec:comparison} we carry out a detailed comparison of the results obtained by the two approaches
in the context of Schatten-perturbations of bounded selfadjoint operators. In Chapter \ref{sec:applications} we turn to applications of the
results obtained in Chapter \ref{sec:complex} and  \ref{sec:operator} to some concrete classes of operators, which allows us to further
compare the results obtained by the two approaches in these specific contexts. We obtain results on the eigenvalues of Jacobi operators and of
Schr\"odinger operators with complex potentials. These case-studies also give us the opportunity to compare the results obtained by our
methods to results which have been obtained by other researchers using different methods. These comparisons give rise to some conjectures and open
questions which we believe could stimulate further research. Some further directions of ongoing work related to the work discussed in this paper,
and issues that we believe are interesting to address, are discussed in Chapter \ref{sec:outlook}.

\section{Preliminaries}\label{cha:preliminaries}

\numberwithin{equation}{subsection}

In this chapter we will introduce and review some basic concepts of operator and spectral theory, restricting ourselves to those aspects of the theory which are relevant in the later parts of this work. We will also use this chapter to set  our notation and terminology.  As general references let us mention the  monographs of Davies \cite{b_Davies07},  Gohberg, Goldberg and Kaashoek \cite{b_Gohberg90}, Gohberg and Krein \cite{b_Gohberg00} and Kato \cite{b_Kato95}.

\subsection{The spectrum of linear operators}

Let  $\hil$ denote a  complex separable Hilbert space and let  $Z$ be  a linear operator in $\hil$. The domain, range and kernel of $Z$ are denoted by $\dom(Z)$, $\ran(Z)$ and $\ker(Z)$, respectively .
We say that $Z$ is an operator \textit{on} $\hil$ if $\dom(Z)=\hil$. The algebra of all bounded operators on $\hil$ is denoted by $\bdd(\hil)$. Similarly, $\cld(\hil)$ denotes the class of all closed operators in $\hil$.  

In the following we assume that $Z$ is a closed operator in $\hil$. The resolvent set of $Z$ is defined as 
\begin{equation}
  \rho(Z):=\{ \lambda \in \mc : \lambda-Z \text{ is invertible in } \bdd(\hil) \}\footnote{Note that here and elsewhere in the text, we use $\lambda-Z$ as a shorthand for $\lambda I - Z$ where $I$ denotes the identity operator on $\hil$.}
\end{equation}
and for $\lambda \in \rho(Z)$ we define
\begin{equation}
  \label{eq:1}
R_Z(\lambda):=(\lambda-Z)^{-1}.
\end{equation}
The complement of $\rho(Z)$ in $\mc$, denoted by $\sigma(Z)$, is called the spectrum of $Z$. Note that $\rho(Z)$ is an open and $\sigma(Z)$ is a closed subset of $\mc$.  We say that $\lambda \in \sigma(Z)$ is an eigenvalue of $Z$ if $\ker(\lambda-Z)$ is nontrivial.

The extended resolvent set of $Z$ is defined as
\begin{equation}\label{eq:2}
  \hat{\rho}(Z):= \left\{
    \begin{array}{cl}
      \rho(Z) \cup \{ \infty \}, & \text{ if } Z \in \bdd(\hil) \\[4pt]
      \rho(Z), & \text{ if } Z \notin \bdd(\hil).
    \end{array}\right.
\end{equation}
In particular, if $Z \in \bdd(\hil)$ we regard $\hat{\rho}(Z)$ as a subset of the extended complex plane $\hat \mc = \mc \cup \{ \infty \}$. Setting $R_Z(\infty):=0$ if $Z \in \bdd(\hil)$, the operator-valued function
$$R_Z : \lambda \mapsto R_Z(\lambda),$$
called the resolvent of $Z$, is analytic on $\hat{\rho}(Z)$. Moreover, for every $\lambda \in \hat{\rho}(Z)$ the resolvent satisfies the inequality
$ \|R_Z(\lambda)\| \geq \dist(\lambda, \sigma(Z))^{-1}$, where $\|.\|$ denotes the norm of $\bdd(\hil)$\footnote{We will use the same symbol to denote the norm on $\hil$.}  and we agree that $1 / \infty := 0$. Actually, if $Z$ is a normal operator (that is, an operator commuting with its adjoint) then the spectral theorem implies that
\begin{equation}\label{eq:3}
\|R_Z(\lambda)\|= \dist(\lambda, \sigma(Z))^{-1}, \quad \lambda \in \hat{\rho}(Z).
\end{equation}

If $\lambda \in \sigma(Z)$ is an isolated point of the spectrum, we define the Riesz projection of $Z$ with respect to $\lambda$ by
\begin{equation}\label{eq:4}
   P_{Z}(\lambda) := \frac{1}{2\pi i} \int_\gamma R_Z(\mu) d\mu,
\end{equation}
where the contour $\gamma$ is a counterclockwise oriented circle centered at $\lambda$, with sufficiently small radius (excluding the rest of $\sigma(Z)$). 
We recall that a subspace $M \subset \hil$ is called $Z$-invariant if $Z(M \cap \dom(Z)) \subset M$. In this case, $Z|_M$ denotes the restriction of $Z$ to  $M \cap \dom(Z)$ and the range of $Z|_M$ is a subspace of $M$.

\begin{prop}[see, e.g., \cite{b_Gohberg90}, p.326] 
Let $Z \in \cld(\hil)$ and let $\lambda \in \sigma(Z)$ be isolated. If $P=P_Z(\lambda)$ is defined as above, then the following holds:
  \begin{enumerate}
  \item[(i)] $P$ is a projection, i.e., $P^2=P$.
  \item[(ii)] $\ran(P)$ and $\ker(P)$ are  $Z$-invariant.
  \item[(iii)] $\ran(P) \subset \dom(Z)$ and $Z|_{\ran(P)}$ is bounded.
  \item[(iv)]  $\sigma(Z|_{\ran(P)})=\{\lambda\}$ and $\sigma(Z|_{\ker(P)})=\sigma(Z) \setminus \{\lambda\}$.
  \end{enumerate}
\end{prop}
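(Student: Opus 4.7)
The plan is to exploit the contour-integral definition of $P$ together with the first resolvent identity and the closedness of $Z$, in the spirit of the functional-calculus intuition that $P$ plays the role of $\chi_{\{\lambda\}}(Z)$. For (i), I would pick two contours $\gamma_1,\gamma_2$ enclosing $\lambda$ alone among $\sigma(Z)$, with $\gamma_1$ strictly inside $\gamma_2$, and express $P^2$ as the iterated integral $\frac{1}{(2\pi i)^2}\int_{\gamma_1}\int_{\gamma_2} R_Z(\mu)R_Z(\nu)\,d\nu\,d\mu$. Applying the resolvent identity $R_Z(\mu)R_Z(\nu) = (R_Z(\nu)-R_Z(\mu))/(\mu-\nu)$ and swapping the order of integration, the two resulting double integrals collapse by Cauchy's theorem: one yields $2\pi i\,R_Z(\mu)$ since $\mu\in\gamma_1$ is interior to $\gamma_2$, while the other vanishes since $\nu\in\gamma_2$ is exterior to $\gamma_1$.

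For (iii), the algebraic identity $Z R_Z(\mu)x = \mu R_Z(\mu)x - x$ (valid for every $x\in\hil$) shows $R_Z(\mu)\hil\subset\dom(Z)$ and that $\mu\mapsto Z R_Z(\mu)x$ is continuous on $\gamma$. Approximating $Px$ by Riemann sums inside $\dom(Z)$ and invoking closedness, one obtains $Px\in\dom(Z)$ together with
$$ZPx = \frac{1}{2\pi i}\int_\gamma \mu R_Z(\mu)x\,d\mu,$$
whose norm is controlled by a constant times $\|x\|$, proving boundedness of $Z|_{\ran(P)}$. For (ii), the same closedness argument applied to the commutation $R_Z(\mu) Zx = Z R_Z(\mu) x$ for $x\in\dom(Z)$ gives $PZ\subset ZP$. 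Hence if $x\in\ran(P)$ then $x = Px\in\dom(Z)$ by (iii) and $Zx = ZPx = PZx\in\ran(P)$; if $x\in\ker(P)\cap\dom(Z)$ then $PZx = ZPx = 0$.

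For (iv), set $Z_1 := Z|_{\ran(P)}$ and $Z_2 := Z|_{\ker(P)}$, and for $\mu\notin\gamma$ introduce the auxiliary operator
$$S_\mu := \frac{1}{2\pi i}\int_\gamma \frac{R_Z(\zeta)}{\mu-\zeta}\,d\zeta.$$
Using $(\mu-Z)R_Z(\zeta) = I + (\mu-\zeta)R_Z(\zeta)$ and the closedness argument once more, one checks
$$(\mu-Z)S_\mu = P - n(\gamma,\mu)\,I,$$
where $n(\gamma,\mu)\in\{0,1\}$ is the winding number of $\gamma$ around $\mu$. Restricting to $\ran(P)$ for any $\mu\neq\lambda$ exterior to a sufficiently small $\gamma$ produces an inverse of $\mu-Z_1$, yielding $\sigma(Z_1)\subset\{\lambda\}$; for $\mu$ interior to $\gamma$ the operator $-S_\mu|_{\ker(P)}$ inverts $\mu-Z_2$, and combining with the fact that $R_Z(\mu)|_{\ker(P)}$ inverts $\mu-Z_2$ for $\mu\in\rho(Z)$ gives $\sigma(Z_2)\subset\sigma(Z)\setminus\{\lambda\}$. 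The reverse inclusions follow from $\sigma(Z) = \sigma(Z_1)\cup\sigma(Z_2)$, a consequence of the topological direct sum $\hil = \ran(P)\oplus\ker(P)$ from (i), once $P\neq 0$ is known; but $P = 0$ would render $-S_\mu$ a two-sided inverse of $\mu-Z$ for every $\mu$ interior to $\gamma$, contradicting $\lambda\in\sigma(Z)$.

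The hardest step is (iv): organizing the two cases (interior vs.\ exterior $\mu$) in a single winding-number identity so that both restricted resolvents emerge from the same contour formula, and handling the unbounded closed $Z$ rigorously when commuting it past the integral. Once closedness is used to move $Z$ inside the contour integral in (iii), the remaining claims are essentially algebraic manipulations of the resolvent identity.
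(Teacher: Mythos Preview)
The paper does not prove this proposition; it is quoted from the literature with the citation ``see, e.g., \cite{b_Gohberg90}, p.326'' and no argument is given. So there is nothing in the paper to compare against.

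Your proof is correct and is essentially the standard argument one finds in the cited reference (Gohberg--Goldberg--Kaashoek) and in Kato's book: the double-contour trick with the resolvent identity for (i), passing $Z$ through the contour integral via closedness for (ii)--(iii), and the parametrix $S_\mu$ built from $\int_\gamma R_Z(\zeta)/(\mu-\zeta)\,d\zeta$ for (iv). The winding-number bookkeeping you use to treat interior and exterior $\mu$ simultaneously is clean, and your handling of the possibility $P=0$ (which would contradict $\lambda\in\sigma(Z)$) is the right way to secure $\sigma(Z_1)=\{\lambda\}$ rather than merely $\sigma(Z_1)\subset\{\lambda\}$. One small point worth making explicit when you write it up: for (iv) you need $S_\mu$ to be a \emph{two}-sided inverse on the relevant subspace, so you should also record the companion identity $S_\mu(\mu-Z)x = (P - n(\gamma,\mu))x$ for $x\in\dom(Z)$, which follows from $R_Z(\zeta)(\mu-Z) = (\mu-\zeta)R_Z(\zeta) + I$ by the same computation.
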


We say that $\lambda_0 \in \sigma(Z)$ is a discrete eigenvalue if $\lambda_0$ is an isolated point of $\sigma(Z)$ and $P=P_Z(\lambda_0)$ is of finite rank (in the literature these eigenvalues are also  referred to as "eigenvalues of finite type"). Note that in this case $\lambda_0$ is indeed an eigenvalue of $Z$ since $\{\lambda_0\}=\sigma(Z|_{\ran(P)})$ and $\ran(P)$ is $Z$-invariant and finite-dimensional. The positive integer
\begin{equation}
  \label{eq:5}
 m_Z(\lambda_0):=\rank(P_Z(\lambda_0))
\end{equation}
is called the algebraic multiplicity of $\lambda_0$ with respect to $Z$. It has to be distinguished from the geometric multiplicity, which is defined as the dimension of the eigenspace  $\ker(\lambda_0-Z)$ (and so can be smaller than the algebraic multiplicity).
\begin{convention}
In this article only algebraic multiplicities will be considered and we will use the term "multiplicity" as a synonym for "algebraic multiplicity".
\end{convention}
\noindent The  discrete spectrum of $Z$ is now defined as
\begin{equation}
  \label{eq:6}
  \sigma_d(Z):=\{ \lambda \in \sigma(Z): \lambda \text{ is a discrete eigenvalue of } Z \}.\\
\end{equation}

We recall that a linear operator $Z_0 \in \cld(\hil)$  is a Fredholm operator if it has closed range and both its kernel and cokernel are finite-dimensional.  Equivalently, if $Z_0 \in \cld(\hil)$ is densely defined, then $Z_0$ is Fredholm if it has closed range and both $\ker(Z_0)$ and $\ker(Z_0^*)$ are finite-dimensional.
The essential spectrum of $Z$ is defined as
\begin{equation}\label{eq:7}
  \sigma_{ess}(Z):=\{ \lambda \in \mc: \lambda- Z \text{ is not a Fredholm operator } \}.
\footnote{For a discussion of various alternative (non-equivalent) definitions of the essential spectrum we refer to \cite{b_Edmunds87}. We note that all reasonable definitions coincide in the selfadjoint case.}
\end{equation}
Note that $\sigma_{ess}(Z) \subset \sigma(Z)$ and that $\sigma_{ess}(Z)$ is a closed set.

For later purposes we will need the following result about the spectrum of the resolvent of $Z$ .
\begin{prop}[\cite{b_Engel00}, p.243 and p.247, and \cite{b_Davies07}, p.331] \label{prop:1}
Suppose that  $Z \in \cld(\hil)$ with $\rho(Z) \neq \emptyset$. If $a \in \rho(Z)$, then
  \begin{equation*}
    \sigma(R_Z(a)) \setminus \{0\} = \{ (a-\lambda)^{-1}: \lambda \in \sigma(Z) \}.
  \end{equation*}
The same identity holds when, on both sides, $\sigma$ is replaced by $\sigma_{ess}$ and $\sigma_d$, respectively.
More precisely, $\lambda_0$ is an isolated point of $\sigma(Z)$ if and only if $(a-\lambda_0)^{-1}$ is an isolated point of  $\sigma(R_Z(a))$ and in this case
\begin{equation*}
  P_Z(\lambda_0)=P_{R_Z(a)}((a-\lambda_0)^{-1}).
\end{equation*}
In particular, the algebraic multiplicities of $\lambda_0 \in \sigma_d(Z)$ and $(a-\lambda_0)^{-1} \in \sigma_d(R_Z(a))$ coincide.
\end{prop}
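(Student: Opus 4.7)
The plan rests on a single algebraic identity. For any $\mu \neq 0$, set $\lambda = a - \mu^{-1}$, so $\mu(a-\lambda) = 1$. Then, writing $\lambda - Z = (a-Z) - (a-\lambda)$ and multiplying by $R_Z(a)$ on the right, I would establish
\begin{equation*}
   \mu - R_Z(a) = \mu(\lambda - Z) R_Z(a) \quad \text{on } \hil, \qquad \mu - R_Z(a) = \mu R_Z(a)(\lambda - Z) \quad \text{on } \dom(Z).
\end{equation*}
Since $R_Z(a): \hil \to \dom(Z)$ is a bijection, these identities immediately give the spectral mapping claim: one verifies that $\mu - R_Z(a)$ is boundedly invertible in $\bdd(\hil)$ if and only if $\lambda - Z$ is, producing explicit inverses in each direction (e.g., $(\mu - R_Z(a))^{-1} = \mu^{-1}[(a-\lambda) R_Z(\lambda) + I]$ when $\lambda \in \rho(Z)$). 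This accounts for $\sigma \setminus \{0\}$.

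For the essential spectrum, I would read off kernels, ranges and cokernels from the same identity. The bijection $R_Z(a): \hil \to \dom(Z)$ sends $\ker(\mu - R_Z(a))$ isomorphically onto $\ker(\lambda - Z)$, and $\ran(\mu - R_Z(a)) = \ran(\lambda - Z)$ as subsets of $\hil$. Thus the two operators share kernel dimension, range (hence closedness), and cokernel dimension, and $\lambda - Z$ is Fredholm exactly when $\mu - R_Z(a)$ is. The delicate point I will have to watch here is that when $Z$ is unbounded, $\dom(Z)$ is not closed in $\hil$, so one must be careful not to conflate "Fredholm as a closed operator $\dom(Z) \to \hil$" with "Fredholm as a bounded operator $\hil \to \hil$"; the equality of ranges makes this harmless, but it is the step that most needs explicit checking.

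For the Riesz projection statement, I would first invert $\mu - R_Z(a)$ explicitly using step one to obtain the expansion
\begin{equation*}
   R_{R_Z(a)}(\nu) = \nu^{-2} R_Z(\lambda'(\nu)) + \nu^{-1} I, \qquad \lambda'(\nu) = a - \nu^{-1},
\end{equation*}
valid on a punctured neighbourhood of $\mu_0 = (a-\lambda_0)^{-1}$. The equivalence of "isolated in $\sigma(R_Z(a))$" with "isolated in $\sigma(Z)$" is then immediate from the homeomorphism $\lambda \mapsto (a-\lambda)^{-1}$, and the matching orientations of small circles around $\lambda_0$ and $\mu_0$ follow from the nonvanishing derivative $d\nu/d\lambda = (a-\lambda)^{-2}$. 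Plugging the expansion into $P_{R_Z(a)}(\mu_0) = \frac{1}{2\pi i}\oint R_{R_Z(a)}(\nu)\,d\nu$ and changing variables back to $\lambda$, the first summand reproduces exactly $\frac{1}{2\pi i}\oint R_Z(\lambda)\,d\lambda = P_Z(\lambda_0)$, while the second summand yields $\frac{1}{2\pi i}\oint (a-\lambda)^{-1}\,d\lambda$, which vanishes because $a \in \rho(Z)$ lies outside the contour and the integrand is therefore holomorphic inside. Equality of the projections gives equality of their ranks, proving the multiplicity statement. I expect the cleanest presentation is to do the change-of-variables computation last, as it ties together all previous steps.
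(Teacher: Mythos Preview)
The paper does not prove this proposition; it simply cites Engel--Nagel and Davies as references. Your self-contained argument is correct and is essentially the standard textbook proof one finds in those sources: the factorization $\mu - R_Z(a) = \mu(\lambda - Z)R_Z(a)$ is precisely the identity underlying the spectral mapping theorem for resolvents, and your contour-integral computation for the Riesz projections (with the change of variables $\nu = (a-\lambda)^{-1}$) is the clean way to establish $P_Z(\lambda_0) = P_{R_Z(a)}((a-\lambda_0)^{-1})$.

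One cosmetic remark on the Fredholm step: you say that $R_Z(a)$ sends $\ker(\mu - R_Z(a))$ isomorphically onto $\ker(\lambda - Z)$, but in fact the two kernels are \emph{equal} as subsets of $\hil$. Indeed, if $(\mu - R_Z(a))f = 0$ then $R_Z(a)f = \mu f$, so $f \in \dom(Z)$ and $(\lambda - Z)f = \mu^{-1}(\lambda - Z)R_Z(a)f = 0$; conversely $Zg = \lambda g$ gives $R_Z(a)g = (a-\lambda)^{-1}g = \mu g$. This makes the kernel comparison even simpler than you wrote, and together with the equality of ranges (which you have correctly) the Fredholm equivalence is immediate. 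Everything else is fine as stated.
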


\begin{rem}\label{rem:3}
We note that $0 \in \sigma(R_Z(a))$ if and only if $Z \notin \bdd(\hil)$. Moreover, if $Z \in \cld(\hil)$ is densely defined, then
\begin{equation*}
  0 \in \sigma(R_Z(a)) \quad \Leftrightarrow \quad 0 \in \sigma_{ess}(R_Z(a)).
\end{equation*}
\end{rem}
The following proposition shows that the essential and the discrete spectrum of a linear operator are disjoint.
\begin{prop}
  If $Z \in \cld(\hil)$ and  $\lambda$ is an isolated point of $\sigma(Z)$, then $\lambda \in \sigma_{ess}(Z)$ if and only if $\:\rank(P_Z(\lambda))= \infty$. In particular,
  \begin{equation*}
    \sigma_{ess}(Z)\cap \sigma_d(Z) = \emptyset.
  \end{equation*}
\end{prop}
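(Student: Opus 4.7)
The plan is to reduce the Fredholm property of $\lambda - Z$ to that of its restrictions to the two $Z$-invariant summands determined by the Riesz projection $P := P_Z(\lambda)$, and then to invoke the non-emptiness of the essential spectrum in infinite dimensions.

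First I would use the preceding proposition to write $\hil = \ran(P) \oplus \ker(P)$ with both summands $Z$-invariant, so that $Z$ is block-diagonal with blocks $Z_1 := Z|_{\ran(P)}$ (bounded on $\ran(P)$) and $Z_2 := Z|_{\ker(P)}$, whose spectra are $\{\lambda\}$ and $\sigma(Z)\setminus\{\lambda\}$ respectively. A standard block-diagonal argument then shows that $\lambda - Z : \dom(Z) \to \hil$ is Fredholm if and only if both $\lambda - Z_1$ and $\lambda - Z_2$ are Fredholm on their respective spaces (kernels, cokernels, and closedness of range all split along the decomposition). Since $\lambda \in \rho(Z_2)$, the operator $\lambda - Z_2$ is boundedly invertible and in particular Fredholm, so the whole question reduces to whether $\lambda - Z_1$ is Fredholm on $\ran(P)$.

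Next I would split into two cases according to $\rank(P)$. If $\rank(P) < \infty$, then $\ran(P)$ is finite-dimensional and $\lambda - Z_1$ is automatically Fredholm, hence $\lambda \notin \sigma_{ess}(Z)$. If $\rank(P) = \infty$, then $Z_1$ is a bounded operator on an infinite-dimensional Hilbert space with $\sigma(Z_1) = \{\lambda\}$. Here I would invoke the fact that every bounded operator on an infinite-dimensional Hilbert space has non-empty essential spectrum (a consequence of the non-triviality of the Calkin algebra, since the spectrum of any element of a unital Banach algebra is non-empty). Combined with the obvious inclusion $\sigma_{ess}(Z_1) \subset \sigma(Z_1) = \{\lambda\}$, this forces $\sigma_{ess}(Z_1) = \{\lambda\}$, so $\lambda - Z_1$ is not Fredholm and therefore $\lambda \in \sigma_{ess}(Z)$. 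This yields the claimed equivalence, and the ``in particular'' statement is then immediate: for $\lambda \in \sigma_d(Z)$, the projection $P_Z(\lambda)$ has finite rank by definition, so the equivalence gives $\lambda \notin \sigma_{ess}(Z)$.

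The main obstacle in this plan is the infinite-rank case, which genuinely requires the structural input that the essential spectrum of any bounded operator on an infinite-dimensional Hilbert space is non-empty; the finite-rank case and the reduction to the two blocks are bookkeeping once the invariant-subspace decomposition from the preceding proposition is available. An alternative route in the infinite-rank step would be to argue directly that a bounded quasi-nilpotent operator on an infinite-dimensional space cannot be Fredholm, but this essentially re-proves the Calkin-algebra fact in disguise.
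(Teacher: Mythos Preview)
Your proof is correct and self-contained, but it takes a different route from the paper's. The paper simply cites Davies \cite{b_Davies07}, p.~122, for the bounded case and then reduces the unbounded case to the bounded one via the resolvent, invoking Proposition~\ref{prop:1} (which transfers isolated spectral points, Riesz projections, and the essential/discrete dichotomy from $Z$ to $R_Z(a)$).

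Your approach instead handles both the bounded and unbounded cases in one stroke by splitting along the Riesz projection and reducing the Fredholm question for $\lambda - Z$ to that for $\lambda - Z_1$ on $\ran(P)$. The block-diagonal bookkeeping you describe is sound (the direct sum $\ran(P)\oplus\ker(P)$ is topological since $P$ is bounded, so kernels, ranges, closedness, and cokernels all split), and the key nontrivial input---that a bounded operator on an infinite-dimensional Hilbert space has nonempty essential spectrum---is exactly Atkinson's theorem combined with nonemptiness of spectra in the (unital, nontrivial) Calkin algebra, as you note. This buys you a direct argument that does not depend on the resolvent-transfer machinery of Proposition~\ref{prop:1}; the paper's route is terser but outsources the substantive work to the reference.
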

\begin{proof}
For $Z \in \bdd(\hil)$ a proof can be found in \cite{b_Davies07}, p.122. The unbounded case can be reduced to the bounded case by means of Proposition \ref{prop:1}.
\end{proof}

While the spectrum of a selfadjoint operator $Z$ can always be decomposed as
\begin{equation}
  \label{eq:8}
\sigma(Z) = \sigma_{ess}(Z) \dotcup \sigma_d(Z),
\end{equation}
where the symbol $\dotcup$ denotes a disjoint union, the same need not be true in the non-selfadjoint case. For instance, considering the shift operator $(Zf) (n)= f(n+1)$ acting on $l^2(\mn)$, we have $\sigma_{ess}(Z)=\{ z \in \mc : |z| = 1\}$ and $\sigma(Z)=\{ z \in \mc : |z| \leq 1 \}$, while $\sigma_d(Z) = \emptyset$, see \cite{b_Kato95}, p.237-238.
The following result gives a suitable criterion for the discreteness of the spectrum in the complement of $\sigma_{ess}(Z)$.
\begin{prop}[\cite{b_Gohberg90}, p.373]\label{prop:2}
  Let $Z \in \cld(\hil)$ and let $\Omega \subset \mc \setminus \sigma_{ess}(Z)$ be open and connected. If $\Omega \cap \rho(Z) \neq \emptyset$, then $\sigma(Z) \cap \Omega \subset \sigma_d(Z)$.
\end{prop}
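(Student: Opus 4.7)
The plan is to reduce to the case of a bounded operator and then apply the analytic Fredholm theorem. For the bounded case, suppose $Z \in \bdd(\hil)$ and consider the analytic operator-valued function $T \colon \Omega \to \bdd(\hil)$, $T(\lambda) := \lambda I - Z$. For every $\lambda \in \Omega$ the operator $T(\lambda)$ is Fredholm, since $\Omega \cap \sigma_{ess}(Z) = \emptyset$, and by hypothesis there exists $a \in \Omega \cap \rho(Z)$ at which $T(a)$ is invertible. The analytic Fredholm theorem then implies that $\sigma(Z) \cap \Omega = \{\lambda \in \Omega : T(\lambda) \text{ is not invertible}\}$ is a discrete subset of $\Omega$ and that at every such $\lambda_0$ the inverse $T(\cdot)^{-1} = R_Z(\cdot)$ is meromorphic with finite-rank Laurent coefficients. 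In particular $\lambda_0$ is an isolated point of $\sigma(Z)$, and the Riesz projection $P_Z(\lambda_0)$, being the residue of $R_Z(\cdot)$ at $\lambda_0$, has finite rank; hence $\lambda_0 \in \sigma_d(Z)$.

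For the general case where $Z$ may be unbounded, I would reduce to the bounded case via Proposition \ref{prop:1}. Pick any $a \in \Omega \cap \rho(Z)$ and set $R := R_Z(a) \in \bdd(\hil)$. The M\"obius map $\phi(\lambda) := (a - \lambda)^{-1}$ carries $\Omega \setminus \{a\}$ biholomorphically onto an open connected set $\tilde\Omega \subset \mc \setminus \{0\}$; connectedness of $\Omega \setminus \{a\}$ follows from the fact that removing a single point from an open connected subset of $\mc$ preserves connectedness. By Proposition \ref{prop:1}, $\phi$ transports $\sigma(Z) \cap \Omega$, $\sigma_{ess}(Z) \cap \Omega$, and $\sigma_d(Z) \cap \Omega$ onto $\sigma(R) \cap \tilde\Omega$, $\sigma_{ess}(R) \cap \tilde\Omega$, and $\sigma_d(R) \cap \tilde\Omega$ respectively, with matching algebraic multiplicities. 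Consequently $\tilde\Omega$ is disjoint from $\sigma_{ess}(R)$, and since $\rho(Z) \cap \Omega$ is open and contains $a$, it also contains some $b \neq a$, so $\phi(b) \in \tilde\Omega \cap \rho(R)$. The bounded case therefore applies to $R$ on $\tilde\Omega$, giving $\sigma(R) \cap \tilde\Omega \subset \sigma_d(R)$, which translates back via $\phi$ to $\sigma(Z) \cap \Omega \subset \sigma_d(Z)$.

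The central ingredient, and the main obstacle in writing a careful proof, is the analytic Fredholm theorem: one must verify that the analytic family under consideration is Fredholm-valued on a connected open set and invertible at some point. Both conditions are secured by the spectral mapping assertions of Proposition \ref{prop:1}. A subtle bookkeeping point is that $0$ may or may not belong to $\sigma(R)$ or $\sigma_{ess}(R)$ (see Remark \ref{rem:3}), but this never interferes with the argument because $0 \notin \tilde\Omega$ by construction. Once these checks are in place, the conclusion follows by direct translation along $\phi$.
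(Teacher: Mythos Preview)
The paper does not supply its own proof of this proposition; it is quoted from \cite{b_Gohberg90}, p.~373, without argument. Your proposal is correct and is essentially the standard route taken in that reference: the analytic Fredholm theorem for holomorphic Fredholm-operator-valued families handles the bounded case, and the unbounded case is reduced to it via the resolvent, exactly as the paper itself does elsewhere (e.g.\ in the proof following Proposition~\ref{prop:1}). One small refinement worth making explicit in the bounded step is that the Fredholm index of $T(\lambda)=\lambda I-Z$ is zero throughout $\Omega$, by local constancy of the index and invertibility at $a$; this is what licenses the version of the analytic Fredholm theorem you invoke (the $I-K(\lambda)$ formulation with $K$ compact is not directly available here, since $R_Z(a)$ need not be compact). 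Otherwise the argument is clean, and your handling of the point $0$ in $\sigma(R)$ and of the connectedness of $\Omega\setminus\{a\}$ is careful and correct.
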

Hence, if $\Omega$ is a (maximal connected) component of  $\mc \setminus \sigma_{ess}(Z)$, then either
\begin{enumerate}
\item[(i)] $\Omega \subset \sigma(Z)$ (in particular, $\Omega \cap \sigma_d(Z) = \emptyset$), or
\item[(ii)] $\Omega \cap \rho(Z) \neq \emptyset$ and $\Omega \cap \sigma(Z)$ consists of an at most countable sequence of discrete eigenvalues  which can accumulate at $\sigma_{ess}(Z)$ only.
\end{enumerate}
A direct consequence of Proposition \ref{prop:2} is
\begin{cor}\label{cor:a1}
  Let $Z \in \cld(\hil)$ with $\sigma_{ess}(Z) \subset \mr$ and assume that there are points of $\rho(Z)$ in both the upper and lower half-planes.
 Then $\sigma(Z)= \sigma_{ess}(Z) \dotcup \sigma_d(Z)$.
\end{cor}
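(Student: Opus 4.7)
The plan is to apply Proposition \ref{prop:2} componentwise to the open set $\mc \setminus \sigma_{ess}(Z)$. Because $\sigma_{ess}(Z) \subset \mr$, the complement $\mc \setminus \sigma_{ess}(Z)$ contains the entire open upper half-plane $\mh^+ := \{z \in \mc : \Im z > 0\}$ and the open lower half-plane $\mh^- := \{z \in \mc : \Im z < 0\}$, and these are each connected.

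The first step is a small topological observation: every connected component $\Omega$ of $\mc \setminus \sigma_{ess}(Z)$ must meet $\mh^+ \cup \mh^-$. Indeed, components of an open subset of $\mc$ are themselves open in $\mc$, and no nonempty open subset of $\mc$ can be contained in $\mr$, so every such $\Omega$ necessarily intersects $\mc \setminus \mr$. The second step is to upgrade this to $\mh^+ \subset \Omega$ or $\mh^- \subset \Omega$: if, say, $\Omega \cap \mh^+ \neq \emptyset$, then since $\mh^+$ is a connected subset of $\mc \setminus \sigma_{ess}(Z)$ and $\Omega$ is a maximal connected subset, we must have $\mh^+ \subset \Omega$.

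By hypothesis $\rho(Z) \cap \mh^+ \neq \emptyset$ and $\rho(Z) \cap \mh^- \neq \emptyset$, so in either case $\Omega \cap \rho(Z) \neq \emptyset$. Proposition \ref{prop:2} then yields
\begin{equation*}
\sigma(Z) \cap \Omega \subset \sigma_d(Z).
\end{equation*}
Taking the union over all components, I obtain $\sigma(Z) \setminus \sigma_{ess}(Z) \subset \sigma_d(Z)$, and combined with $\sigma_{ess}(Z) \subset \sigma(Z)$ this gives $\sigma(Z) = \sigma_{ess}(Z) \cup \sigma_d(Z)$. The disjointness $\sigma_{ess}(Z) \cap \sigma_d(Z) = \emptyset$ established in the proposition immediately preceding Proposition \ref{prop:2} then upgrades the union to a disjoint one, yielding the claim.

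There is no real obstacle here; the corollary is a direct topological reduction to Proposition \ref{prop:2}. The only point deserving attention is the elementary observation that, since $\sigma_{ess}(Z)$ is a closed subset of $\mr$ (and hence has empty interior in $\mc$), each connected component of its complement necessarily reaches into at least one of the open half-planes and therefore swallows it entirely.
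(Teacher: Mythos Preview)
Your proof is correct and is exactly the argument the paper has in mind: the paper simply records the corollary as ``a direct consequence of Proposition~\ref{prop:2}'' without spelling out any details, and you have filled in precisely the componentwise application together with the observation that each component of $\mc\setminus\sigma_{ess}(Z)$ contains one of the open half-planes.
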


We conclude this section with some remarks on the numerical range of a linear operator and its relation to the spectrum, see \cite{MR1417493}, \cite{b_Kato95} for extensive accounts on this topic. The numerical range of $Z \in \cld(\hil)$ is defined as
\begin{equation}
  \num(Z) := \{ \langle Zf,f \rangle : f \in \dom(Z), \|f\|=1 \}.
\end{equation}
It was shown by Hausdorff and Toeplitz (see, e.g., \cite{b_Davies07} Theorem 9.3.1) that the numerical range is always a convex subset of $\mc$. Furthermore, if the complement of the closure of the numerical range is connected and contains at least one point of the resolvent set of $Z$, then  $\sigma(Z) \subset \overline{\num}(Z)$ and
\begin{equation}\label{sec1:num}
  \|R_Z(a)\| \leq 1/\dist(a, \overline{\num}(Z)), \qquad a \in \mc \setminus \overline{\num}(Z).
\end{equation}
Clearly, if $Z \in \bdd(\hil)$ then $\num(Z) \subset \{ \lambda : |\lambda| \leq \|Z\| \}$. Moreover, if $Z$ is normal then the closure of $\num(Z)$ coincides with the convex hull of $\sigma(Z)$, i.e. the smallest convex set containing $\sigma(Z)$.

\subsection{Schatten classes and determinants}\label{sec:schatten}

An operator $K \in \bdd(\hil)$ is called compact if it is the norm limit of finite rank operators.  The class of all compact operators forms a two-sided ideal in
$\bdd(\hil)$, which we denote by $\mathcal{S}_\infty(\hil)$. The non-zero elements of the spectrum of $K \in \mathcal{S}_\infty(\hil)$ are discrete eigenvalues. In particular, the only possible accumulation point of the spectrum is $0$, and $0$ itself may or may not belong to the spectrum. More precisely, if $\hil$ is infinite-dimensional, as will be the case in most of the applications below, then $\sigma_{ess}(K)=\{0\}$.

For every $K \in \mathcal{S}_\infty(\hil)$ we can find (not necessarily complete) orthonormal sets $\{\phi_n\}$ and $\{\psi_n\}$ in $\hil$, and a set of positive numbers $\{s_n(K)\}$ with $s_1(K) \geq s_2(K) \geq \ldots > 0$, such that
\begin{equation}
  Kf =\sum_{n} s_n(K) \langle f, \psi_n \rangle \phi_n, \qquad f \in \hil.
\end{equation}
Here the numbers $s_n(K)$ are called the singular values of $K$. They are precisely the eigenvalues of $|K|:=\sqrt{K^*K}$, in non-increasing order.

The Schatten class of order $p$ (with $p \in (0,\infty)$), denoted by $\mathcal{S}_p(\hil)$, consists of all compact operators on $\hil$ whose singular values are $p$-summable, i.e.
\begin{equation}
  \label{eq:11}
 K \in \mathcal{S}_p(\hil) \quad :\Leftrightarrow \quad  \{s_n(K)\} \in l^p(\mn).
\end{equation}

We remark that $\mathcal{S}_p(\hil)$ is a linear subspace of $\mathcal{S}_\infty(\hil)$ for every $p>0$ and for $p \geq 1$ we can make it into  a complete normed space by setting
\begin{equation}
  \label{eq:12}
 \| K \|_{\mathcal{S}_p}:= \|\{s_n(K)\}\|_{l^p}.
\end{equation}
Note that for $0<p<1$ this definition provides only a quasi-norm. For consistency we set $\|K\|_{S_\infty}:=\|K\|$.

For  $0 < p < q \leq \infty$ we have the (strict) inclusion $\mathcal{S}_p(\hil) \subset \mathcal{S}_q(\hil)$ and
\begin{equation}
  \label{eq:184}
  \|K\|_{\mathcal{S}_q} \leq \|K\|_{\mathcal{S}_p}.
\end{equation}

Similar to the class of compact operators, $\mathcal{S}_p(\hil)$ is a two-sided ideal in the algebra $\bdd(\hil)$ and for $K \in \mathcal{S}_p(\hil)$ and $B \in \bdd(\hil)$ we have
\begin{equation}
  \label{eq:14}
  \|KB\|_{\mathcal{S}_p} \leq \|K\|_{\mathcal{S}_p} \|B\| \quad \text{and} \quad   \|BK\|_{\mathcal{S}_p} \leq \|B \| \|K\|_{\mathcal{S}_p}.
\end{equation}
Moreover, if $K \in \mathcal{S}_p(\hil)$ then  $K^* \in \mathcal{S}_p(\hil)$ and $\|K^*\|_{\mathcal{S}_p}=\|K\|_{\mathcal{S}_p}$.

The following estimate is a Schatten class analog of H\"older's inequality (see \cite{b_Gohberg00}, p.88):  Let $K_1 \in \mathcal{S}_p(\hil)$ and $K_2 \in \mathcal{S}_q(\hil)$ where $0<p,q\leq \infty$. Then $K_1K_2 \in \mathcal{S}_{r}(\hil)$, where $r^{-1}=p^{-1}+q^{-1}$, and
$$ \|K_1K_2\|_{\mathcal{S}_r} \leq \| K_1 \|_{\mathcal{S}_p} \|K_2\|_{\mathcal{S}_q}.$$

While the singular values of a selfadjoint operator are just the absolute values of its eigenvalues, in general the eigenvalues and singular values need not be related. However, we have the following result  of Weyl.

\begin{prop}\label{prop:6}
  Let $K \in \mathcal{S}_p(\hil),$ where $0<p<\infty,$ and let $\lambda_1, \lambda_2, \ldots$ denote its sequence of nonzero eigenvalues (counted according to their multiplicity). Then
  \begin{equation}
    \label{eq:16}
    \sum_{n} |\lambda_n|^p \leq \sum_{n} s_n(K)^p.
  \end{equation}
\end{prop}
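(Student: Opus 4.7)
The plan is to use the classical two-step strategy: first establish a multiplicative Weyl inequality relating partial products of $|\lambda_n|$ and $s_n(K)$, and then upgrade it to the additive inequality for $t\mapsto t^p$ via a log-majorization argument.

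\textbf{Step 1 (multiplicative Weyl inequality).} First I would prove that, after ordering so that $|\lambda_1|\geq |\lambda_2|\geq\ldots$ (padding by zeros if there are only finitely many nonzero eigenvalues), one has
\begin{equation*}
\prod_{n=1}^{N}|\lambda_n|\;\leq\;\prod_{n=1}^{N}s_n(K), \qquad N\in\mn.
\end{equation*}
To do this I would let $M_N$ be an $N$-dimensional $K$-invariant subspace containing the generalized eigenvectors for $\lambda_1,\ldots,\lambda_N$ (obtained from the Riesz projections $P_K(\lambda_n)$ of Section 2.1) and set $A:=K|_{M_N}$, viewed as an operator on $M_N$. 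Since $M_N$ is $K$-invariant and finite dimensional, $A$ has eigenvalues $\lambda_1,\ldots,\lambda_N$ counted with algebraic multiplicity, so by Schur triangularization there is an orthonormal basis of $M_N$ in which $A$ is upper triangular with the $\lambda_n$ on the diagonal, giving $|\det(A)|=\prod_{n=1}^{N}|\lambda_n|$. On the other hand $|\det(A)|=\det(|A|)=\prod_{n=1}^{N}s_n(A)$, and the min--max characterization of singular values yields $s_n(A)\leq s_n(K)$ for each $n$. Combining these three relations gives the multiplicative inequality.

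\textbf{Step 2 (log-majorization).} Next I would invoke the following standard lemma: if $(a_n)$ and $(b_n)$ are nonincreasing nonnegative sequences satisfying $\prod_{n=1}^{N}a_n\leq\prod_{n=1}^{N}b_n$ for every $N$, and if $\Phi:[0,\infty)\to[0,\infty)$ is such that $x\mapsto\Phi(e^x)$ is convex and nondecreasing, then $\sum_{n=1}^{N}\Phi(a_n)\leq\sum_{n=1}^{N}\Phi(b_n)$ for every $N$. Applied with $a_n=|\lambda_n|$, $b_n=s_n(K)$ and $\Phi(t)=t^p$ (so that $\Phi(e^x)=e^{px}$ is convex and increasing for any $p>0$), this yields the desired bound for every finite $N$. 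Letting $N\to\infty$ by monotone convergence gives the stated inequality $\sum_{n}|\lambda_n|^p\leq\sum_{n}s_n(K)^p$, where the right-hand side is finite by hypothesis $K\in\mathcal{S}_p(\hil)$.

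\textbf{Main obstacle.} The main technical point is the log-majorization lemma in Step 2, whose standard proof reduces to the Hardy--Littlewood--P\'olya majorization theorem after passing to logarithms, together with a truncation argument to handle possible zero entries (namely, one replaces $a_n,b_n$ by $a_n+\eps,b_n+\eps$, applies majorization, and lets $\eps\downarrow 0$). A secondary, easier, point is the construction of $M_N$ and the identification of its eigenvalues with $\lambda_1,\ldots,\lambda_N$, which relies on the fact that the ranges of the Riesz projections $P_K(\lambda_n)$ are $K$-invariant, finite dimensional, and mutually disjoint apart from $\{0\}$ --- all consequences of the general results recalled in Section 2.1.
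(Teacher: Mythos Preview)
Your argument is correct and is exactly the classical Weyl proof: the multiplicative inequality via restriction to invariant subspaces and Schur triangularization, followed by the log-majorization/Hardy--Littlewood--P\'olya step applied to $\Phi(t)=t^p$. There is nothing to compare against, since the paper does not prove this proposition at all --- it merely states it as the well-known result of Weyl. Your sketch thus supplies precisely the standard proof that the paper omits; the technical points you flag (construction of $M_N$ from the Riesz projections of Section~2.1, and the truncation to handle zeros in the majorization lemma) are indeed the only places requiring care, and both are routine.
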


In the remaining part of this section we will introduce the notion of an infinite determinant. To this end, let $K \in \mathcal{S}_n(\hil)$, where $n\in \mn$, and let $\lambda_1, \lambda_2, \ldots$  denote its sequence of nonzero eigenvalues, counted according to their multiplicity and enumerated according to decreasing absolute value. The $n$-regularized determinant of $I-K$, where $I$ denotes the identity operator on $\hil$, is
\begin{equation}
  \label{eq:18}
{\det}_n(I-K) := \left\{
  \begin{array}{cl}
 \prod_{k \in \mn} (1-\lambda_k),     & \text{if } n=1 \\[4pt]
\prod_{k \in \mn} \left[(1-\lambda_k) \exp \left( \sum_{j=1}^{n-1} \frac{\lambda_k^j}{j} \right)\right], &  \text{if } n \geq 2.
  \end{array}\right.
\end{equation}
Here the convergence of the  products on the right-hand side follows from (\ref{eq:16}).

It is clear from the definition that $I-K$ is invertible if and only if $\det_n(I-K)\neq 0$. Moreover, $\det_n(I)=1$. Since the nonzero eigenvalues of $K_1K_2$ and $K_2K_1$ coincide ($K_1,K_2 \in \bdd(\hil)$) we have
\begin{equation}\label{eq:comm}
{\det}_n(I-K_1K_2)={\det}_n(I-K_2K_1)
\end{equation}
if both $K_1K_2, K_2K_1 \in \mathcal{S}_n(\hil)$.

The regularized determinant ${\det_n}(I-K)$ is a continuous function of $K$. If $\Omega \subset \hat{\mc}$ is open and  $K(\lambda)\in \mathcal{S}_n(\hil)$ depends holomorphically on $\lambda \in \Omega$,  then $\det_n(I-K(\lambda))$ is holomorphic on $\Omega$. For a proof of both results we refer to \cite{Simon77}.

We can define the perturbation determinant for non-integer valued Schatten classes as well: Since $\mathcal{S}_p(\hil) \subset \mathcal{S}_{\lceil p \rceil}(\hil)$ where
$\lceil p \rceil = \min \{ n \in \mn :  n \geq p \}$, the $\lceil p \rceil$-regularized determinant of $I-K, K \in \mathcal{S}_p(\hil),$ is well defined, and so the above results can still be applied. Moreover, this determinant can be estimated in terms of the $p$th Schatten norm of $K$ (see \cite{b_Dunford63}, \cite{Simon77}, \cite{gil08} ): If $K\in \mathcal{S}_p(\hil)$, where $0<p<\infty$, then
        \begin{equation}\label{inequality}
          |{\det}_{\lceil p \rceil}(I-K)|\leq \exp \left( \Gamma_p \|K
        \|_{\mathcal{S}_p}^p\right),
        \end{equation}
        where $\Gamma_p $ is some positive constant.

\subsection{Perturbation theory}

The aim of perturbation theory is to obtain information about the spectrum of some operator $Z$ by showing that it is close, in a suitable sense, to an operator $Z_0$ whose spectrum is already known. In this case one can hope that some of the spectral characteristics of $Z_0$ are inherited by $Z$.  For instance, the classical Weyl theorem (see Theorem \ref{thm:2} below) implies the validity of the following result (also sometimes called Weyl's Theorem).
\begin{prop}\label{prop:3}
Let $Z, Z_0 \in \cld(\hil)$ with $\rho(Z)\cap \rho(Z_0) \neq \emptyset$. If the resolvent difference $R_{Z}(a)-R_{Z_0}(a)$ is compact for some $a \in \rho(Z) \cap \rho(Z_0)$, then $\sigma_{ess}(Z)=\sigma_{ess}(Z_0)$.
\end{prop}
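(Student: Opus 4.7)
The plan is to reduce the statement to Weyl's theorem on the invariance of the essential spectrum of bounded operators under compact perturbations (referred to in the text as Theorem \ref{thm:2}), applied to the two resolvents, and then to pull the resulting equality back to $Z$ and $Z_0$ via the spectral mapping for resolvents recorded in Proposition \ref{prop:1}.

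First I would observe that, since $a \in \rho(Z) \cap \rho(Z_0)$, both $R_Z(a)$ and $R_{Z_0}(a)$ belong to $\bdd(\hil)$, and by hypothesis their difference is compact. Weyl's theorem for bounded operators therefore yields
\[
\sigma_{ess}(R_Z(a)) = \sigma_{ess}(R_{Z_0}(a))
\]
as subsets of $\mc$; in particular $0$ lies in the left-hand side iff it lies in the right-hand side, which matches the criterion recorded in Remark \ref{rem:3}.

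Next I would invoke Proposition \ref{prop:1} for both $Z$ and $Z_0$, which states
\[
\sigma_{ess}(R_Z(a)) \setminus \{0\} = \{ (a-\lambda)^{-1} : \lambda \in \sigma_{ess}(Z) \},
\]
and similarly with $Z_0$ in place of $Z$. The Möbius map $\varphi : \lambda \mapsto (a-\lambda)^{-1}$ is a bijection from $\mc \setminus \{a\}$ onto $\mc \setminus \{0\}$, with inverse $\mu \mapsto a - \mu^{-1}$. Since $a \in \rho(Z) \cap \rho(Z_0)$ lies in neither $\sigma_{ess}(Z)$ nor $\sigma_{ess}(Z_0)$, both essential spectra sit in the domain of $\varphi$. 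Equating the two representations via the first step and applying $\varphi^{-1}$ then gives $\sigma_{ess}(Z) = \sigma_{ess}(Z_0)$, as required.

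I do not anticipate any serious obstacle: the argument is a mechanical combination of the cited Weyl theorem with Proposition \ref{prop:1}. The only small bookkeeping issue is the point $0$, which Proposition \ref{prop:1} excludes on the resolvent side; this is handled transparently by $\varphi$, since discarding $0$ on both sides of the equality obtained in the first step is automatic and does not affect the preimage under $\varphi^{-1}$.
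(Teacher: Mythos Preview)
Your proposal is correct and follows essentially the same approach as the paper, which explicitly notes (in the remark after Theorem~\ref{thm:2}) that Proposition~\ref{prop:3} is a consequence of Weyl's theorem combined with Proposition~\ref{prop:1}. Your handling of the point $0$ via the bijection $\varphi$ is the only detail the paper leaves implicit, and you have addressed it cleanly.
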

\begin{rem}
If $R_{Z}(a)-R_{Z_0}(a)$ is compact for some $a \in \rho(Z) \cap \rho(Z_0)$, then the same is true for every $a \in \rho(Z) \cap \rho(Z_0)$. This is a consequence of the Hilbert-identity
  \begin{equation*}
R_{Z}(b)-R_{Z_0}(b) = (a-Z)R_{Z}(b)(R_{Z}(a)-R_{Z_0}(a))(a-Z_0)R_{Z_0}(b),
  \end{equation*}
valid for $a,b \in \rho(Z) \cap \rho(Z_0)$.
\end{rem}
Combining Proposition \ref{prop:3} and Corollary \ref{cor:a1} we obtain the following result for perturbations of selfadjoint operators.
\begin{cor}\label{prop:4}
  Let $Z, Z_0 \in \cld(\hil)$ and let $Z_0$ be selfadjoint. Suppose that there are points of $\rho(Z)$ in both the upper and lower half-planes. If $R_{Z}(a)-R_{Z_0}(a) \in \mathcal{S}_\infty(\hil)$ for some $a \in \rho(Z) \cap \rho(Z_0)$, then $\sigma_{ess}(Z)=\sigma_{ess}(Z_0) \subset \mr$ and
\begin{equation}
\label{eq:9}
  \sigma(Z)= \sigma_{ess}(Z_0) \dotcup \sigma_{d}(Z).
\end{equation}
\end{cor}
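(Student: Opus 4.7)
The plan is to obtain the conclusion as a direct synthesis of Proposition \ref{prop:3} and Corollary \ref{cor:a1}, with no genuinely new content beyond verifying that the hypotheses of those two results are satisfied under the assumptions.

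First I would observe that since $Z_0$ is selfadjoint, the classical fact $\sigma(Z_0) \subset \mr$ holds, and in particular $\sigma_{ess}(Z_0) \subset \mr$. The hypothesis provides $a \in \rho(Z) \cap \rho(Z_0)$ with $R_Z(a) - R_{Z_0}(a) \in \mathcal{S}_\infty(\hil)$, so in particular $\rho(Z) \cap \rho(Z_0) \neq \emptyset$. Thus Proposition \ref{prop:3} applies directly and yields
\begin{equation*}
  \sigma_{ess}(Z) = \sigma_{ess}(Z_0) \subset \mr,
\end{equation*}
which gives the first assertion.

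Next I would invoke Corollary \ref{cor:a1}. By the step just established, $\sigma_{ess}(Z) \subset \mr$, and by assumption $\rho(Z)$ contains points in both the upper and lower half-planes. These are precisely the hypotheses of Corollary \ref{cor:a1}, so
\begin{equation*}
  \sigma(Z) = \sigma_{ess}(Z) \dotcup \sigma_d(Z) = \sigma_{ess}(Z_0) \dotcup \sigma_d(Z),
\end{equation*}
which is the desired decomposition \eqref{eq:9}.

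There is no real obstacle here, since both preparatory results have already been stated. The only point that needs care is checking that the hypothesis of Corollary \ref{cor:a1} (existence of resolvent points in both half-planes) is inherited by $Z$ rather than by $Z_0$; but this is exactly what is assumed, so the argument goes through verbatim.
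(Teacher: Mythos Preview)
Your proof is correct and follows exactly the approach indicated in the paper, which simply states that the corollary is obtained by combining Proposition \ref{prop:3} and Corollary \ref{cor:a1}. There is nothing to add.
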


In the following we will study perturbations of the form $Z=Z_0+M$, understood as the usual operator sum defined on $\dom(Z_0) \cap \dom(M)$. More precisely, we assume that $Z_0 \in \cld(\hil)$ has non-empty resolvent set and that $M$ is a relatively bounded perturbation of $Z_0$, i.e.  $\dom(Z_0) \subset \dom(M)$ and there exist $r,s \geq 0$ such that
$$ \|Mf\| \leq r \|f\| + s \|Z_0f\|$$
for all $f \in \dom(Z_0)$. The infimum of all constants $s$ for which a corresponding $r$ exists such that the last inequality holds is called the $Z_0$-{bound} of $M$. The operator $Z$ is closed if the $Z_0$-bound of $M$ is smaller one. Note that $M$ is $Z_0$-bounded if and only if $\dom(Z_0) \subset \dom(M)$ and $MR_{Z_0}(a) \in \bdd(\hil)$ for some $a \in \rho(Z_0)$, and the $Z_0$-bound is not larger than $\inf_{a \in \rho(Z_0)} \|MR_{Z_0}(a)\|$.
The operator $M$ is called $Z_0$-{compact} if $\dom(Z_0) \subset \dom(M)$  and  $MR_{Z_0}(a) \in \mathcal{S}_\infty(\hil)$ for some $a \in \rho(Z_0)$.  Every $Z_0$-compact operator is $Z_0$-bounded and the corresponding $Z_0$-bound is $0$. Moreover, if $M$ is $Z_0$-compact and $Z_0$ is Fredholm, then also $Z_0+M$ is Fredholm (see, e.g., \cite{b_Kato95}, p.238). The last implication is the main ingredient in the proof of Weyl's theorem:

\begin{thm}\label{thm:2}
  Let $Z=Z_0+M$ where $Z_0 \in \cld(\hil)$ and $M$ is $Z_0$-compact. Then $\sigma_{ess}(Z)=\sigma_{ess}(Z_0)$.
\end{thm}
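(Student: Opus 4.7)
The plan is to reduce the claim to Proposition \ref{prop:3}: it suffices to exhibit $b \in \rho(Z) \cap \rho(Z_0)$ at which $R_Z(b) - R_{Z_0}(b)$ is compact, since then Proposition \ref{prop:3} directly yields $\sigma_{ess}(Z) = \sigma_{ess}(Z_0)$.

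The key algebraic input is the factorization
\begin{equation*}
  \lambda - Z \;=\; \bigl(I - MR_{Z_0}(\lambda)\bigr)(\lambda - Z_0)\quad\text{on}\ \dom(Z_0),\qquad \lambda \in \rho(Z_0),
\end{equation*}
which is immediate since $MR_{Z_0}(\lambda)(\lambda - Z_0)f = Mf$ for $f \in \dom(Z_0)$. The $Z_0$-compactness of $M$ propagates to all of $\rho(Z_0)$ via the Hilbert resolvent identity (if $MR_{Z_0}(a)$ is compact, then $MR_{Z_0}(\lambda) = MR_{Z_0}(a) + (a - \lambda)MR_{Z_0}(a)R_{Z_0}(\lambda)$ is compact), so $I - MR_{Z_0}(\lambda)$ is Fredholm of index zero; whenever this factor is injective at some $\lambda_0 \in \rho(Z_0)$ it is invertible, and the factorization then forces $\lambda_0 \in \rho(Z)$ with
\begin{equation*}
  R_Z(\lambda_0) \;=\; R_{Z_0}(\lambda_0)\bigl(I - MR_{Z_0}(\lambda_0)\bigr)^{-1}.
\end{equation*}
Together with the resolvent-difference identity $R_Z(\lambda_0) - R_{Z_0}(\lambda_0) = R_Z(\lambda_0)\,M\,R_{Z_0}(\lambda_0)$ (verified using $\dom(Z) = \dom(Z_0)$), this exhibits the difference as a product with a compact factor, hence compact, and Proposition \ref{prop:3} closes the argument.

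The remaining and main obstacle is producing $\lambda_0 \in \rho(Z_0)$ at which $I - MR_{Z_0}(\lambda_0)$ is injective. I would exploit the fact, recalled in the preceding paragraph of the paper, that $Z_0$-compactness of $M$ forces the $Z_0$-bound of $M$ to be zero: writing $\|Mf\| \leq \varepsilon\|Z_0 f\| + r_\varepsilon\|f\|$ and inserting $f = R_{Z_0}(\lambda)g$ using $Z_0 R_{Z_0}(\lambda) = \lambda R_{Z_0}(\lambda) - I$ yields
\begin{equation*}
  \|M R_{Z_0}(\lambda)\| \;\leq\; \varepsilon\bigl(1 + |\lambda|\,\|R_{Z_0}(\lambda)\|\bigr) + r_\varepsilon\,\|R_{Z_0}(\lambda)\|.
\end{equation*}
In typical situations (e.g. when a sector or ray in $\rho(Z_0)$ is available along which $\|R_{Z_0}(\lambda)\|$ decays and $|\lambda|\,\|R_{Z_0}(\lambda)\|$ remains bounded) this can be driven below $1$ and a Neumann series inverts $I - MR_{Z_0}(\lambda)$ at the chosen $\lambda_0$. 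In the fully general case I would fall back on the analytic Fredholm theorem for the holomorphic compact-operator-valued map $\lambda \mapsto MR_{Z_0}(\lambda)$ on a connected component of $\rho(Z_0)$: either $I - MR_{Z_0}(\lambda)$ is invertible outside a discrete exceptional set, or it fails to be invertible throughout the component, and the latter alternative is ruled out by any single instance of the above estimate. Either route secures a suitable $\lambda_0$ and completes the proof.
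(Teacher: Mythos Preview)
Your reduction to Proposition~\ref{prop:3} is circular in the paper's development: the paper explicitly states, both when introducing Proposition~\ref{prop:3} and in the remark following Theorem~\ref{thm:2}, that Proposition~\ref{prop:3} is \emph{derived from} Theorem~\ref{thm:2} (together with the spectral mapping in Proposition~\ref{prop:1}). The paper's intended argument is direct and bypasses resolvents entirely: the sentence just before the theorem records that if $M$ is $Z_0$-compact and $Z_0$ is Fredholm then $Z_0+M$ is Fredholm (citing Kato). Since $M$ is $(\lambda-Z_0)$-compact whenever it is $Z_0$-compact, this yields at once that $\lambda-Z_0$ is Fredholm iff $\lambda-Z$ is, i.e., $\sigma_{ess}(Z)=\sigma_{ess}(Z_0)$, with no need to locate a common resolvent point.

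Even if one granted Proposition~\ref{prop:3} independently, your argument does not close in full generality: you still need a single $\lambda_0\in\rho(Z_0)$ at which $I-MR_{Z_0}(\lambda_0)$ is invertible, and the $Z_0$-bound of $M$ being zero does \emph{not} by itself secure this. Your displayed bound on $\|MR_{Z_0}(\lambda)\|$ requires simultaneous control of $\|R_{Z_0}(\lambda)\|$ and $|\lambda|\,\|R_{Z_0}(\lambda)\|$, available along rays for selfadjoint or sectorial $Z_0$ but not for an arbitrary closed operator (whose resolvent set may, for instance, be bounded). The analytic Fredholm fallback still needs one invertible instance to exclude the degenerate alternative, and you have not produced one without extra hypotheses on $Z_0$.
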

\begin{rem}
As noted above, Weyl's theorem and Proposition \ref{prop:1} 
show the validity of Proposition \ref{prop:3}.
\end{rem}


If $Z_0$ is selfadjoint and $M$ is $Z_0$-compact, then $\rho(Z)$ has values in the upper and lower half-plane (see \cite{b_Davies07}, p.326). Moreover, if $a \in \rho(Z) \cap \rho(Z_0)$, then $R_Z(a)-R_{Z_0}(a) \in \mathcal{S}_\infty(\hil)$ as a consequence of the {second resolvent identity}
  \begin{equation}
    \label{eq:10}
    R_Z(a)-R_{Z_0}(a)=R_Z(a)MR_{Z_0}(a).
  \end{equation}
So Corollary \ref{prop:4} implies that $\sigma_{ess}(Z)=\sigma_{ess}(Z_0)$ and $\sigma(Z)= \sigma_{ess}(Z_0) \dotcup \sigma_d(Z)$.\\

\subsection{Perturbation determinants}\label{sec:pert-determ}

We have seen in the last section that the essential spectrum is stable under (relatively) compact perturbations. In this section, we will have a look at the discrete spectrum and construct a holomorphic function whose zeros coincide with the discrete eigenvalues of the corresponding operator. Throughout we make the following assumption.
\begin{assumption}\label{ass:1}
$Z_0$ and $Z$ are closed densely defined operators in $\hil$ such that
  \begin{enumerate}
  \item[(i)] $\rho(Z_0) \cap \rho(Z) \neq \emptyset$.

  \item[(ii)] $R_{Z}(b)-R_{Z_0}(b) \in \mathcal{S}_p(\hil)$ for some $b \in \rho(Z_0) \cap \rho(Z)$ and some fixed $p>0$.

\item[(iii)] $\sigma(Z) \cap \rho(Z_0) = \sigma_d(Z)$.
  \end{enumerate}
\end{assumption}

\begin{rem}\label{rem:6}
  By Proposition \ref{prop:4}, assumption (iii) follows from assumption (ii) if $Z_0$ is selfadjoint with $\sigma_d(Z_0)=\emptyset$ and if there exist points of $\rho(Z)$ in both the upper and lower half-planes. If $Z_0$ and $Z$ are bounded operators on $\hil$ then  the second resolvent identity implies that assumption (ii) is equivalent to $Z-Z_0 \in \mathcal{S}_p(\hil)$.
\end{rem}


We begin with the case when $Z_0,Z \in \bdd(\hil)$: Then for $\lambda_0 \in \rho(Z_0)$ we have
$$(\lambda_0 - Z)R_{Z_0}(\lambda_0) = I-(Z-Z_0)R_{Z_0}(\lambda_0),$$
so $\lambda_0 \in \rho(Z)$ if and only if $I-(Z-Z_0)R_{Z_0}(\lambda_0)$ is invertible. As we know from Section \ref{sec:schatten}, this operator is invertible if and only if $$\detp(I-(Z-Z_0)R_{Z_0}(\lambda_0))\neq 0.$$

By Assumption \ref{ass:1} we have  $\sigma(Z) \cap \rho(Z_0)=\sigma_d(Z)$, so we have shown that  $\lambda_0 \in \sigma_d(Z)$ if and only if $\lambda_0$ is a zero of the analytic function
\begin{equation}\label{eq:191}
d_\infty^{Z,Z_0} : \hat{\rho}(Z_0) \to \mc, \qquad d_\infty^{Z,Z_0}(\lambda) := \detp(I-(Z-Z_0)R_{Z_0}(\lambda)).
\end{equation}
 For later purposes we note that $d_\infty^{Z,Z_0}(\infty)=1$.\\

\noindent Next, we consider the general case: Let $a \in \rho(Z_0) \cap \rho(Z)$ where $Z_0, Z$ satisfy Assumption \ref{ass:1}. Then Proposition \ref{prop:1} and its accompanying remark show that
$$\sigma_d(R_Z(a))= \sigma(R_{Z}(a)) \cap \rho(R_{Z_0}(a)),$$
so we can apply the previous discussion to the operators  $R_{Z_0}(a)$ and $R_{Z}(a)$, i.e. the function
\begin{equation}\label{eq:189}
d_\infty^{R_Z(a),R_{Z_0}(a)}(.)= \detp(I-[R_Z(a)-R_{Z_0}(a)][(.)-R_{Z_0}(a)]^{-1})
\end{equation}
 is well defined and analytic on $\hat{\rho}(R_{Z_0}(a))$. Moreover, since $\lambda \in \hat{\rho}(Z_0)$
 if and only if $(a-\lambda)^{-1} \in \hat{\rho}(R_{Z_0}(a))$  (which is again a consequence of Proposition \ref{prop:1} and Remark \ref{rem:3}), we see that the function
\begin{equation}\label{eq:190}
   d_a^{Z,Z_0}(\lambda):=d_\infty^{R_{Z}(a),R_{Z_0}(a)}((a-\lambda)^{-1})
\end{equation}
is analytic on $\hat{\rho}(Z_0)$ and
\begin{equation*}
d_a^{Z,Z_0}(\lambda)=0 \quad \Leftrightarrow \quad   (a-\lambda)^{-1} \in \sigma_d(R_Z(a)) \quad \Leftrightarrow \quad \lambda \in \sigma_d(Z).
\end{equation*}
Note that, as above, we have $d_a^{Z,Z_0}(a)= d_\infty^{R_{Z}(a),R_{Z_0}(a)}(\infty)=1$.\\

We summarize the previous discussion in the following proposition.
\begin{prop}
Let $a \in \hat{\rho}(Z_0) \cap \hat{\rho}(Z)$, where $Z,Z_0$ satisfy Assumption \ref{ass:1}, and let $d_a=d_a^{Z,Z_0} : \hat{\rho}(Z_0) \to \mc$ be defined by (\ref{eq:191}) if $a=\infty$ and by (\ref{eq:190}) if $a \neq \infty$, respectively. Then $d_a$ is analytic, $d_a(a)=1$ and $\lambda \in \sigma_d(Z)$ if and only if $d_a(\lambda)=0$.
\end{prop}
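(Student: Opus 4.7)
The plan is to verify the three assertions, namely analyticity on $\hat{\rho}(Z_0)$, the normalization $d_a(a)=1$, and the characterization of zeros as elements of $\sigma_d(Z)$, by treating the two cases $a=\infty$ and $a\in\mc$ in turn, with the second case reduced to the first via the resolvent of $Z_0$.

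First I would handle $a=\infty$, which by definition of $\hat{\rho}$ forces $Z,Z_0\in\bdd(\hil)$ and, by Remark \ref{rem:6}, makes Assumption \ref{ass:1}(ii) equivalent to $Z-Z_0\in\mathcal{S}_p(\hil)$. Thus for any $\lambda\in\hat{\rho}(Z_0)$ the operator $(Z-Z_0)R_{Z_0}(\lambda)$ lies in $\mathcal{S}_p(\hil)\subset\mathcal{S}_{\lceil p\rceil}(\hil)$ by (\ref{eq:14}), so $\detp(I-(Z-Z_0)R_{Z_0}(\lambda))$ is defined. Its holomorphicity on $\hat{\rho}(Z_0)$ follows by combining the analyticity of the resolvent with the holomorphy of $\detp$ on holomorphic families of $\mathcal{S}_{\lceil p\rceil}$-valued maps (cf.\ \cite{Simon77}), and the normalization at $\infty$ comes from $R_{Z_0}(\infty)=0$ together with $\detp(I)=1$. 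For the zero characterization I would write the algebraic identity $(\lambda-Z)R_{Z_0}(\lambda)=I-(Z-Z_0)R_{Z_0}(\lambda)$, which for $\lambda\in\rho(Z_0)$ shows that $\lambda\in\rho(Z)$ if and only if $I-(Z-Z_0)R_{Z_0}(\lambda)$ is invertible, equivalently $d_\infty^{Z,Z_0}(\lambda)\neq0$ by the general criterion from Section~\ref{sec:schatten}. Applying Assumption \ref{ass:1}(iii), $\sigma(Z)\cap\rho(Z_0)=\sigma_d(Z)$, then converts this into $\lambda\in\sigma_d(Z)\iff d_\infty^{Z,Z_0}(\lambda)=0$.

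For $a\in\mc$ the strategy is to apply the bounded case to the pair $(R_Z(a),R_{Z_0}(a))\subset\bdd(\hil)$, after checking that this pair itself satisfies Assumption \ref{ass:1}: item (i) is immediate since, e.g., $\infty\in\hat\rho(R_{Z_0}(a))\cap\hat\rho(R_Z(a))$; item (ii) is exactly hypothesis (ii) on $(Z,Z_0)$; and for item (iii) I would invoke Proposition \ref{prop:1} together with Remark \ref{rem:3} to translate $\sigma_d(R_Z(a))=\{(a-\lambda)^{-1}:\lambda\in\sigma_d(Z)\}$ and $\rho(R_{Z_0}(a))\cap\mc^*=\{(a-\mu)^{-1}:\mu\in\rho(Z_0)\}$, then apply Assumption \ref{ass:1}(iii) for $(Z,Z_0)$. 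This produces, by the previous case, the analyticity, normalization and zero characterization of $d_\infty^{R_Z(a),R_{Z_0}(a)}$ on $\hat{\rho}(R_{Z_0}(a))$. I would then transport these properties back to $d_a^{Z,Z_0}$ via the M\"obius-type bijection $\varphi:\lambda\mapsto(a-\lambda)^{-1}$, which by Proposition \ref{prop:1} maps $\hat{\rho}(Z_0)$ biholomorphically onto $\hat{\rho}(R_{Z_0}(a))$ with $\varphi(a)=\infty$ and $\varphi^{-1}(\sigma_d(R_Z(a)))=\sigma_d(Z)$. Composing gives analyticity of $d_a^{Z,Z_0}$, the value $d_a^{Z,Z_0}(a)=d_\infty^{R_Z(a),R_{Z_0}(a)}(\infty)=1$, and the desired zero set.

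The only genuine subtlety, and therefore the step I expect to take the most care, is the verification that Assumption \ref{ass:1} is inherited by the resolvent pair in item (iii): one must correctly account for the possible role of $0\in\sigma(R_Z(a))$ (which, by Remark \ref{rem:3}, arises precisely when $Z$ is unbounded) and ensure that $0\notin\sigma_d(R_Z(a))$ so that the translation via Proposition \ref{prop:1} is a genuine bijection between discrete spectra. Everything else is a mechanical combination of the already established facts about $\detp$, the resolvent, and Assumption \ref{ass:1}.
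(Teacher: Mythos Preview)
Your proposal is correct and follows essentially the same route as the paper: the bounded case is handled via the identity $(\lambda-Z)R_{Z_0}(\lambda)=I-(Z-Z_0)R_{Z_0}(\lambda)$ together with the invertibility criterion for $\detp$, and the general case is reduced to the bounded one by passing to the resolvent pair $(R_Z(a),R_{Z_0}(a))$ and transporting via $\lambda\mapsto(a-\lambda)^{-1}$. Your explicit check that Assumption~\ref{ass:1}(iii) is inherited by the resolvent pair (including the remark about $0$) is exactly what the paper does in one line by invoking Proposition~\ref{prop:1} and Remark~\ref{rem:3}.
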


We call the function $d_a=d_a^{Z,Z_0}$ the $p$th perturbation determinant of $Z$ by $Z_0$ (the $p$-dependence of $d_a$ is neglected in our notation). Without proof we note that the algebraic multiplicity of $\lambda_0 \in \sigma_d(Z)$ coincides with the order of $\lambda_0$ as a zero of $d_a$, see \cite{Hans_diss}, p.20-22.
\begin{rem}
Our definition of perturbation determinants is an extension of the standard one (which coincides with the function $d_\infty$), see, e.g., \cite{b_Gohberg69} and \cite{b_Yafaev92}.
\end{rem}
We conclude this section with some estimates.
\begin{prop}\label{prop:12}
Let $a \in {\rho}(Z_0) \cap {\rho}(Z)$, where $Z,Z_0$ satisfy Assumption \ref{ass:1}, and let $d_a: \hat{\rho}(Z_0) \to \mc$ be defined as above. Then, for $\lambda\neq a$,
  \begin{equation}
    \label{eq:29}
    |d_a(\lambda)| \leq \exp \left( \Gamma_p \|[R_Z(a)-R_{Z_0}(a)][(a-\lambda)^{-1}-R_{Z_0}(a)]^{-1}\|_{\mathcal{S}_p}^p\right),
  \end{equation}
where $\Gamma_p$ was introduced in estimate (\ref{inequality}).
\end{prop}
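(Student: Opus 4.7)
The plan is to unfold the definition of $d_a$ and then apply the universal bound (\ref{inequality}) to the resulting regularized determinant. Since $a \in \rho(Z_0) \cap \rho(Z)$ (in particular $a \neq \infty$), the definitions (\ref{eq:189})--(\ref{eq:190}) give, for $\lambda \in \hat{\rho}(Z_0)$ with $\lambda \neq a$,
\begin{equation*}
d_a(\lambda) = {\det}_{\lceil p \rceil}\bigl(I - K_\lambda\bigr), \quad \text{where} \quad K_\lambda := \bigl[R_Z(a) - R_{Z_0}(a)\bigr]\bigl[(a-\lambda)^{-1} - R_{Z_0}(a)\bigr]^{-1}.
\end{equation*}
So the statement reduces to showing that $K_\lambda \in \mathcal{S}_p(\hil)$ and then invoking (\ref{inequality}).

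Next, I would verify that $K_\lambda$ is a well-defined element of $\mathcal{S}_p(\hil)$. Assumption \ref{ass:1}(ii) gives $R_Z(a) - R_{Z_0}(a) \in \mathcal{S}_p(\hil)$. For the second factor, the hypotheses $\lambda \in \hat{\rho}(Z_0)$ and $\lambda \neq a$ together with Proposition \ref{prop:1} and Remark \ref{rem:3} ensure that $(a-\lambda)^{-1} \in \hat{\rho}(R_{Z_0}(a))$ (with the convention $(a-\infty)^{-1} := 0$ when $Z_0$ is bounded and $\lambda = \infty$), so $(a-\lambda)^{-1} - R_{Z_0}(a)$ is invertible in $\bdd(\hil)$. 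The ideal property (\ref{eq:14}) then yields $K_\lambda \in \mathcal{S}_p(\hil)$ together with
\begin{equation*}
\|K_\lambda\|_{\mathcal{S}_p} \leq \bigl\|R_Z(a) - R_{Z_0}(a)\bigr\|_{\mathcal{S}_p} \bigl\|[(a-\lambda)^{-1} - R_{Z_0}(a)]^{-1}\bigr\|.
\end{equation*}

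Finally, I would apply the estimate (\ref{inequality}) directly to $K_\lambda$, which, by definition of $\Gamma_p$, gives
\begin{equation*}
|d_a(\lambda)| = \bigl|{\det}_{\lceil p \rceil}(I - K_\lambda)\bigr| \leq \exp\bigl(\Gamma_p \|K_\lambda\|_{\mathcal{S}_p}^p\bigr),
\end{equation*}
which is precisely (\ref{eq:29}). There is essentially no hard step here; the only point requiring care is the bookkeeping that justifies the boundedness of $[(a-\lambda)^{-1} - R_{Z_0}(a)]^{-1}$ at the value $\lambda$ under consideration, which is exactly what Assumption \ref{ass:1}(i), the condition $\lambda \in \hat{\rho}(Z_0) \setminus \{a\}$, and Proposition \ref{prop:1} together guarantee.
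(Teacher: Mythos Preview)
Your proof is correct and follows exactly the paper's approach: the paper's proof consists of the single line ``Apply estimate (\ref{inequality}),'' and you have simply unpacked the definition of $d_a$ via (\ref{eq:189})--(\ref{eq:190}) and supplied the routine verification that $K_\lambda \in \mathcal{S}_p(\hil)$ before invoking that estimate.
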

\begin{proof}
  Apply estimate (\ref{inequality}).
\end{proof}
\begin{prop}\label{prop:17}
Let $Z,Z_0 \in \bdd(\hil)$  satisfy Assumption \ref{ass:1}. Then for $\lambda \in \hat{\rho}(Z_0)$ we have
\begin{equation}
\label{eq:209}
    |d_\infty(\lambda)| \leq \exp\left(\Gamma_p \|(Z-Z_0)R_{Z_0}(\lambda)\|_{\mathcal{S}_p}^p\right).
\end{equation}
If, in addition, $Z-Z_0=M_1M_2$ where $M_1,M_2$ are bounded operators on $\hil$ such that $M_2 R_{Z_0}(a)M_1 \in \mathcal{S}_p(\hil)$ for every $a \in \rho(Z_0)$, then for $\lambda \in \hat{\rho}(Z_0)$ we have
\begin{equation}
  \label{eq:39}
    |d_\infty(\lambda)| \leq \exp\left(\Gamma_p \|M_2R_{Z_0}(\lambda)M_1\|_{\mathcal{S}_p}^p\right).
\end{equation}
\end{prop}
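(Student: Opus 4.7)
The strategy is direct: for both inequalities, I would rewrite $d_\infty(\lambda)$ as a regularized determinant of an explicitly given $\mathcal{S}_p$-operator and then invoke the determinant estimate (\ref{inequality}).

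For (\ref{eq:209}), I first note that under Assumption \ref{ass:1} and since $Z,Z_0 \in \bdd(\hil)$, Remark \ref{rem:6} gives $Z-Z_0 \in \mathcal{S}_p(\hil)$. The ideal property (\ref{eq:14}) then shows that $(Z-Z_0)R_{Z_0}(\lambda) \in \mathcal{S}_p(\hil)$ for every $\lambda \in \rho(Z_0)$, so by definition
\[
d_\infty(\lambda) = {\det}_{\lceil p \rceil}\bigl(I - (Z-Z_0)R_{Z_0}(\lambda)\bigr)
\]
is well defined. Applying (\ref{inequality}) to $K = (Z-Z_0)R_{Z_0}(\lambda)$ yields exactly (\ref{eq:209}). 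The case $\lambda = \infty$ is immediate: we have $R_{Z_0}(\infty)=0$, so the right-hand side is $\exp(0)=1$, while $d_\infty(\infty)=1$ by the remark following (\ref{eq:191}).

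For (\ref{eq:39}), the key step is the commutation identity (\ref{eq:comm}). Since $(Z-Z_0)R_{Z_0}(\lambda) = M_1 \cdot \bigl(M_2 R_{Z_0}(\lambda)\bigr)$ belongs to $\mathcal{S}_p(\hil) \subset \mathcal{S}_{\lceil p \rceil}(\hil)$, and by hypothesis $M_2 R_{Z_0}(\lambda) M_1 \in \mathcal{S}_p(\hil)\subset \mathcal{S}_{\lceil p \rceil}(\hil)$, the identity (\ref{eq:comm}) applies with $K_1 = M_1$, $K_2 = M_2 R_{Z_0}(\lambda)$ (both products landing in $\mathcal{S}_{\lceil p \rceil}$), giving
\[
d_\infty(\lambda) = {\det}_{\lceil p \rceil}\bigl(I - M_1 M_2 R_{Z_0}(\lambda)\bigr) = {\det}_{\lceil p \rceil}\bigl(I - M_2 R_{Z_0}(\lambda) M_1\bigr).
\]
Now a second application of (\ref{inequality}), this time with $K = M_2 R_{Z_0}(\lambda) M_1 \in \mathcal{S}_p(\hil)$, produces the bound (\ref{eq:39}). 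Again $\lambda = \infty$ is handled trivially because $R_{Z_0}(\infty)=0$.

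The only delicate point is the bookkeeping around (\ref{eq:comm}): one must verify that both cyclic orderings of the product lie in $\mathcal{S}_{\lceil p \rceil}(\hil)$, so that both regularized determinants are defined before they can be shown to agree. Once that is observed, the proof is a two-line application of the determinant estimate in each case; no further analytic input is required.
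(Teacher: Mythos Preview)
Your proof is correct and follows essentially the same approach as the paper: both inequalities are obtained by applying the determinant bound (\ref{inequality}) directly to the definition of $d_\infty$, with (\ref{eq:39}) requiring the additional use of the commutation identity (\ref{eq:comm}). Your treatment is in fact slightly more careful, since you explicitly verify that both cyclic orderings lie in $\mathcal{S}_{\lceil p \rceil}(\hil)$ and handle the point $\lambda=\infty$ separately.
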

\begin{proof}
 Estimate (\ref{eq:39}) is a consequence of estimate (\ref{inequality}), the definition of $d_\infty$ and the identity
$$\detp(I-(Z-Z_0)R_{Z_0}(\lambda))=\detp(I-M_1M_2R_{Z_0}(\lambda))=\detp(I-M_2R_{Z_0}(\lambda)M_1),$$
which follows from (\ref{eq:comm}). Estimate (\ref{eq:209}) follows immediately from the definition of $d_\infty$ and  estimate (\ref{inequality}).
\end{proof}
\begin{rem}
While the non-zero eigenvalues of $M_1M_2R_{Z_0}(a)$ and $M_2R_{Z_0}(a)M_1$ coincide, the same need not be true for their singular values. In particular, while \linebreak $(Z-Z_0)R_{Z_0}(a) \in \mathcal{S}_p(\hil)$ is automatically satisfied if $Z,Z_0 \in \bdd(\hil)$ satisfy Assumption \ref{ass:1}, in general this need not imply that $M_2R_{Z_0}(a)M_1 \in \mathcal{S}_p(\hil)$ as well.
\end{rem}

\section{Zeros of holomorphic functions}\label{cha:complex}

In this chapter we discuss results on the distribution of zeros of holomorphic functions, which will
subsequently be applied to the holomorphic functions defined by perturbation determinants to obtain results on the
distribution of eigenvalues for certain classes of operators. We begin with a motivating discussion in Section \ref{sec:short-motivation}, introducing
the class of functions on the unit disk which will be our special focus of study. In Section \ref{sec:jensen} we consider results that can be
obtained using the classical Jensen identity. In Section \ref{sec:c4} we present the recent results of Borichev, Golinskii and Kupin and show that, for the
class of functions that we are interested in, they yield more information than provided by the application of the Jensen identity.

\subsection{Motivation: the complex analysis method for studying eigenvalues}\label{sec:short-motivation}

We have seen in Section \ref{sec:pert-determ} that the discrete spectrum of a linear operator $Z$ satisfying Assumption \ref{ass:1} coincides with the zero set of the corresponding perturbation determinant, which is a holomorphic function defined on the resolvent set of the `unperturbed' operator $Z_0$. Moreover, we have a bound
on the absolute value of this holomorphic function in the form of Propositions \ref{prop:12} and \ref{prop:17}. Thus, general results providing information about
the zeros of holomorphic functions satisfying certain bounds may be exploited to obtain information about the eigenvalues of the operator $Z$. This
observation is the basis of the following complex-analysis approach to studying eigenvalues.

As an example, we consider the following situation: $Z_0 \in \bdd(\hil)$ is assumed to be a selfadjoint operator with
\begin{equation}
  \label{eq:30}
  \sigma(Z_0)= \sigma_{ess}(Z_0)= [a, b],
\end{equation}
where $a < b $, and
$$Z=Z_0+M,$$
where $M \in \mathcal{S}_p(\hil)$ for some fixed $p>0$. Given these assumptions, the spectrum of $Z$ can differ from the spectrum of $Z_0$ by an at most countable set of discrete eigenvalues, whose points of accumulation are contained in the interval $[a, b]$.
Moreover, $\sigma_d(Z)$ is precisely the zero set of the $p$th perturbation determinant $d=d_\infty^{Z,Z_0}$ defined by
$$d : \hat{\mc}\setminus [a,b] \to \mc, \qquad d(\lambda) = \detp(I-MR_{Z_0}(\lambda)).$$
It should therefore be possible to obtain further information on the distribution of the eigenvalues of $Z$ by studying the analytic function $d$, in particular, by taking advantage of the estimate provided on $d$ in Proposition \ref{prop:17}, i.e.,
\begin{equation}\label{eq:31}
 \log |d(\lambda)| \leq  \Gamma_p \|MR_{Z_0}(\lambda)\|_{\mathcal{S}_p}^p, \qquad \lambda \in \mc\setminus [a,b],
\end{equation}
as well as the fact that $d(\infty)=1$.
Note that the right-hand side of (\ref{eq:31}) is finite for any $\lambda\in \hat{\mc}\setminus [a,b]$, but as $\lambda$ approaches $[a,b]$
it can `explode'. A simple way to estimate the right-hand side of (\ref{eq:31}) from above and thus to obtain a more concrete estimate, is to use the identity
\begin{equation}
  \label{eq:32}
 \|R_{Z_0}(\lambda)\|=[\dist(\lambda,\sigma(Z_0))]^{-1},
\end{equation}
which is valid since $Z_0$ is selfadjoint, and the inequality (\ref{eq:14}) to obtain
\begin{equation}\label{eq:33}
\log |d(\lambda)| \leq   \frac{\Gamma_p \|M\|_{\mathcal{S}_p}^p}{\dist(\lambda,\sigma(Z_0))^p} .
\end{equation}
The inequality (\ref{eq:33}) is the best that we can obtain at a general level, that is without imposing any further restrictions on the
operators $Z_0$ and $M$. However, as we shall show in Chapter \ref{sec:jacobi}, for concrete operators it is possible to obtain better inequalities by a more
precise analysis of the $\mathcal{S}_p$-norm of $MR_{Z_0}(\lambda)$. These inequalities will take the general form
\begin{equation}
  \label{eq:34}
 \log |d(\lambda)|\leq  \frac{C}{\dist(\lambda,\sigma(Z_0))^{\alpha'} \dist(\lambda,a)^{\beta_1'}\dist(\lambda,b)^{\beta_2'}},
\end{equation}
where  $\alpha'$ and $\beta_1',\beta_2'$ are some non-negative parameters with $\alpha' + \beta_1'+\beta_2' = p$. Note that (\ref{eq:34}) can be stronger than (\ref{eq:33}) in the sense that
the growth of $\log|d(\lambda)|$ as $\lambda$ approaches a point $\zeta\in (a,b)$ is estimated from above by $O(|\lambda-\zeta|^{-\alpha'})$, which can be smaller than the $O(|\lambda -\zeta|^{-p})$ bound given by (\ref{eq:33}) (since $\alpha'< p$ if $\beta_1'+\beta_2' > 0$). A similar remark applies to $\lambda$ approaching one of the endpoints $a,b$ (since, e.g., $\alpha'+\beta_1'<p$ if $\beta_2'>0$).
As we shall see, such differences are very significant in terms of the estimates on eigenvalues that are obtained.

The question then becomes how to use inequalities of the type (\ref{eq:33}), (\ref{eq:34}) to deduce information about the zeros of the holomorphic function
$d(\lambda)$. The study of zeros of holomorphic functions is, of course, a major theme in complex analysis. Since the holomorphic functions
$d(\lambda)$ which we will be looking at will be defined on domains that are conformally equivalent to the open unit disk $\md$, we
are specifically interested in results about zeros of functions $h\in H(\md)$, the class of holomorphic functions in the unit disk. Indeed, if $\Omega\subset {\hat{\mc}}$ is a domain
which is conformally equivalent to the unit disk, we choose a conformal map $\phi:\md \rightarrow \Omega$ so that the study of the zeros of the holomorphic function
$d:\Omega\rightarrow \mc$ is converted to the study of the zeros of the function $h=d\circ \phi:\md \rightarrow \mc$, where, denoting by $\mathcal{Z}(h)$
the set of zeros of a holomorphic function $h$, we have
$$\mathcal{Z}(d|_{\Omega})=\phi(\mathcal{Z}(h)).$$
We can also choose the conformal mapping $\phi$ so that $\phi(0)=\infty$, which implies that $h(0)=1$.

This conversion involves
two steps which require some effort:

(i) Inequalities of the type (\ref{eq:33}) and (\ref{eq:34}) must be translated into inequalities on the
function $h\in H(\md)$.
(ii) Results obtained about the zeros of $h$, lying in the unit disk, must be translated into results about the zeros of $d$.

Regarding step (i), it turns out that inequalities of the form (\ref{eq:34}), and generalizations of it, are converted into inequalities of the form
\begin{equation}\label{eq:c03}
 \log |h(w)| \leq  \frac {K |w|^\gamma } {(1-|w|)^\alpha \prod_{j=1}^N|w-\xi_j|^{\beta_j}}, \quad w \in \md,
\end{equation}
where $\xi_j\in\mt:=\partial \md$ and the parameters in (\ref{eq:c03}) are determined by those appearing in the inequality bounding $d(\lambda)$ and by properties of the
conformal mapping $\phi$. Note that this inequality restricts the growth of $|h(w)|$ as $|w|\rightarrow 1$ differently according to whether or
not $w$ approaches one of the `special' points $\xi_j$. Since functions obeying (\ref{eq:c03}) play an important role in our work, it is convenient
to have a special notation for this class of functions. First, let us set
\begin{equation}
  \label{eq:22}
 (\mt^N)_* := \{ (\xi_1, \ldots, \xi_N) \in \mt^N : \xi_i \neq \xi_j, 1 \leq i < j \leq N\}, \quad N \in \mn.
\end{equation}
\begin{definition}\label{def:c01}
  Let $\alpha, \gamma, K \in \mr_+:=[0,\infty)$. For $N \in \mn$ let
$\vec \beta =(\beta_1, \ldots, \beta_N) \in \mr_+^N$ and $\vec \xi =(\xi_1, \ldots, \xi_N)\in (\mt^N)_*$. The class of all functions $h \in H(\md)$ satisfying $h(0)=1$ and obeying (\ref{eq:c03}) (for this choice of parameters) is denoted by $\mathcal{M}(\alpha, \vec \beta, \gamma, \vec \xi, K)$. Moreover, we set $\mathcal{M}(\alpha,K)= \mathcal{M}(\alpha, \vec 0 , 0, \vec \xi, K)$ where $\vec \xi \in \mt^N$ is arbitrary, that is functions satisfying
$$ \log |h(w)| \leq  \frac {K } {(1-|w|)^\alpha }, \quad w \in \md.$$
\end{definition}
\begin{rem}
Throughout this chapter, whenever speaking of $\mathcal{M}(\alpha, \vec \beta, \gamma, \vec \xi, K)$ we will always implicitly assume that the parameters are chosen as indicated in the previous definition.
\end{rem}
\begin{rem}\label{rem:c02}
We have the inclusions
  \begin{equation*}
    \mathcal{M}(\alpha,\vec \beta, \gamma, \vec \xi , K) \subset     \mathcal{M}(\alpha',\vec \beta, \gamma', \vec \xi , K')
  \end{equation*}
if $\alpha \leq \alpha', \gamma \geq \gamma'$ and $K \leq K'$, and
  \begin{equation*}
    \mathcal{M}(\alpha,\vec \beta, \gamma, \vec \xi , K) \subset     \mathcal{M}(\alpha,\vec \beta', \gamma, \vec \xi , K \cdot 2^{\sum_{j=1}^N \beta_j'})
  \end{equation*}
if $\beta_j \leq \beta_j'$ for $1 \leq j \leq N$.
\end{rem}


Thus, our aim is to understand what information on the set of zeros of $h$ is implied by the assumption $h\in \mathcal{M}(\alpha, \vec \beta, \gamma, \vec \xi, K)$.
This information will then be translated back into information about the set of zeros of the perturbation determinant $d(\lambda)$, that is
about the eigenvalues of $Z$.

\subsection{Zeros of holomorphic functions in the unit disk: Jensen's identity}\label{sec:jensen}

The zero set of a (non-trivial) function $h \in H(\md)$ is of course discrete, with possible accumulation points on the boundary $\mt$. In other words, $\mathcal{Z}(h)$ is either finite, or it can be written as $\mathcal{Z}(h)=\{ w_k\}_{k=1}^{\infty}$, where $|w_k|$ is increasing, and
\begin{equation}\label{eq:c01}
\lim_{k \to \infty}(1 - |w_k|) = 0.
\end{equation}
While in this generality nothing more can be said about $\mathcal{Z}(h)$, the situation changes drastically if we restrict the growth of $|h(z)|$ as $z$
approaches the boundary of the unit disk.  A basic result which allows to make a connection between the boundary growth of a function $h\in H(\md)$ and the distribution of its
zeros is Jensen's identity (see \cite{b_Rudin87}, p.308). Denoting the number of zeros (counting multiplicities) of $h$ in the disk $\md_r=\{ w \in \mc : |w| \leq r\}$ by $N(h,r)$, this result reads as follows.
\begin{lemma}\label{lem:c01}
  Let $h \in H(\md)$ with $|h(0)|=1$. Then for $r \in (0,1)$ we have
  \begin{equation}\label{eq:c07}
    \int_0^r \frac{N(h,s)}{s}\: ds = \sum_{w \in \mathcal{Z}(h), |w|\leq r} \log\left| \frac{r}{w} \right| {=}  \frac{1}{2\pi} \int_0^{2\pi} \log | h(re^{i\theta}) | \:d\theta.
  \end{equation}
\end{lemma}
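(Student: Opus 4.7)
The identity splits into two separate statements, and I would prove them independently. The first equality (between the integral of $N(h,s)/s$ and the sum $\sum_{|w|\leq r}\log(r/|w|)$) is a purely combinatorial rewriting, while the second (the "true" Jensen identity relating the zero sum to the boundary average of $\log|h|$) is the analytic content of the result.

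For the first equality, my plan is to use Fubini. Writing $N(h,s)=\sum_{w\in\mathcal{Z}(h)} m(w)\mathbf{1}_{\{|w|\leq s\}}$, where $m(w)$ is the multiplicity of $w$ as a zero of $h$, and interchanging sum and integral (which is justified since there are only finitely many zeros in the closed disk of radius $r<1$), one gets
\begin{equation*}
\int_0^r \frac{N(h,s)}{s}\,ds = \sum_{w\in \mathcal{Z}(h), |w|\leq r} m(w) \int_{|w|}^r \frac{ds}{s} = \sum_{w\in \mathcal{Z}(h), |w|\leq r} m(w)\log\frac{r}{|w|},
\end{equation*}
which is the claimed identity (with the standard convention that zeros are listed according to multiplicity in the sum).

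For the second equality, the key idea is to remove the zeros inside $\md_r$ by a Blaschke-type factorization. Assume first that $h$ has no zeros on the circle $|z|=r$, and enumerate its zeros inside $\md_r$ (with multiplicity) as $w_1,\ldots,w_n$. Define
\begin{equation*}
g(z) := h(z) \prod_{k=1}^n \frac{r^2-\overline{w_k} z}{r(z-w_k)}, \qquad |z|\leq r.
\end{equation*}
The factors have modulus $1$ on $|z|=r$, so $|g(z)|=|h(z)|$ there; and $g$ is holomorphic and nonvanishing in a neighborhood of $\overline{\md_r}$. Hence $\log|g|$ is harmonic on a neighborhood of $\overline{\md_r}$, and the mean value property gives
\begin{equation*}
\log|g(0)| = \frac{1}{2\pi}\int_0^{2\pi}\log|g(re^{i\theta})|\,d\theta = \frac{1}{2\pi}\int_0^{2\pi}\log|h(re^{i\theta})|\,d\theta.
\end{equation*}
On the other hand, evaluating the defining product at $z=0$ and using $|h(0)|=1$ yields $\log|g(0)| = \sum_{k=1}^n \log(r/|w_k|)$, which is exactly the desired sum.

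The main obstacle, as usual with Jensen's formula, is the edge case where $h$ has zeros on the circle $|z|=r$ itself. My approach here is an approximation argument: pick a sequence $r_j\uparrow r$ avoiding the (at most countable) set of radii on which $h$ vanishes, apply the identity already proved for each $r_j$, and pass to the limit. The sum side passes to the limit by monotone/dominated convergence (the zero set in $\overline{\md_r}$ is finite); for the boundary integral one needs to check that $\int_0^{2\pi}\log|h(r_j e^{i\theta})|\,d\theta \to \int_0^{2\pi}\log|h(re^{i\theta})|\,d\theta$, which follows by dominated convergence once one controls $\log|h|$ near its zeros on $|z|=r$ by a local factorization $h(z)=(z-\zeta)^m u(z)$ with $u$ nonvanishing (the resulting $\log|z-\zeta|$ singularities are integrable in $\theta$). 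This is the only nontrivial analytic step in the argument.
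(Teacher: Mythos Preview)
Your proof is correct and follows the standard textbook argument (essentially the one in Rudin, to which the paper refers). The paper itself does not prove this lemma: it simply states it with a citation, remarking that the left equality is immediate while the right equality is the real content of the result. Your treatment of the first equality via Fubini and of the second via the Blaschke-type factorization and mean value property, together with the approximation argument for radii on which $h$ vanishes, is exactly the classical proof.
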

Note that the left equality is immediate, while the right equality is the real content of the result.

As a simple application of Jensen's identity, consider the case in which $h \in H^\infty(\md)$, the class of functions bounded in the unit disk, with
$\|h\|_\infty$ denoting the supremum.  Then the right-hand side of (\ref{eq:c07}) is bounded from above by $\log(\|h\|_\infty)$, so that we can take the limit
$r\rightarrow 1-$ (noting that the left-hand side increases with $r$) and obtain
$$\sum_{w \in \mathcal{Z}(h)} \log\left| \frac{1}{w} \right|\leq \log(\|h\|_{\infty}).$$
We may also bound the left-hand side of this inequality from below, using $\log|w|\leq |w|-1$, to obtain
\begin{equation}\label{eq:c02}
  \sum_{w \in \mathcal{Z}(h)}  (1-|w|) \leq \log(\|h\|_{\infty}).
\end{equation}
Obviously the convergence of the sum in (\ref{eq:c02}), known as the Blaschke sum, is a much stricter condition on the sequence of zeros than
(\ref{eq:c01}).
\noindent However, the functions $h$ arising in the applications we make to the perturbation determinant will generally not be bounded, so the Blaschke condition (\ref{eq:c02}) cannot be applied. We will now assume that $h \in \mathcal{M}=\mathcal{M}(\alpha,\vec \beta, \gamma, \vec \xi, K)$ and derive estimates on
the zeros of $h$, by using Jensen's identity in a more careful way.

We will use the following proposition, derived from Jensen's identity.
For that purpose we denote the \textit{support} of a function $f: (a,b) \subset \mr \to \mr$ by $\supp(f)$, i.e.,
\begin{equation*}
  \label{eq:181}
\supp(f)= \overline{\{ x \in (a,b) : f(x) \neq 0 \}  }.
\end{equation*}
Moreover, by $f_+=\max(f,0)$ and $f_-=-\min(f,0)$ we denote the positive and negative parts of $f$, respectively (note that we will use the same notation for the positive and negative parts of a real number as well). In addition, we denote the class of all twice-differentiable functions on $(a,b)$ whose second derivative is continuous by $C^2(a,b)$.

 \begin{prop}\label{prop:c03}
  Let $\vphi \in C^2(0,1)$ be non-negative and non-increasing, and suppose that  $\lim_{r \to 1} \vphi(r) = \lim_{r \to 1} \vphi'(r)=0$, $  \supp \left( [ r\vphi'(r) ]' \right)_- \subset [0,1)$ and $$\sup_{0<r<1} \left( [r\vphi'(r)]' \right)_-<\infty.$$ If $h \in H(\md)$, with $|h(0)|=1$, then
\begin{equation} \label{eq:c21}
  \sum_{w \in \mathcal{Z}(h)} \vphi(|w|) = \frac 1 {2\pi} \int_0^1 dr \:
  [ r\vphi'(r) ]'   \int_0^{2\pi} d\theta \log |h(re^{i\theta})|.\footnote{Of course, both sides of (\ref{eq:c21}) may be (simultaneously) divergent.}
\end{equation}
\end{prop}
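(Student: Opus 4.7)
My plan is to combine Jensen's identity (Lemma~\ref{lem:c01}) with a Fubini-type interchange of integration. The first step is to observe, via $\varphi(|w|)=-\int_{|w|}^{1}\varphi'(r)\,dr$ and Tonelli (all integrands non-negative), that
\[
\sum_{w\in\mathcal{Z}(h)}\varphi(|w|)=-\int_0^1\varphi'(r)N(h,r)\,dr\in[0,\infty].
\]
Thus it suffices to show that the right-hand side of (\ref{eq:c21}) equals this same quantity.

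For the right-hand side, since $|h(0)|=1$, Jensen's identity identifies the averaged logarithm as $J(r):=\frac{1}{2\pi}\int_0^{2\pi}\log|h(re^{i\theta})|\,d\theta=\int_0^r N(h,s)/s\,ds$. This $J$ is non-negative, non-decreasing, finite on $[0,1)$ (since $h$ has only finitely many zeros in each $\md_r$, $r<1$), and vanishes near $r=0$ (since $h(0)\neq 0$ forces $N(h,s)=0$ for small $s$). The right-hand side of (\ref{eq:c21}) thus becomes $\int_0^1[r\varphi'(r)]'J(r)\,dr$, and I would interchange the order of integration to get
\[
\int_0^1[r\varphi'(r)]'\!\int_0^r\!\frac{N(h,s)}{s}\,ds\,dr=\int_0^1\frac{N(h,s)}{s}\!\int_s^1[r\varphi'(r)]'\,dr\,ds.
\]
The inner integral on the right equals $[r\varphi'(r)]_s^1=-s\varphi'(s)$, using the hypothesis $\lim_{r\to1}\varphi'(r)=0$, which yields $-\int_0^1 N(h,s)\varphi'(s)\,ds$ and matches the left-hand side.

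The main obstacle is that $[r\varphi'(r)]'$ is of mixed sign, so Fubini does not apply directly. This is where the remaining hypotheses enter. I would write $[r\varphi'(r)]'=g_+-g_-$ for the positive and negative parts and apply Tonelli to each piece separately, obtaining two swap identities with $[0,\infty]$-valued sides. The assumptions that $g_-$ is bounded and $\supp(g_-)\subset[0,r_0]$ for some $r_0<1$ yield
\[
\int_0^1 g_-(r)J(r)\,dr\leq \|g_-\|_\infty\,r_0\,J(r_0)<\infty,
\]
so the identity for $g_-$ has both sides finite. Subtracting it from the (possibly infinite) identity for $g_+$ is then legitimate, with the convention $\infty-\text{finite}=\infty$, and shows that the two sides of (\ref{eq:c21}) are simultaneously finite or $+\infty$, and equal in either case.
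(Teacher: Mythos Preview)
Your proof is correct and follows essentially the same route as the paper's: both start from Jensen's identity, interchange the order of integration in $\int_0^1 [r\varphi'(r)]' J(r)\,dr$, and reduce to $-\int_0^1 \varphi'(s)N(h,s)\,ds = \sum_w \varphi(|w|)$. Your treatment of the Fubini step (splitting $[r\varphi'(r)]'$ into positive and negative parts and using the compact-support and boundedness hypotheses to make the $g_-$ piece finite) is in fact more explicit than the paper's, which simply remarks that the interchange ``is justified by the assumptions made on $\varphi$''; your argument also handles the divergent case more transparently.
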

\begin{rem}
We are mainly interested in the choice $\vphi(r)=(1-r)^q$, with $q > 1$; other possible choices are $\vphi(r)=(-\log(r))^q$ and $\vphi(r)=(r^{-1}-r)^q$, respectively.
\end{rem}
\begin{proof}[Proof of Proposition \ref{prop:c03}]
Let $0<r<1$. We restate Jensen's identity:
\begin{equation}\label{eq:c22}
\int_0^r ds \: \frac{N(h,s)}{s} = \frac{1}{2\pi}\int_0^{2\pi}\: d\theta  \log| h(re^{i\theta})|.
\end{equation}
Multiplying both sides of (\ref{eq:c22}) by $[
r\vphi'(r) ]' $ and integrating with respect to $r$ leads to
 \begin{eqnarray}
&&  \frac{1}{2\pi} \int_0^1 dr\: [ r\vphi'(r) ]'  \int_0^{2\pi} d \theta\: \log | h(re^{i\theta})|  \nonumber \\
&=& \int_0^1 dr \: [ r\vphi'(r) ]'  \int_0^r ds \: \frac{N(h,s)}{s}
\overset{(\star)}{=} \int_0^1 ds \:  \frac{N(h,s)}{s} \int_s^1 dr \:[ r\vphi'(r) ]' \nonumber \\
&=& - \int_0^1 ds \: \vphi'(s)  N(h,s) = \int_0^\infty dt \:  \left[ \frac d {dt} \vphi(e^{-t}) \right]
N(h,e^{-t}). \label{eq:c23}
 \end{eqnarray}
The application of Fubini's theorem in ($\star$) is justified by the assumptions made on $\vphi$.
We can reformulate the right-hand side of the last equation as follows
\begin{eqnarray}
&& \int_0^\infty dt \: \left[ \frac d {dt} \vphi(e^{-t}) \right] N(h,{e^{-t}}) = \int_0^\infty dt \: \sum_{w \in \mathcal{Z}(h), |w| < e^{-t}} \left[ \frac d {dt} \vphi(e^{-t}) \right] \nonumber \\
&=& \sum_{w \in \mathcal{Z}(h)} \int_0^{-\log|w|} \: dt \: \left[ \frac
d {dt} \vphi(e^{-t}) \right] = \sum_{w \in \mathcal{Z}(h)} \vphi(|w|).
\nonumber
\end{eqnarray}
The last equation together with (\ref{eq:c23}) yields the result.
\end{proof}
We can now derive a Blaschke-type result on the zeros of a function $h \in \mathcal{M}$ (see Definition \ref{def:c01}). In the result below, $C(\alpha,\vec \beta, \vec \xi,\tau)$ denotes a
constant depending only on the parameters $\alpha,\vec \beta, \vec \xi,\tau$, which can in principle be made explicit but would yield expressions
too unwieldy to be of much use. As usual, when such a constant appears in two equations, or even on two lines of the same equations, it may take
different values, but we do take care to always indicate the parameters on which the constant depends.

\begin{thm}\label{thm:c03}
Let $h \in \mathcal{M}(\alpha, \vec \beta, 0, \vec \xi, K)$. Then for every $\tau > 0$ we have
\begin{equation}
  \label{eq:c24}
  \sum_{w \in \mathcal{Z}(h)} (1-|w|)^{1 + \alpha  + \max_j (\beta_j-1)_+ + \tau} \leq C(\alpha,\vec \beta, \vec \xi,\tau) K.
\end{equation}
\end{thm}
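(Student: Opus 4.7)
The plan is to invoke Proposition \ref{prop:c03} with the test function $\vphi(r)=(1-r)^q$, where
$$q := 1+\alpha+\max_j(\beta_j-1)_+ + \tau.$$
Since $q>1$ one easily checks the hypotheses on $\vphi$: it is non-negative, non-increasing, $\vphi(1-)=\vphi'(1-)=0$, and $[r\vphi'(r)]'=q(1-r)^{q-2}(qr-1)$ is non-negative on $[1/q,1)$, so its negative part is supported in $[0,1/q]\subset[0,1)$ and is bounded there. From the proof of Proposition \ref{prop:c03} one extracts the intermediate identity
$$\sum_{w\in\mathcal{Z}(h)}(1-|w|)^q \;=\; q\int_0^1 (1-s)^{q-1} N(h,s)\,ds,$$
so the theorem reduces to a sufficiently good pointwise upper bound on $N(h,s)$ as $s\to 1$.

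The second step is to bound $N(h,s)$ via Jensen's identity (Lemma \ref{lem:c01}). For any $R\in(s,1)$,
$$N(h,s)\log(R/s) \;\leq\; \int_s^R\frac{N(h,t)}{t}\,dt \;\leq\; \frac{1}{2\pi}\int_0^{2\pi}\log_+|h(Re^{i\theta})|\,d\theta \;\leq\; \frac{K}{2\pi(1-R)^{\alpha}}\,I(R),$$
where I have used the defining hypothesis $h\in\mathcal{M}(\alpha,\vec\beta,0,\vec\xi,K)$ and set
$$I(R) := \int_0^{2\pi}\prod_{j=1}^{N}|Re^{i\theta}-\xi_j|^{-\beta_j}\,d\theta.$$
Choosing $R=(1+s)/2$ gives $1-R=(1-s)/2$ and $\log(R/s)\geq c(1-s)$ for $s\geq 1/2$, so
$$N(h,s) \;\leq\; \frac{C\,K}{(1-s)^{1+\alpha}}\,I\!\left(\tfrac{1+s}{2}\right).$$

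The main technical obstacle is the estimation of the angular integral $I(R)$ as $R\to 1$. Because the points $\xi_1,\dots,\xi_N$ are pairwise distinct, a partition of unity localizes the singularities: on a small arc around $\xi_j$ only the factor $|Re^{i\theta}-\xi_j|^{-\beta_j}$ is unbounded, and the remaining factors contribute a constant depending on $\vec\xi$. Using the standard bound $|Re^{i\theta}-\xi_j|^2 \geq c\bigl((1-R)^2+(\theta-\theta_j)^2\bigr)$ and the substitution $u = (\theta-\theta_j)/(1-R)$, each local integral is seen to be uniformly bounded when $\beta_j<1$, of order $\log(e/(1-R))$ when $\beta_j=1$, and of order $(1-R)^{1-\beta_j}$ when $\beta_j>1$. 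Summing over $j$ yields
$$I(R) \;\leq\; C(\vec\beta,\vec\xi)\,(1-R)^{-\max_j(\beta_j-1)_+}\bigl(1+\log(1/(1-R))\bigr).$$

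Plugging this into the bound for $N(h,s)$ gives
$$N(h,s) \;\leq\; C(\alpha,\vec\beta,\vec\xi)\,K\,(1-s)^{-(1+\alpha+\max_j(\beta_j-1)_+)}\log\!\bigl(e/(1-s)\bigr),$$
and therefore
$$\sum_{w\in\mathcal{Z}(h)}(1-|w|)^q \;\leq\; C(\alpha,\vec\beta,\vec\xi)\,K\,q\int_0^1 (1-s)^{\tau-1}\log\!\bigl(e/(1-s)\bigr)\,ds.$$
The remaining integral is elementary and evaluates to $1/\tau + 1/\tau^2$; this is precisely the point at which the strictly positive slack $\tau>0$ is required in order to absorb the logarithmic factor arising from the angular integral and ensure integrability near $s=1$. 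This delivers the estimate (\ref{eq:c24}) with a constant of the claimed form.
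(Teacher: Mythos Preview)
Your argument is correct and rests on the same ingredients as the paper's proof: the test function $\vphi(r)=(1-r)^q$, Proposition~\ref{prop:c03}, and the standard asymptotics of the angular integral $\int_0^{2\pi}|Re^{i\theta}-\xi_j|^{-\beta_j}\,d\theta$ as $R\to 1$. The execution differs slightly. The paper applies the conclusion of Proposition~\ref{prop:c03} directly, bounds $\log|h(re^{i\theta})|$ by $K(1-r)^{-\alpha}\prod_j|re^{i\theta}-\xi_j|^{-\beta_j}$ inside the resulting double integral, restricts to $r\geq 1/q$ (where $[r\vphi'(r)]'\geq 0$, using that $\int_0^{2\pi}\log|h(re^{i\theta})|\,d\theta\geq 0$), and integrates; no intermediate bound on $N(h,s)$ is needed. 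You instead pass through the identity $\sum_w(1-|w|)^q=q\int_0^1(1-s)^{q-1}N(h,s)\,ds$ extracted from the proof of Proposition~\ref{prop:c03} and then bound $N(h,s)$ pointwise via a second use of Jensen's identity at radius $R=(1+s)/2$. Both routes yield the same estimate; the paper's is one step shorter, while yours produces as a byproduct the pointwise counting bound $N(h,s)\leq CK(1-s)^{-(1+\alpha+\max_j(\beta_j-1)_+)}\log(e/(1-s))$, which is of independent interest. One small gap in your write-up: your bound on $N(h,s)$ is derived only for $s\geq 1/2$ (where $\log(R/s)\gtrsim 1-s$), so the contribution to the final integral from $[0,1/2]$ should be handled separately via the trivial bound $N(h,s)\leq N(h,1/2)\leq CK$.
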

\begin{proof}
For $q > 1$ let $\vphi(r)=(1-r)^q$. Since
\[ [ r\vphi'(r) ]'  = q  (1-r)^{q-2} (rq -1)\] we obtain from Proposition \ref{prop:c03} and our assumptions, using that $\int_0^{2\pi}\log |h(re^{i\theta})| d\theta$ is non-negative,
 \begin{eqnarray}
&&   \sum_{w \in \mathcal{Z}(h)} (1-|w|)^q = \frac {q} {2\pi} \int_0^1 dr \frac{(rq-1)}{(1-r)^{2-q}} \int_0^{2\pi} d\theta \log |h(re^{i\theta})| \nonumber \\
&\leq& \frac {q}{2\pi} \int_{1/{q}}^1 dr \frac{(rq-1)}{(1-r)^{2-q}}   \int_0^{2\pi} {d\theta} \: \log |h(re^{i\theta})|  \nonumber \\
&\leq& \frac {K q(q-1)}
{2\pi} \int_{1/{q}}^1 dr \frac{1}{(1-r)^{2-q+\alpha}}
\int_0^{2\pi} \frac{d\theta}{\prod_{j=1}^N |re^{i\theta}-\xi_j|^{\beta_j}} \nonumber \\
&\leq&  \frac {K C(\vec \beta, \vec \xi)  q(q-1)}
{2\pi} \sum_{j=1}^N \int_{1/{q}}^1 dr \frac{1}{(1-r)^{2-q+\alpha}}
\int_0^{2\pi} \frac{d\theta}{|re^{i\theta}-\xi_j|^{\beta_j}}.\label{eq:c26}
\end{eqnarray}
Standard calculations show that, as $r\rightarrow 1-$
\begin{equation}\label{eq:c12}
 \int_0^{2\pi} \frac{d\theta}{|re^{i\theta}-\xi|^\beta} = \left\{
   \begin{array}{cl}
     O\left( \frac{1}{(1-r)^{\beta-1}} \right), & \text{ if } \beta > 1, \\[6pt]
     O\left( -\log(1-r) \right), & \text{ if }\beta = 1, \\[6pt]
     O\left(1\right), & \text{ if } \beta < 1.
   \end{array}\right.
\end{equation}
Therefore integrals on the right-hand side of (\ref{eq:c26}) will be finite whenever $q > 1 + \alpha + \max_j (\beta_j-1)_+$, and the result follows.
\end{proof}

 \subsection{A theorem of Borichev, Golinskii and Kupin}\label{sec:c4}

A different inequality on the zeros of $h\in \mathcal{M}(\alpha,\vec \beta, 0, \vec \xi, K)$ was proved by  Borichev, Golinskii and Kupin \cite{Borichev08}.
\begin{thm}\label{thm:c05}
Let $h \in \mathcal{M}(\alpha, \vec \beta,0,\vec \xi, K)$, where $\vec \xi = (\xi_1, \ldots,$ $\xi_N) \in (\mt^N)_*$ and \linebreak $\vec \beta = ( \beta_1, \ldots, \beta_N) \in \mr_+^N$. Then for every $\tau > 0$ the following holds: If $\alpha > 0$ then
\begin{equation}\label{eq:c35}\sum_{w \in \mathcal{Z}(h)}(1-|w|)^{\alpha+1+\tau}\prod_{j=1}^N|w-\xi_j|^{(\beta_j-1+\tau)_+}\leq
C(\alpha,\vec \beta,\vec \xi, \tau) K.\end{equation}
Furthermore, if $\alpha = 0$ then
\begin{equation}\label{eq:c36}\sum_{w \in \mathcal{Z}(h)}(1-|w|)\prod_{j=1}^N|w-\xi_j|^{(\beta_j-1+\tau)_+}\leq
C(\vec \beta,\vec \xi, \tau) K.\end{equation}
\end{thm}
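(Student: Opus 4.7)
The plan is to generalize the Jensen-type identity of Proposition \ref{prop:c03} by replacing the purely radial weight $\vphi(|w|)$ with a more flexible two-dimensional weight $G(w)$ that absorbs the anisotropic behaviour of $\log|h|$ near the exceptional points $\xi_j$. Write $u = \log|h|$, which is subharmonic on $\md$ with Riesz measure $\mu_h = \sum_{w\in\mathcal{Z}(h)} \delta_w$ (counted with multiplicities). For any sufficiently smooth non-negative test function $G$ on $\overline{\md}$ with $G\equiv 0$ on $\mt$, Green's formula gives
\begin{equation*}
\sum_{w\in\mathcal{Z}(h)} G(w) \;=\; \frac{1}{2\pi}\int_{\md} u(w)\,\Delta G(w)\,dA(w) + \text{boundary terms involving } u(0)=0.
\end{equation*}
Splitting $\Delta G = (\Delta G)_+ - (\Delta G)_-$, discarding the non-negative contribution of the $-(\Delta G)_- u$ term on the set where $u\ge 0$, and using $u \le u_+ \le \frac{K}{(1-|w|)^{\alpha}\prod_j |w-\xi_j|^{\beta_j}}$, the task reduces to choosing $G$ so that (i) $G(w) \gtrsim (1-|w|)^{\alpha+1+\tau}\prod_j|w-\xi_j|^{(\beta_j-1+\tau)_+}$ pointwise, and (ii) the integral $\int_{\md}\frac{(\Delta G)_+(w)}{(1-|w|)^{\alpha}\prod_j |w-\xi_j|^{\beta_j}}\,dA(w)$ is finite with an explicit bound depending only on $\alpha,\vec\beta,\vec\xi,\tau$.

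The natural candidate is a regularised product
\begin{equation*}
G(w) \;=\; (1-|w|^2)^{\alpha+1+\tau}\,\prod_{j=1}^N \bigl(|w-\xi_j|^2 + \eps^2\bigr)^{(\beta_j-1+\tau)_+/2},
\end{equation*}
or, more conveniently for the direct computation, a sum of test functions each tailored to one $\xi_j$ (partitioning the disk into sectors around the $\xi_j$'s and a central part using a partition of unity). I would compute $\Delta G$ using polar coordinates based around $\xi_j$ in the sector near $\xi_j$ and polar coordinates centred at the origin elsewhere. Using the standard estimate (\ref{eq:c12}), one checks that the integral $\int_{\md} (\Delta G)_+\cdot \frac{K\,dA}{(1-|w|)^\alpha\prod_j|w-\xi_j|^{\beta_j}}$ converges precisely because the $+\tau$ margin in both the radial exponent $\alpha+1+\tau$ and in each boundary exponent $(\beta_j-1+\tau)_+$ compensates for the $-2$ contributed by the Laplacian.

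The case distinction between $\alpha > 0$ and $\alpha = 0$ is accommodated as follows. When $\alpha>0$, the radial part needs the strict margin $\alpha + 1 + \tau$ to produce an integrable radial factor $(1-r)^{-\alpha-1+\tau}$ after applying the Laplacian. When $\alpha = 0$, however, $u_+$ already grows only like $\prod_j|w-\xi_j|^{-\beta_j}$ in the radial direction, so the radial part of $G$ can be taken to be $(1-|w|^2)$ without the extra $\tau$, yielding the cleaner exponent $1$ in (\ref{eq:c36}); the angular part still requires the $\tau$ margin for the same reason as before.

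The principal obstacle is the combined singularity: near each $\xi_j$ the radial singularity $(1-|w|)^{-\alpha}$ and the boundary singularity $|w-\xi_j|^{-\beta_j}$ interact, and $(\Delta G)_+$ picks up cross terms from differentiating the product. These cross terms must be controlled simultaneously, which is exactly why partitioning into a neighbourhood of each $\xi_j$ and handling each piece with its own local coordinates is essential. Once the bookkeeping is done carefully using (\ref{eq:c12}), every resulting integral is a $B$-type Euler integral, and the final bound is $C(\alpha,\vec\beta,\vec\xi,\tau)\,K$ as claimed.
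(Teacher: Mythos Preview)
The paper does not prove this theorem; it quotes it from Borichev, Golinskii and Kupin \cite{Borichev08} and explicitly remarks that, unlike Theorem~\ref{thm:c03}, its proof ``is not an application of Jensen's identity, and requires less elementary function-theoretic arguments.'' So there is no in-paper proof to compare against, and your task is really to produce a self-contained argument.

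Your Green's-formula strategy has a genuine gap in the treatment of the sign of $u=\log|h|$. From
\[
\sum_{w\in\mathcal Z(h)} G(w)=\frac{1}{2\pi}\int_{\md} u\,\Delta G\,dA+\text{(boundary)}
\]
you split $\Delta G=(\Delta G)_+-(\Delta G)_-$ and bound $\int u\,(\Delta G)_+\,dA$ using $u\le u_+\le K(1-|w|)^{-\alpha}\prod_j|w-\xi_j|^{-\beta_j}$. That part is fine. The problem is the remaining piece $-\int u\,(\Delta G)_-\,dA$. Your claim that one may ``discard the non-negative contribution of $-(\Delta G)_- u$ on the set where $u\ge 0$'' gets the sign backwards: where $u\ge 0$ this term is $\le 0$ and may indeed be dropped, but where $u<0$ it equals $u_-(\Delta G)_-\ge 0$ and is \emph{not} controlled by the hypothesis, which bounds only $u_+$. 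The negative part $u_-$ can be arbitrarily large (it blows up at every zero), and for an angle-dependent weight $G$ the set $\{(\Delta G)_->0\}$ is not contained in a region where one can invoke the non-negativity of angular means $\int_0^{2\pi}\log|h(re^{i\theta})|\,d\theta\ge 0$ coming from Jensen. In the purely radial case of Proposition~\ref{prop:c03} this difficulty disappears precisely because $[r\vphi'(r)]'_-$ has compact support in $[0,1)$ and one integrates against the angular mean, which is non-negative; once $G$ carries the factors $|w-\xi_j|^{(\beta_j-1+\tau)_+}$ that averaging is no longer available. This is exactly the obstruction that forces the Borichev--Golinskii--Kupin proof to use more refined tools (Carleson-box/Stolz-angle decompositions and localized Poisson--Jensen type estimates) rather than a single global Green identity. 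As written, your outline does not address this, and without it the argument does not close.
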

To see the advantage of (\ref{eq:c35}) over (\ref{eq:c24}), consider a convergent subsequence $\{w_k\}_{k=1}^\infty \subset \mathcal{Z}(h)$. The limit point $\xi$
satisfies  $|\xi|=1$, and (\ref{eq:c24}) ensures that the sum
\begin{equation}\label{eq:sum}\sum_{k=1}^\infty (1-|w_k|)^\eta<\infty \end{equation}
whenever \begin{equation}\label{eq:condition}\eta>1 + \alpha  + \max_j (\beta_j-1)_+ .\end{equation}
As for (\ref{eq:c35}), it gives us different information according to whether $\xi=\xi_j$ for some $1\leq j\leq N$ or whether $\xi\in \partial \md$ is a
`generic' point. For sequences $\{w_k\}_{k=1}^\infty$ converging to generic points ($\xi\neq \xi_j$) the product term in (\ref{eq:c35}) will be bounded from below by a positive constant along the sequence, so that we can conclude that (\ref{eq:sum}) will hold whenever $\eta>\alpha+1$, obviously a less restrictive condition than that
provided by (\ref{eq:condition}), except in the case when $\beta_j\leq 1$ for all $j$, in which the two conditions are the same.
When $\xi=\xi_{j^*}$ for some $1\leq j^*\leq N$, the summands in (\ref{eq:c35}) will be bounded from below by a positive constant multiple of
$$(1-|w_k|)^{\alpha+1+\tau}|w_k-\xi_{j^*}|^{(\beta_{j^*}-1+\tau)_+}\geq (1-|w_k|)^{\alpha+1+\tau+(\beta_{j^*}-1+\tau)_+}.$$
Therefore, if $\beta_{j^*}>1$, (\ref{eq:c35}) implies that (\ref{eq:sum}) will hold whenever $\eta>\alpha+1+\beta_{j^*}$, a less restrictive condition than (\ref{eq:condition}) since it does not involve the maximum of all $\beta_{j}$'s.  If $\beta_{j^*}<1$, (\ref{eq:c35}) implies that (\ref{eq:sum}) will hold whenever $\eta>\alpha+1$, also a less restrictive condition except in the case where all $\beta_j<1$ for all $1\leq j\leq N$, in which it is the same condition.

We thus see that the Theorem of Borichev, Golinskii and Kupin \cite{Borichev08} provides sharper information about the asymptotic distribution
of the zeros than Theorem \ref{thm:c03}. Therefore, in our applications, Theorem \ref{thm:c05} will provide more precise information about the
distribution of eigenvalues, and it is this result which will be used. It should be noted that, unlike Theorem \ref{thm:c03}, the proof  of Theorem \ref{thm:c05}
is not an application of Jensen's identity, and requires less elementary function-theoretic arguments.
\begin{rem}
  We should also note that Theorem \ref{thm:c05} has been generalized in several ways: to subharmonic functions on the unit disk \cite{Golinskii08}, and to holomorphic functions on more general domains \cite{golinskii11}, \cite{golinskii12}. We will return to this topic in Chapter \ref{sec:outlook}.
\end{rem}

In the following, however, we will make one improvement to Theorem \ref{thm:c05}, which is useful when considering applications to eigenvalue estimates.
We consider functions $h \in \mathcal{M}(\alpha, \vec \beta, \gamma, \vec \xi, K)$ with $\gamma > 0$, which have the property that
$$\log|h(w)|=O(|w|^\gamma), \quad \text{as } |w| \to 0.$$
 Of course these functions are included
in $\mathcal{M}(\alpha, \vec \beta, 0, \vec \xi, K)$, so that Theorem \ref{thm:c05} holds for them.
We will show that, for this class of functions, the sum on the left-hand side of (\ref{eq:c35}) can be replaced by
\begin{equation}\label{eq:c27}
  \sum_{w\in \mathcal{Z}(h)} \frac{(1-|w|)^{\alpha+1+\tau}}{|w|^{x}} \prod_{j=1}^N|w-\xi_j|^{(\beta_j-1+\tau)_+}
\end{equation}
for a suitable choice of $x=x(\gamma)>0$. It should be noted that since we always assume $h(0)=1$, the zeros of $h$ will always be bounded away from $0$,
so that (\ref{eq:c35}) implies that the sum (\ref{eq:c27}) is finite. The point, however, is to obtain a bound on this sum which is linear in $K$, like
the bound in Theorem \ref{thm:c05}. This linearity is important in the applications.

We first estimate the counting function $N(h,r)$ for small $r>0$.
\begin{lemma}\label{lem:c03}
Let $h \in \mathcal{M}(\alpha, \vec \beta, \gamma, \vec \xi, K)$. Then for $r \in (0,\frac 1 2]$ we have
\begin{equation}\label{eq:c28}
  N(h,r) \leq C(\alpha,\vec \beta, \vec \xi) K r^{\gamma}  .
\end{equation}
\end{lemma}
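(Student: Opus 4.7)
The plan is to apply Jensen's identity (Lemma \ref{lem:c01}) at a radius $r'$ slightly larger than $r$, chosen so that the factor $|w|^{\gamma}$ appearing in the $\mathcal M$-bound (\ref{eq:c03}) produces the desired $r^{\gamma}$ on the right-hand side, while $(1-r')$ and $|r'e^{i\theta}-\xi_j|$ stay bounded below by absolute constants.

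Concretely, I would set $r' = 3r/2$. For $r\in(0,1/2]$ this gives $r'\in(0,3/4]$, hence $1-r'\ge 1/4$, and since $|\xi_j|=1$ we also have $|r'e^{i\theta}-\xi_j|\ge 1-r'\ge 1/4$ for all $\theta$ and all $j$. Using the monotonicity of $N(h,\cdot)$ together with Jensen's identity at radius $r'$,
\[ N(h,r)\log(3/2) \le \int_r^{r'}\frac{N(h,s)}{s}\,ds \le \int_0^{r'}\frac{N(h,s)}{s}\,ds = \frac{1}{2\pi}\int_0^{2\pi}\log|h(r'e^{i\theta})|\,d\theta. \]
Inserting the upper bound (\ref{eq:c03}) into the integrand and using the lower bounds above, the right-hand side is at most $K(r')^{\gamma}\cdot 4^{\alpha+\sum_j\beta_j} = C(\alpha,\vec\beta,\gamma)\,K\,r^{\gamma}$, which proves the claim after dividing by $\log(3/2)$.

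The choice $c=3/2$ in $r'=cr$ is dictated by two competing constraints: one needs $c>1$ so that $\log(r'/r)$ is a positive constant independent of $r$, and $cr$ must stay bounded away from $1$ uniformly for $r\le 1/2$, so that the factors $(1-r')^{-\alpha}$ and $|r'e^{i\theta}-\xi_j|^{-\beta_j}$ remain controlled on the whole integration circle. Any $c\in(1,2)$ works; I take $c=3/2$ for definiteness. There is no serious obstacle worth flagging: the only conceptual point is that one must integrate on a circle of radius proportional to $r$ (rather than on a fixed circle close to the unit circle), which is precisely what lets the bound absorb the factor $r^{\gamma}$.
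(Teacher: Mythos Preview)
Your argument is correct and is essentially identical to the paper's proof: the paper also applies Jensen's identity at radius $s=\tfrac{3}{2}r$, uses the monotonicity of $N(h,\cdot)$ to obtain $N(h,r)\le \tfrac{1}{\log(3/2)}\int_0^s \tfrac{N(h,t)}{t}\,dt$, and then inserts the $\mathcal M$-bound with $s\le 3/4$. One small remark: your constant $C(\alpha,\vec\beta,\gamma)$ is in fact the honest one, since $(3r/2)^\gamma=(3/2)^\gamma r^\gamma$ introduces a $\gamma$-dependence; the paper's stated constant $C(\alpha,\vec\beta,\vec\xi)$ glosses over this, but it is immaterial for the subsequent applications.
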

\begin{proof}
Let $0 < r < s < 1$. Then,
\begin{eqnarray*}
N(h,r) = \frac{1}{\log (\frac s r) } \int_r^s \frac{N(h,r)}{t} dt
\leq \frac{1}{\log (\frac s r) } \int_{r}^s \frac{N(h,t)}{t} dt
\leq \frac{1}{\log (\frac s r) } \int_{0}^s \frac{N(h,t)}{t} dt.
\end{eqnarray*}
Jensen's identity and our assumptions on $h$ thus imply that
\begin{eqnarray*}
N(h,r)  &\leq& \frac{1}{\log (\frac s r) } \frac{1}{2\pi} \int_0^{2\pi} \log |h(s e^{i\theta})| d\theta
\leq \frac{1}{\log (\frac s r) }  \frac{K
s^\gamma}{(1-s)^\alpha} \frac{1}{2\pi} \int_0^{2\pi}
\prod_{j=1}^N \frac{1}{ | s e^{i\theta} - \xi_j|^{\beta_j}} d\theta .
\end{eqnarray*}
Choosing $s = \frac{3}{2} r$ (i.e., $s \leq \frac 3 4$) concludes the proof.
\end{proof}
The information offered by the previous lemma can immediately be applied to obtain the following result.
\begin{lemma}\label{lem:c04}
  Let $h \in \mathcal{M}(\alpha, \vec \beta, \gamma, \vec \xi, K)$. Then for every $\eps >0$ we have
  \begin{equation}
    \label{eq:c29}
    \sum_{w \in \mathcal{Z}(h), |w| \leq \frac 1 2} \frac{1}{|w|^{(\gamma-\eps)_+}} \leq C(\alpha, \vec \beta, \gamma, \vec \xi, \eps) K.
  \end{equation}
\end{lemma}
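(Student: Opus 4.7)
The plan is to derive the estimate directly from the counting-function bound supplied by Lemma \ref{lem:c03}, which gives $N(h,r) \le C(\alpha,\vec\beta,\vec\xi)\,K\,r^{\gamma}$ for $r \in (0,1/2]$. The lemma concerns only zeros with $|w|\le 1/2$, so the region where this bound is valid is exactly the range of integration we need.

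First I would dispose of the trivial case $\gamma \le \varepsilon$. Here $(\gamma-\varepsilon)_+ = 0$, so the left-hand side of \eqref{eq:c29} just counts the zeros in $\md_{1/2}$, and Lemma \ref{lem:c03} at $r=1/2$ bounds this by $C(\alpha,\vec\beta,\vec\xi)\,K\,2^{-\gamma}$, which is of the required form.

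For the main case $\gamma > \varepsilon$, I would rewrite the sum as a Stieltjes integral against the counting measure,
\begin{equation*}
\sum_{w \in \mathcal{Z}(h),\,|w|\le 1/2} \frac{1}{|w|^{\gamma-\varepsilon}} = \int_{0+}^{1/2} t^{-(\gamma-\varepsilon)}\, dN(h,t),
\end{equation*}
and integrate by parts to obtain
\begin{equation*}
\frac{N(h,1/2)}{(1/2)^{\gamma-\varepsilon}} - \lim_{r\to 0+} \frac{N(h,r)}{r^{\gamma-\varepsilon}} + (\gamma-\varepsilon)\int_{0}^{1/2} \frac{N(h,t)}{t^{\gamma-\varepsilon+1}}\, dt.
\end{equation*}
The bound $N(h,t)\le C K t^{\gamma}$ from Lemma \ref{lem:c03} immediately shows that $N(h,r)/r^{\gamma-\varepsilon}\le CK r^{\varepsilon} \to 0$ as $r\to 0+$, so the lower boundary term vanishes. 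The upper boundary term is bounded by a constant multiple of $K$, and the remaining integral is estimated by
\begin{equation*}
CK(\gamma-\varepsilon)\int_0^{1/2} t^{\varepsilon-1}\, dt = \frac{CK(\gamma-\varepsilon)}{\varepsilon}\,(1/2)^{\varepsilon},
\end{equation*}
which is finite precisely because $\varepsilon>0$. Combining the two contributions yields an upper bound of the form $C(\alpha,\vec\beta,\gamma,\vec\xi,\varepsilon)\,K$, as required.

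There is essentially no obstacle here: the work was done in Lemma \ref{lem:c03}, and the present lemma is a pure Abel-summation (integration by parts) argument. The only point that requires care is checking that the boundary term at the origin vanishes, which is exactly why we need $\varepsilon>0$ (the substitution from $(\gamma-\varepsilon)_+$ to $\gamma-\varepsilon$ is made possible by the positivity of $\varepsilon$). One could equivalently enumerate the zeros $w_1,w_2,\ldots$ with $|w_k|$ non-decreasing, use $k\le N(h,|w_k|)\le CK|w_k|^{\gamma}$ to get $|w_k|\ge (k/CK)^{1/\gamma}$, and then bound the sum by the tail of a convergent series $\sum k^{-(\gamma-\varepsilon)/\gamma}$; the exponent arithmetic works out to give a linear dependence on $K$.
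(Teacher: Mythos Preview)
Your proof is correct and follows essentially the same approach as the paper: both reduce immediately to the counting-function bound of Lemma~\ref{lem:c03} and then perform an Abel-summation argument, the paper via the layer-cake identity $|w|^{-(\gamma-\eps)} = (\gamma-\eps)\int_0^{1/|w|} t^{\gamma-1-\eps}\,dt$ followed by Fubini, and you via the equivalent Stieltjes integration by parts. The handling of the trivial case $\gamma\le\eps$ is identical.
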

\begin{proof}
For $\gamma \leq \eps$ the left-hand side of (\ref{eq:c29}) is equal to $N(h,1/2)$, so in view of Lemma \ref{lem:c03} we only need to consider the case $\gamma > \eps$. In this case, we can rewrite the sum in (\ref{eq:c29}) as follows:
\begin{eqnarray*}
\sum_{w \in \mathcal{Z}(h), |w| \leq \frac 1 2} \frac{1}{|w|^{\gamma-\eps}}
&=& (\gamma-\eps) \sum_{w \in \mathcal{Z}(h), |w| \leq \frac 1 2} \int_0^{\frac 1 {|w|}} dt \; t^{\gamma-1-\eps} \\
&=& (\gamma-\eps) \left[ \int_0^2 dt\; t^{\gamma-1-\eps} N(h,1/2) +  \int_2^\infty dt\; t^{\gamma-1-\eps} N(h,t^{-1})  \right].
\end{eqnarray*}
Using Lemma \ref{lem:c03} and the fact that $\gamma > \eps$ we conclude that
\begin{eqnarray*}
  \int_0^2 dt \; t^{\gamma-1-\eps} N(h,1/2)
&\leq& C(\alpha,\vec \beta, \gamma, \vec \xi, \eps) K.
\end{eqnarray*}
Similarly, using that $\eps>0$, Lemma \ref{lem:c03} implies that
\begin{eqnarray*}
\int_2^\infty dt \; t^{\gamma-1-\eps} N(h,{t^{-1}})
&\leq& C(\alpha,\vec \beta, \vec \xi) K  \int_2^\infty dt \; t^{-1-\eps} \\
&\leq&
C(\alpha,\vec \beta, \gamma, \vec \xi, \eps) K.
\end{eqnarray*}
This concludes the proof.
\end{proof}
\noindent The next theorem (which first appeared in \cite{HK09}) combines the previous lemma with Theorem \ref{thm:c03} to provide the desired bound on the sum in (\ref{eq:c27}).
\begin{thm}\label{thm:c31}
Let $h \in \mathcal{M}(\alpha, \vec \beta,\gamma,\vec \xi, K)$, where $\vec \xi = (\xi_1, \ldots,$ $\xi_N) \in (\mt^N)_*$ and \linebreak $\vec \beta = ( \beta_1, \ldots, \beta_N) \in \mr_+^N$. Then for every $\eps, \tau > 0$ the following holds: If $\alpha >0$ then
\begin{equation}\label{eq:c37}\sum_{w \in \mathcal{Z}(h)}\frac{(1-|w|)^{\alpha+1+\tau}}{|w|^{(\gamma-\eps)_+}} \prod_{j=1}^N|w-\xi_j|^{(\beta_j-1+\tau)_+}\leq
C(\alpha,\vec \beta,\gamma,\vec \xi,\eps, \tau) K.\end{equation}
Furthermore, if $\alpha = 0$ then
\begin{equation}\label{eq:c38}\sum_{w \in \mathcal{Z}(h)}\frac{(1-|w|)}{|w|^{(\gamma-\eps)_+}} \prod_{j=1}^N|w-\xi_j|^{(\beta_j-1+\tau)_+}\leq
C(\vec \beta,\gamma,\vec \xi,\eps, \tau) K.\end{equation}
\end{thm}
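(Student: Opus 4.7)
The plan is to split the sum on the left-hand side of (\ref{eq:c37}) (respectively (\ref{eq:c38})) into two pieces according to whether a zero $w \in \mathcal{Z}(h)$ satisfies $|w| \leq 1/2$ or $|w| > 1/2$, and to dispatch each piece with one of the two tools already at hand: Theorem \ref{thm:c05} for the zeros near the boundary, and Lemma \ref{lem:c04} for the zeros near the origin. The guiding observation is that the extra weight $1/|w|^{(\gamma-\eps)_+}$ is only dangerous for small $|w|$, whereas the weights $(1-|w|)^{\alpha+1+\tau}$ and $\prod_{j} |w-\xi_j|^{(\beta_j-1+\tau)_+}$ are only delicate near $\mt$; so each weight is harmless precisely where the other one is the bottleneck.

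For the contribution of zeros with $|w| > 1/2$, I would simply bound
$$\frac{1}{|w|^{(\gamma-\eps)_+}} \leq 2^{(\gamma-\eps)_+},$$
an absolute constant depending only on $\gamma$ and $\eps$. Using Remark \ref{rem:c02}, $h \in \mathcal{M}(\alpha,\vec\beta,\gamma,\vec\xi,K) \subset \mathcal{M}(\alpha,\vec\beta,0,\vec\xi,K)$, so Theorem \ref{thm:c05} (in its $\alpha>0$ or $\alpha=0$ version, matching the case at hand) applied to the remaining weighted sum yields a bound of the form $C(\alpha,\vec\beta,\vec\xi,\tau) K$ for this piece.

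For the contribution of zeros with $|w| \leq 1/2$, I would dominate $(1-|w|)^{\alpha+1+\tau} \leq 1$ and, using $|w-\xi_j| \leq |w|+1 \leq 3/2$, estimate $\prod_{j=1}^N |w-\xi_j|^{(\beta_j-1+\tau)_+} \leq C(\vec\beta,\tau)$. What remains is $C(\vec\beta,\tau) \sum_{w \in \mathcal{Z}(h),\, |w|\leq 1/2} 1/|w|^{(\gamma-\eps)_+}$, which is exactly the quantity controlled by Lemma \ref{lem:c04}, giving a bound $C(\alpha,\vec\beta,\gamma,\vec\xi,\eps) K$. Adding the two contributions yields the claimed estimate, linear in $K$.

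I do not expect a serious obstacle; the only point requiring care is to verify that the linearity in $K$ is preserved in each piece (which it is, since both Theorem \ref{thm:c05} and Lemma \ref{lem:c04} produce bounds linear in $K$), and to be a bit careful when $\alpha = 0$ to invoke (\ref{eq:c36}) rather than (\ref{eq:c35}) in the boundary piece so as to match the exponent structure of (\ref{eq:c38}). The real mathematical content lies in Lemma \ref{lem:c04}, whose proof via Jensen's identity already did the hard work of tracking the $|w|^\gamma$ growth of $\log|h|$ near the origin.
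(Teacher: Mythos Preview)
Your proposal is correct and follows essentially the same approach as the paper: split the sum at $|w|=1/2$, control the boundary piece via Theorem \ref{thm:c05} (bounding $|w|^{-(\gamma-\eps)_+}$ by a constant there) and the inner piece via Lemma \ref{lem:c04} (bounding the remaining weights by constants there). The paper's proof is written more tersely but is identical in substance.
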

\begin{proof}
Since the sum on the left-hand side of (\ref{eq:c37}) is bounded from above by
\begin{eqnarray*}
\sum_{w \in \mathcal{Z}(h), |w| \leq \frac 1 2 } \frac{1}{|w|^{(\gamma-\eps)_+}} + C(\gamma, \eps) \sum_{w \in \mathcal{Z}(h), |w| > \frac 1 2}
(1-|w|)^{\alpha+1+\tau} \prod_{j=1}^N|w-\xi_j|^{(\beta_j-1+\tau)_+},
\end{eqnarray*}
we see that the proof of (\ref{eq:c37}) is an immediate consequence of estimate (\ref{eq:c35}) and Lemma \ref{lem:c04}. The proof of (\ref{eq:c38}) is analogous starting from estimate (\ref{eq:c36}).
\end{proof}

\section{Eigenvalue estimates via the complex analysis approach}\label{sec:complex}

Applying the results obtained in the previous two chapters we derive estimates on the discrete spectrum of linear operators satisfying Assumption \ref{ass:1}. In particular, we present precise estimates on the discrete spectrum of perturbations of bounded and non-negative selfadjoint operators, respectively. Some of the material in this section is taken from \cite{Hans_diss}.

\subsection{Bounded operators - a general result}\label{sec:bounded-operators}

Throughout this section we make the following
\begin{assumption}
$Z_0$ and $Z$ are bounded operators in $\hil$, satisfying
\begin{itemize}
\item[(i)]  $M=Z-Z_0 \in \mathcal{S}_p(\hil)$ for some $p>0$.
\item[(ii)] $ \sigma_d(Z) = \sigma(Z) \cap \rho(Z_0)$.
\item[(iii)] $M_1$ and $M_2$ are two fixed bounded operators on $\hil$ such that $M= M_1M_2$ and  $M_2R_{Z_0}(a)M_1 \in \mathcal{S}_p(\hil)$ for every $a \in \hat{\rho}(Z_0)$.
\item[(iv)] $\hat{\rho}(Z_0)$ is conformally equivalent to the unit disk, that is there exists a (necessarily unique) mapping $\phi:\md\rightarrow \hat{\rho}(Z_0)$
with $\phi(0)=\infty$.
\end{itemize}
\end{assumption}

\begin{rem}
We note that, if assumption (i) holds, assumption (iii) will automatically hold if we take $M_1=I,M_2=M$. However, sometimes other factorizations
of $M$ will yield stronger results, and for an arbitrary factorization $M=M_1M_2$ it is not true that (i) implies (iii).
\end{rem}

As we have seen in Section \ref{sec:pert-determ}, the perturbation determinant
$$d=d_\infty^{Z,Z_0} : \hat{\rho}(Z_0) \to \mc, \qquad d_\infty^{Z,Z_0}(\lambda) = \detp(I-(Z-Z_0)R_{Z_0}(\lambda))$$
has the property that its zero set coincides with the discrete spectrum of $Z$, and $d(\infty)=1$.
We recall that, by (\ref{eq:comm}),
$$\detp(I-(Z-Z_0)R_{Z_0}(\lambda))=\detp(I-M_2R_{Z_0}(\lambda)M_1),$$
and estimate (\ref{eq:39}) showed that for $\lambda \in \hat{\rho}(Z_0)$ we have
\begin{equation}
  \label{eq:188}
  |d(\lambda)| \leq \exp \left( \Gamma_p \| M_2R_{Z_0}(\lambda)M_1 \|_{\mathcal{S}_p}^p \right),
\end{equation}
where the constant $\Gamma_p$ was introduced in (\ref{inequality}). Thus if we can show that, for suitable parameters
$K,\alpha,\xi_j,\beta_j$,
  \begin{equation}\label{eq:37a}
\|M_2R_{Z_0}(\phi(w))M_1\|_{\mathcal{S}_p}^p \leq \frac{K|w|^\gamma}{(1-|w|)^\alpha \prod_{j=1}^N |w-\xi_j|^{\beta_j}}, \quad w \in \md,
  \end{equation}
then we obtain
$$\log |(d\circ \phi)(w)|\leq \frac{\Gamma_pK|w|^\gamma}{(1-|w|)^\alpha \prod_{j=1}^N |w-\xi_j|^{\beta_j}}.$$
In other words, $d \circ \phi \in \mathcal{M}(\alpha,\vec \beta,\gamma, \vec \xi, \Gamma_p K)$. Therefore Theorem \ref{thm:c31} can be applied to $d\circ \phi$ and in this way we obtain the following result.

\begin{prop}\label{prop:13a}
Suppose (\ref{eq:37a}) holds,
where $\alpha, \beta_j, \gamma,K$ are non-negative and $\xi_j \in \mt$. Then for every $\eps, \tau > 0$ the following holds: If $\alpha>0$ then
  \begin{eqnarray}
     \sum_{\lambda \in \sigma_d(Z)} \frac{(1-|\phi^{-1}(\lambda)|)^{\alpha+1+\tau}}{|\phi^{-1}(\lambda)|^{(\gamma-\eps)_+}} \prod_{j=1}^N |\phi^{-1}(\lambda)-\xi_j|^{(\beta_j-1+\tau)_+} &\leq& C K, \qquad \quad \label{eq:38}
  \end{eqnarray}
where $C=C(\alpha, \vec \beta, \gamma, \vec \xi, \eps, \tau, p)$ and each eigenvalue is counted according to its multiplicity. Moreover, if $\alpha=0$ then the same inequality holds with $\alpha + 1 +\tau$ replaced by $1$.
\end{prop}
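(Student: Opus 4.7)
The plan is to invoke the perturbation-determinant formalism of Section \ref{sec:pert-determ}, transplant everything to the unit disk via $\phi$, and then read off the eigenvalue sum from Theorem \ref{thm:c31}. First, I would rewrite the perturbation determinant using the cyclic identity (\ref{eq:comm}),
\[
d(\lambda) = \detp\bigl(I - M_2 R_{Z_0}(\lambda) M_1\bigr), \qquad \lambda \in \hat{\rho}(Z_0),
\]
and combine this with the basic inequality (\ref{inequality}) to recover (\ref{eq:39}):
\[
|d(\lambda)| \leq \exp\!\bigl(\Gamma_p \,\|M_2 R_{Z_0}(\lambda) M_1\|_{\mathcal{S}_p}^{p}\bigr).
\]
Inserting $\lambda = \phi(w)$ and applying the standing hypothesis (\ref{eq:37a}) converts this into the pointwise majorization
\[
\log|d(\phi(w))| \;\leq\; \frac{\Gamma_p K\, |w|^{\gamma}}{(1-|w|)^{\alpha}\,\prod_{j=1}^{N}|w-\xi_j|^{\beta_j}}, \qquad w \in \md.
\]

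Second, I would set $h := d \circ \phi \in H(\md)$. Since $\phi$ is conformal and $\phi(0)=\infty$, we have $h(0) = d(\infty) = 1$, so the preceding bound places $h$ in the class $\mathcal{M}(\alpha, \vec\beta, \gamma, \vec\xi, \Gamma_p K)$ of Definition \ref{def:c01}. Theorem \ref{thm:c31} then yields, for every $\eps,\tau > 0$,
\[
\sum_{w \in \mathcal{Z}(h)} \frac{(1-|w|)^{\alpha+1+\tau}}{|w|^{(\gamma-\eps)_+}} \prod_{j=1}^{N}|w-\xi_j|^{(\beta_j-1+\tau)_+} \;\leq\; C(\alpha,\vec\beta,\gamma,\vec\xi,\eps,\tau)\,\Gamma_p K
\]
when $\alpha > 0$, and the analogous bound with $\alpha+1+\tau$ replaced by $1$ when $\alpha = 0$.

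Third, I would translate the sum back to the operator side. Because $\phi:\md \to \hat{\rho}(Z_0)$ is a biholomorphic bijection, the map $\phi^{-1}$ induces a bijection $\mathcal{Z}(h) \leftrightarrow \mathcal{Z}(d)$ which preserves the order of every zero. On the operator side, Section \ref{sec:pert-determ} records that the algebraic multiplicity of $\lambda_0 \in \sigma_d(Z)$ equals the order of $\lambda_0$ as a zero of $d$, and by the defining property of the perturbation determinant $\mathcal{Z}(d)=\sigma_d(Z)$. Substituting $w = \phi^{-1}(\lambda)$ in the sum above therefore produces precisely the left-hand side of (\ref{eq:38}), with each eigenvalue counted according to its algebraic multiplicity; absorbing $\Gamma_p$ into the final constant gives the stated dependence $C = C(\alpha,\vec\beta,\gamma,\vec\xi,\eps,\tau,p)$.

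I do not anticipate any serious obstacle, since the proposition is really a packaging of three independent ingredients: the cyclic determinant identity together with the exponential bound (\ref{inequality}), the conformal uniformization by $\phi$, and the complex-analytic sum estimate in Theorem \ref{thm:c31}. The only points that genuinely need to be checked are the preservation of algebraic multiplicities under $\phi$ (which follows from $\phi$ being biholomorphic together with the multiplicity identification recalled in Section \ref{sec:pert-determ}) and the case distinction $\alpha = 0$ versus $\alpha > 0$ in Theorem \ref{thm:c31}, both of which are routine.
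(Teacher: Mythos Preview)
Your proposal is correct and follows essentially the same route as the paper: bound $d\circ\phi$ via (\ref{eq:39}) and the hypothesis (\ref{eq:37a}) to land in the class $\mathcal{M}(\alpha,\vec\beta,\gamma,\vec\xi,\Gamma_p K)$, apply Theorem \ref{thm:c31}, and pull the zero sum back to $\sigma_d(Z)$ via $\phi^{-1}$ using the multiplicity identification from Section \ref{sec:pert-determ}. Your write-up is in fact slightly more explicit than the paper's about the multiplicity-preservation step, but the argument is the same.
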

\begin{rem}
It remains an interesting open question whether (\ref{eq:38}) is still valid when $\tau=0$ and $\eps=0$, respectively. At the moment, even for the specific choices of  $Z_0$ considered below, we are neither able to answer the corresponding question in the affirmative nor to provide a suitable counterexample.
\end{rem}
\begin{convention}
In the remaining parts of this article, let us agree that whenever a sum involving eigenvalues is considered, each eigenvalue is counted according to its (algebraic) multiplicity.
\end{convention}
The previous result is very general but not very enlightening. To obtain useful information using Proposition \ref{prop:13a} we need to do two things:
\begin{itemize}
\item Obtain estimates of the form (\ref{eq:37a}) for the operator of interest.

\item Obtain estimates from below on the sum on the left-hand side of (\ref{eq:38}) in terms of simple functions of the eigenvalues, so as to obtain
interesting information on the eigenvalues.
\end{itemize}

Carrying out both of these steps requires us to impose restrictions on the spectrum of the unperturbed operator $Z_0$, thus enabling
us to express the mapping $\phi$ explicitly. In the next subsection we will concentrate on the case that $Z_0$ is self-adjoint, so that its spectrum is
real. There are, however, various other options for treating various classes of operators. We now demonstrate one of them.

\begin{example}\label{ex:normal}
Let $Z_0 \in \bdd(\hil)$ be normal. Assume that $\sigma(Z_0)=\sigma_{ess}(Z_0)= \overline{\md}$, and let $Z=Z_0+M$ where $M \in \mathcal{S}_p(\hil)$ (so with the notation  above we have $Z-Z_0=M_1M_2$, where $M_1=I$ and $M_2=M$). Note that $\sigma_d(Z)=\sigma(Z) \cap \overline{\md}^c$ by Proposition \ref{prop:2}.
A conformal map $\phi : \md \to \hat{\rho}(Z_0)$, mapping $0$ onto $\infty$, is given by $\phi(w)= w^{-1}$, and we have
$ M_2R_{Z_0}(w^{-1})M_1= MR_{Z_0}(w^{-1})$.
The spectral theorem for normal operators implies that
$$\|R_{Z_0}(w^{-1})\|=\dist(w^{-1},\mt)^{-1}=|w|(1-|w|)^{-1},$$
so we obtain
$$ \|MR_{Z_0}(\phi(w))\|_{\mathcal{S}_p}^p \leq \|M\|_{\mathcal{S}_p}^p |w|^p(1-|w|)^{-p}, \quad w \in \md.$$
Hence, applying Proposition \ref{prop:13a} with $\alpha=\gamma=p$, $\vec \beta = \vec 0$ and $K=\|M\|_{\mathcal{S}_p}^p$, we conclude that for $\tau \in (0,p)$ (choosing $\eps=\tau$)
$$\sum_{\lambda \in \sigma_d(Z)} \frac{(|\lambda|-1)^{p+1+\tau}  }{|\lambda|^{1+2\tau}} = \sum_{\lambda \in \sigma_d(Z)} \frac{(1-|\phi^{-1}(\lambda)|)^{p+1+\tau}  }{|\phi^{-1}(\lambda)|^{p-\tau}}\leq C(p,\tau) \|M\|_{\mathcal{S}_p}^p. $$
\end{example}

\begin{rem}
Actually, we will show below that the estimate in the previous example can be improved considerably using our alternative approach to eigenvalue estimates (see Example \ref{ex:normal2}).
\end{rem}

\subsection{Perturbations of bounded selfadjoint operators}\label{sec:bounded}

\noindent Throughout this section we assume that $A_0 \in \bdd(\hil)$ is selfadjoint with $\sigma(A_0)=[a,b]$,\footnote{In this section we are changing notation from $Z_0$ to $A_0$ (and from $Z$ to $A$), the reason being the specific choice we make for the spectrum of $A_0$. A similar remark will apply in Section \ref{sec:pert-nonn-oper}.} where $a<b$,  and that  $M=M_1M_2 \in \mathcal{S}_p(\hil)$ for some $p > 0,$ where $M_1$ and $M_2$ are bounded operators on $\hil$ satisfying
\begin{equation}
  \label{eq:211}
 M_2R_{A_0}(\lambda)M_1 \in \mathcal{S}_p(\hil), \quad \lambda \in \hat{\rho}(A_0).
\end{equation}
\noindent In particular, $A_0$ and $A=A_0+M$ satisfy Assumption \ref{ass:1}  by Remark \ref{rem:6} (with $Z_0=A_0$ and $Z=A$, respectively), and we have $$\sigma(A)=[a,b] \dotcup \sigma_d(A).$$
Let us define a conformal map $\phi_1 :\md \to \hat{\mc} \setminus [a,b]$, mapping $0$ onto $\infty$, by setting
\begin{equation}\label{eq:41}
 \quad \phi_1(w)= \frac{b-a}{4}(w + w^{-1}+2)+a, \quad w \in \md.
\end{equation}
To adapt Proposition \ref{prop:13a} to the present context we will need the following elementary but crucial inequalities, see Lemma 7 in \cite{HK09}.

\begin{lemma}\label{lem:c05}
For $w \in \md$  let $\phi_1(w)$ be defined by (\ref{eq:41}). Then
   \begin{equation*}
  \frac{b-a}{8} \frac{|w^2-1|(1-|w|)}{|w|} \leq \dist(\phi_1(w),[a,b]) \leq \frac{(b-a)(1+\sqrt{2})}{8} \frac{|w^2-1|(1-|w|)}{|w|}.
  \end{equation*}
\end{lemma}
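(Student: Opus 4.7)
My plan is to exploit the factorizations
\[ \phi_1(w) - a = \frac{b-a}{4}\cdot\frac{(w+1)^2}{w}, \qquad \phi_1(w) - b = \frac{b-a}{4}\cdot\frac{(w-1)^2}{w}, \]
which are immediate from $\phi_1(w) - a = c(w + w^{-1} + 2)$ with $c := (b-a)/4$. Writing $w = re^{i\theta}$ with $r \in (0,1)$, these yield the modulus formulas $|\phi_1(w) - a| = c|w+1|^2/r$ and $|\phi_1(w) - b| = c|w-1|^2/r$, together with the product identity $|\phi_1(w)-a|\,|\phi_1(w)-b| = c^2|w^2-1|^2/r^2$, which will drive both sides of the estimate.

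For the upper bound I would compare $\phi_1(w)$ with the same-angle boundary point $\phi_1(e^{i\theta})$, which lies in $[a,b]$ because $\phi_1(e^{i\theta}) = a + (b-a)\cos^2(\theta/2)$. A direct computation gives
\[ \phi_1(w) - \phi_1(e^{i\theta}) = \frac{c(1-r)}{r}\bigl(e^{-i\theta} - re^{i\theta}\bigr), \]
so that $\dist(\phi_1(w),[a,b]) \leq \frac{c(1-r)}{r}|1 - re^{2i\theta}|$. Since $|w^2-1| = |1 - r^2 e^{2i\theta}|$, the claim reduces to $2|1 - r\zeta| \leq (1+\sqrt{2})|1 - r^2\zeta|$ for every $\zeta \in \mt$. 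A one-variable maximization in $\Re\zeta$ places the extremum at $\zeta = -1$, so it suffices to verify $2(1+r) \leq (1+\sqrt{2})(1+r^2)$. The quadratic $(1+\sqrt{2})r^2 - 2r + (\sqrt{2}-1)$ has discriminant $4 - 4(1+\sqrt{2})(\sqrt{2}-1) = 0$ (since $(1+\sqrt{2})(\sqrt{2}-1) = 1$), hence is a perfect square in $r$, and the inequality holds with equality exactly at $r = \sqrt{2}-1$ --- showing that the constant $1+\sqrt 2$ is sharp.

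For the lower bound I would split on whether $\Re\phi_1(w)$ lies in $[a,b]$, which is equivalent to $|\cos\theta| \leq 2r/(1+r^2)$. In that case the distance equals $|\Im\phi_1(w)| = c(1-r^2)|\sin\theta|/r$, and after squaring and using $|w^2-1|^2 = (1-r^2)^2 + 4r^2\sin^2\theta$ the claim reduces to $4(1+2r)\sin^2\theta \geq (1-r)^2(1+r)^2$; the case assumption forces $\sin^2\theta \geq (1-r^2)^2/(1+r^2)^2$, so what remains is the elementary $(1+r^2)^2 \leq 4(1+2r)$ on $[0,1]$, which follows because $3 + 8r - 2r^2 - r^4$ has derivative $4(2-r-r^3)\geq 0$ on $[0,1]$ and value $3$ at $r=0$. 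In the complementary case, say $\Re\phi_1(w) < a$, the distance equals $|\phi_1(w) - a| = c|w+1|^2/r$, and the claim reduces to $2|w+1| \geq (1-r)|w-1|$. This is immediate from the triangle-inequality bounds $|w+1| \geq 1-r$ and $|w-1| \leq 1+r$ together with $(1-r)(1+r) \leq 2(1-r)$; the symmetric sub-case $\Re\phi_1(w) > b$ is analogous.

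The main obstacle is locating the correct comparison point for the upper bound: the naive estimate $\dist(\phi_1(w), [a,b]) \leq \min(|\phi_1(w)-a|,|\phi_1(w)-b|)$ misses the essential factor $(1-|w|)$ when $|w| \to 1$ away from $\pm 1$, so one must exploit the natural parametrization $\theta \mapsto \phi_1(e^{i\theta})$ of the segment by the unit circle. Once this ansatz is in place, the sharpness of the constant $1+\sqrt{2}$ emerges cleanly from the vanishing discriminant, providing a satisfying consistency check on the statement.
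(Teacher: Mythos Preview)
Your proof is correct. The upper bound via the same-angle comparison point $\phi_1(e^{i\theta})$ is the right idea, and your reduction to the quadratic $(1+\sqrt 2)r^2 - 2r + (\sqrt 2 - 1)$ with vanishing discriminant both establishes the inequality and confirms sharpness of the constant. The lower bound by splitting on $\Re\phi_1(w)\in[a,b]$ is carried out cleanly; in particular, the chain $|w-1|(1-r)\le(1+r)(1-r)\le 2(1-r)\le 2|w+1|$ in the off-interval case and the reduction to $(1+r^2)^2\le 4(1+2r)$ in the on-interval case both check out.

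The paper itself does not supply a proof of this lemma but simply refers to Lemma~7 of \cite{HK09}, so there is no in-paper argument to compare against. Your self-contained elementary derivation is a welcome addition.
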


\noindent In the following, we derive estimates on $\sigma_d(A)$ given the assumption that for every $\lambda \in \mc \setminus [a,b]$ we have
\begin{equation}
    \label{eq:42}
    \| M_2R_{A_0}(\lambda)M_1\|_{\mathcal{S}_p}^p \leq K \frac{|\lambda-a|^{\beta}|\lambda-b|^{\beta}}{\dist(\lambda,[a,b])^\alpha},
\end{equation}
where $\alpha, K \in \mr_+$, $\beta \in \mr$ and $\alpha > 2\beta$. Of course, one could imagine different assumptions on the norm of $M_2R_{A_0}(\lambda)M_1$, e.g., a different behavior at the boundary points $a$ and $b$, but the choice above is sufficiently general for the applications we have in mind.

\begin{thm}\label{thm:3}
  With the assumptions and notations from above, suppose that \linebreak  $M_2R_{A_0}(\lambda)M_1$ satisfies estimate (\ref{eq:42}) for every $\lambda \in \mc \setminus [a,b]$.  Let $\tau \in (0,1)$ and define
\begin{equation}
  \label{eq:43}
  \begin{array}{rcl}
    \eta_1 & = & \alpha + 1 +\tau, \\
    \eta_2 & = & (\alpha-2\beta-1+\tau)_+.
  \end{array}
\end{equation}
Then the following holds: If $\alpha >0$ then
\begin{equation}
  \label{eq:44}
  \sum_{\lambda \in \sigma_d(A)} \frac{\dist(\lambda,[a,b])^{\eta_1}}{(|b-\lambda||a-\lambda|)^{\frac{\eta_1-\eta_2}{2}}} \leq C(\alpha,\beta,\tau,p) (b-a)^{\eta_2-\alpha+2\beta} K.
\end{equation}
Moreover, if $\alpha=0$ then the same inequality holds with $\eta_1$ replaced by $1$.
\end{thm}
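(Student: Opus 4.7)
The strategy is to encode $\sigma_d(A)$ as the zero set of a holomorphic function on the unit disk, verify that this function lies in a suitable class $\mathcal{M}(\alpha,\vec\beta,\gamma,\vec\xi,K')$, invoke Theorem \ref{thm:c31}, and then undo the conformal change of variables.

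First, I would consider the perturbation determinant $d = d_\infty^{A,A_0}$. By the discussion in Section \ref{sec:pert-determ}, $d$ is holomorphic on $\hat\rho(A_0) = \hat\mc \setminus [a,b]$, satisfies $d(\infty)=1$, and has $\sigma_d(A)$ as its zero set (with multiplicities). Combining Proposition \ref{prop:17} (in the factored form (\ref{eq:39})) with the hypothesis (\ref{eq:42}) yields the pointwise bound
\begin{equation*}
\log|d(\lambda)| \;\leq\; \Gamma_p\, K\, \frac{|\lambda-a|^\beta |\lambda-b|^\beta}{\dist(\lambda,[a,b])^\alpha}, \qquad \lambda \in \mc \setminus [a,b].
\end{equation*}

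Next, I would pull this back to the disk via $\phi_1$. Setting $h = d\circ \phi_1 \in H(\md)$, we have $h(0)=d(\infty)=1$. A direct computation from (\ref{eq:41}) gives
\begin{equation*}
\phi_1(w)-a = \frac{(b-a)(w+1)^2}{4w}, \qquad \phi_1(w)-b = \frac{(b-a)(w-1)^2}{4w},
\end{equation*}
and Lemma \ref{lem:c05} gives $\dist(\phi_1(w),[a,b]) \asymp (b-a) |w-1||w+1|(1-|w|)/|w|$. Substituting these three estimates into the bound on $\log|d|$ and cancelling common factors produces
\begin{equation*}
\log|h(w)| \;\leq\; C(\alpha,\beta,p)\, (b-a)^{2\beta-\alpha}\, K \cdot \frac{|w|^{\alpha-2\beta}}{(1-|w|)^{\alpha}\, |w-1|^{\alpha-2\beta}\, |w+1|^{\alpha-2\beta}}.
\end{equation*}
Since $\alpha > 2\beta$, every exponent on the right is non-negative, so $h \in \mathcal{M}(\alpha,\vec\beta',\gamma',\vec\xi,K')$ with $\vec\xi=(1,-1)$, $\vec\beta'=(\alpha-2\beta,\alpha-2\beta)$, $\gamma'=\alpha-2\beta$, and $K' = C(\alpha,\beta,p)(b-a)^{2\beta-\alpha}K$.

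Now I would apply Theorem \ref{thm:c31} to $h$. Choosing $\eps \in (0, 1-\tau]$ small enough ensures $(\gamma'-\eps)_+ \geq (\alpha-2\beta-1+\tau)_+ = \eta_2$, and $(\beta'_j-1+\tau)_+ = \eta_2$ as well. In the case $\alpha > 0$, Theorem \ref{thm:c31} yields
\begin{equation*}
\sum_{w \in \mathcal{Z}(h)} \frac{(1-|w|)^{\eta_1}\,|w-1|^{\eta_2}|w+1|^{\eta_2}}{|w|^{\eta_2}} \;\leq\; C(\alpha,\beta,\tau,p)\, K',
\end{equation*}
after bounding $|w|^{(\gamma'-\eps)_+}\leq |w|^{\eta_2}$ pointwise. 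Finally, translating back via $\lambda=\phi_1(w)$, the identities listed above show
\begin{equation*}
\frac{\dist(\lambda,[a,b])^{\eta_1}}{(|b-\lambda||a-\lambda|)^{(\eta_1-\eta_2)/2}} \;\asymp\; (b-a)^{\eta_2}\, \frac{(1-|w|)^{\eta_1}\,|w-1|^{\eta_2}|w+1|^{\eta_2}}{|w|^{\eta_2}},
\end{equation*}
which multiplied by $K'$ gives precisely the factor $(b-a)^{\eta_2-\alpha+2\beta}K$ in (\ref{eq:44}). The case $\alpha=0$ is handled identically, invoking (\ref{eq:c38}) of Theorem \ref{thm:c31} in place of (\ref{eq:c37}), so that $(1-|w|)^{\eta_1}$ is replaced by $(1-|w|)^1$.

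The main technical obstacle is step three: the careful bookkeeping of the exponents under the Joukowsky-type substitution, in particular verifying that $\alpha>2\beta$ ensures both positivity of the parameters defining the class $\mathcal{M}$ and the matching of the $|w|$-exponent in the denominator. Everything else is an application of the results already proved in Chapters \ref{cha:preliminaries} and \ref{cha:complex}.
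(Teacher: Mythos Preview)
Your proof is correct and follows essentially the same route as the paper: pull the perturbation determinant back to the disk via $\phi_1$, show membership in an $\mathcal{M}$-class with parameters $(\alpha,\,\alpha-2\beta,\,\alpha-2\beta,\,\{\pm1\})$, apply the Borichev--Golinskii--Kupin type bound, and translate back using the identities $|\lambda-a|=\tfrac{b-a}{4}|w+1|^2/|w|$, $|\lambda-b|=\tfrac{b-a}{4}|w-1|^2/|w|$ together with Lemma~\ref{lem:c05}. The only cosmetic differences are that the paper invokes the packaged Proposition~\ref{prop:13a} rather than Theorem~\ref{thm:c31} directly, and that it sets $\eps=1-\tau$ exactly (so that $(\gamma'-\eps)_+=\eta_2$ on the nose) instead of your monotonicity argument with $\eps\in(0,1-\tau]$; both variants are equivalent.
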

\begin{proof}
We consider the case $\alpha>0$ only. As above, let
$$\lambda=\phi_1(w)=\frac{b-a}{4}(w + w^{-1}+2)+a, \qquad w \in \md.$$
Then a short computation shows that
  \begin{equation}
    \label{eq:45}
    |a-\lambda|= \frac{b-a}{4} \frac{|w+1|^2}{|w|} \quad \text{and} \quad
    |b-\lambda|= \frac{b-a}{4} \frac{|w-1|^2}{|w|}.
  \end{equation}
Using the last two identities and Lemma \ref{lem:c05}, the assumption in (\ref{eq:42}) can be rewritten as
\begin{equation}
  \label{eq:46}
  \|M_2R_{A_0}(\lambda)M_1\|_{\mathcal{S}_p}^p  \leq \frac{C(\alpha,\beta) K}{(b-a)^{\alpha-2\beta}}  \frac{|w|^{\alpha-2\beta}}{(1-|w|)^\alpha|w^2-1|^{\alpha-2\beta}}.
\end{equation}
Let $\eps,\tau > 0$ and let $\eta_1, \eta_2$ be defined by (\ref{eq:43}). Then Proposition \ref{prop:13a} implies that
\begin{equation}
 \sum_{\lambda \in \sigma_d(A)} \frac{(1-|\phi_1^{-1}(\lambda)|)^{\eta_1}}{|\phi_1^{-1}(\lambda)|^{(\alpha-2\beta-\eps)_+}} |(\phi_1^{-1}(\lambda))^2-1|^{\eta_2}
\leq \frac{C(\alpha,\beta,\eps,\tau,p) K}{(b-a)^{\alpha-2\beta}}.
\end{equation}
Restricting $\tau$ to the interval $(0,1)$ and setting $\eps=1-\tau$, the last inequality can be rewritten as
\begin{equation}
  \label{eq:47}
 \sum_{\lambda \in \sigma_d(A)} \frac{(1-|\phi_1^{-1}(\lambda)|)^{\eta_1}}{|\phi_1^{-1}(\lambda)|^{\eta_2}} |(\phi_1^{-1}(\lambda))^2-1|^{\eta_2}
\leq \frac{C(\alpha,\beta,\tau,p) K}{(b-a)^{\alpha-2\beta}}.
\end{equation}
By (\ref{eq:45}) we have
\begin{equation}
  \label{eq:48}
  |(\phi_1^{-1}(\lambda))^2-1| = \frac{4}{b-a} |\phi_1^{-1}(\lambda)| (|\lambda-a||\lambda-b|)^{{1}/{2}},
\end{equation}
and by Lemma \ref{lem:c05}, we obtain
\begin{eqnarray}
  (1-|\phi_1^{-1}(\lambda)|) &\geq& \frac{8}{(1+\sqrt{2})(b-a)} \frac{|\phi_1^{-1}(\lambda)|\dist(\lambda,[a,b])}{|(\phi_1^{-1}(\lambda))^2-1|} \nonumber \\
&=&  \frac{2}{(1+\sqrt{2})} \frac{\dist(\lambda,[a,b])}{(|\lambda-a||\lambda-b|)^{1/2}}   \label{eq:49}.
\end{eqnarray}
Inserting (\ref{eq:49}) and (\ref{eq:48}) into (\ref{eq:47}) concludes the proof.
\end{proof}

\begin{rem}
The left- and right-hand sides of (\ref{eq:49}) are actually equivalent (meaning that the same inequality, with another constant, holds in the other direction as well), so no essential information gets lost in this estimate.
\end{rem}

\begin{rem}\label{example1}
A nice way to illustrate the consequences of the finiteness of the sum in (\ref{eq:44}) is to consider sequences $\{ \lambda_k\}$ of isolated eigenvalues of $A$ converging to some $\lambda^* \in [a,b]$. Taking a subsequence, we can suppose that one of the following options holds:
\begin{equation*}
\begin{array}{clcl}
\mbox{(i.a)} & \lambda^* = a \text{ and } \Re(\lambda_k)\leq a. \quad &
\mbox{(i.b)} & \lambda^* = b \text{ and } \Re(\lambda_k)\geq b. \\[4pt]
\mbox{(ii.a)} & \lambda^* = a \text{ and }\Re(\lambda_k)> a. \quad &
\mbox{(ii.b)} & \lambda^* = b \text{ and } \Re(\lambda_k)< b.\\[4pt]
\mbox{(iii)} & \lambda^* \in (a,b). & &
\end{array}
\end{equation*}
It is sufficient to consider the cases (i.a), (ii.a) and (iii) only. In case (i.a), since $\dist(\lambda_k,[a,b])=|\lambda_k-a|$, (\ref{eq:44}) implies the finiteness of $\sum_k |\lambda_k-a|^{(\eta_1+\eta_2)/2}$ showing that any such sequence must converge to $a$ sufficiently fast. Similarly, in case (ii.a), (\ref{eq:44}) implies the finiteness of $\sum_k \frac{|\Im(\lambda_k)|^{\eta_1}}{|\lambda_k-a|^{(\eta_1-\eta_2)/2}}$. Finally, in case (iii), we obtain the finiteness of $\sum_k |\Im(\lambda_k)|^{\eta_1}$, showing that the sequence must converge to the real line sufficiently fast.
\end{rem}

\noindent Theorem \ref{thm:3} still relies on a quantitative estimate on the $\mathcal{S}_p$-norm of the operator $M_2R_{A_0}(\lambda)M_1$.
In particular applications, one wants to choose the decomposition $M=M_1M_2$ so as to obtain an estimate on $M_2R_{A_0}(\lambda)M_1$ as strong as
possible (we will indicate this process when considering Jacobi operators in Chapter \ref{sec:jacobi}).
Let us note, however, that we can always take the `trivial' decomposition $M_1=I$ and $M_2=M$, and use the bound
$$ \|MR_{A_0}(\lambda)\|_{\mathcal{S}_p}^p \leq \|M\|_{\mathcal{S}_p}^p \|R_{A_0}(\lambda)\|^p \leq \frac{\|M\|_{\mathcal{S}_p}^p}{\dist(\lambda,[a,b])^p},$$
so that we obtain the following estimates.
\begin{cor}\label{cor:2}
Let $A_0 \in \bdd(\hil)$ be selfadjoint with $\sigma(A_0)=[a,b]$ and let $A=A_0+M$ where $M \in \mathcal{S}_p(\hil)$. Then for $\tau \in (0,1)$ the following holds: If $p \geq 1 - \tau$ then
\begin{equation}
\label{eq:50}
  \sum_{\lambda \in \sigma_d(A)} \frac{\dist(\lambda,[a,b])^{p+1+\tau}}{|b-\lambda||a-\lambda|} \leq C(p,\tau) (b-a)^{-1+\tau} \|M\|_{\mathcal{S}_p}^p.
\end{equation}
Moreover, if $0<p<1 - \tau$ then
\begin{equation}
\label{eq:51}
  \sum_{\lambda \in \sigma_d(A)} \left( \frac{\dist(\lambda,[a,b])}{|b-\lambda|^{1/2}|a-\lambda|^{1/2}}\right)^{p+1+\tau} \leq C(p,\tau) (b-a)^{-p} \|M\|_{\mathcal{S}_p}^p.
\end{equation}
\end{cor}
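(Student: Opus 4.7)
The plan is to apply Theorem \ref{thm:3} with the ``trivial'' factorization $M_1 = I$, $M_2 = M$. First I would verify the setup: $M = M_1 M_2 \in \mathcal{S}_p(\hil)$ by hypothesis, and for every $\lambda \in \hat\rho(A_0)$ one has $M_2 R_{A_0}(\lambda) M_1 = M R_{A_0}(\lambda) \in \mathcal{S}_p(\hil)$ as the product of an element of $\mathcal{S}_p(\hil)$ and a bounded operator, cf.\ (\ref{eq:14}). Hence all standing hypotheses of Section \ref{sec:bounded} and Theorem \ref{thm:3} are in force.

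Next I would derive the concrete bound of the form (\ref{eq:42}). Since $A_0$ is selfadjoint (hence normal), the spectral identity (\ref{eq:3}) gives $\|R_{A_0}(\lambda)\| = \dist(\lambda,[a,b])^{-1}$, and combining this with the Schatten ideal inequality (\ref{eq:14}) yields
\begin{equation*}
\|M R_{A_0}(\lambda)\|_{\mathcal{S}_p}^p \leq \|M\|_{\mathcal{S}_p}^p\, \|R_{A_0}(\lambda)\|^p = \frac{\|M\|_{\mathcal{S}_p}^p}{\dist(\lambda,[a,b])^p}.
\end{equation*}
Thus (\ref{eq:42}) holds with $\alpha = p$, $\beta = 0$ and $K = \|M\|_{\mathcal{S}_p}^p$; the condition $\alpha > 2\beta$ becomes $p > 0$, which is given.

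Now I would invoke Theorem \ref{thm:3} with these parameters. Since $\alpha = p > 0$, we are always in the first branch of the theorem, and
\begin{equation*}
\eta_1 = p + 1 + \tau, \qquad \eta_2 = (p - 1 + \tau)_+, \qquad \eta_2 - \alpha + 2\beta = \eta_2 - p.
\end{equation*}
The conclusion then splits according to the sign of $p - 1 + \tau$, which is exactly the dichotomy in the statement of the corollary.

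Finally I would do the bookkeeping in each case. If $p \geq 1 - \tau$, then $\eta_2 = p - 1 + \tau$, so $(\eta_1 - \eta_2)/2 = 1$ and the $(b-a)$-exponent becomes $-1 + \tau$, producing (\ref{eq:50}) directly from (\ref{eq:44}). If $0 < p < 1 - \tau$, then $\eta_2 = 0$, so $(\eta_1 - \eta_2)/2 = (p+1+\tau)/2$ and the $(b-a)$-exponent becomes $-p$; factoring the denominator as $(|b-\lambda||a-\lambda|)^{(p+1+\tau)/2}$ and pulling the common exponent $p+1+\tau$ outside gives (\ref{eq:51}). The proof is essentially a specialization, so there is no real obstacle beyond carefully matching the parameters $(\alpha,\beta,\eta_1,\eta_2)$ in Theorem \ref{thm:3} with the two regimes for $p$ — the complex-analytic work has already been absorbed into Theorem \ref{thm:c31} and Theorem \ref{thm:3}.
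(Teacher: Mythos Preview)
Your proof is correct and follows exactly the same approach as the paper: apply Theorem \ref{thm:3} with $M_1=I$, $M_2=M$, $K=\|M\|_{\mathcal{S}_p}^p$, $\alpha=p$, $\beta=0$. Your parameter bookkeeping for $\eta_1,\eta_2$ and the resulting exponents in the two regimes is accurate; the paper simply states the specialization in one line and leaves this computation to the reader.
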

\begin{proof}
  Apply Theorem \ref{thm:3} with $M_1=I$, $M_2=M$, $K= \|M\|_{\mathcal{S}_p}^p$, $\alpha =p$ and $\beta = 0$.
\end{proof}

\begin{rem}\label{rem:8}
In view of estimate (\ref{eq:51}), we should mention that in general it is not possible to infer the finiteness of the sum
\begin{equation}
  \label{eq:193}
    \sum_{\lambda \in \sigma_d(A_0+M)} \left( \frac{\dist(\lambda,[a,b])}{|b-\lambda|^{1/2}|a-\lambda|^{1/2}}\right)^\gamma, \quad \text{ where } \gamma < 1,
\end{equation}
from the mere assumption that $M \in \mathcal{S}_p(\hil)$ for some $p>0$. Indeed, if $A_0$ is the free Jacobi operator, then for every $\gamma<1$ we can construct a rank one perturbation $M$ such that the sum in (\ref{eq:193}) diverges, see Appendix C in \cite{Hans_diss}.
\end{rem}
\begin{rem}\label{rem:borichev}
We note that a slightly weaker version of the previous theorem has first been obtained by Borichev, Golinskii and Kupin \cite{Borichev08} in the context of Jacobi operators. They used Theorem \ref{thm:c05} instead of Theorem \ref{thm:c31} in its derivation, which resulted in a constant on the right-hand side depending on the operator $A$ in some unspecified way.
\end{rem}

We will return to Corollary \ref{cor:2} in Chapter \ref{sec:comparison}, where we will compare it with some related results obtained via our alternative approach to eigenvalues estimates, which will be described in Chapter \ref{sec:operator}.

\subsection{Unbounded operators - a general result}\label{sec:an-abstract-estimate}

\noindent We are now interested in applying similar considerations to the study of eigenvalues of unbounded operators. Throughout this section we make the following
\begin{assumption}
$Z_0$ and $Z$ are operators in $\hil$  satisfying
\begin{itemize}
\item[(i)] $Z,Z_0 \in \cld(\hil)$ are densely defined with $\rho(Z_0) \cap \rho(Z) \neq \emptyset$.
\item[(ii)]  $R_Z(b)-R_{Z_0}(b) \in \mathcal{S}_p(\hil)$
for some $b \in \rho(Z_0) \cap \rho(Z)$ and some $p >0$.
\item[(iii)] $ \sigma_d(Z) = \sigma(Z) \cap \rho(Z_0)$.
\item[(iv)] $\rho(Z_0)$ is conformally equivalent to the unit disk. More precisely, there exists a conformal mapping $\psi : \md \to  \rho(Z_0)$ with $\psi(0)=a$, where $a$ is some fixed element of $\rho(Z_0) \cap \rho(Z)$.
\end{itemize}
\end{assumption}

The analysis of the discrete spectrum of $Z$ is quite similar to the analysis made in Section \ref{sec:bounded-operators}, with the only difference that the discrete spectrum of $Z$ now coincides with the
zero set of the perturbation determinant
$$d_a^{Z,Z_0} : \rho(Z_0) \to \mc, \quad d_a^{Z,Z_0}(\lambda)=\detp(I-[R_Z(a)-R_{Z_0}(a)][(a-\lambda)^{-1}-R_{Z_0}(a)]^{-1}),$$
compare Section \ref{sec:pert-determ}. In particular, we can use the same line of reasoning as in Section \ref{sec:bounded-operators} to obtain the following result.

\begin{prop}\label{prop:unbounded}
Suppose that for some non-negative constants $K,\alpha,\beta_j, \gamma$ and some $\xi_j \in \mt$ we have for every $w \in \md$
\begin{small}
  \begin{equation}
 \|[R_Z(a)-R_{Z_0}(a)][(a-\psi(w))^{-1}-R_{Z_0}(a)]^{-1}\|_{\mathcal{S}_p}^p
\leq  \frac{K|w|^\gamma}{(1-|w|)^\alpha \prod_{j=1}^N |w-\xi_j|^{\beta_j}}. \label{eq:prop:unb}
  \end{equation}
\end{small}
Then for every $\eps, \tau > 0$ the following holds: If $\alpha>0$ then
  \begin{eqnarray*}
     \sum_{\lambda \in \sigma_d(Z)} \frac{(1-|\psi^{-1}(\lambda)|)^{\alpha+1+\tau}}{|\psi^{-1}(\lambda)|^{(\gamma-\eps)_+}} \prod_{j=1}^N |\psi^{-1}(\lambda)-\xi_j|^{(\beta_j-1+\tau)_+} &\leq& C(\alpha, \vec \beta, \gamma, \vec \xi, \eps, \tau, p) K. 
  \end{eqnarray*}
Moreover, if $\alpha=0$ then the same inequality holds with $\alpha + 1 +\tau$ replaced by $1$.
\end{prop}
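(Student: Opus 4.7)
The plan is to follow the same complex-analytic strategy used in the bounded case (Proposition \ref{prop:13a}), only now with the perturbation determinant $d_a^{Z,Z_0}$ in place of $d_\infty^{Z,Z_0}$ and with the conformal map $\psi$ playing the role of $\phi$. First I would set $h:=d_a^{Z,Z_0}\circ\psi$, which is well defined and holomorphic on $\md$ because $\psi$ maps $\md$ into $\rho(Z_0)$, the natural domain of $d_a^{Z,Z_0}$. Since $\psi(0)=a$ and, as established in Section \ref{sec:pert-determ}, $d_a^{Z,Z_0}(a)=1$, we automatically get the normalization $h(0)=1$ required for membership in the classes $\mathcal{M}(\alpha,\vec\beta,\gamma,\vec\xi,K)$. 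Moreover, the characterization of $\sigma_d(Z)$ as the zero set of $d_a^{Z,Z_0}$ gives the bijection
\begin{equation*}
\mathcal{Z}(h) = \psi^{-1}(\sigma_d(Z)),
\end{equation*}
so estimates on $\mathcal{Z}(h)$ inside $\md$ translate directly into estimates on $\sigma_d(Z)$ via $\psi^{-1}$.

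Next I would invoke Proposition \ref{prop:12}, which yields
\begin{equation*}
|d_a^{Z,Z_0}(\lambda)| \leq \exp\bigl(\Gamma_p\,\|[R_Z(a)-R_{Z_0}(a)][(a-\lambda)^{-1}-R_{Z_0}(a)]^{-1}\|_{\mathcal{S}_p}^p\bigr)
\end{equation*}
for every $\lambda\in\rho(Z_0)$ with $\lambda\neq a$. Substituting $\lambda=\psi(w)$ and inserting hypothesis (\ref{eq:prop:unb}) gives, for all $w\in\md\setminus\{0\}$,
\begin{equation*}
\log|h(w)| \leq \frac{\Gamma_p K\,|w|^{\gamma}}{(1-|w|)^{\alpha}\prod_{j=1}^N|w-\xi_j|^{\beta_j}}.
\end{equation*}
At $w=0$ the inequality is trivial since $h(0)=1$. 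Thus $h\in\mathcal{M}(\alpha,\vec\beta,\gamma,\vec\xi,\Gamma_p K)$ in the sense of Definition \ref{def:c01}.

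The conclusion is now a direct application of Theorem \ref{thm:c31} to $h$: in the case $\alpha>0$, for all $\eps,\tau>0$,
\begin{equation*}
\sum_{w\in\mathcal{Z}(h)}\frac{(1-|w|)^{\alpha+1+\tau}}{|w|^{(\gamma-\eps)_+}}\prod_{j=1}^N|w-\xi_j|^{(\beta_j-1+\tau)_+}\leq C(\alpha,\vec\beta,\gamma,\vec\xi,\eps,\tau)\,\Gamma_p K,
\end{equation*}
and the analogous statement with $1$ in place of $\alpha+1+\tau$ when $\alpha=0$. Reindexing the sum via $w=\psi^{-1}(\lambda)$ with $\lambda\in\sigma_d(Z)$ gives exactly the claimed inequality, absorbing $\Gamma_p$ into the constant (which is allowed to depend on $p$).

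I do not expect any substantial obstacle: once Proposition \ref{prop:12} and Theorem \ref{thm:c31} are in hand, the only thing to check carefully is that multiplicities are preserved under the pullback by $\psi$, i.e.\ that the order of $w_0\in\mathcal{Z}(h)$ equals the order of $\psi(w_0)$ as a zero of $d_a^{Z,Z_0}$, which in turn (as remarked after Section \ref{sec:pert-determ}) equals the algebraic multiplicity of $\psi(w_0)\in\sigma_d(Z)$. This holds because $\psi$ is conformal, hence locally biholomorphic, and so composition with $\psi$ preserves orders of zeros.
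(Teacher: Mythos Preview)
Your proposal is correct and follows essentially the same approach as the paper, which explicitly says to ``use the same line of reasoning as in Section \ref{sec:bounded-operators}'': define $h=d_a^{Z,Z_0}\circ\psi$, use Proposition \ref{prop:12} together with the hypothesis to place $h$ in $\mathcal{M}(\alpha,\vec\beta,\gamma,\vec\xi,\Gamma_pK)$, and then apply Theorem \ref{thm:c31}. Your additional remark on multiplicities being preserved under composition with the conformal map $\psi$ is a correct and useful clarification.
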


\begin{rem}\label{rem:unbound}
  If $Z = Z_0 + M$ where $M$ is $Z_0$-compact, then we can use the second resolvent identity (\ref{eq:10}) to obtain
$$ [R_Z(a)-R_{Z_0}(a)][(a-\psi(w))^{-1}-R_{Z_0}(a)]^{-1} = (a-\psi(w))R_Z(a)MR_{Z_0}(\psi(w)),$$
so in order to satisfy the conditions of the last proposition we need a good control of the $\mathcal{S}_p$-norm of $MR_{Z_0}(\psi(w))$. We will return to this topic in the next section.
\end{rem}



\noindent We can obtain a more ``explicit'' version of Proposition \ref{prop:unbounded}  using Koebe's distortion theorem, see \cite{b_Pommerenke92}, page 9.
\begin{thm}\label{thm:c01}
 Let $\vphi : \md \to \vphi(\md)$ be conformal. Then
 \begin{equation}\label{eq:c05}
   \frac 1 4 |\vphi'(w)|(1-|w|) \leq \dist( \vphi(w), \partial \vphi(\md)) \leq 2 |\vphi'(w)|(1-|w|)
 \end{equation}
for $w \in \md$.
\end{thm}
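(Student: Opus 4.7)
The strategy is the standard reduction to the classical Koebe $1/4$-theorem via a Möbius automorphism of the disk. Concretely, I would fix $w \in \md$ and introduce the automorphism
\[
\tau_w : \md \to \md, \qquad \tau_w(z) = \frac{z+w}{1+\bar w z},
\]
which satisfies $\tau_w(0) = w$ and $\tau_w'(0) = 1 - |w|^2$. Setting $\psi := \vphi \circ \tau_w$, the map $\psi : \md \to \vphi(\md)$ is conformal with $\psi(0) = \vphi(w)$, $\psi'(0) = \vphi'(w)(1-|w|^2)$, and $\psi(\md) = \vphi(\md)$.

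For the lower bound, I would normalize by defining $g(z) := (\psi(z) - \psi(0))/\psi'(0)$, which is univalent on $\md$ with $g(0) = 0$, $g'(0) = 1$. The Koebe $1/4$-theorem (which I would cite from \cite{b_Pommerenke92}) then gives $B(0, 1/4) \subset g(\md)$, which after unwinding becomes
\[
B\bigl(\vphi(w),\,\tfrac{1}{4}|\vphi'(w)|(1-|w|^2)\bigr) \subset \vphi(\md).
\]
Using $1-|w|^2 \geq 1-|w|$ and taking distances to the boundary yields the lower estimate in \eqref{eq:c05}.

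For the upper bound, I would set $d := \dist(\vphi(w),\partial\vphi(\md))$ so that $B(\vphi(w),d) \subset \vphi(\md)$, and consider
\[
h : \md \to \md, \qquad h(z) := \tau_w^{-1}\bigl(\vphi^{-1}(\vphi(w) + dz)\bigr).
\]
This is well defined (the image of $\vphi^{-1}$ lies in $\md$), holomorphic, and satisfies $h(0) = 0$. Schwarz's lemma gives $|h'(0)| \leq 1$, and a direct chain-rule computation yields
\[
h'(0) = \frac{1}{1-|w|^2} \cdot \frac{d}{\vphi'(w)},
\]
so $d \leq (1-|w|^2)|\vphi'(w)| \leq 2(1-|w|)|\vphi'(w)|$, which is the upper estimate.

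The main work, and the only non-elementary ingredient, is the Koebe $1/4$-theorem itself; given that, both bounds follow from the Möbius trick together with Schwarz's lemma, and the asymmetry between the constants $1/4$ and $2$ arises precisely from the factor $(1+|w|) \in [1,2]$ absorbed when passing from $(1-|w|^2)$ to $(1-|w|)$.
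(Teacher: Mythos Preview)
Your proof is correct and is the standard argument: reduce to $w=0$ via a disk automorphism, then invoke Koebe's $\tfrac14$-theorem for the lower bound and the Schwarz lemma for the upper bound. The paper itself does not prove this theorem at all; it simply quotes it from \cite{b_Pommerenke92}, page 9, as a known result (``Koebe's distortion theorem''), so there is no in-paper argument to compare against.
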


\begin{cor}\label{cor:1}
Suppose that (\ref{eq:prop:unb}) is satisfied for some non-negative constants $K,\alpha,\beta_j, \gamma$ and some $\xi_j \in \mt$.
Then for every $\eps, \tau > 0$ the following holds: If $\alpha>0$ then
  \begin{equation}
\sum_{\lambda \in \sigma_d(Z)} \frac{(\dist(\lambda,\partial \sigma(Z_0)) |(\psi^{-1})'(\lambda)|)^{\alpha+1+\tau}}{|\psi^{-1}(\lambda)|^{(\gamma-\eps)_+}} \prod_{j=1}^N |\psi^{-1}(\lambda)-\xi_j|^{(\beta_j-1+\tau)_+} \leq C K, \label{eq:183}
  \end{equation}
where $C=C(\alpha, \vec \beta, \gamma, \vec \xi, \eps, \tau,p)$. Moreover, if $\alpha=0$ then the same inequality holds with $\alpha + 1 +\tau$ replaced by $1$.
\end{cor}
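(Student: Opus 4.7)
The plan is to derive (\ref{eq:183}) from Proposition \ref{prop:unbounded} by a change-of-variables argument based on Koebe's distortion theorem. First, I would simply invoke Proposition \ref{prop:unbounded}: since the hypothesis (\ref{eq:prop:unb}) is identical, we immediately get the desired sum, but with the factor $(1-|\psi^{-1}(\lambda)|)^{\alpha+1+\tau}$ (respectively $(1-|\psi^{-1}(\lambda)|)$ if $\alpha=0$) in the numerator. The whole task is then to replace this factor by $(\dist(\lambda,\partial\sigma(Z_0))|(\psi^{-1})'(\lambda)|)^{\alpha+1+\tau}$ up to a multiplicative constant depending only on $\alpha$ and $\tau$.

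To do this, I would apply Theorem \ref{thm:c01} to the conformal mapping $\vphi=\psi:\md\to\psi(\md)=\rho(Z_0)$. Since $\rho(Z_0)=\mc\setminus\sigma(Z_0)$, the topological identity $\partial\rho(Z_0)=\partial\sigma(Z_0)$ holds, so the right-hand inequality in (\ref{eq:c05}) reads
\begin{equation*}
\dist(\psi(w),\partial\sigma(Z_0)) \leq 2\,|\psi'(w)|\,(1-|w|), \qquad w\in\md.
\end{equation*}
Substituting $w=\psi^{-1}(\lambda)$ and using $|\psi'(\psi^{-1}(\lambda))|=1/|(\psi^{-1})'(\lambda)|$ (which follows from differentiating the identity $\psi\circ\psi^{-1}=\mathrm{id}$), we obtain
\begin{equation*}
\dist(\lambda,\partial\sigma(Z_0))\,|(\psi^{-1})'(\lambda)| \leq 2\,(1-|\psi^{-1}(\lambda)|),\qquad \lambda\in\rho(Z_0).
\end{equation*}

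Raising both sides to the power $\alpha+1+\tau$ (or to the power $1$ when $\alpha=0$) and inserting the resulting estimate into the bound provided by Proposition \ref{prop:unbounded}, the factor $2^{\alpha+1+\tau}$ produced can be absorbed into the constant on the right, and (\ref{eq:183}) follows. I do not expect any real obstacle here, since Corollary \ref{cor:1} is essentially a reformulation of Proposition \ref{prop:unbounded} in intrinsic geometric terms; the only minor point requiring justification is the identification $\partial\psi(\md)=\partial\sigma(Z_0)$, which is immediate from the fact that an open set and its complement share the same topological boundary.
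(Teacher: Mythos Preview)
Your proposal is correct and follows exactly the same approach as the paper: invoke Proposition \ref{prop:unbounded}, then use the upper bound in Koebe's distortion theorem (Theorem \ref{thm:c01}) together with the chain rule $|\psi'(\psi^{-1}(\lambda))|=1/|(\psi^{-1})'(\lambda)|$ and the identity $\partial\rho(Z_0)=\partial\sigma(Z_0)$ to replace $(1-|\psi^{-1}(\lambda)|)$ by $\dist(\lambda,\partial\sigma(Z_0))\,|(\psi^{-1})'(\lambda)|$ up to a constant. Your write-up is in fact slightly more explicit than the paper's, which simply records the two-sided Koebe estimate and leaves the substitution to the reader.
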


\begin{proof}
Use Proposition \ref{prop:unbounded} and the fact that, by Koebe's distortion theorem, for $\lambda \in \rho(Z_0)=\psi(\md)$ we have
\begin{equation*}
4 \dist(\lambda,\partial \rho(Z_0)) \geq  (1-|\psi^{-1}(\lambda)|)|\psi'(\psi^{-1}(\lambda))|\geq \frac 1 2 \dist(\lambda,\partial \rho(Z_0)).
\end{equation*}
Now note that $\partial \rho(Z_0)= \partial \sigma(Z_0)$.
\end{proof}

\subsection{Perturbations of non-negative operators}\label{sec:pert-nonn-oper}

\noindent In this section we assume that $H_0$ is a selfadjoint operator in $\hil$ with $\sigma(H_0)=[0,\infty)$, and $H \in \cld(\hil)$ is densely defined with
\begin{equation}
  \label{eq:52}
  R_{H}(u)-R_{H_0}(u) \in \mathcal{S}_p(\hil)
\end{equation}
for some  $u \in \rho(H_0) \cap \rho(H)$ (which we assume to be non-empty) and some fixed $p \in (0,\infty)$. In particular, by Remark \ref{rem:6}, $H_0$ and $H$ satisfy Assumption \ref{ass:1}  (with $Z_0=H_0$ and $Z=H$, respectively) and we have $$\sigma(H)=[0,\infty) \dotcup \sigma_d(H).$$

\begin{rem}
Given the above assumptions we could use Corollary \ref{cor:1} to derive a quite explicit estimate on the discrete eigenvalues of $H$ in terms of the $\mathcal{S}_p$-norm of $R_H(u)-R_{H_0}(u)$, see \cite{Hans_diss} Theorem 3.3.1. However, we decided against presenting this estimate in this review since it is weaker than an analogous estimate we can obtain using our alternative approach to eigenvalue estimates (see Theorem \ref{thm:num_non}).
\end{rem}

Actually, in the following we will restrict ourselves to a less general but much simpler situation: We will assume that $H=H_0+M$ where $M$ is $H_0$-compact and  $MR_{H_0}(u) \in \mathcal{S}_p(\hil)$ for some (and hence all) $u \in \rho(H_0)$. Moreover, we will assume that there exists $\omega \leq 0$ such that
\begin{equation}\label{eq:omega1}
  \{ \lambda : \Re(\lambda) < \omega \} \subset \rho(H)
\end{equation}
and that there exists $C_0(\omega) > 0$ such that for every $\lambda$ with $\Re(\lambda) < \omega$ we have
\begin{equation}\label{eq:omega2}
  \|R_H(\lambda)\| \leq \frac{C_0(\omega)}{|\Re(\lambda)-\omega|}.
\end{equation}
\begin{rem}
  The existence of some $\omega$ with the above properties is actually implied by the $H_0$-compactness of $M$ (see, e.g., the discussion in \cite{Hans_diss} Section 3.3).
  If the operator $H$ is m-sectorial with vertex $\gamma \leq 0$, then we can choose $\omega=\gamma$ and $C_0(\omega)=1$.  For instance, the  Schr\"odinger operators considered in Chapter \ref{sec:schroedinger} will be $m$-sectorial.
\end{rem}

Now let us fix some $a < \omega$ and choose $b>0$ such that $a=-b^2$. For later purposes let us note that a conformal mapping $\psi_1$ of $\md$ onto $\mc \setminus [0,\infty)$, which maps $0$ onto $a$, is given by
\begin{equation}\label{eq:53}
\psi_1(w)= a\left(\frac{1+w}{1-w}\right)^2, \quad \psi_1^{-1}(\lambda)=\frac{\sqrt{-\lambda}-b}{\sqrt{-\lambda}+b}.
\end{equation}
Here the square root is chosen such that $\Re(\sqrt{-\lambda})>0$ for $\lambda \in \mc \setminus [0,\infty)$. In particular, we note that $\psi_1(-1)=0$ and $\psi_1(1)=\infty$.

In the following,  we will derive a first estimate on $\sigma_d(H)$ (which will not use estimate (\ref{eq:omega2})) given the quantitative assumption that for every $\lambda \in \mc \setminus [0,\infty)$ we have
\begin{equation}
  \label{eq:70}
  \|R_H(a)MR_{H_0}(\lambda)\|_{\mathcal{S}_p}^p \leq \frac{K |\lambda|^\beta}{\dist(\lambda,[0,\infty))^\alpha},
\end{equation}
where $\alpha,K$ are non-negative and $\beta \in \mr$ (note that the values of the constants might also depend on the choice of $a$).
\begin{thm}\label{thm:5}
  With the assumptions and notation from above, assume that the operator $R_H(a)MR_{H_0}(\lambda)$ satisfies assumption (\ref{eq:70}). Let $\eps, \tau > 0$ and define
\begin{equation}
  \label{eq:71}
  \begin{array}{rcl}
    \eta_1&=&\alpha+1+\tau, \\
    \eta_2&=& ((\alpha-2\beta)_+-1+\tau)_+, \\
    \eta_3&=&((2p-3\alpha+2\beta)_+-1+\tau)_+, \\
    \eta_4&=&(p-\eps)_+.
  \end{array}
\end{equation}
Then the following holds: If $\alpha > 0$ then
  \begin{eqnarray}
     \sum_{\lambda \in \sigma_d(H)} \frac{\dist(\lambda,[0,\infty))^{\eta_1}}{|\lambda|^{\frac{\eta_1-\eta_2}{2}}(|\lambda|+|a|)^{\eta_1-\eta_4+\frac{\eta_2+\eta_3}{2}}|\lambda-a|^{\eta_4}} \leq C |a|^{-(\frac{\eta_1+\eta_3}{2}-p+\alpha-\beta)} K, \label{eq:72}
  \end{eqnarray}
where $C=C(\alpha,\beta, p, \eps, \tau)$. Furthermore, if $\alpha=0$ then the same inequality holds with $\eta_1$ replaced by $1$.
\end{thm}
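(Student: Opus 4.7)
The plan is to invoke Proposition~\ref{prop:unbounded} with the explicit conformal map $\psi_1$ from~(\ref{eq:53}) and then translate the resulting estimate back from $w=\psi_1^{-1}(\lambda)$ to $\lambda$. Since $M$ is $H_0$-compact, Remark~\ref{rem:unbound} gives
\[
[R_H(a)-R_{H_0}(a)][(a-\psi_1(w))^{-1}-R_{H_0}(a)]^{-1}=(a-\psi_1(w))\,R_H(a)\,M\,R_{H_0}(\psi_1(w)),
\]
and combining this identity with the hypothesis~(\ref{eq:70}) bounds the $\mathcal{S}_p$-quasinorm on the left to the $p$th power by $K\,|a-\psi_1(w)|^p\,|\psi_1(w)|^\beta\,\dist(\psi_1(w),[0,\infty))^{-\alpha}$.

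The first step is to cast this bound in the form~(\ref{eq:prop:unb}). Direct computation with~(\ref{eq:53}) yields
\[
|a-\psi_1(w)|=\frac{4|a||w|}{|1-w|^2},\qquad |\psi_1(w)|=|a|\,\frac{|1+w|^2}{|1-w|^2},
\]
while Koebe's distortion theorem (Theorem~\ref{thm:c01}), combined with $|\psi_1'(w)|=4|a||1+w|/|1-w|^3$, gives $\dist(\psi_1(w),[0,\infty))\asymp |a|\,|1+w|\,(1-|w|)/|1-w|^3$. Substituting these formulas collects the $w$-dependence into powers of $|w|$, $1-|w|$, $|w-1|$, $|w+1|$; any factor of $|w\pm1|^s$ with $s\geq 0$ that appears in the numerator is bounded uniformly by $2$ and absorbed into a universal constant, leaving
\[
\frac{C\,K\,|a|^{p+\beta-\alpha}\,|w|^p}{(1-|w|)^\alpha\,|w-1|^{(2p-3\alpha+2\beta)_+}\,|w+1|^{(\alpha-2\beta)_+}},
\]
which is~(\ref{eq:prop:unb}) with $\gamma=p$, $\vec{\xi}=(1,-1)$ and $\vec{\beta}=\bigl((2p-3\alpha+2\beta)_+,(\alpha-2\beta)_+\bigr)$. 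Applying Proposition~\ref{prop:unbounded} with these parameters then gives, with $w=\psi_1^{-1}(\lambda)$,
\[
\sum_{\lambda\in\sigma_d(H)}\frac{(1-|w|)^{\eta_1}}{|w|^{\eta_4}}\,|w-1|^{\eta_3}\,|w+1|^{\eta_2}\;\leq\;C\,K\,|a|^{p+\beta-\alpha}.
\]

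The final step is to re-express the summand in the variables $\lambda$ and $a$. The two exact identities above combine into the quadratic $|w|^2|a-\lambda|-2(|a|+|\lambda|)|w|+|a-\lambda|=0$, whose smaller root satisfies $|w|\asymp |a-\lambda|/(|a|+|\lambda|)$; consequently
\[
|w-1|\asymp\sqrt{\frac{|a|}{|a|+|\lambda|}},\quad |w+1|\asymp\sqrt{\frac{|\lambda|}{|a|+|\lambda|}},\quad 1-|w|\asymp\frac{\sqrt{|a|}\,\dist(\lambda,[0,\infty))}{\sqrt{|\lambda|}\,(|a|+|\lambda|)},
\]
the last being a second application of Koebe (or, equivalently, of the elementary identity $|\lambda|-\Re\lambda\asymp \dist(\lambda,[0,\infty))^2/|\lambda|$). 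Inserting these comparisons, all powers of $|w|$ cancel and the summand reduces to
\[
\asymp\frac{|a|^{(\eta_1+\eta_3)/2}\,\dist(\lambda,[0,\infty))^{\eta_1}}{|\lambda|^{(\eta_1-\eta_2)/2}\,(|\lambda|+|a|)^{\eta_1-\eta_4+(\eta_2+\eta_3)/2}\,|\lambda-a|^{\eta_4}}.
\]
Transferring the overall factor $|a|^{(\eta_1+\eta_3)/2}$ to the right-hand side gives exactly~(\ref{eq:72}) with constant $C\,|a|^{-(\frac{\eta_1+\eta_3}{2}-p+\alpha-\beta)}K$, since $p+\beta-\alpha-(\eta_1+\eta_3)/2=-(\frac{\eta_1+\eta_3}{2}-p+\alpha-\beta)$. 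The case $\alpha=0$ is identical, invoking the second alternative of Proposition~\ref{prop:unbounded}.

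The main obstacle is the bookkeeping with the positive-part exponents: one has to check that the (possibly negative) true exponents of $|w\pm1|$ arising from the substitution are correctly handled by the $(\cdot)_+$ truncations in the definition of the $\eta_j$, so that negative powers become harmless constants via $|w\pm1|\leq 2$, and one has to verify that the $|w|$-power $\eta_1+\eta_2/2+\eta_3/2-\eta_4$ produced in the translation cancels \emph{exactly}, with no residual factor of $|w|$ surviving. Once both cancellations are checked, the remainder of the argument is algebra.
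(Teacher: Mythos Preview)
Your proposal is correct and follows essentially the same route as the paper's proof: bound the $\mathcal{S}_p$-norm in the $w$-variable using the explicit formulas for $|a-\psi_1(w)|$, $|\psi_1(w)|$ and Koebe's theorem for $\dist(\psi_1(w),[0,\infty))$, apply Proposition~\ref{prop:unbounded} (the paper uses its Corollary~\ref{cor:1} packaging, which is the same thing), and then translate back. The only cosmetic difference is that the paper works with the exact identities $\psi_1^{-1}(\lambda)\pm 1=\frac{2\sqrt{-\lambda}}{\sqrt{-\lambda}+b},\ \frac{-2b}{\sqrt{-\lambda}+b}$ and $(\psi_1^{-1})'(\lambda)=\frac{-b}{\sqrt{-\lambda}(\sqrt{-\lambda}+b)^2}$ followed by the one-sided bound $|\sqrt{-\lambda}+b|\leq 2(|\lambda|+|a|)^{1/2}$, whereas you phrase everything via two-sided comparisons $\asymp$; since $|\sqrt{-\lambda}+b|\asymp(|\lambda|+|a|)^{1/2}$ does hold, your formulation is equivalent. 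Your worry about a residual $|w|$-power is unfounded: in the substitution only $|w|^{-\eta_4}$ appears and it becomes $((|\lambda|+|a|)/|\lambda-a|)^{\eta_4}$ directly, with no further cancellation required.
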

\begin{rem}
 The parameter $\frac{\eta_1+\eta_3}{2}-p+\alpha-\beta$ is positive, as a short computation shows.
\end{rem}
\begin{proof}[Proof of Theorem \ref{thm:5}]
We consider the case $\alpha > 0$ only. Let $\lambda=\psi_1(w)=a(\frac{1+w}{1-w})^2$ and note that
  \begin{equation*}
    \psi_1(w)-a= \frac{4aw}{(1-w)^2}.
  \end{equation*}
Together with assumption (\ref{eq:70}), the last identity implies that
\begin{equation}\label{eq:73}
  |\psi_1(w)-a|^p \|R_H(a)MR_{H_0}(\psi_1(w))\|_{\mathcal{S}_p}^p \leq \frac{4^p|a|^p|w|^p}{|1-w|^{2p}}\frac{K |\psi_1(w)|^{\beta}}{\dist(\psi_1(w),[0,\infty))^\alpha}.
\end{equation}
Since $\psi_1'(w)= \frac{4a(1+w)}{(1-w)^3}$, we obtain from Theorem \ref{thm:c01} that
\begin{equation*}
  \dist(\psi_1(w),[0,\infty)) \geq |a| \frac{|1+w| (1-|w|)}{|1-w|^3}.
\end{equation*}
Using this inequality and the definition of $\psi_1$ we see that the right-hand side of (\ref{eq:73}) is bounded from above by
\begin{equation*}
\frac{4^pK|a|^{p-\alpha+\beta}|w|^p}{(1-|w|)^\alpha |1+w|^{\alpha-2\beta} |1-w|^{2p-3\alpha+2\beta} }.
\end{equation*}
Applying Corollary \ref{cor:1}, taking Remark \ref{rem:unbound} into account,  we thus obtain that for $\eps,\tau>0$,
\begin{equation}\label{eq:35}
   \sum_{\lambda \in \sigma_d(H)} \frac{|\dist(\lambda,[0,\infty))(\psi_1^{-1})'(\lambda)|^{\eta_1}}{|\psi_1^{-1}(\lambda)|^{\eta_4}} |\psi_1^{-1}(\lambda)+1|^{\eta_2} |\psi_1^{-1}(\lambda)-1|^{\eta_3} \leq C |a|^{p-\alpha+\beta} K,
\end{equation}
where $C=C(\alpha,\beta,p,\eps, \tau)$. Recall that $\psi_1^{-1}(\lambda)= \frac{\sqrt{-\lambda}-b}{\sqrt{-\lambda}+b}$ where $b = \sqrt{-a}$. Since
$$ (\psi_1^{-1})'(\lambda)=\frac{-b}{\sqrt{-\lambda}(\sqrt{-\lambda}+b)^2}$$
and
$$ \psi_1^{-1}(\lambda)-1=\frac{-2b}{\sqrt{-\lambda}+b}, \quad \psi_1^{-1}(\lambda)+1=\frac{2\sqrt{-\lambda}}{\sqrt{-\lambda}+b},$$
estimate (\ref{eq:35}) implies that
\begin{eqnarray*}
  \sum_{\lambda \in \sigma_d(H)} \frac{\dist(\lambda,[0,\infty))^{\eta_1}}{|\lambda|^{\frac{\eta_1-\eta_2}{2}} |\sqrt{-\lambda}+b|^{2\eta_1+\eta_2+\eta_3-\eta_4}|\sqrt{-\lambda}-b|^{\eta_4}}
\leq C |a|^{p-\alpha+\beta-\frac{\eta_1+\eta_3}{2}} K.
\end{eqnarray*}
We conclude the proof by noting that
\begin{equation*}
   |\sqrt{-\lambda}-b|= \frac{|\lambda-a|}{|\sqrt{-\lambda}+b|}
\end{equation*}
and
\begin{equation*}
  |\sqrt{-\lambda}+b| \leq (|\lambda|^{1/2}+b) \leq 2 (|\lambda|+|a|)^{1/2}.
\end{equation*}
\end{proof}

\begin{rem}
Analogous to our discussion in Remark \ref{example1}, let us consider the consequences of estimate (\ref{eq:72}) on the discrete spectrum of $H$ in a little more detail. To this end, let  $\{ \lambda_k\}$ be a sequence of isolated eigenvalues of $H$ converging to some $\lambda^* \in [0,\infty)$. Taking a subsequence, we can suppose that one of the following options holds:
\begin{equation*}
\begin{array}{llllll}
\mbox{(i)} & \lambda^* = 0 \text{ and } \Re(\lambda_k)\leq 0 \quad &
\mbox{(ii)} & \lambda^* = 0 \text{ and } \Re(\lambda_k)> 0 \quad &
\mbox{(iii)} & \lambda^*>0.
\end{array}
\end{equation*}
In case (i), since $\dist(\lambda_k,[0,\infty))=|\lambda_k|$, (\ref{eq:72}) implies the finiteness of $$\sum_k |\lambda_k|^{(\eta_1+\eta_2)/2},$$
so any such sequence must converge to $0$ sufficiently fast. Similarly, in case (ii), (\ref{eq:72}) implies the finiteness of $\sum_k {|\Im(\lambda_k)|^{\eta_1}}{|\lambda_k|^{-(\eta_1-\eta_2)/2}}$, and in case (iii) we obtain the finiteness of $\sum_k |\Im(\lambda_k)|^{\eta_1}$, which shows that any such sequence must converge to the real line sufficiently fast. Estimate (\ref{eq:72}) also provides information about divergent sequences of eigenvalues. For example, if $\{ \lambda_k\}$ is an infinite sequence of eigenvalues which stays bounded away from $[0,\infty)$, that is, $\dist(\lambda_k,[0,\infty)) \geq \delta$ for some $\delta > 0$ and all $k$, then (\ref{eq:72}) implies that
$$ \sum_k \frac 1 {|\lambda_k|^{(3\eta_1+\eta_3)/2}} < \infty,$$
which shows that the sequence $\{ \lambda_k\}$ must diverge to infinity sufficiently fast.
\end{rem}

 Estimate (\ref{eq:72}) provides us with a family of inequalities parameterized by $a < \omega$. By considering an average of all these inequalities, i.e., by multiplying both sides of  (\ref{eq:72}) with an $a$-dependent weight and integrating with respect to $a$, it is possible to extract some more information on $\sigma_d(H)$. Of course, in this context, we have to be aware that the constants and parameters on the right-hand side of (\ref{eq:72}) may still depend on $a$. We can use the estimate (\ref{eq:omega2}) to get rid of this dependence.
\begin{thm}\label{cor:3}
Let $\omega \leq 0$ and $C_0=C_0(\omega)>0$ be chosen as in (\ref{eq:omega1}) and (\ref{eq:omega2}), respectively, and assume that for all $\lambda \in \mc \setminus [0,\infty)$ we have
  \begin{equation}\label{eq:77}
\|MR_{H_0}(\lambda)\|_{\mathcal{S}_p}^p \leq \frac{K|\lambda|^{\beta}}{\dist(\lambda,[0,\infty))^\alpha},
  \end{equation}
where $K \geq 0$, $\alpha > 0$ and $\beta \in \mr$. Let $\tau > 0$ and define
\begin{equation}
  \begin{array}{rcl}
    \eta_0 &=& -\alpha + \beta+\tau, \\
    \eta_1&=&\alpha+1+\tau, \\
    \eta_2&=&((\alpha-2\beta)_+-1+\tau)_+.
     \end{array}
\end{equation}
Then the following holds: If $\omega < 0$ then
  \begin{equation}
  \sum_{\lambda \in \sigma_d(H)} \frac{\dist(\lambda,[0,\infty))^{\eta_1}}{|\lambda|^{\frac{\eta_1-\eta_2}{2}}(|\lambda|+|\omega|)^{\eta_0+\frac{\eta_1+\eta_2}{2}}}
 \leq C \frac{ C_0^p  K}{ |\omega|^{\tau}}. \qquad \label{eq:78}
  \end{equation}
If $\omega = 0$ then for $s> 0$
  \begin{equation}
  \sum_{\lambda \in \sigma_d(H), |\lambda| > s} \frac{\dist(\lambda,[0,\infty))^{\eta_1}}{|\lambda|^{\beta+1+2\tau}}
 + \sum_{\lambda \in \sigma_d(H), |\lambda| \leq s} \frac{\dist(\lambda,[0,\infty))^{\eta_1}}{|\lambda|^{\beta+1}s^{2\tau}}
\leq C \frac{C_0^p K}{  s^{\tau}} . \qquad \label{eq:79}
  \end{equation}
In both cases, $C=C(\alpha,\beta, p,\tau)$. Moreover, if $\alpha=0$ then in (\ref{eq:78}) and (\ref{eq:79})  we can replace $\eta_1$ by $1$.
\end{thm}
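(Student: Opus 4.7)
The plan is to derive the hypothesis of Theorem \ref{thm:5} for a suitable family of parameters $a$, and then eliminate the $a$-dependence by averaging the resulting family of inequalities against an appropriate weight. For any real $a$ with $a<\omega$ (so that $a \in \rho(H)\cap\rho(H_0)$), the submultiplicativity of the Schatten quasi-norm combined with (\ref{eq:omega2}) and (\ref{eq:77}) yields
\[
\|R_H(a)MR_{H_0}(\lambda)\|_{\mathcal{S}_p}^p \le \|R_H(a)\|^p \|MR_{H_0}(\lambda)\|_{\mathcal{S}_p}^p \le \frac{C_0^p K|\lambda|^\beta}{|a-\omega|^p\,\dist(\lambda,[0,\infty))^\alpha}.
\]
This is precisely the hypothesis (\ref{eq:70}) of Theorem \ref{thm:5} with $K$ replaced by $KC_0^p/|a-\omega|^p$, so an application of that theorem produces, for each such $a$, the $a$-dependent eigenvalue estimate
\[
\sum_{\lambda\in\sigma_d(H)} \frac{\dist(\lambda,[0,\infty))^{\eta_1}}{|\lambda|^{(\eta_1-\eta_2)/2}(|\lambda|+|a|)^{\eta_1-\eta_4+(\eta_2+\eta_3)/2}|\lambda-a|^{\eta_4}} \le \frac{C\cdot C_0^p K}{|a|^\kappa|a-\omega|^p},
\]
where $\kappa=(\eta_1+\eta_3)/2-p+\alpha-\beta>0$ and the free parameters $\eps, \tau > 0$ remain at our disposal.

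Next, since every $\lambda \in \sigma_d(H)$ satisfies $\Re(\lambda) \ge \omega$ (as the half-plane $\{\Re(\mu)<\omega\}$ lies in $\rho(H)$ by (\ref{eq:omega1})), one checks that $|\lambda-a| \asymp |\lambda|+|a|$ uniformly for $a\le\omega$. This allows the factor $|\lambda-a|^{\eta_4}$ to be absorbed into $(|\lambda|+|a|)^{\eta_4}$, producing an estimate which depends on $a$ only through $|a|$. Setting $t=-a\ge|\omega|$ and integrating both sides against a power weight $t^{-\mu}\,dt$ over $(|\omega|,\infty)$, the right-hand side becomes a Mellin-type integral $\int_{|\omega|}^\infty t^{-\mu-\kappa}(t-|\omega|)^{-p}\,dt \asymp |\omega|^{1-\mu-\kappa-p}$, while the left-hand side integral $\int_{|\omega|}^\infty t^{-\mu}(|\lambda|+t)^{-(\eta_1+(\eta_2+\eta_3)/2)}\,dt$ is $\asymp (|\lambda|+|\omega|)^{-(\eta_0+(\eta_1+\eta_2)/2)}$ for the appropriate choice of $\mu$. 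Matching exponents then produces (\ref{eq:78}), with the prefactor $|\omega|^{-\tau}$ arising from $|\omega|^{1-\mu-\kappa-p}$ after the substitutions $\eta_0 = -\alpha+\beta+\tau$, $\eta_1 = \alpha+1+\tau$ are inserted.

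For the case $\omega=0$, the bound (\ref{eq:omega2}) degenerates at $a=0$, so the integration domain is restricted to $t\in(s,\infty)$ for the fixed $s>0$. The resulting left-hand Mellin integrals $\int_s^\infty t^{-\mu}(|\lambda|+t)^{-\nu}\,dt$ split into two regimes: for $|\lambda|>s$ the integral is dominated by the region $t\sim |\lambda|$, producing the $|\lambda|^{-(\beta+1+2\tau)}$-weight in the first sum of (\ref{eq:79}); for $|\lambda|\le s$ the integral is dominated by the region $t\sim s$, producing the $|\lambda|^{-(\beta+1)}s^{-2\tau}$-weight in the second sum.

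The main obstacle is the exponent bookkeeping in the averaging step: the free parameters $\eps$ (controlling $\eta_4$), $\tau$ (entering $\eta_0,\eta_1,\eta_2,\eta_3$), and the integration weight $\mu$ must be chosen simultaneously so that (i) the right-hand Mellin integral yields exactly $|\omega|^{-\tau}$ (respectively $s^{-\tau}$), (ii) the left-hand Mellin integral yields the target $(|\lambda|+|\omega|)^{-(\eta_0+(\eta_1+\eta_2)/2)}$-weight, and (iii) the positive-part conditions implicit in the definitions of $\eta_2, \eta_3, \eta_4$ are respected. Convergence of the right-hand integral at $t=|\omega|$ requires $p<1$ after the integration is performed, which suggests using an extra small-$a$ regularization; this technical point is where the constant $C(\alpha,\beta,p,\tau)$ may absorb some awkward dependence coming from splitting the integration domain at, say, $t=2|\omega|$.
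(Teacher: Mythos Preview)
Your overall strategy is exactly the paper's: feed the combined bound $\|R_H(a)MR_{H_0}(\lambda)\|_{\mathcal{S}_p}^p \le C_0^p K|\lambda|^\beta/(|a-\omega|^p\dist(\lambda,[0,\infty))^\alpha)$ into Theorem \ref{thm:5} and then integrate the resulting one-parameter family of inequalities over $a\in(-\infty,\omega)$ against a power weight. The exponent bookkeeping you describe is also essentially correct.

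There are two points where your write-up diverges from the paper and where you should tighten things.

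First, the claim $|\lambda-a|\asymp|\lambda|+|a|$ uniformly for $a\le\omega$ and $\lambda\in\sigma_d(H)$ is false: take $\omega<0$, an eigenvalue $\lambda$ with $\Re(\lambda)$ close to $\omega$, and $a\uparrow\omega$; then $|\lambda-a|\to|\lambda-\omega|$ while $|\lambda|+|a|\to|\lambda|+|\omega|\ge 2|\omega|$, and the ratio can be arbitrarily small. Fortunately you only need the trivial direction $|\lambda-a|\le|\lambda|+|a|$ to pass from the $|\lambda-a|^{\eta_4}$ denominator to $(|\lambda|+|a|)^{\eta_4}$, which is exactly what the paper uses (after setting $\eps=\tau$). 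So drop the $\asymp$ and just invoke the triangle inequality.

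Second, and more substantively, you correctly spot that leaving $(t-|\omega|)^{-p}$ on the right-hand side makes the Mellin integral diverge at $t=|\omega|$ for $p\ge 1$, but your proposed fix (truncate at $t=2|\omega|$) is left as a hand-wave. The paper avoids this entirely by a cleaner trick: before integrating, it multiplies both sides of the $a$-dependent inequality by $r^{\varphi_1-1+\tau}(r-|\omega|)^p/(s+r)^{2\tau}$ (with $r=|a|$, and $s=0$ when $\omega<0$, $s>0$ when $\omega=0$). This moves the troublesome factor to the \emph{left} as $(r-|\omega|)^{+p}$, so the right-hand integrand becomes simply $r^{-1+\tau}(s+r)^{-2\tau}$, which is integrable on $(|\omega|,\infty)$ for every $\tau>0$ and evaluates to $C/|\omega|^\tau$ (resp.\ $C/s^\tau$). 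The left-hand integral is then estimated from below via the substitution $u=(r-|\omega|)/(|\lambda|+|\omega|)$, giving the desired $(|\lambda|+|\omega|)$-weight directly. This is both simpler and sharper than domain-splitting, and it is what makes the constant depend only on $(\alpha,\beta,p,\tau)$.
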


\begin{proof}
 By (\ref{eq:omega2}) we have, for $a<\omega$,
$ \| R_H(a)\| \leq {C_0} {|a-\omega|^{-1}}$, so (\ref{eq:77}) implies that
\begin{eqnarray}
  \label{eq:80}
\|R_H(a)MR_{H_0}(\lambda)\|_{\mathcal{S}_p}^p &\leq& \frac {C_0^pK} {|a-\omega|^p} \frac{ |\lambda|^{\beta}}{\dist(\lambda,[0,\infty))^\alpha}.
\end{eqnarray}
For $\eps, \tau > 0$, let $\eta_j$, where $j =1,\ldots,4$, be defined by (\ref{eq:71}). Then  Theorem \ref{thm:5} implies that
  \begin{eqnarray*}
\sum_{\lambda \in \sigma_d(H)} \frac{\dist(\lambda,[0,\infty))^{\eta_1}}{|\lambda|^{\frac{\eta_1-\eta_2}{2}}(|\lambda|+|a|)^{\eta_1-\eta_4+\frac{\eta_2+\eta_3}{2}}|\lambda-a|^{\eta_4}}
\leq  \frac{C_0^p C(\alpha,\beta,p,\eps,\tau)K}{|a|^{\frac{\eta_1+\eta_3}{2}-p+\alpha-\beta}|a-\omega|^p}.
\end{eqnarray*}
Setting $\eps=\tau$ and using that $|\lambda-a|\leq (|\lambda|+|a|)$ the last inequality implies that
  \begin{eqnarray}
\sum_{\lambda \in \sigma_d(H)} \frac{\dist(\lambda,[0,\infty))^{\eta_1}}{|\lambda|^{\frac{\eta_1-\eta_2}{2}}(|\lambda|+|a|)^{\eta_1+\frac{\eta_2+\eta_3}{2}}}
\leq \frac{C_0^p  C(\alpha,\beta,p,\tau)K}{|a|^{\frac{\eta_1+\eta_3}{2}-p+\alpha-\beta}|a-\omega|^p}. \label{eq:81}
\end{eqnarray}
To simplify notation, we set $r=|a| (>|\omega|)$, $C= C(\alpha,\beta,p,\tau)$,
\begin{eqnarray*}
  \varphi_1 = \frac{\eta_1+\eta_3}{2}-p+\alpha-\beta \quad \text{and} \quad
  \varphi_2 = \eta_1 + \frac{\eta_2+ \eta_3}{2}.
\end{eqnarray*}
Note that $\varphi_1, \varphi_2  > 0$. Now let us introduce some constant $s=s(\omega)$. More precisely, we choose $s=0$ if $|\omega| > 0$ and $s>0$ if $\omega = 0$.
Then we can rewrite (\ref{eq:81}) as follows
\begin{equation}\label{eq:82}
\sum_{\lambda \in \sigma_d(H)} \frac{\dist(\lambda,[0,\infty))^{\eta_1}r^{\varphi_1-1+\tau} (r-|\omega|)^{p}}{|\lambda|^{\frac{\eta_1-\eta_2}{2}} (|\lambda|+r)^{\varphi_2}(s+r)^{2\tau}}
\leq \frac{C_0^p  C K}{r^{1-\tau}(s+r)^{2\tau}} .
\end{equation}
Next, we integrate both sides of the last inequality with respect to $r \in (|\omega|,\infty)$. We obtain for the right-hand side 
\begin{equation}
  \label{eq:83}
  \int_{|\omega|}^\infty  \frac{dr}{r^{1-\tau}(s+r)^{2\tau}} = \left\{
    \begin{array}{cl}
      \frac 1 {\tau |\omega|^\tau}, & |\omega| > 0 \text{ and } s = 0 \\[4pt]
      \frac{C(\tau)}{s^\tau}, & \omega = 0 \text{ and } s > 0.
    \end{array}\right.
\end{equation}
Integrating the left-hand side of (\ref{eq:82}), interchanging sum and integral, it follows that
\begin{eqnarray}
&&  \int_{|\omega|}^\infty dr \left(\sum_{\lambda \in \sigma_d(H)} \frac{\dist(\lambda,[0,\infty))^{\eta_1}r^{\varphi_1-1+\tau} (r-|\omega|)^{p}}{|\lambda|^{\frac{\eta_1-\eta_2}{2}} (|\lambda|+r)^{\varphi_2}(s+r)^{2\tau}}  \right) \nonumber \\
&=& \sum_{\lambda \in \sigma_d(H)} \frac{\dist(\lambda,[0,\infty))^{\eta_1}}{|\lambda|^{\frac{\eta_1-\eta_2}{2}}} \int_{|\omega|}^\infty dr \frac{(r-|\omega|)^{p}r^{\varphi_1-1+\tau}}{(|\lambda|+r)^{\varphi_2}(s+r)^{2\tau}}. \label{eq:84}
\end{eqnarray}
We note that the finiteness of (\ref{eq:84}) is a consequence of (\ref{eq:83}) and (\ref{eq:82}). Substituting $t=\frac{r-|\omega|}{|\lambda|+|\omega|}$, we obtain for the integral in (\ref{eq:84}):
\begin{eqnarray}
&&  \int_{|\omega|}^\infty dr \frac{(r-|\omega|)^{p}r^{\varphi_1-1+\tau}}{(|\lambda|+r)^{\varphi_2}(s+r)^{2\tau}} \nonumber \\
&=& \frac{1}{(|\lambda|+|\omega|)^{\vphi_2-1-p}} \int_{0}^\infty dt \frac{t^{p}[(|\lambda|+|\omega|)t+|\omega|]^{\varphi_1-1+\tau}}{(t+1)^{\varphi_2}[(|\lambda|+|\omega|)t+|\omega| +s]^{2\tau}} \nonumber \\
&\geq& \frac{1}{(|\lambda|+|\omega|)^{\vphi_2-\vphi_1-p-\tau}} \int_{0}^\infty dt \frac{t^{p+\varphi_1-1+\tau}}{(t+1)^{\varphi_2}[(|\lambda|+|\omega|)t+|\omega| +s]^{2\tau}} \nonumber \\
&\geq&  \frac{C(\alpha,\beta,p,\tau)}{(|\lambda|+|\omega|)^{\vphi_2-\vphi_1-p-\tau}\max( |\lambda|+|\omega|, s + |\omega|)^{2\tau}}.\label{eq:85}
\end{eqnarray}
It remains to put together the information contained in (\ref{eq:82})-(\ref{eq:85}) and to evaluate the constants (for instance, $\varphi_2-\varphi_1-p-\tau=\frac{\eta_1+\eta_2}{2}+\eta_0-2\tau$).
\end{proof}

\section{Eigenvalue estimates - an operator theoretic approach}\label{sec:operator}

In this chapter we will present our second approach for studying the distribution of eigenvalues of non-selfadjoint operators, based on material from \cite{MR2836430}. As compared to the complex analysis
method this approach is quite elementary but, as we will see, still strong enough to improve upon some features of the former method.

\subsection{Kato's theorem}

The estimate we are going to present in Section \ref{subsec:num} will be a variant of the following classical estimate of Kato.
\begin{thm}[\cite{MR900507}]
Let $Z,Z_0 \in \bdd(\hil)$ be selfadjoint and assume that $Z-Z_0 \in \mathcal{S}_p(\hil)$ for some $p\geq 1$. Then there exist extended enumerations $\{z_j\}$ and $\{z_j^0\}$
of the discrete spectra of $Z$ and $Z_0$, respectively, such that
\begin{equation}\label{eq:Kato1}
  \sum_{j} |z_j-z_j^0|^p \leq \|Z-Z_0\|_{\mathcal{S}_p}^p.
\end{equation}
\end{thm}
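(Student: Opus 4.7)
My plan is to combine a reduction to the spectral gaps of $\sigma_{ess}(Z_0)$ with a classical stability inequality for eigenvalues of compact selfadjoint operators. The key ingredient is Mirsky's inequality: if $A,B$ are compact selfadjoint, then for every unitarily invariant norm $|||\cdot|||$ and every enumeration of their non-zero eigenvalues in non-increasing order (padded with zeros if needed), one has $|||\operatorname{diag}(a_j - b_j)||| \leq |||A - B|||$; specialized to the Schatten $p$-norm with $p \geq 1$ this gives $\bigl(\sum_j |a_j - b_j|^p\bigr)^{1/p} \leq \|A - B\|_{\mathcal{S}_p}$. The statement of the theorem is the analogous fact for the \emph{discrete} spectra of bounded (but generally non-compact) selfadjoint operators, whose eigenvalues live in the complement of a common essential spectrum.

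Since $M := Z - Z_0 \in \mathcal{S}_p(\hil) \subset \mathcal{S}_\infty(\hil)$, Proposition \ref{prop:3} gives $\sigma_{ess}(Z) = \sigma_{ess}(Z_0)$, so both discrete spectra sit in the open set $\mr \setminus \sigma_{ess}(Z_0)$, which is a countable disjoint union of maximal open intervals $I_\alpha = (a_\alpha, b_\alpha)$ (``spectral gaps''). In each gap the discrete eigenvalues accumulate only at the endpoints and can be enumerated monotonically on either side of a chosen reference point $c_\alpha \in I_\alpha \cap \rho(Z) \cap \rho(Z_0)$. I would then apply Mirsky's inequality gap-by-gap to the compact selfadjoint operators obtained by restricting $Z - c_\alpha$ and $Z_0 - c_\alpha$ to compact sub-intervals $J \subset I_\alpha$, passing to the limit $J \uparrow I_\alpha$ by monotone convergence. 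This yields, in each gap, a pairing $\{z_j^\alpha\}, \{z_j^{0,\alpha}\}$ of the discrete eigenvalues together with a bound of the form $\sum_j |z_j^\alpha - z_j^{0,\alpha}|^p \leq \|P_\alpha M P_\alpha\|_{\mathcal{S}_p}^p$ for suitable spectral projections $P_\alpha$. Summing over $\alpha$ and using the standard fact (valid for $p \geq 1$) that for an orthogonal resolution of identity $I = \sum_\alpha P_\alpha$ one has $\sum_\alpha \|P_\alpha M P_\alpha\|_{\mathcal{S}_p}^p \leq \|M\|_{\mathcal{S}_p}^p$ reduces the total to $\|M\|_{\mathcal{S}_p}^p$.

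The main obstacle is matching enumerations precisely enough to produce the sharp constant $1$ on the right-hand side. The spectral projections $\chi_{I_\alpha}(Z)$ and $\chi_{I_\alpha}(Z_0)$ are distinct, so projecting to either introduces cross-terms that threaten the sharp bound; the ``pinching'' step above only controls these cross-terms with respect to an \emph{outer} decomposition tied to one of the two operators. Kato's resolution in the original paper is to avoid projections altogether and instead to work with the layer-cake representation
\begin{equation*}
\|M\|_{\mathcal{S}_p}^p = p\int_0^\infty t^{p-1} \#\{j : s_j(M) > t\}\, dt,
\end{equation*}
combined with a Weyl-type min-max bound on the difference of the eigenvalue counting functions of $Z$ and $Z_0$ in each gap. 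The ``extended enumerations'' in the statement arise naturally from this counting-function viewpoint, since they allow an eigenvalue that exists for one operator but not the other to be paired with the appropriate endpoint of a gap.
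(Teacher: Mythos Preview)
The paper does not prove this theorem: it is stated as a classical result of Kato and attributed via the citation \cite{MR900507}, with no proof given. There is therefore nothing in the paper to compare your argument against.

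That said, your proposal is not a proof but a plan that you yourself diagnose as incomplete. The core difficulty you identify is real: the spectral projections $\chi_{I_\alpha}(Z)$ and $\chi_{I_\alpha}(Z_0)$ associated to a common gap $I_\alpha$ do not commute, so there is no single orthogonal decomposition $I=\sum_\alpha P_\alpha$ that simultaneously block-diagonalizes both operators. The pinching inequality $\sum_\alpha \|P_\alpha M P_\alpha\|_{\mathcal{S}_p}^p \le \|M\|_{\mathcal{S}_p}^p$ requires the $P_\alpha$ to be orthogonal projections summing to the identity, and choosing them adapted to $Z_0$ (say) destroys the relation between $P_\alpha Z P_\alpha$ and the discrete eigenvalues of $Z$ in $I_\alpha$. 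Your step ``restricting $Z-c_\alpha$ and $Z_0-c_\alpha$ to compact sub-intervals $J\subset I_\alpha$'' is also imprecise: restricting to a spectral subspace of one operator again fails to respect the other, and neither $Z-c_\alpha$ nor $Z_0-c_\alpha$ is compact. So as written the argument does not close.

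Your final paragraph correctly points toward the actual mechanism in Kato's paper: a counting-function / min-max comparison that produces the extended enumerations directly and avoids projections entirely. If you want to write a self-contained proof, that is the route to follow; the Mirsky-plus-pinching strategy, at least in the form you describe, does not deliver the sharp constant~$1$.
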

Here an extended enumeration of the discrete spectrum is a sequence which contains all discrete eigenvalues, counting multiplicity, and which in addition might contain boundary points of the
essential spectrum. An immediate consequence of Kato's theorem is
\begin{cor}
Let $Z,Z_0 \in \bdd(\hil)$ be selfadjoint and assume that $Z-Z_0 \in \mathcal{S}_p(\hil)$ for some $p\geq 1$. Then
\begin{equation}\label{eq:Kato2}
  \sum_{\lambda \in \sigma_d(Z)} \dist(\lambda, \sigma(Z_0))^p \leq \|Z-Z_0\|_{\mathcal{S}_p}^p.
\end{equation}
\end{cor}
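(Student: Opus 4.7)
The corollary is essentially an immediate consequence of Kato's theorem, and the proof amounts to little more than bookkeeping. The plan is to apply Kato's theorem to obtain extended enumerations $\{z_j\}$ and $\{z_j^0\}$ of $\sigma_d(Z)$ and $\sigma_d(Z_0)$ respectively, satisfying the inequality \eqref{eq:Kato1}, and then replace each $|z_j - z_j^0|$ on the right-hand side by $\dist(z_j, \sigma(Z_0))$.

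More precisely, I would first invoke the theorem to choose such enumerations. By definition of an extended enumeration, the sequence $\{z_j\}$ contains every discrete eigenvalue of $Z$ (counted according to multiplicity), possibly together with boundary points of $\sigma_{ess}(Z)$; similarly every $z_j^0$ lies in $\sigma(Z_0)$, being either a discrete eigenvalue of $Z_0$ or a boundary point of $\sigma_{ess}(Z_0)$. In either case $z_j^0 \in \sigma(Z_0)$, so that
\begin{equation*}
  \dist(z_j, \sigma(Z_0)) \leq |z_j - z_j^0|, \qquad j \in \mn.
\end{equation*}

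Now I would restrict the sum in \eqref{eq:Kato1} to those indices $j$ for which $z_j \in \sigma_d(Z)$ (which can only decrease it, since all summands are non-negative), raise the previous inequality to the $p$th power, and sum, obtaining
\begin{equation*}
  \sum_{\lambda \in \sigma_d(Z)} \dist(\lambda, \sigma(Z_0))^p \leq \sum_{j : z_j \in \sigma_d(Z)} |z_j - z_j^0|^p \leq \sum_j |z_j - z_j^0|^p \leq \|Z-Z_0\|_{\mathcal{S}_p}^p,
\end{equation*}
which is the desired bound.

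There is no real obstacle here: once one is comfortable with the notion of extended enumeration, the only thing to check is that each discrete eigenvalue of $Z$ really does occur in $\{z_j\}$ with the correct multiplicity, which is built into the definition, and that the partner $z_j^0$ lies in $\sigma(Z_0)$, which is immediate. The substantive content is entirely contained in Kato's theorem; the corollary is simply its translation into a statement about distances to the full spectrum of the unperturbed operator.
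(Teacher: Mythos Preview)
Your proposal is correct and is exactly the argument the paper has in mind: the paper simply states that the corollary is ``an immediate consequence of Kato's theorem'' without spelling out the details, and what you have written is precisely those details. (One tiny slip: in your opening paragraph you say you will replace terms ``on the right-hand side'', but in \eqref{eq:Kato1} the sum $\sum_j |z_j-z_j^0|^p$ is on the left; your subsequent chain of inequalities is nonetheless correct.)
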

As it stands, Kato's theorem (and its corollary) need not be correct if (at least) one of the operators is non-selfadjoint. Indeed, even in the finite-dimensional case it can fail drastically.
\begin{example}
  Let $\hil= \mc^2$ and for $a>0$ define
$$ Z_0 = \left(
  \begin{array}{cc}
    0 & 1 \\
    0 & 0
  \end{array} \right), \quad Z = \left(
  \begin{array}{cc}
    0 & 1 \\
    a & 0
  \end{array} \right).$$
Then $\sigma_d(Z_0)=\{0\}$, $\sigma_d(Z)=\{\sqrt{a},-\sqrt{a}\}$, $\|Z-Z_0\|_{\mathcal{S}_p}^p = a^p$ and
$$ \sum_{\lambda \in \sigma_d(Z)} \dist(\lambda, \sigma_d(Z_0))^p = 2a^{p/2}.$$
Here for small $a$ the quotient of left- and right-hand side in (\ref{eq:Kato2}) can become arbitrarily large.
\end{example}
On the other hand, Kato's theorem is known to remain correct if $Z_0,Z$ and $Z-Z_0$ are normal \cite{MR1702207} or if $Z_0, Z$ and $Z-Z_0$ are unitary \cite{MR948353}, provided a multiplicative constant $\pi/2$ is added to the right-hand side. Inequality (\ref{eq:Kato2}) remains valid if $Z_0$ and $Z$ (but not necessarily $Z-Z_0$) are normal, but only if $p \geq 2$ \cite{MR577759}.
Moreover, the slightly weaker estimate
\begin{equation}\label{eq:Kato3}
  \sum_{\lambda \in \sigma_d(Z)} \dist(\lambda, \sigma(Z_0))^p \leq C_p \|Z-Z_0\|_{\mathcal{S}_p}^p,
\end{equation}
where the constant $C_p$ is independent of $Z_0$ and $Z$, holds provided that  $Z_0$ is selfadjoint and $Z$ is normal \cite{MR1314392}.

The case of most interest to us is the case where $Z_0$ is selfadjoint (and its spectrum is an interval) and $Z$ is arbitrary. In the next section we will show that in this case inequality (\ref{eq:Kato2}) does indeed remain correct. As we will see, this will be a simple corollary of a much more general estimate.
\begin{rem}
Recently it has been shown  \cite{Hansmann12} that for $p>1$ estimate (\ref{eq:Kato3}) remains valid if $Z_0$ is selfadjoint and $Z$ is arbitrary, even without the additional assumption that $\sigma(Z_0)$ is an interval. We will come back to this result in Chapter \ref{sec:outlook}.
\end{rem}

\subsection{An eigenvalue estimate involving the numerical range}\label{subsec:num}

The following theorem provides an estimate on the eigenvalues of $Z$ given the mere assumption that $Z-Z_0$ is in $\mathcal{S}_p(\hil)$. In particular, it does \emph{not} require that $Z_0$ is selfadjoint, normal or something alike.
\begin{thm}[\cite{MR2836430}] \label{thm:num1}
  Let $Z_0, Z \in \bdd(\hil)$ and assume that $Z-Z_0 \in \mathcal{S}_p(\hil)$ for some $p\geq 1$. Then
  \begin{equation}\label{eq:num1}
    \sum_{\lambda \in \sigma_d(Z)} \dist(\lambda, \num(Z_0))^p \leq \|Z-Z_0\|_{\mathcal{S}_p}^p.
  \end{equation}
\end{thm}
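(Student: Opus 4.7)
My plan is a Schur-triangularization argument on finite-dimensional invariant subspaces, combined with the classical ``diagonal versus singular values'' majorization of A.~Horn.

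The reduction is immediate: since $\dist(\lambda,\num(Z_0))^p = 0$ for $\lambda \in \overline{\num(Z_0)}$, only discrete eigenvalues lying outside $\overline{\num(Z_0)}$ contribute, and since the left-hand side of (\ref{eq:num1}) is a sum of nonnegative terms, it is enough to establish the inequality for every finite list $\{\lambda_1,\dots,\lambda_n\}\subset \sigma_d(Z)\setminus \overline{\num(Z_0)}$ (counted with algebraic multiplicity). For such a list, set $\mathcal{M} := \bigoplus_\mu \ran P_Z(\mu)$, where $\mu$ ranges over the distinct values in the list; then $\mathcal{M}$ is an $n$-dimensional $Z$-invariant subspace of $\hil$ on which the spectrum of $Z|_\mathcal{M}$, counted with multiplicity, is precisely $\{\lambda_1,\ldots,\lambda_n\}$.

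Applying the Schur decomposition to the linear operator $Z|_\mathcal{M}$ on the finite-dimensional inner product space $\mathcal{M}$ produces an orthonormal basis $\{e_1,\dots,e_n\}$ of $\mathcal{M}$ (and hence an orthonormal system in $\hil$) in which $Z|_\mathcal{M}$ is upper triangular with $\lambda_1, \dots, \lambda_n$ on the diagonal, so that $\langle Z e_j, e_j \rangle = \lambda_j$ for each $j$. Splitting $Z = Z_0 + (Z-Z_0)$ yields
\begin{equation*}
\lambda_j - \langle Z_0 e_j, e_j\rangle = \langle (Z-Z_0) e_j, e_j\rangle,
\end{equation*}
and since $\langle Z_0 e_j, e_j\rangle \in \num(Z_0)$ we obtain the pointwise bound
\begin{equation*}
\dist(\lambda_j,\num(Z_0)) \le |\langle (Z-Z_0) e_j, e_j\rangle|.
\end{equation*}

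Raising to the $p$-th power, summing over $j$, and invoking the classical inequality that for any orthonormal system $\{e_j\}$ in $\hil$ and any $K \in \mathcal{S}_p(\hil)$ with $p \geq 1$,
\begin{equation*}
\sum_j |\langle K e_j, e_j\rangle|^p \le \sum_j s_j(K)^p \le \|K\|_{\mathcal{S}_p}^p,
\end{equation*}
applied to $K = Z - Z_0$ completes the proof. The main (mild) obstacle is precisely this diagonal-vs-singular-values bound. For $p = 2$ it follows directly from Parseval applied to the singular-value expansion of $K$, but for $1 \le p < 2$ one must invoke the full weak-majorization theorem of Horn (the decreasing rearrangement of the absolute diagonal entries of $K$ in any orthonormal basis is weakly majorized by the singular values), whose upgrade to $p$-th power sums uses the convexity of $t \mapsto t^p$ and therefore relies essentially on $p \geq 1$, explaining the hypothesis in the theorem.
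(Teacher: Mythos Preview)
Your proof is correct and follows essentially the same approach as the paper: reduction to a finite invariant subspace, Schur triangularization to make the diagonal entries equal to the eigenvalues, and then the diagonal--Schatten inequality $\sum_j |\langle K e_j, e_j\rangle|^p \le \|K\|_{\mathcal{S}_p}^p$. The only cosmetic difference is that the paper obtains this last inequality from the variational characterization of the Schatten norm (Lemma~\ref{lem:schatten}), whereas you invoke Horn's weak-majorization theorem directly; these are equivalent justifications of the same fact.
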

The proof of this theorem will be given below.
\begin{rem}
  It is interesting to observe that estimate (\ref{eq:num1}) remains valid for $p \in (0,1)$ if $Z_0$ and $Z$ are selfadjoint. This is in contrast to Kato's theorem, which will not be correct in this case. We refer to \cite{MR2836430} for a proof of these statements.
\end{rem}
Since the closure of the numerical range of a normal operator coincides with the convex hull of its spectrum, the following corollary is immediate.
\begin{cor}\label{cor:num}
  Let $Z_0, Z \in \bdd(\hil)$ and assume that $Z-Z_0 \in \mathcal{S}_p(\hil)$ for some $p\geq 1$. Moreover, let $Z_0$ be normal and assume that $\sigma(Z_0)$ is convex. Then
  \begin{equation}\label{eq:num2}
    \sum_{\lambda \in \sigma_d(Z)} \dist(\lambda, \sigma(Z_0))^p \leq \|Z-Z_0\|_{\mathcal{S}_p}^p.
  \end{equation}
\end{cor}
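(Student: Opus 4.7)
The plan is to deduce this corollary essentially as a direct consequence of Theorem \ref{thm:num1}, reducing it to an identification of sets: under the hypotheses on $Z_0$, the numerical range and the spectrum coincide, so the distance functions appearing in (\ref{eq:num1}) and (\ref{eq:num2}) are the same.

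First I would invoke Theorem \ref{thm:num1}, which, since $Z_0,Z \in \bdd(\hil)$ and $Z-Z_0 \in \mathcal{S}_p(\hil)$ for $p \geq 1$, yields
\begin{equation*}
\sum_{\lambda \in \sigma_d(Z)} \dist(\lambda, \num(Z_0))^p \leq \|Z-Z_0\|_{\mathcal{S}_p}^p.
\end{equation*}
It therefore suffices to show that $\dist(\lambda, \num(Z_0)) = \dist(\lambda, \sigma(Z_0))$ for every $\lambda \in \mc$, which amounts to showing $\overline{\num}(Z_0) = \sigma(Z_0)$ under our assumptions.

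Next I would recall the fact, stated at the end of Section 2.1, that for a normal operator the closure of the numerical range coincides with the convex hull of the spectrum. Thus, since $Z_0$ is normal,
\begin{equation*}
\overline{\num}(Z_0) = \operatorname{conv}(\sigma(Z_0)).
\end{equation*}
By hypothesis $\sigma(Z_0)$ is convex, and since $Z_0 \in \bdd(\hil)$ its spectrum is also closed (and bounded), hence $\operatorname{conv}(\sigma(Z_0)) = \sigma(Z_0)$. Combining, $\overline{\num}(Z_0) = \sigma(Z_0)$.

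Finally, since $\dist(\lambda, S) = \dist(\lambda, \overline{S})$ for any set $S \subset \mc$, we obtain $\dist(\lambda, \num(Z_0)) = \dist(\lambda, \overline{\num}(Z_0)) = \dist(\lambda, \sigma(Z_0))$, and substituting this into the bound from Theorem \ref{thm:num1} gives (\ref{eq:num2}). There is no real obstacle here: the work is entirely done by Theorem \ref{thm:num1}, and the corollary is just a matter of recognizing that convexity of $\sigma(Z_0)$ collapses the convex hull in the Hausdorff--Toeplitz-type characterization of $\overline{\num}(Z_0)$ for normal operators.
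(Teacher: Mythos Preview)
Your proof is correct and follows exactly the same route as the paper: the paper simply notes that the corollary is immediate from Theorem \ref{thm:num1} together with the fact that the closure of the numerical range of a normal operator equals the convex hull of its spectrum. Your write-up just makes explicit the elementary observations (closedness of $\sigma(Z_0)$, $\dist(\lambda,S)=\dist(\lambda,\overline{S})$) that the paper leaves implicit.
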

In particular, as mentioned above, this corollary applies if $Z_0$ is selfadjoint and the spectrum of $Z_0$ is an interval.
\begin{example}\label{ex:normal2}
Let us take a second look at Example \ref{ex:normal}, where $Z_0 \in \bdd(\hil)$ was normal with $\sigma(Z_0)=\sigma_{ess}(Z_0)=\overline{\md}$ and $Z=Z_0+M$ with $M \in \mathcal{S}_p(\hil)$ for some $p \geq 1$ (in particular, $\sigma_d(Z) \subset \overline{\md}^c$). The previous corollary then implies that
$$ \sum_{\lambda \in \sigma_d(Z)} (|\lambda|-1)^p \leq \|M\|_{\mathcal{S}_p}^p,$$
which is stronger than the corresponding estimate obtained in Example \ref{ex:normal} via the complex analysis approach.
\end{example}

\begin{rem}
A version of Corollary \ref{cor:num} for unbounded operators will be provided in Section \ref{subsec:num_unbound}.
\end{rem}

The proof of Theorem \ref{thm:num1} relies on the following characterization of Schatten-$p$-norms, see \cite{b_Simon05} Proposition 2.6.
\begin{lemma}\label{lem:schatten}
  Let $K \in \mathcal{S}_p(\hil)$, where $p \geq 1$. Then
$$ \|K\|_{\mathcal{S}_p}^p = \sup_{\{ e_i\}, \{f_i\}} \left\{ \sum_i |\langle K e_i, f_i \rangle |^p \right\},$$
where the supremum is taken with respect to arbitrary orthonormal sequences $\{e_i\}$ and $\{f_i\}$ in $\hil$.
\end{lemma}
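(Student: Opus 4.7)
\medskip

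\noindent\textbf{Proof proposal for Lemma \ref{lem:schatten}.}

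The plan is to establish the two inequalities separately. For the lower bound (i.e., that the supremum is at least $\|K\|_{\mathcal{S}_p}^p$) I would use the Schmidt (singular value) representation already recalled in Section \ref{sec:schatten}: write $K = \sum_n s_n(K) \langle \cdot, \psi_n\rangle \phi_n$ with $\{\phi_n\},\{\psi_n\}$ orthonormal, and then simply choose $e_i=\psi_i$ and $f_i=\phi_i$. This yields $\langle Ke_i,f_i\rangle = s_i(K)$, so that $\sum_i |\langle Ke_i, f_i\rangle|^p = \sum_i s_i(K)^p = \|K\|_{\mathcal{S}_p}^p$, showing that the supremum is attained.

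The core of the argument is the upper bound. First I would reduce to a ``diagonal'' situation: given orthonormal sequences $\{e_i\}$ and $\{f_i\}$ (indexed by a common set), let $U \in \bdd(\hil)$ be a partial isometry with $Ue_i = f_i$ on $\overline{\mathrm{span}}\{e_i\}$ and $U=0$ on its orthogonal complement. Then $\|U\|\le 1$, so by (\ref{eq:14}) the operator $T := U^*K$ still lies in $\mathcal{S}_p(\hil)$ with $\|T\|_{\mathcal{S}_p} \le \|K\|_{\mathcal{S}_p}$, and
\[
\langle Ke_i,f_i\rangle = \langle Ke_i,Ue_i\rangle = \langle Te_i,e_i\rangle.
\]
Hence it suffices to prove that for every $T \in \mathcal{S}_p(\hil)$ and every orthonormal sequence $\{e_i\}$,
\[
\sum_i |\langle Te_i,e_i\rangle|^p \le \|T\|_{\mathcal{S}_p}^p.
\]

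To establish this diagonal bound, I would invoke the polar decomposition $T = V|T|$ with $V$ a partial isometry, and use the standard identity $V|T|V^* = |T^*|$. Writing $\langle Te_i,e_i\rangle = \langle |T|^{1/2}e_i,|T|^{1/2}V^*e_i\rangle$ and applying the Cauchy--Schwarz inequality in $\hil$ yields
\[
|\langle Te_i,e_i\rangle|^2 \le \langle |T|e_i,e_i\rangle \cdot \langle |T^*|e_i,e_i\rangle,
\]
and hence, by the Cauchy--Schwarz inequality for sums,
\[
\sum_i |\langle Te_i,e_i\rangle|^p \le \left(\sum_i \langle |T|e_i,e_i\rangle^p\right)^{1/2}\left(\sum_i \langle |T^*|e_i,e_i\rangle^p\right)^{1/2}.
\]
At this point the proof is reduced to the positive case: for a positive compact operator $A\in \mathcal{S}_p(\hil)$ with eigenvalues $\lambda_1\ge\lambda_2\ge\cdots$ and any orthonormal sequence $\{e_i\}$, one has $\sum_i \langle Ae_i,e_i\rangle^p \le \sum_i \lambda_i^p = \|A\|_{\mathcal{S}_p}^p$. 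Applying this with $A=|T|$ and $A=|T^*|$ (both with the same $p$th Schatten norm as $T$) closes the estimate.

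The main obstacle is precisely this last ``positive operator'' step, which is not entirely formal: it requires the Ky Fan / Schur--Horn majorization statement $\sum_{i=1}^n \langle Ae_i,e_i\rangle \le \sum_{i=1}^n \lambda_i$ for positive compact $A$, combined with the fact that majorization implies $\ell^p$-domination of convex, symmetric, nondecreasing functions of the ordered sequences for $p\ge 1$. This is standard (and is the reason the lemma fails for $p<1$ in general), but it is the only non-elementary ingredient; everything else in the argument reduces to polar decomposition and Cauchy--Schwarz.
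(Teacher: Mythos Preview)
Your proof is correct. The paper does not actually give a proof of this lemma; it simply quotes the result from Simon's \emph{Trace Ideals} (Proposition~2.6) and uses it as a black box in the proof of Theorem~\ref{thm:num1}. What you have written is essentially the standard argument one finds in Simon: attain the supremum on the singular vectors, reduce the upper bound to the diagonal case via a partial isometry, use polar decomposition together with $V|T|V^*=|T^*|$ and Cauchy--Schwarz to reduce to positive operators, and finally invoke the Schur--Horn/Ky~Fan majorization $\sum_{i\le n}\langle Ae_i,e_i\rangle\le\sum_{i\le n}\lambda_i(A)$ combined with convexity of $t\mapsto t^p$ for $p\ge 1$. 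Your identification of this last majorization step as the only genuinely non-elementary ingredient, and as the place where $p\ge 1$ enters, is exactly right.
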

\begin{proof}[Proof of Theorem \ref{thm:num1}]
Let $\Lambda=\{\lambda_1, \ldots, \lambda_n\}$ be an arbitrary finite subset of $\sigma_d(Z)$ and let
$$ P_Z(\Lambda)=P_Z(\lambda_1)+\ldots+P_Z(\lambda_n)$$
be the corresponding Riesz-Projection. Then $N:=\rank(P_Z(\Lambda))$ is the sum of the (algebraic) multiplicities of the $\lambda_i$'s and, invoking Schur's lemma, we can find an orthonormal basis $\{e_1,\ldots, e_N\}$ of $\ran(P_Z(\Lambda))$ such that
\begin{equation}
  \label{eq:schur}
 Ze_i = z_{i1}e_1 + z_{i2}e_2+ \ldots + z_{ii} e_i \qquad i = 1, \ldots, N,
\end{equation}
where the $z_{ii}$'s are the eigenvalues in $\Lambda$, counted according to their multiplicity (in other words, the finite-dimensional operator $Z|_{\ran(P_Z(\Lambda))}$ has upper-triangular form). Applying Lemma \ref{lem:schatten} to this particular sequence $\{e_i\}$ we obtain
\begin{eqnarray*}
  \|Z-Z_0\|_{\mathcal{S}_p}^p \geq \sum_{i=1}^N |\langle (Z-Z_0)e_i, e_i \rangle |^p = \sum_{i=1}^N |\langle Z e_i, e_i \rangle - \langle Z_0 e_i, e_i \rangle |^p.
\end{eqnarray*}
But $\langle Z e_i, e_i \rangle = z_{ii}$ and $\langle Z_0 e_i, e_i \rangle \in \num(Z_0)$, so the previous estimate implies that
$$ \sum_{\lambda \in  \Lambda} \dist(\lambda,\num(Z_0))^p \leq \|Z-Z_0\|_{\mathcal{S}_p}^p,$$
where each eigenvalue is counted according to its multiplicity. Noting that the right-hand side is independent of $\Lambda$ concludes the proof of Theorem \ref{thm:num1}.
\end{proof}

\begin{rem}\label{rem:Ouhabaz}
 The method of proof of Theorem \ref{thm:num1} can also be used to recover another recent result about the distribution of eigenvalues of non-selfadjoint operators, by Bruneau and Ouhabaz \cite[Theorem 1]{MR2455843}. Let $H$ be an $m$-sectorial operator in $\hil$ with the associated sesquilinear form $h(u,v)$ and let $\Re(H)$ denote the real part of $H$, i.e. the selfadjoint operator associated to the form $1/2 ( h(u,v)+ \overline{h(v,u)})$. For simplicity, let us suppose that $\dom(H) \subset \dom(\Re(H))$. Then, assuming that the negative part $\Re(H)_-$ of $\Re(H)$ is in $\mathcal{S}_p(\hil)$, we obtain as above that
$$ \sum_{i=1}^N |\langle \Re(H) e_i, e_i \rangle - \langle \Re(H)_+ e_i, e_i \rangle |^p \leq \|\Re(H)_-\|_{\mathcal{S}_p}^p,$$
where $\{e_i\}$ is a Schur Basis corresponding to a finite number of eigenvalues $(\lambda_i)_{i=1}^N$ of $H$ with $\Re(\lambda_i) < 0$.
Since $\langle \Re(H) e_i, e_i \rangle = \Re(\lambda_i)$ and $\langle \Re(H)_+ e_i, e_i \rangle \geq 0$ this estimate implies that
$\sum_{i=1}^N |\Re(\lambda_i)|^p \leq \|\Re(H)_-\|_{\mathcal{S}_p}^p$ and so we arrive at the estimate
\begin{equation}\label{eq:ouhabaz}
  \sum_{\lambda \in \sigma_d(H), \Re(\lambda)<0} |\Re(\lambda)|^p \leq \|\Re(H)_-\|_{\mathcal{S}_p}^p,
\end{equation}
the result of Bruneau and Ouhabaz.
\end{rem}

\subsection{Perturbations of non-negative operators}\label{subsec:num_unbound}

With the help of resolvents we can transfer the eigenvalue estimates of the previous section to unbounded operators. To make things simple, we will only study perturbations of non-negative operators.

\begin{thm}\label{thm:num_non}
Let $H_0 \in \cld(\hil)$ be selfadjoint with $\sigma(H_0) \subset [0,\infty)$. Let $H \in \cld(\hil)$ and assume that $a \in \rho(H) \cap (-\infty,0)$.  If $R_H(a)-R_{H_0}(a) \in \mathcal{S}_p(\hil)$ for some $p \geq 1$, then
\begin{equation}\label{eq:num_non}
\sum_{\lambda \in \sigma_d(H)} \frac{\dist(\lambda,[0,\infty))^p}{|\lambda+a|^p (|\lambda|+|a|)^p}  \leq 8^p \|R_H(a)-R_{H_0}(a)\|_{\mathcal{S}_p}^p.
\end{equation}
\end{thm}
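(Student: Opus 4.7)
The plan is to apply Corollary~\ref{cor:num} to the bounded pair $R_{H_0}(a),R_H(a)\in\bdd(\hil)$ and then push the resulting bound on $\sigma_d(R_H(a))$ back to $\sigma_d(H)$ via Proposition~\ref{prop:1}. Since $a<0$ is real and $H_0$ is selfadjoint with $\sigma(H_0)\subset[0,\infty)$, the resolvent $R_{H_0}(a)$ is bounded and selfadjoint, and Proposition~\ref{prop:1} identifies its spectrum with $\{(a-\mu)^{-1}:\mu\in[0,\infty)\}\cup\{0\}=[-1/|a|,0]$, which is an interval and hence convex. Therefore Corollary~\ref{cor:num} applies and yields
\begin{equation*}
\sum_{\tilde\mu\in\sigma_d(R_H(a))}\dist\bigl(\tilde\mu,[-1/|a|,0]\bigr)^{p}\le\|R_H(a)-R_{H_0}(a)\|_{\mathcal{S}_p}^{p}.
\end{equation*}

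By Proposition~\ref{prop:1} the non-zero points of $\sigma_d(R_H(a))$ are exactly $\{(a-\lambda)^{-1}:\lambda\in\sigma_d(H)\}$, with multiplicities preserved. Any $\lambda\in\sigma_d(H)\cap[0,\infty)$ contributes zero to the target left-hand side since $\dist(\lambda,[0,\infty))=0$, so the task reduces to establishing, for every $\lambda\in\sigma_d(H)\setminus[0,\infty)$, the pointwise lower bound
\begin{equation*}
\dist\bigl((a-\lambda)^{-1},[-1/|a|,0]\bigr)\ge\frac{\dist(\lambda,[0,\infty))}{8\,|\lambda-a|\,(|\lambda|+|a|)},
\end{equation*}
after which raising to the $p$-th power and summing delivers the inequality.

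The core of the argument is thus the verification of this pointwise estimate. Every point of $[-1/|a|,0]$ is $(a-\mu)^{-1}$ for some $\mu\in[0,\infty]$ (with $\mu=\infty$ corresponding to $0$), and $|a-\mu|=|a|+\mu$ for $\mu\ge 0$. The elementary M\"obius identity $|(a-\lambda)^{-1}-(a-\mu)^{-1}|=|\lambda-\mu|/\bigl(|a-\lambda|(|a|+\mu)\bigr)$ reduces the task to showing
\begin{equation*}
\inf_{\mu\in[0,\infty]}\frac{|\lambda-\mu|}{|a|+\mu}\;\ge\;\frac{\dist(\lambda,[0,\infty))}{8\,(|\lambda|+|a|)}.
\end{equation*}
I would handle this by splitting at the threshold $\mu_*=2(|\lambda|+|a|)$. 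When $\mu\le\mu_*$, the bounds $|a|+\mu\le 3(|\lambda|+|a|)$ together with $|\lambda-\mu|\ge\dist(\lambda,[0,\infty))$ give the conclusion directly with constant $1/3$. When $\mu>\mu_*$, one has $\mu\ge 2|\lambda|$ and $\mu>2|a|$, hence $|\lambda-\mu|\ge\mu-|\lambda|\ge\mu/2$ and $|a|+\mu\le 3\mu/2$, so $|\lambda-\mu|/(|a|+\mu)\ge 1/3$, and this easily dominates the right-hand side because $\dist(\lambda,[0,\infty))\le|\lambda|\le|\lambda|+|a|$. The limit $\mu\to\infty$, which takes care of the isolated point $0\in\sigma(R_{H_0}(a))$ when $H_0$ is unbounded, yields the ratio $1$ and is absorbed into the same bound.

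The main (and essentially the only) obstacle is arranging the case split cleanly so that the overall constant comes out no larger than the claimed $8$; no delicate cancellations are involved, and the argument amounts to triangle-inequality manipulations and one monotonicity check. Once the reductions via Corollary~\ref{cor:num} and Proposition~\ref{prop:1} are in place, the pointwise bound is purely elementary bookkeeping.
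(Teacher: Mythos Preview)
Your argument is exactly the paper's: apply Corollary~\ref{cor:num} to the pair $R_{H_0}(a),R_H(a)$, pull the resulting sum back to $\sigma_d(H)$ via Proposition~\ref{prop:1}, and then use the pointwise lower bound on $\dist\bigl((a-\lambda)^{-1},[a^{-1},0]\bigr)$; the paper simply cites \cite{Hans_diss} for this last bound while you supply the elementary threshold split at $\mu_*=2(|\lambda|+|a|)$. (The factor $|\lambda-a|=|a-\lambda|$ that your M\"obius identity produces is the one that actually appears in the cited estimate; the printed $|\lambda+a|$ in the statement is a misprint.)
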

\begin{rem}
  If we restrict the sum on the left-hand side of (\ref{eq:num_non}) to eigenvalues in the right half-plane, then the estimate remains valid without the additional constant $8^p$ on the right-hand side,
see \cite[Theorem 3.1]{MR2836430}.
\end{rem}
\begin{proof}
Applying Corollary \ref{cor:num} to $Z=R_H(a)$ and $Z_0=R_{H_0}(a)$ we obtain
$$ \sum_{\mu \in \sigma_d(R_H(a))} \dist(\mu, \sigma(R_{H_0}(a)))^p \leq \|R_H(a)-R_{H_0}(a)\|_{\mathcal{S}_p}^p.$$
The spectral mapping theorem and the assumption $\sigma(H_0) \subset [0,\infty)$ imply that $\sigma(R_{H_0}(a)) \subset [a^{-1},0]$, so applying the spectral mapping theorem again
we obtain
$$ \sum_{\lambda \in \sigma_d(H)} \dist((a-\lambda)^{-1}, [a^{-1},0])^p \leq \|R_H(a)-R_{H_0}(a)\|_{\mathcal{S}_p}^p.$$
All that remains is to observe that
$$ \dist\left((a-\lambda)^{-1} ,[a^{-1},0] \right) \geq \frac 1 8 \frac{\dist(\lambda,[0,\infty))}{|\lambda+a| (|\lambda|+|a|)},$$
see the proof of Theorem 3.3.1 in \cite{Hans_diss}.
\end{proof}
In applications to, for instance, Schr\"odinger operators, estimates on the  Schatten norm on the right-hand side of (\ref{eq:num_non}) will take a particular form, namely, we will have that
\begin{equation*}
  \|R_H(a)-R_{H_0}(a)\|_{\mathcal{S}_p}^p \leq C_0 |a|^{-\alpha} (|a|-|\omega|)^{-\beta}
\end{equation*}
for some constants $\alpha, \beta \geq 0$, $C_0>0$, $\omega < 0$ and every $a \in (-\infty,\omega)$ (compare this with (\ref{eq:70}) and (\ref{eq:77})). Note that $\alpha,\beta$ and $C_0$ may depend on $p$ but \textit{not} on $a$. In particular, in this case Theorem \ref{thm:num_non} provides us
with a whole family of estimates (i.e. one estimate for every $a < - \omega$) and we can take advantage of this fact by taking a suitable average of all these estimates, similar to what we have done in the derivation of Theorem \ref{cor:3}. This is the content of the
next theorem.

\begin{thm}\label{thm:num_non2}
Let $H_0 \in \cld(\hil)$ be selfadjoint with $\sigma(H_0) \subset [0,\infty)$ and let $H \in \cld(\hil)$ with  $(-\infty, \omega) \subset \rho(H)$ for some $\omega \leq 0$. Suppose that for some $p \geq 1$ there exist $\alpha, \beta \geq 0$ and $C_0>0$ such that for every $a< \omega$ we have
\begin{equation}
  \|R_H(a)-R_{H_0}(a)\|_{\mathcal{S}_p}^p \leq C_0 |a|^{-\alpha} (|a|-|\omega|)^{-\beta}.
\end{equation}
Then for every $\tau>0$ the following holds: If $\omega<0$ then
\begin{equation}\label{eq:num_non1}
\sum_{\lambda \in \sigma_d(H)} \frac{\dist(\lambda,[0,\infty))^p}{(|\lambda|+|\omega|)^{-\alpha-\beta+2p+\tau}} \leq C_0 C(\alpha,\beta,\tau,p) |\omega|^{-\tau}.
\end{equation}
If $\omega=0$ then for $s>0$
\begin{equation}
\sum_{\lambda \in \sigma_d(H),|\lambda|>s} \frac{\dist(\lambda,[0,\infty))^p}{|\lambda|^{-\alpha-\beta+2p+\tau}}
+ \sum_{\lambda \in \sigma_d(H), |\lambda|\leq s} \frac{\dist(\lambda,[0,\infty))^p}{|\lambda|^{-\alpha-\beta+2p-\tau}s^{2\tau}} \leq C_0 s^{-\tau} C(\alpha,\beta,\tau,p).
\end{equation}
\end{thm}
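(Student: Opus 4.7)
The plan is to mirror the averaging argument from the proof of Theorem \ref{cor:3}, but starting now from Theorem \ref{thm:num_non} rather than from Theorem \ref{thm:5}. I would first apply Theorem \ref{thm:num_non} to the pair $(H_0,H)$ at an arbitrary $a\in(-\infty,\omega)$ and combine it with the assumed Schatten-norm bound on $R_H(a)-R_{H_0}(a)$ to obtain the one-parameter family
\[
\sum_{\lambda \in \sigma_d(H)} \frac{\dist(\lambda,[0,\infty))^p}{|\lambda+a|^p (|\lambda|+|a|)^p} \leq 8^p C_0\, |a|^{-\alpha}(|a|-|\omega|)^{-\beta}, \qquad a<\omega,
\]
indexed by $r := |a|\in(|\omega|,\infty)$. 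The averaging step is to multiply by a positive weight $g(r)$ and integrate over $r\in(|\omega|,\infty)$, then interchange sum and integral via Fubini.

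Guided by the template used in the proof of Theorem \ref{cor:3}, I would choose
\[
g(r) = \frac{r^{\alpha-1+\tau}(r-|\omega|)^{\beta}}{(s+r)^{2\tau}},
\]
with $s=0$ when $\omega<0$ and $s>0$ (the free parameter of the statement) when $\omega=0$. The factor $(r-|\omega|)^{\beta}$ cancels the boundary singularity on the right-hand side, the factor $r^{\alpha-1+\tau}$ cancels $|a|^{-\alpha}$, and $(s+r)^{-2\tau}$ forces convergence of the remaining integral. An elementary computation identical to (\ref{eq:83}) then shows that the integrated right-hand side equals $8^p C_0$ times a constant multiple of $|\omega|^{-\tau}$ when $\omega<0$ and of $s^{-\tau}$ when $\omega=0$, which is exactly the right-hand side appearing in (\ref{eq:num_non1}) and its $\omega=0$ counterpart.

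On the left-hand side, after exchanging sum and integral, it remains to lower bound, for every $\lambda\in\sigma_d(H)$, the integral
\[
I(\lambda) := \int_{|\omega|}^{\infty}\frac{r^{\alpha-1+\tau}(r-|\omega|)^{\beta}}{|\lambda-r|^p(|\lambda|+r)^p(s+r)^{2\tau}}\, dr.
\]
To avoid the small-value singularity of $|\lambda-r|^p$ at $r=\Re(\lambda)$, I would use the crude estimate $|\lambda-r|\leq |\lambda|+r$, replacing $|\lambda-r|^p(|\lambda|+r)^p$ by $(|\lambda|+r)^{2p}$. The substitution $t = (r-|\omega|)/(|\lambda|+|\omega|)$, exactly the one leading to (\ref{eq:85}), then converts $I(\lambda)$ into $(|\lambda|+|\omega|)^{\alpha+\beta+\tau-2p}$ times a beta-type integral (with an extra $\max(|\lambda|+|\omega|,\,s+|\omega|)^{-2\tau}$ factor coming from $(s+r)^{-2\tau}$). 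Dividing by this lower bound on $I(\lambda)$ and rearranging yields exactly the exponent $2p+\tau-\alpha-\beta$ appearing in the summand of the statement.

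The main technical obstacle is the factor $|\lambda-r|^p$ in $I(\lambda)$, which has no analogue in the integral arising in Theorem \ref{cor:3}; for $\lambda$ close to the positive real axis this factor can be small, and one must check that discarding it through $|\lambda-r|\leq|\lambda|+r$ does not cost too much. As it turns out, this crude bound is exactly what makes the exponents balance. The remaining bookkeeping, namely verifying uniform convergence in $\lambda$ of the beta-type integral for every $\tau>0$ and separating the $\omega<0$ and $\omega=0$ cases using the parameter $s$ in the spirit of (\ref{eq:83}), is routine.
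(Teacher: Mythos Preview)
Your proposal is correct and follows essentially the same route as the paper's proof: apply Theorem \ref{thm:num_non} together with the Schatten-norm hypothesis, absorb the factor $|\lambda+a|^p=|\lambda-r|^p$ into $(|\lambda|+r)^p$ via the triangle inequality, multiply by the weight $r^{\alpha-1+\tau}(r-|\omega|)^{\beta}(s+r)^{-2\tau}$, integrate over $r\in(|\omega|,\infty)$, and evaluate the two sides exactly as in the proof of Theorem \ref{cor:3}. Your remark that the factor $|\lambda-r|^p$ has no analogue in the proof of Theorem \ref{cor:3} is slightly off (there the factor $|\lambda-a|^{\eta_4}$ plays the same role and is handled by the same triangle-inequality bound), but this does not affect the argument.
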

\begin{proof}
  From Theorem \ref{thm:num_non} and our assumption we obtain
$$ \sum_{\lambda \in \sigma_d(H)} \frac{\dist(\lambda,[0,\infty))^p}{|\lambda+a|^p (|\lambda|+|a|)^p}  \leq 8^p C_0 |a|^{-\alpha} (|a|-|\omega|)^{-\beta},$$
which we can rewrite as (also using the triangle inequality $|a+\lambda| \leq |a| + |\lambda|$)
$$ \sum_{\lambda \in \sigma_d(H)} \frac{\dist(\lambda,[0,\infty))^p |a|^{\alpha-1+\tau} (|a|-|\omega|)^\beta}{(|\lambda|+|a|)^{2p} (s+|a|)^{2\tau}}  \leq 8^p C_0 |a|^{-1+\tau} (s+|a|)^{-2\tau},$$
where we choose $s=0$ if $|\omega|>0$ and $s>0$ if $\omega=0$.
Integrating with respect to $r:=|a|$ we obtain
\begin{equation*}
 \sum_{\lambda \in \sigma_d(H)} \dist(\lambda, [0,\infty))^p \int_{|\omega|}^\infty dr \frac{r^{\alpha-1+\tau} (r-|\omega|)^\beta}{(|\lambda|+r)^{2p} (s+r)^{2\tau}}
\leq C_0 8^p \int_{|\omega|}^\infty dr \frac{1}{ r^{1-\tau} (s+r)^{2\tau}}.
\end{equation*}
As in the proof of Theorem \ref{cor:3} we can estimate the integral on the left from below by
 $$ \frac{C(\alpha,\beta,p,\tau)}{(|\lambda|+|\omega|)^{-\alpha-\beta+2p-\tau} \max(|\lambda|+|\omega|,s+|\omega|)^{2\tau}},$$
and the integral on the right is equal to
$$    \left\{  \begin{array}{cl}
      \frac 1 {\tau |\omega|^\tau}, & |\omega| > 0 \text{ and } s = 0 \\[4pt]
      \frac{C(\tau)}{s^\tau}, & \omega = 0 \text{ and } s > 0.
    \end{array} \right.
$$
Putting everything together concludes the proof.
\end{proof}

\section{Comparing the two approaches}\label{sec:comparison}

\setcounter{equation}{0}
\numberwithin{equation}{section}

Above we have developed two quite different approaches for obtaining inequalities involving the eigenvalues of non-selfadjoint operators.
One is based on applying complex-analysis theorems on the distribution of zeros to perturbation determinants (Chapter \ref{sec:complex}), and the other relies on direct operator-theoretic arguments
involving the numerical range (Chapter \ref{sec:operator}). We now wish to compare the results obtained by the two methods, in order to understand the strengths and limitations of
each approach.

We consider only the case in which $A_0$ is a bounded self-adjoint operator, with $\sigma(A_0)=[a,b]$, and $A=A_0+M$, where $M\in \mathcal{S}_p(\hil)$.
Corollary \ref{cor:2}, obtained using the complex-analysis approach, tells us that, when $p\geq 1$ and for any $\tau>0$ we have
\begin{equation}
\label{eq:c1}
  \sum_{\lambda \in \sigma_d(A)} \frac{\dist(\lambda,[a,b])^{p+1+\tau}}{|\lambda-b||\lambda-a|} \leq C(p,\tau) (b-a)^{-1+\tau} \|M\|_{\mathcal{S}_p}^p
\end{equation}
and that when $p<1$, and $0<\tau<1-p$,
\begin{equation}
\label{eq:c3}
  \sum_{\lambda \in \sigma_d(A)} \left( \frac{\dist(\lambda,[a,b])}{|b-\lambda|^{1/2}|a-\lambda|^{1/2}}\right)^{p+1+\tau} \leq C(p,\tau) (b-a)^{-p} \|M\|_{\mathcal{S}_p}^p.
\end{equation}
Corollary \ref{cor:num}, obtained using the numerical range approach, tells us that
\begin{equation}\label{eq:c2}
    \sum_{\lambda \in \sigma_d(A)} \dist(\lambda, [a,b])^p \leq \|M\|_{\mathcal{S}_p}^p.
\end{equation}
Clearly a good feature of (\ref{eq:c2}), as opposed to (\ref{eq:c1}), is the absence of a constant $C$ on the right-hand side. An optimal
value of the constant $C(p,\tau)$ in (\ref{eq:c1}) is not known, and though explicit upper bounds for such an optimal value could be
extracted by making all the estimates used in its derivation explicit, the resulting expression would be complicated, and there is little reason to
expect that it would yield a sharp result.

We now compare the information that can be deduced from these inequalities  regarding the asymptotic behavior of sequences of eigenvalues.

(i) Assume first that $p\geq 1$. To begin with consider a sequence of eigenvalues $\{\lambda_k\}$ with $\lambda_k\rightarrow \lambda^*\in (a,b)$ as $k\rightarrow \infty$.
Then $|\lambda_k-a|$ and $|\lambda_k-b|$ are bounded from below by some positive constant, hence we conclude from (\ref{eq:c1}) that the
sum $\sum_{k=1}^\infty \dist(\lambda_k,[a,b])^{p+1+\tau}$ is finite, for any $\tau>0$. However, (\ref{eq:c2}) implies the finiteness
of $\sum_{k=1}^\infty \dist(\lambda_k,[a,b])^{p}$, obviously a stronger result.

If we consider a sequence $\{\lambda_k\}$ with $\lambda_k\rightarrow  a$, then (\ref{eq:c1}) implies
$$\sum_{k=1}^\infty \frac{\dist(\lambda_k,[a,b])^{p+1+\tau}}{|a-\lambda_k|}<\infty,  $$
for any $\tau>0$. However, since $|\lambda_k-a| \geq \dist(\lambda_k,[a,b])$, so that
$$\dist(\lambda_k, [a,b])^p\geq \frac{\dist(\lambda_k, [a,b])^{p+1}}{|\lambda_k-a|}$$
(\ref{eq:c2}) implies the stronger result
$$\sum_{k=1}^\infty \frac{\dist(\lambda_k,[a,b])^{p+1}}{|\lambda_k-a|}<\infty.$$
Thus, we have established the superiority of (\ref{eq:c2}) over (\ref{eq:c1}).

(ii) Let us examine the case $0<p<1$,  $0<\tau<1-p$.
Corollary \ref{cor:num} is not valid for $p<1$, but we can use the fact that $\mathcal{S}_p(\hil)\subset \mathcal{S}_1(\hil)$ to conclude that
\begin{equation}\label{eq:c4}
    \sum_{\lambda \in \sigma_d(A)} \dist(\lambda, [a,b]) \leq \|M\|_{\mathcal{S}_1}.
\end{equation}
Considering a sequence $\{\lambda_k\}$ of eigenvalues with $\lambda_k\rightarrow \lambda^*\in (a,b)$ as $k\rightarrow \infty$, (\ref{eq:c3}) implies
$\sum_{k=1}^\infty \dist(\lambda_k,[a,b])^{p+1+\tau}<\infty$, which is weaker than the result $$\sum_{k=1}^\infty \dist(\lambda_k,[a,b])<\infty$$ implied by (\ref{eq:c4}).

However, considering a sequence $\{\lambda_k\}$ of eigenvalues with $\lambda_k\rightarrow  a$ as $k\rightarrow \infty$, (\ref{eq:c3}) gives
\begin{equation}\label{eq:c5}\sum_{k=1}^\infty \Big(\frac{\dist(\lambda_k,[a,b])}{|\lambda_k-a|^{\frac{1}{2}}}\Big)^{p+1+\tau}<\infty.\end{equation}
This result does {\it{not}} follow from (\ref{eq:c4}). To see this, take a real sequence with $\lambda_k<a$, so that $|\lambda_k-a|=\dist(\lambda_k,[a,b])$. Then
(\ref{eq:c5}) becomes $$\sum_{k=1}^{\infty}\dist(\lambda_k,[a,b])^{\frac{1}{2}(p+1+\tau)}<\infty,$$ which is stronger than the result given by
(\ref{eq:c4}) since $p+\tau<1$ implies that $\frac{1}{2}(p+1+\tau)<1$.

Summing up, we have seen that in nearly all cases Corollary \ref{cor:num}, proved by the numerical range approach, provides sharper information on the asymptotics of eigenvalue sequences
than provided by Corollary \ref{cor:2}, proved by the complex analysis approach, the sole exception being the case $p<1$ when considering a sequence of eigenvalues
converging to an edge of the essential spectrum.

This, however, is not the end of the story. Corollary \ref{cor:2} which we have been discussing, is only the simplest result that we can obtain using the
complex analysis approach. We recall that Theorem \ref{thm:3}, which provides inequalities on the
eigenvalues assuming an estimate on the quantity $ \| M_2R_{A_0}(\lambda)M_1\|_{\mathcal{S}_p}^p$, where $M_1,M_2$ are a pair of operators with $M_1M_2=M$.
Corollary \ref{cor:2} was obtained by taking $M_1=I,M_2=M$. As we shall see in Chapter \ref{sec:jacobi}, in an application to Jacobi operators, by choosing a
different decomposition $M=M_1M_2$ one may use Theorem \ref{thm:3} to obtain stronger results than those provided by Corollaries \ref{cor:2} and \ref{cor:num}.

We may thus conclude that if one is considering a general bounded operator of the form $A=A_0+M$, with $A_0$ selfadjoint, $\sigma(A_0)=[a,b]$, and
$M\in \mathcal{S}_p(\hil)$, $p\geq 1$, and the only quantitative information available is a bound on the norm $ \| M\|_{\mathcal{S}_p}$, then the best
estimate on the discrete spectrum of $A$ is provided by the numerical range method. If, however, one is dealing with specific classes of operators of the
above form which have a special structure which allows to perform an appropriate decomposition $M=M_1M_2$ and estimate  $ \| M_2R_{A_0}(\lambda)M_1\|_{\mathcal{S}_p}^p$,
one can sometimes obtain stronger results using the complex analysis approach (Theorem \ref{thm:3}).

\begin{rem*}
What has been said in the last paragraph applies also to the case of unbounded operators, as we will see in our discussion of Schr\"odinger operators in Chapter \ref{sec:schroedinger}.
\end{rem*}

\section{Applications}\label{sec:applications}
\numberwithin{equation}{subsection}

In this chapter we will finally apply our abstract estimates to some more concrete situations. Namely, we will analyze the discrete eigenvalues of  bounded Jacobi operators on $l^2(\mz)$ and of unbounded Schr\"odinger operators in $L^2(\mr^d)$, respectively.

\subsection{Jacobi operators}\label{sec:jacobi}

In this section, which is based on \cite{HK09}, we apply our general results on bounded non-selfadjoint perturbations of selfadjoint operators
 to obtain estimates on the discrete spectrum of complex Jacobi operators.

The  spectral theory of Jacobi operators is a classical subject with many beautiful results, though by far the
majority of results relate to selfadjoint Jacobi operators. Using our results, we are able to obtain new estimates on the eigenvalues of non-selfadjoint  Jacobi operators which are nearly as strong as those which have been obtained in the selfadjoint case. The techniques, however, are very different, as previous results for the
selfadjoint case have been obtained by methods which rely very strongly on the selfadjointness. This example thus gives a striking illustration of
the utility of our general results in studying a concrete class of operators.

Another interesting feature of these results is that they provide an example in which the
 results proved by means of the complex-analysis approach of Chapter \ref{sec:complex} are (in many respects) stronger
 than those we can obtain at present using the operator-theoretic approach of Chapter \ref{sec:operator}.
 This is in contrast with the case of `general' operators, for which, as we have discussed above, the operator-theory approach provides results which are usually stronger.\\

\noindent Given three bounded complex sequences $\{a_k\}_{k \in \mz}, \{b_k\}_{k \in \mz}$ and $ \{c_k\}_{k\in \mz}$, we define the associated (complex) \textit{Jacobi operator} $J=J(a_k,b_k,c_k): l^2(\mz) \to l^2(\mz)$ as follows:
\begin{equation}\label{eq:102}
(J u)(k)=a_{k-1}u(k-1)+b_k u(k)+c_k u(k+1), \quad u \in l^2(\mz).
\end{equation}
It is easy to see that $J$ is a bounded operator on $l^2(\mz)$ with $$\|J\| \leq \sup_k |a_k| + \sup_k |b_k| + \sup_k|c_k|.$$
Moreover, with respect to the standard basis $\{\delta_k\}_{k \in \mz}$ of $l^2(\mz)$, i.e., $\delta_k(j)=0$ if $j\neq k$ and $\delta_k(k)=1$, $J$ can be represented by the two-sided infinite tridiagonal matrix
$$\left(
      \begin{array}{ccccccc}
       \ddots & \ddots & \ddots &  &  &  &  \\
        &  a_{-1} & b_0 & c_0 &  &  &   \\
        &  & a_0 & b_1 & c_1 &  &   \\
        &  &  & a_1 & b_2 & c_2 &  \\
        &  &  &  & \ddots & \ddots & \ddots
      \end{array}
    \right).
$$
In view of this representation it is also customary to refer to $J$ as a \textit{Jacobi matrix}.
\begin{example}
The \textit{discrete Laplace operator} on $l^2(\mz)$ coincides with the Jacobi operator $J(1,-2,1)$. Similarly, the Jacobi operator $J(-1,2+d_k,-1)$ ($d_k\in \mc$) describes a \textit{discrete Schr\"odinger operator}.
\end{example}

\noindent In the following, we will focus on Jacobi operators  which are perturbations of the \textit{free} Jacobi operator $J_0=J(1,0,1)$, i.e.,
\begin{equation}
  \label{eq:103}
(J_0u)(k)=u(k-1)+u(k+1), \quad u \in l^2(\mz).
\end{equation}
More precisely, if $J=J(a_k,b_k,c_k)$ is defined as above, then throughout this section we assume that $J-J_0$ is compact.
\begin{prop}
  The operator $J-J_0$ is compact if and only if
\begin{equation}\label{eq:112}
 \lim_{|k|\rightarrow \infty}a_k=\lim_{|k|\rightarrow
\infty}c_k=1 \quad \text{and} \quad \lim_{|k|\rightarrow
\infty}b_k=0.
\end{equation}
\end{prop}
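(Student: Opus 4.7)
The plan is to decompose $J - J_0$ as a sum of a diagonal operator and two shifted-diagonal operators, whose compactness is easy to characterize in terms of the decay of their coefficient sequences.

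For the \emph{if} direction, I would introduce the bilateral shift $S$ on $l^2(\mz)$, defined by $(Su)(k) = u(k+1)$, with adjoint $(S^*u)(k) = u(k-1)$, and write
$$J - J_0 = S^* D_a + D_b + D_c S,$$
where $D_a, D_b, D_c$ are the bounded multiplication operators with diagonals $\{a_k - 1\}, \{b_k\}, \{c_k - 1\}$, respectively; this identity is verified by evaluating both sides on the basis vectors $\delta_k$. A bounded diagonal operator on $l^2(\mz)$ is compact if and only if its diagonal sequence vanishes at infinity — the ``if'' part follows by approximating it in operator norm by its finite-rank truncations to $\{|k| \le n\}$. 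Under assumption (\ref{eq:112}) the three operators $D_a, D_b, D_c$ are therefore compact, and since compactness is preserved under multiplication by the bounded operators $S, S^*$ and under finite sums, $J - J_0$ is compact.

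For the \emph{only if} direction, I would exploit the fact that the basis vectors $\delta_k$ converge weakly to $0$ as $|k| \to \infty$ (since $\langle u, \delta_k\rangle = \overline{u(k)} \to 0$ for every $u \in l^2(\mz)$). Because compact operators map weakly null sequences to norm null sequences, the compactness of $J - J_0$ forces $\|(J-J_0)\delta_k\| \to 0$. A direct computation using (\ref{eq:102}) gives
$$(J-J_0)\delta_j = (a_j - 1)\delta_{j+1} + b_j \delta_j + (c_{j-1} - 1)\delta_{j-1},$$
so that $\|(J-J_0)\delta_j\|^2 = |a_j - 1|^2 + |b_j|^2 + |c_{j-1} - 1|^2$. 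The vanishing of this quantity as $|j| \to \infty$ is exactly (\ref{eq:112}).

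Both directions are essentially routine: the one real ingredient beyond the decomposition is the finite-rank approximation of diagonal operators and the weak-to-norm continuity of compact operators. The only delicate point is the bookkeeping of indices, so that the shift in $S^* D_a$ and $D_c S$ matches the convention of (\ref{eq:102}); this is easily checked by testing on $\{\delta_k\}$. I do not anticipate a serious obstacle.
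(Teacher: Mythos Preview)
Your proof is correct and follows essentially the same approach as the paper: for the ``only if'' direction both arguments test $J-J_0$ on the weakly null sequence $\{\delta_k\}$ and compute $\|(J-J_0)\delta_k\|^2 = |a_k-1|^2+|b_k|^2+|c_{k-1}-1|^2$, while for the ``if'' direction the paper simply asserts that $J-J_0$ is a norm limit of finite-rank operators, which your decomposition $S^*D_a + D_b + D_cS$ makes explicit. Your index bookkeeping checks out.
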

\begin{proof}
  It is easy to see that $J-J_0$ is a norm limit of finite rank operators, and hence compact, if (\ref{eq:112}) is satisfied. On the other hand, if $J-J_0$ is compact then it maps weakly convergent zero-sequences into norm convergent zero-sequences. In particular,
$$ \|(J-J_0)\delta_k\|_{l^2}^2 = |a_k-1|^2+|b_k|^2+|c_{k-1}-1|^2 \overset{|k| \to \infty}{\longrightarrow} 0$$
as desired.
\end{proof}
Let $F:l^2(\mz) \to L^2(0,2\pi)$ denote the Fourier transform, i.e.,
$$({F}u)(\theta)= \frac 1 {\sqrt{2\pi}} \sum_{k \in \mz} e^{ik\theta}u_k.$$
Then  for $u \in l^2(\mz)$ and $\theta \in
[0,2\pi)$ we have
\begin{equation}\label{fourier}
({F}J_0 u)(\theta) = 2\cos(\theta) ({F}u)(\theta),
\end{equation}
as a short computation shows. In particular, we see  that $J_0$ is unitarily equivalent to the operator of multiplication by the function $2\cos(\theta)$ on $L^2(0,2\pi)$, and so the spectrum of $J_0$ coincides with the interval $[-2,2]$. Consequently, the compactness of $J-J_0$ implies that
$$ \sigma(J)=[-2,2] \dotcup \sigma_d(J),$$
i.e., the isolated eigenvalues of $J$ are situated in $\mc \setminus [-2,2]$ and can accumulate on $[-2,2]$ only.

Our aim is to derive estimates on $\sigma_d(J)$ given the stronger assumption that $J-J_0 \in \mathcal{S}_p$ (for simplicity, in this section we set  $\mathcal{S}_p=\mathcal{S}_p(l^2(\mz)$). To this end, let us define a sequence $v= \{v_k\}_{k \in \mz}$ by setting
\begin{equation}\label{defdk}v_k=\max\Big(|a_{k-1}-1|,|a_{k}-1|,|b_k|,|c_{k-1}-1|,|c_k-1|\Big).\end{equation}
Clearly, the compactness of $J-J_0$ is equivalent to $v_k$
converging to $0$. Moreover, for $p\geq 1$ we will show in Lemma \ref{esti1} below that $J-J_0 \in \mathcal{S}_p$ if and only if $v \in l^p(\mz)$, and the $\mathcal{S}_p$-norm of $J-J_0$ and the $l^p$-norm of $v$ are equivalent.
If $p \in (0,1)$, then the $\mathcal{S}_p$-norm of $J-J_0$ and the $l^p$-norm of $v$ are still equivalent in the diagonal case when $a_k=c_k\equiv 1$. In general, however, we only obtain a one-sided estimate.
\begin{lemma}[\cite{HK09}, Lemma 8]\label{esti1}
  Let $p > 0$. Then
  \begin{equation}
     \|J-J_0\|_{\mathcal{S}_p} \leq 3 \|v\|_{l^p}.
  \end{equation}
Moreover, if $p \geq 1$ then
\begin{equation}
  \label{eq:114}
 6^{-{1}/{p}} \|v\|_{l^p}\leq \|J-J_0\|_{\mathcal{S}_p}.
\end{equation}
\end{lemma}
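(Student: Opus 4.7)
The plan is to decompose $J - J_0$ into its three diagonals and treat each piece via its shift-invariant structure. Write $J - J_0 = A + B + C$, where $A$ is the sub-diagonal part defined by $A\delta_k = (a_k - 1)\delta_{k+1}$, $B$ the main diagonal $B\delta_k = b_k \delta_k$, and $C$ the super-diagonal part $C\delta_k = (c_{k-1} - 1)\delta_{k-1}$. The operator $B$ is diagonal, so its singular values are the rearrangement of $\{|b_k|\}$ and $\|B\|_{\mathcal{S}_p} = \|\{b_k\}\|_{l^p}$. Each of $A$ and $C$ factors as a diagonal operator times a bilateral shift on $l^2(\mz)$, which is unitary; by the ideal property (\ref{eq:14}) this gives $\|A\|_{\mathcal{S}_p} = \|\{a_k-1\}\|_{l^p}$ and $\|C\|_{\mathcal{S}_p} = \|\{c_k-1\}\|_{l^p}$. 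All three sequences are pointwise dominated by $v$, hence each Schatten norm is bounded by $\|v\|_{l^p}$. For $p \geq 1$, the triangle inequality in $\mathcal{S}_p$ delivers the upper bound directly; for $p \in (0,1)$, the subadditivity of $\|\cdot\|_{\mathcal{S}_p}^p$ plays the analogous role.

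For the lower bound (stated only for $p \geq 1$), I would appeal to the variational formula of Lemma \ref{lem:schatten}, namely $\|K\|_{\mathcal{S}_p}^p = \sup \sum_i |\langle K e_i, f_i \rangle|^p$ over orthonormal families. Fix $\{e_i\} = \{\delta_k\}_{k \in \mz}$. Choosing $\{f_i\}$ in turn to be $\{\delta_k\}$, $\{\delta_{k+1}\}$, and $\{\delta_{k-1}\}$ extracts the diagonal, sub-diagonal, and super-diagonal entries of $J - J_0$, respectively, yielding
\begin{equation*}
\sum_k |b_k|^p \le \|J-J_0\|_{\mathcal{S}_p}^p, \quad \sum_k |a_k - 1|^p \le \|J-J_0\|_{\mathcal{S}_p}^p, \quad \sum_k |c_k - 1|^p \le \|J-J_0\|_{\mathcal{S}_p}^p.
\end{equation*}
Using $v_k^p \leq |a_{k-1}-1|^p + |a_k-1|^p + |b_k|^p + |c_{k-1}-1|^p + |c_k-1|^p$ (the $p$-th power of a max of non-negatives is bounded by the sum of the $p$-th powers), summing in $k$ and re-indexing the shifted sums gives $\|v\|_{l^p}^p \le 5\|J - J_0\|_{\mathcal{S}_p}^p$, which in particular implies (\ref{eq:114}).

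There is no real conceptual obstacle; the argument is essentially bookkeeping against the tridiagonal structure. The one genuinely delicate point is matching the constant $3$ in the upper bound uniformly in $p$: the quasi-triangle inequality for $p \in (0,1)$ produces $3^{1/p}$ rather than $3$, so either a mild refinement of the splitting (for instance, grouping indices modulo three) is required in that range, or the constant is to be interpreted accordingly. The restriction $p \geq 1$ in the lower bound is intrinsic because Lemma \ref{lem:schatten} is itself a $p \geq 1$ statement.
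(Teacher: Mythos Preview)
Your lower-bound argument is clean and actually yields the slightly better constant $5^{-1/p}$, which certainly implies (\ref{eq:114}). Your upper bound for $p\geq 1$ via the triangle inequality in $\mathcal{S}_p$ is also correct.

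The gap is in the upper bound for $p\in(0,1)$. As you already note, the $p$-subadditivity $\|A+B+C\|_{\mathcal{S}_p}^p\leq \|A\|_{\mathcal{S}_p}^p+\|B\|_{\mathcal{S}_p}^p+\|C\|_{\mathcal{S}_p}^p$ only gives $\|J-J_0\|_{\mathcal{S}_p}\leq 3^{1/p}\|v\|_{l^p}$, not the claimed $3\|v\|_{l^p}$. Your proposed fix of ``grouping indices modulo three'' does not work: the three diagonals of a tridiagonal matrix do not have mutually orthogonal ranges (or cokernels) after any mod-$3$ partition of the index set, so you cannot convert the sum $A+B+C$ into an orthogonal direct sum whose $\mathcal{S}_p$-norm decouples.

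The device that does give the constant $3$ uniformly in $p>0$ is already displayed in the paper as equation (\ref{rj}): write $J-J_0=M_{v^{1/2}}UM_{v^{1/2}}$ with the tridiagonal operator $U$ satisfying $\|U\|\leq 3$. Then H\"older's inequality for Schatten classes (stated in Section~\ref{sec:schatten} for all $0<p,q\leq\infty$) together with the ideal property (\ref{eq:14}) yields
\[
\|J-J_0\|_{\mathcal{S}_p}\leq \|M_{v^{1/2}}\|_{\mathcal{S}_{2p}}\,\|U\|\,\|M_{v^{1/2}}\|_{\mathcal{S}_{2p}}=3\,\|v\|_{l^p},
\]
since $\|M_{v^{1/2}}\|_{\mathcal{S}_{2p}}^{2p}=\sum_k v_k^p$. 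This is the missing step that replaces your three-term splitting in the range $p<1$.
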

From the above estimate and Corollary \ref{cor:2} we obtain
\begin{thm}\label{thm:12}
Let $\tau \in (0,1)$. If $p \geq 1-\tau$ then
\begin{equation}\label{eq:108}
\sum_{\lambda \in \sigma_{d}(J)}
\frac{\dist(\lambda,[-2,2])^{p+1+\tau}}{|\lambda^2-4|} \leq
C(\tau,p) \|v\|_{l_p}^p.
\end{equation}
Moreover, if $p \in (0,1-\tau)$ then
\begin{equation}\label{eq:111}
\sum_{\lambda \in \sigma_{d}(J)} \left(\frac{\dist(\lambda,[-2,2])}{|\lambda^2-4|^{{1}/{2}}}\right)^{p+1+\tau} \leq
C(\tau,p) \|v\|_{l_p}^p.
\end{equation}
\end{thm}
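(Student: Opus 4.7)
The plan is that Theorem \ref{thm:12} is essentially a direct specialization of Corollary \ref{cor:2} to the Jacobi setting, with the Schatten-norm on the right translated into an $l^p$-norm of the sequence $v$ via Lemma \ref{esti1}. So I would not do any new analysis; I would only verify that the hypotheses of Corollary \ref{cor:2} apply to the pair $(J_0, J)$ and then substitute numerical values.

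First I would check the hypotheses. We may assume $v \in l^p(\mz)$, otherwise the right-hand sides of (\ref{eq:108}) and (\ref{eq:111}) are infinite and there is nothing to prove. Under this assumption, Lemma \ref{esti1} gives $J - J_0 \in \mathcal{S}_p(l^2(\mz))$ with
\begin{equation*}
\|J - J_0\|_{\mathcal{S}_p} \leq 3\,\|v\|_{l^p}.
\end{equation*}
Moreover, $J_0$ is bounded and selfadjoint with $\sigma(J_0) = [-2,2]$, which follows from (\ref{fourier}), since $J_0$ is unitarily equivalent via the Fourier transform to multiplication by the bounded real function $\theta \mapsto 2\cos\theta$ on $L^2(0,2\pi)$. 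Thus we are exactly in the framework of Corollary \ref{cor:2} with $A_0 = J_0$, $A = J$, $M = J - J_0$, $a = -2$, $b = 2$.

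The remaining step is purely algebraic. Applying Corollary \ref{cor:2} in this setting gives, for $\tau \in (0,1)$ and $p \geq 1 - \tau$,
\begin{equation*}
\sum_{\lambda \in \sigma_d(J)} \frac{\dist(\lambda,[-2,2])^{p+1+\tau}}{|2-\lambda|\,|-2-\lambda|} \leq C(p,\tau)\, 4^{-1+\tau}\, \|J - J_0\|_{\mathcal{S}_p}^p,
\end{equation*}
and for $0 < p < 1 - \tau$,
\begin{equation*}
\sum_{\lambda \in \sigma_d(J)} \left(\frac{\dist(\lambda,[-2,2])}{|2-\lambda|^{1/2}|{-2-\lambda}|^{1/2}}\right)^{p+1+\tau} \leq C(p,\tau)\, 4^{-p}\, \|J - J_0\|_{\mathcal{S}_p}^p.
\end{equation*}
Using $|2-\lambda|\,|-2-\lambda| = |\lambda^2 - 4|$ (and the square-root version of the same identity for the second estimate), absorbing the universal factors $4^{-1+\tau}$, $4^{-p}$, and $3^p$ into a new constant $C(p,\tau)$, and invoking the Schatten bound from Lemma \ref{esti1} to replace $\|J - J_0\|_{\mathcal{S}_p}^p$ by $3^p \|v\|_{l^p}^p$, we obtain precisely (\ref{eq:108}) and (\ref{eq:111}).

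There is no real obstacle here: all the substantive work was done earlier, in proving Corollary \ref{cor:2} via the complex-analysis/perturbation-determinant machinery and in proving the Schatten-norm bound of Lemma \ref{esti1}. The only thing to watch in an honest write-up is that the statement of Corollary \ref{cor:2} uses the factorization $M_1 = I$, $M_2 = M$; this is fine here because we are not attempting to exploit any special decomposition of $J - J_0$. (As noted in the paper, stronger results for Jacobi operators can be obtained by applying Theorem \ref{thm:3} directly with a nontrivial factorization $M = M_1 M_2$ tailored to the three-diagonal structure, but that lies beyond the scope of Theorem \ref{thm:12}.)
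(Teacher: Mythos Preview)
Your proposal is correct and matches the paper's approach exactly: the paper simply states that Theorem \ref{thm:12} follows ``from the above estimate and Corollary \ref{cor:2}'', i.e., from Lemma \ref{esti1} combined with Corollary \ref{cor:2} applied with $A_0=J_0$, $A=J$, $a=-2$, $b=2$. Your write-up just makes the substitutions explicit.
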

\begin{rem}
  A slightly weaker version of the previous theorem has first been obtained by Golinskii, Borichev and Kupin, compare Remark \ref{rem:borichev}.
\end{rem}
In addition, Corollary \ref{cor:num} implies
\begin{thm}\label{thm:12b}
If $p \geq 1$ then
\begin{equation}
\sum_{\lambda \in \sigma_{d}(J)}
\dist(\lambda,[-2,2])^{p} \leq \|v\|_{l_p}^p.
\end{equation}
\end{thm}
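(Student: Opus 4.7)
The estimate is a direct application of Corollary \ref{cor:num} to $Z_0=J_0$ and $Z=J$, so the proof reduces to verifying the hypotheses of that corollary and then translating the resulting Schatten-norm bound via Lemma \ref{esti1}.

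First I would observe that $J_0=J(1,0,1)$ is bounded and selfadjoint on $l^2(\mz)$ (its matrix is real symmetric), and that the Fourier-transform identity (\ref{fourier}) shows $J_0$ to be unitarily equivalent to multiplication by $2\cos(\theta)$ on $L^2(0,2\pi)$, giving $\sigma(J_0)=[-2,2]$. In particular $\sigma(J_0)$ is a convex subset of $\mc$. We may assume $v\in l^p(\mz)$ (otherwise the right-hand side is infinite and there is nothing to prove), and then the upper bound in Lemma \ref{esti1} ensures $J-J_0\in\mathcal{S}_p(l^2(\mz))$. Hence the hypotheses of Corollary \ref{cor:num} are met and it yields
\begin{equation*}
\sum_{\lambda\in\sigma_d(J)}\dist(\lambda,[-2,2])^p \;\leq\; \|J-J_0\|_{\mathcal{S}_p}^p .
\end{equation*}

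Finally I would invoke Lemma \ref{esti1} once more, in the form $\|J-J_0\|_{\mathcal{S}_p}\leq 3\|v\|_{l^p}$, to bound the right-hand side by a constant multiple of $\|v\|_{l^p}^p$. The main, and essentially only, technical subtlety is the value of the constant: applying the triangle inequality to the natural decomposition of $J-J_0$ into its three diagonals yields a factor $3^p$, whereas the theorem as written carries none. To recover the tighter constant one would bypass Lemma \ref{esti1} and unroll the Schur-basis argument from the proof of Theorem \ref{thm:num1}: for a finite $\Lambda\subset\sigma_d(J)$ choose a basis $\{e_i\}_{i=1}^N$ of $\ran(P_J(\Lambda))$ on which $J$ is upper-triangular, estimate $|\langle(J-J_0)e_i,e_i\rangle|$ directly from the pointwise bounds $|a_{k-1}-1|,|b_k|,|c_k-1|\leq v_k$ together with Cauchy--Schwarz, and compare the result with $\sum_i\langle Ve_i,e_i\rangle^p$ for the positive diagonal operator $V$ of entries $v_k$, which by Lemma \ref{lem:schatten} is bounded by $\|V\|_{\mathcal{S}_p}^p=\|v\|_{l^p}^p$. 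This refinement is the only step that requires any real care; the qualitative content of the inequality is already furnished by the two-line argument above.
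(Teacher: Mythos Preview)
Your main argument---apply Corollary~\ref{cor:num} with $Z_0=J_0$, $Z=J$, and then bound $\|J-J_0\|_{\mathcal{S}_p}$ via Lemma~\ref{esti1}---is exactly what the paper does (it simply writes ``Corollary~\ref{cor:num} implies'' before the statement). You are also right that this route produces the factor $3^p$, and the stated inequality with constant $1$ is almost certainly an oversight in the paper; the qualitative content is all that is being used later.

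However, your proposed refinement to recover constant $1$ does not work. Writing out $\langle(J-J_0)e_i,e_i\rangle$ in the standard basis, the off-diagonal terms $(a_{k-1}-1)e_i(k-1)\overline{e_i(k)}$ and $(c_k-1)e_i(k+1)\overline{e_i(k)}$ involve \emph{shifted} indices, so Cauchy--Schwarz (or AM--GM) brings in $\sum_k v_{k\pm 1}|e_i(k)|^2$ rather than $\sum_k v_k|e_i(k)|^2$, and there is no pointwise relation between $v_k$ and $v_{k\pm 1}$. Equivalently, using the factorization $J-J_0=M_{v^{1/2}}UM_{v^{1/2}}$ from (\ref{rj}) gives
\[
|\langle(J-J_0)e_i,e_i\rangle|=|\langle UM_{v^{1/2}}e_i,M_{v^{1/2}}e_i\rangle|\le\|U\|\,\langle M_v e_i,e_i\rangle\le 3\,\langle M_v e_i,e_i\rangle,
\]
and the factor $3$ coming from $\|U\|$ cannot be removed in general. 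So the Schur-basis computation reproduces the same $3^p$ you already have from Lemma~\ref{esti1}; it does not yield the sharp constant you were hoping for.
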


As was already discussed in Chapter \ref{sec:comparison}, the result of Theorem \ref{thm:12b} is stronger than that of Theorem \ref{thm:12} in the case
$p\geq 1$. However, we now show that both of these results can be considerably improved, when $p\geq 1$, by a more refined application of Theorem \ref{thm:3}.
The following theorem is our main result on the discrete eigenvalues of Jacobi operators. Its proof will be presented below.

\begin{thm}\label{THM2}
Let $\tau \in (0,1)$. If $v \in l^p(\mz)$, where $p > 1$, then
\begin{equation}\label{inequality2}
\sum_{\lambda \in \sigma_d(J)}
\frac{\dist(\lambda,[-2,2])^{p+\tau}}{|\lambda^2-4|^{{1}/{2}}}
\leq C(p,\tau)\|v\|_{l^p}^p.
\end{equation}
Furthermore, if $v \in l^1(\mz)$ then
\begin{equation}\label{inequality2b}
\sum_{\lambda \in \sigma_d(J)}
\frac{\dist(\lambda,[-2,2])^{1+\tau}}{|\lambda^2-4|^{\frac{1}{2}+\frac
{\tau}{4}}} \leq C(\tau)\|v\|_{l^1}.
\end{equation}
\end{thm}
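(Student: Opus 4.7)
The plan is to apply Theorem~\ref{thm:3} with $A_0 = J_0$, $A = J$, $[a,b] = [-2,2]$, using a factorization $M = J - J_0 = M_1 M_2$ that exploits the tridiagonal structure of $M$ through the majorant $\{v_k\}$ defined in (\ref{defdk}). Let $V$ denote the diagonal multiplication operator $V\delta_k = v_k\delta_k$ on $l^2(\mz)$. The key observation, following directly from (\ref{defdk}), is that every matrix entry of $M$ satisfies $|M_{ij}| \leq \min(v_i, v_j)$, so the tridiagonal operator $\tilde M := V^{-1/2} M V^{-1/2}$ (extended by zero where some $v_k = 0$) has entries of modulus at most $1$ and hence $\|\tilde M\| \leq 3$. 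I would use the factorization $M_1 = V^{1/2}\tilde M$, $M_2 = V^{1/2}$, so that by Schatten--H\"older (\ref{eq:14}),
\begin{equation*}
\|M_2 R_{J_0}(\lambda) M_1\|_{\mathcal{S}_p} \leq 3\, \|V^{1/2} R_{J_0}(\lambda) V^{1/2}\|_{\mathcal{S}_p}.
\end{equation*}

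Next I would compute the explicit Green function: using (\ref{fourier}) and a residue calculation, $(R_{J_0}(\lambda))_{jk} = -w^{|j-k|+1}/(1-w^2)$ for $\lambda = w + w^{-1}$ with $|w| < 1$, where $w = \phi_1^{-1}(\lambda)$ is the inverse of the conformal map (\ref{eq:41}) with $a=-2$, $b=2$. Thus $V^{1/2} R_{J_0}(\lambda) V^{1/2}$ is a diagonal-convolution-diagonal operator with kernel $-(1-w^2)^{-1} v_j^{1/2} v_k^{1/2} w^{|j-k|+1}$, and the goal becomes to establish the sharp Schatten bound
\begin{equation*}
\|V^{1/2} R_{J_0}(\lambda) V^{1/2}\|_{\mathcal{S}_p}^p \leq \frac{C \|v\|_{l^p}^p}{|\lambda^2-4|^{1/2}\, \dist(\lambda,[-2,2])^{p-1}}
\end{equation*}
for all $p \geq 1$. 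At the endpoint $p=2$ this follows from a direct Hilbert--Schmidt computation combined with the Toeplitz bound $\sum_{j,k} v_j v_k |w|^{2|j-k|} \lesssim \|v\|_{l^2}^2/(1-|w|)$, after translating to $\lambda$-coordinates via the identities $|1-w^2| = |w|\,|\lambda^2-4|^{1/2}$ and $1-|w| \asymp \dist(\lambda,[-2,2])/|\lambda^2-4|^{1/2}$ (which follow from Lemma~\ref{lem:c05} and (\ref{eq:45})). The extension to general $p \geq 1$ is then obtained by complex interpolation between the $p=2$ endpoint and auxiliary endpoints at $p=1$ and $p=\infty$ (the latter being $\|V^{1/2} R_{J_0}(\lambda) V^{1/2}\| \leq \|v\|_{l^\infty}/\dist(\lambda,[-2,2])$), or alternatively by a Birman--Solomyak-type decomposition exploiting the convolution--diagonal structure.

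Feeding this Schatten bound into Theorem~\ref{thm:3} with parameters $\alpha = p-1$, $\beta = -1/2$, and $K = C\|v\|_{l^p}^p$, the case $p>1$ (where $\alpha>0$) gives $\eta_1 = p+\tau$ and $\eta_2 = p-1+\tau$, hence $(\eta_1-\eta_2)/2 = 1/2$, which is precisely (\ref{inequality2}). For the boundary case $p=1$ the choice $\alpha = 0$ only produces $\eta_1 = 1$ in Theorem~\ref{thm:3} rather than the desired $1+\tau$. To recover the sharper exponent I would use the elementary bound $\dist(\lambda,[-2,2])^2 \leq |\lambda^2-4|$ to rewrite the $p=1$ Schatten estimate in the weaker but $\alpha>0$ form
\begin{equation*}
\|M_2 R_{J_0}(\lambda) M_1\|_{\mathcal{S}_1} \leq \frac{C\|v\|_{l^1}}{|\lambda^2-4|^{1/2-\tau/4}\,\dist(\lambda,[-2,2])^{\tau/2}},
\end{equation*}
which matches (\ref{eq:42}) with shifted parameters $\alpha = \tau/2$ and $\beta = \tau/4 - 1/2$. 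A second application of Theorem~\ref{thm:3}, with its small free parameter chosen as $\tau/2$, then produces $\eta_1 = 1+\tau$, $\eta_2 = \tau/2$, and $(\eta_1-\eta_2)/2 = 1/2+\tau/4$, delivering (\ref{inequality2b}).

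I expect the main obstacle to be the Schatten bound of the second paragraph in the regime $1 \leq p < 2$: the direct Hilbert--Schmidt kernel computation is clean only at $p=2$, and retaining the sharp factor $|\lambda^2-4|^{-1/2}$ (rather than the cruder $\dist(\lambda,[-2,2])^{-1}$ arising from a blunt use of $\|R_{J_0}(\lambda)\| \leq \dist(\lambda,[-2,2])^{-1}$) requires genuine complex interpolation or a careful Birman--Solomyak-type analysis of the sandwiched resolvent. Without this sharper decay one recovers only the weaker estimates of Theorems~\ref{thm:12} and~\ref{thm:12b}, so this technical improvement is precisely what distinguishes Theorem~\ref{THM2} from its predecessors.
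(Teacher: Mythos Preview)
Your overall architecture---factor $J-J_0 = V^{1/2}\tilde M V^{1/2}$, bound the sandwiched resolvent in $\mathcal{S}_p$, then feed into Theorem~\ref{thm:3}---is exactly the paper's strategy, and your parameter bookkeeping for both the $p>1$ and $p=1$ applications of Theorem~\ref{thm:3} is correct. The difference lies entirely in how you establish the Schatten bound, and there your plan has a genuine gap.

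The claimed endpoint
\[
\|V^{1/2} R_{J_0}(\lambda) V^{1/2}\|_{\mathcal{S}_1}\leq C\,\|v\|_{l^1}\,|\lambda^2-4|^{-1/2}
\]
is false. Take $v = 1_{[-N,N]}$, so that $V^{1/2} R_{J_0}(\lambda) V^{1/2}$ is the $(2N{+}1)\times(2N{+}1)$ Toeplitz truncation $T_N(g_\lambda)$ of the symbol $g_\lambda(\theta)=(\lambda-2\cos\theta)^{-1}$. By the Avram--Parter theorem, $\|T_N(g_\lambda)\|_{\mathcal{S}_1}/N \to (2\pi)^{-1}\|g_\lambda\|_{L^1}$ as $N\to\infty$, and $\|g_\lambda\|_{L^1}$ diverges logarithmically as $\lambda$ approaches any interior point of $(-2,2)$, whereas $|\lambda^2-4|^{-1/2}$ stays bounded there. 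So no constant $C$ works uniformly in $\lambda$, and your subsequent $p=1$ manipulation (weakening $\alpha=0$ to $\alpha=\tau/2$) starts from an invalid bound. This also undermines your interpolation to $1<p<2$, since you have no true $p=1$ anchor with the sharp $|\lambda^2-4|^{-1/2}$ factor.

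The paper avoids interpolation altogether. Writing $R_{J_0}(\lambda)=F^{-1}M_{|g_\lambda|^{1/2}}\,T\,M_{|g_\lambda|^{1/2}}F$ with $T$ unitary, one gets by Schatten--H\"older
\[
\|V^{1/2}R_{J_0}(\lambda)V^{1/2}\|_{\mathcal{S}_p}\leq \|M_{v^{1/2}}F^{-1}M_{|g_\lambda|^{1/2}}\|_{\mathcal{S}_{2p}}^{2},
\]
and since $2p\geq 2$ the discrete Kato--Seiler--Simon bound (Lemma~\ref{lem:simon}) applies directly, yielding $C(p)\|v\|_{l^p}\|g_\lambda\|_{L^p}$. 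The $L^p$ norm of the symbol is then controlled by Lemma~\ref{lem:resolv1}: for $p>1$ one gets exactly your target bound, while for $p=1$ the unavoidable logarithm is absorbed into an $\varepsilon$-loss, $\|g_\lambda\|_{L^1}\leq C(\varepsilon)\,\dist(\lambda,[-2,2])^{-\varepsilon}|\lambda^2-4|^{-(1-\varepsilon)/2}$. Feeding this into Theorem~\ref{thm:3} with $\alpha=\varepsilon=\tau/2$ and the free parameter also $\tau/2$ gives (\ref{inequality2b}). Your $\alpha=\tau/2$, $\beta=\tau/4-1/2$ choice arrives at the identical endpoint, but only after the correct $\varepsilon$-bound is in hand; the shortcut through the clean $|\lambda^2-4|^{-1/2}$ bound at $p=1$ is not available.
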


\noindent Let us compare the previous theorem with Theorem \ref{thm:12} and \ref{thm:12b}, respectively. To begin, we note that a direct calculation shows that for $\tau >0$, $\lambda \in \mc \setminus [-2,2]$ and $p>1$ we have
\begin{equation*}
  \frac{\dist(\lambda,[-2,2])^{p+1+\tau}}{|\lambda^2-4|} \leq \frac{\dist(\lambda,[-2,2])^{p+\tau}}{|\lambda^2-4|^{{1}/{2}}}.
\end{equation*}
Moreover, if $\lambda \in \mc \setminus [-2,2]$ and $|\lambda| \leq \|J\|$, then
\begin{equation*}
  \frac{\dist(\lambda,[-2,2])^{2+\tau}}{|\lambda^2-4|} \leq C(\tau, \|J\|) \frac{\dist(\lambda,[-2,2])^{1+\tau}}{|\lambda^2-4|^{\frac{1}{2}+\frac{\tau}{4}}}.
\end{equation*}
Hence, inequalities (\ref{inequality2}) and (\ref{inequality2b}) provide more information on the discrete spectrum of $J$ than inequality (\ref{eq:108}), i.e., Theorem \ref{THM2} is stronger  than Theorem \ref{thm:12}.

The advantage of Theorem \ref{THM2} over Theorem \ref{thm:12b} can be seen by considering sequences of eigenvalues
$\{\lambda_k\}$ converging to an endpoint of the spectrum. If $\lambda_k\rightarrow 2$ as $k\rightarrow \infty$, Theorem \ref{thm:12b} implies the
convergence of the sum $\sum_{k=1}^\infty |\lambda_k-2|^p$, while Theorem \ref{THM2} implies the convergence of the sum $\sum_{k=1}^\infty |\lambda_k-2|^{p-\frac{1}{2}+\tau}$, which is strictly stronger when $\tau<\frac{1}{2}$.

It should be noted, however, that Theorem \ref{THM2} does not subsume Theorem \ref{thm:12b}, since for sequences $\lambda_k\rightarrow (-2,2)$, Theorem
\ref{THM2} only implies the convergence of   $\sum_{k=1}^\infty |\lambda_k-2|^{p+\tau}$ for any $\tau>0$, which is weaker than the convergence of
 $\sum_{k=1}^\infty |\lambda_k-2|^p$ given by Theorem \ref{thm:12b}.

\begin{problem}\label{p1}
In view of the previous discussion it is natural to conjecture that a result implying both Theorem \ref{thm:12b} and \ref{THM2} is true, namely the inequality obtained
by setting $\tau=0$ in (\ref{inequality2}):
\begin{equation}\label{eq:conjecture}\sum_{\lambda \in \sigma_d(J)}
\frac{\dist(\lambda,[-2,2])^{p}}{|\lambda^2-4|^{{1}/{2}}}
\leq C(p)\|v\|_{l^p}^p.\end{equation}
However, we have not been able to prove such a result, and it remains an open question.
We note here that in the case of selfadjoint $J$, (\ref{eq:conjecture}) was proved by Hundertmark and Simon \cite{Hundertmark02}. It was then partly extended to the non-selfadjoint case by
Golinskii and Kupin \cite{Golinskii07}, who considered eigenvalues outside a diamond-shaped region avoiding the interval $[-2,2]$. We can therefore
consider Theorem \ref{THM2} as a near-generalization of the results of Hundertmark and Simon (and Golinskii and Kupin), and it would be interesting to understand whether the gap
between our result (with $\tau>0$) and their results  ($\tau=0$) can be closed for general non-selfadjoint Jacobi operators.
\end{problem}

\begin{proof}[Proof of Theorem \ref{THM2}]

 Some of the technical results needed will be quoted without proofs, for which we will refer to \cite{HK09}.

Let the multiplication operator $M_v \in \bdd(l^2(\mz))$ be defined by $M_v\delta_k=v_k \delta_k$, where
the  sequence $v=\{v_k\}$ was defined in (\ref{defdk}).
Furthermore, we define the operator $U \in \bdd(l^2(\mz))$ by setting
$$U \delta_k=u_k^- \delta_{k-1} +
u_k^0\delta_k +u_k^+\delta_{k+1},$$ where (using the convention that $\frac{0}{0}=1$)
$$u_k^-=\frac{c_{k-1}-1}{\sqrt{v_{k-1}v_k}},\quad
u_k^0=\frac{b_k}{v_k} \quad \text{ and } \quad
u_k^+=\frac{a_{k}-1}{\sqrt{v_{k+1}v_k}}.$$ 
It is then easily checked that
\begin{equation}\label{rj}J-J_0=M_{v^{1/2}}UM_{v^{1/2}},\end{equation}
where $v^{1/2}=\{ v_k^{1/2} \}$.
Moreover, the definition of $\{v_k\}$ implies that
$$|u_k^-|\leq 1, \quad |u_k^0|\leq 1 \quad \text{and} \quad |u_k^+|\leq1,$$
showing  that $\|U\|\leq 3$.

We intend to prove Theorem \ref{THM2} by an application of Theorem \ref{thm:3}. Since we have seen above that $J-J_0=M_{v^{1/2}}UM_{v^{1/2}}$, we will apply that theorem choosing (with the notation of that theorem) $M_1=M_{v^{1/2}}$ and $M_2=UM_{v^{1/2}}$, and so we need an appropriate bound on the Schatten norm of  $UM_{v^{1/2}}R_{J_0}(\lambda)M_{v^{1/2}}$.
\begin{lemma}\label{lem:est_g}
Let $v \in l^p(\mz)$, where $p \geq 1$. Then the following holds: if $p>1$ then
\begin{equation}\label{Q2}
   \|UM_{v^{1/2}}R_{J_0}(\lambda)M_{v^{1/2}}\|_{\mathcal{S}_p}^p \leq  \frac{C(p) \|v\|_{l^p}^p}{\dist(\lambda,[-2,2])^{p-1}|\lambda^2-4|^{1/2}}.
\end{equation}
Furthermore, if $v \in l^1(\mz)$, then  for every $\eps \in (0,1)$ we have
\begin{equation}\label{Q3}
   \|UM_{v^{1/2}}R_{J_0}(\lambda)M_{v^{1/2}}\|_{\mathcal{S}_1} \leq \frac{C(\eps)\|v\|_{l^1} }{\dist(\lambda,[-2,2])^{\eps}|\lambda^2-4|^{{(1-\eps)}/2}}.
\end{equation}
\end{lemma}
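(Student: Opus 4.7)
The plan is to estimate $\|M_{v^{1/2}}R_{J_0}(\lambda)M_{v^{1/2}}\|_{\mathcal{S}_p}$ (which is all that is needed, as $\|U\|\leq 3$) by exploiting that $J_0$ is a Fourier multiplier: by (\ref{fourier}) we have $R_{J_0}(\lambda) = F^{-1}M_{h_\lambda}F$ with $h_\lambda(\theta) = (\lambda-2\cos\theta)^{-1}$, which is bounded since $\lambda\notin[-2,2]$. Picking a measurable branch $h_\lambda^{1/2}$ of the square root, I would split
\[
M_{v^{1/2}}R_{J_0}(\lambda)M_{v^{1/2}} = \bigl(M_{v^{1/2}}F^{-1}M_{h_\lambda^{1/2}}\bigr)\cdot\bigl(M_{h_\lambda^{1/2}}FM_{v^{1/2}}\bigr)
\]
and apply the Schatten H\"older inequality $\|AB\|_{\mathcal{S}_p} \leq \|A\|_{\mathcal{S}_{2p}}\|B\|_{\mathcal{S}_{2p}}$.

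To each factor I would apply a discrete Kato--Seiler--Simon inequality: for every $q \geq 2$,
\[
\|M_a F^{-1} M_b\|_{\mathcal{S}_q(l^2(\mz))} \leq (2\pi)^{-1/q}\,\|a\|_{l^q(\mz)}\,\|b\|_{L^q(0,2\pi)}.
\]
This is immediate at $q=2$ (Parseval's identity makes both sides $\|a\|_{l^2}\|b\|_{L^2}$, up to the Fourier normalization) and at $q=\infty$ (by submultiplicativity of the operator norm), and interpolates complex-linearly in the Schatten scale. Taking $q=2p\geq 2$, $a=v^{1/2}$, $b=h_\lambda^{1/2}$ and raising to the $p$-th power gives
\[
\|M_{v^{1/2}}R_{J_0}(\lambda)M_{v^{1/2}}\|_{\mathcal{S}_p}^p \leq C(p)\,\|v\|_{l^p}^p\int_0^{2\pi}\frac{d\theta}{|\lambda-2\cos\theta|^p}.
\]

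What remains is to estimate the singular integral $I_p(\lambda)$ on the right. Writing $\lambda = \mu + i\nu$ and localizing near the critical points $\theta_0$ defined by $2\cos\theta_0 = \mu$ (when $|\mu|<2$), the second-order Taylor expansion $|\lambda-2\cos\theta|^2 \simeq \nu^2 + (4-\mu^2)(\theta-\theta_0)^2$ reduces matters to the model integral $\int_\mr (\nu^2 + a^2 t^2)^{-p/2}\,dt$, which equals a finite constant times $|\nu|^{1-p}/a$ for $p>1$. Translating $|\nu|\asymp\dist(\lambda,[-2,2])$ and $a = \sqrt{4-\mu^2}\asymp|\lambda^2-4|^{1/2}$ (with a separate treatment in a small neighborhood of the edges $\pm 2$) yields (\ref{Q2}). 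For $p=1$ the model integral is genuinely logarithmic, producing a factor $\log(1/\dist(\lambda,[-2,2]))$ which, once absorbed via $\log(1/x) \leq C(\eps)x^{-\eps}$ and distributed between the two factors, yields (\ref{Q3}).

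The main obstacle is the singular-integral estimate for $I_p(\lambda)$: the quadratic expansion degenerates at $\mu = \pm 2$ where the two critical points coalesce, so one has to split the integration into a bulk region and small neighborhoods of the edges and combine matching local estimates to recover exactly the factor $|\lambda^2-4|^{-1/2}$. The endpoint $p=1$ is especially delicate because the logarithmic divergence forces the $\eps$-loss visible in (\ref{Q3}).
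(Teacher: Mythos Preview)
Your proposal is correct and follows essentially the same route as the paper: factor $R_{J_0}(\lambda)=F^{-1}M_{g_\lambda}F$, split via a square root of $g_\lambda$, apply the Schatten H\"older inequality together with the discrete Kato--Seiler--Simon bound (Lemma~\ref{lem:simon}) at exponent $2p$, and reduce everything to $\|g_\lambda\|_{L^p}^p$, which is then estimated exactly as in Lemma~\ref{lem:resolv1}. The only cosmetic difference is that the paper writes $g_\lambda=|g_\lambda|^{1/2}\cdot\frac{g_\lambda}{|g_\lambda|}\cdot|g_\lambda|^{1/2}$ and absorbs the phase into a unitary, whereas you take a measurable branch of $g_\lambda^{1/2}$; either way the Schatten bounds depend only on $|g_\lambda|^{1/2}$, so the outcome is identical.
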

The proof of Lemma \ref{lem:est_g} will be given below. First, let
us continue with the proof of Theorem \ref{THM2}. To this end, let us assume that $v \in l^p(\mz)$ and let us fix $\tau \in (0,1)$. Considering the case $p>1$ first, we obtain from (\ref{Q2}) and Theorem \ref{thm:3}, with $\alpha=p-1$,
$\beta = - 1/2$ and $K=C(p)\|v\|_{l^p}^p$, i.e., $\eta_1=p+\tau$ and $\eta_2=p-1+\tau$,
\begin{equation*}
\sum_{\lambda \in \sigma_d(J)}
\frac{\dist(\lambda,[-2,2])^{p+\tau}}{|\lambda^2-4|^{{1}/{2}}}
\leq C(p,\tau)\|v\|_{l^p}^p.
\end{equation*}
Similarly, if $p=1$, then we obtain from (\ref{Q3}) and Theorem \ref{thm:3} that, for $\eps \in (0,1)$ and $\tilde{\tau} \in (0,1)$,
\begin{equation*}
  \sum_{\lambda \in \sigma_d(J)}
\frac{\dist(\lambda,[-2,2])^{1+\eps+\tilde{\tau}}}{|\lambda^2-4|^{{(1+\eps)}/2}} \leq C(\tilde\tau,\eps)\|v\|_{l^1}.
\end{equation*}
Choosing $\eps=\tilde{\tau}=\tau/2$ concludes the proof of
Theorem \ref{THM2}.
\end{proof}

It remains to prove Lemma \ref{lem:est_g}. In the following, let $v \in l^p(\mz)$ where $p \geq 1$. To begin,  we recall (see (\ref{fourier})) that
$$({F}J_0 f)(\theta) = 2\cos(\theta) ({F}f)(\theta), \quad f \in l^2(\mz), \quad \theta \in [0,2\pi),$$
where ${F}$ denotes the Fourier transform. Consequently, for $\lambda \in \mc\setminus [-2,2]$ we have
  \begin{eqnarray*}
    R_{J_0}(\lambda)= {F}^{-1}M_{g_\lambda}{F},
  \end{eqnarray*}
where $M_{g_\lambda} \in \bdd(L^2(0,2\pi))$ is the operator of
multiplication by the bounded function
\begin{equation}
  g_\lambda(\theta)= (\lambda-2\cos(\theta))^{-1}, \quad \theta \in [0,2\pi).\label{Q4}
\end{equation}
Since $g_\lambda= |g_\lambda|^{1/2} \cdot \frac{g_\lambda}{|g_\lambda|}\cdot |g_\lambda|^{1/2},$ we can define the unitary operator $T=F^{-1}M_{g_\lambda/|g_\lambda|}F$ to obtain the identity
\begin{equation*}
  \|UM_{v^{1/2}}R_{J_0}(\lambda)M_{v^{1/2}}\|_{\mathcal{S}_p}^p =   \|UM_{v^{1/2}}{F}^{-1} M_{|g_\lambda|^{1/2}}{F}T{F}^{-1} M_{|g_\lambda|^{1/2}}{F}M_{v^{1/2}}\|_{\mathcal{S}_p}^p.
\end{equation*}
Using H\"older's inequality for Schatten norms (see Section \ref{sec:schatten}), and recalling that $\|U\| \leq 3$, we thus obtain
\begin{eqnarray}
 \|UM_{v^{1/2}}R_{J_0}(\lambda)M_{v^{1/2}}\|_{\mathcal{S}_p}^p &\leq& 3^{p} \|M_{v^{1/2}}{F}^{-1} M_{|g_\lambda|^{1/2}}{F}\|_{\mathcal{S}_{2p}}^p \|{F}^{-1} M_{|g_\lambda|^{1/2}}{F}M_{v^{1/2}}\|_{\mathcal{S}_{2p}}^p \nonumber \\
&=& 3^p \|M_{v^{1/2}}{F}^{-1} M_{|g_\lambda|^{1/2}}{F}\|_{\mathcal{S}_{2p}}^{2p}.  \label{R1}
\end{eqnarray}
For the last identity we used the selfadjointness of the bounded operators  $M_{v^{{1/2}}}$ and $F^{-1}M_{|g_\lambda|^{1/2}}F$  and the fact that the Schatten norm of an operator and its adjoint coincide.

To derive an estimate on the Schatten norm on the right-hand side of (\ref{R1}), we
will use the following lemma (see \cite{HK09}, Lemma 10).
Here, as above, $M_u \in \bdd(l^2(\mz))$ and $M_h \in \bdd(L^2(0,2\pi))$ denote the operators of multiplication by a sequence $u=\{u_m\} \in l^\infty(\mz)$ and a function $h \in L^\infty(0,2\pi)$, respectively.
\begin{lemma} \label{lem:simon}
Let $q \geq 2$ and suppose that  $u=\{u_m\} \in l^q(\mz)$ and $h \in
L^\infty(0,2\pi)$. Then
\begin{equation}
  \|M_u {F}^{-1} M_h {F} \|_{\mathcal{S}_q} \leq (2\pi)^{-1/q} \|u\|_{l^q} \|h\|_{L^q}.
\end{equation}
\end{lemma}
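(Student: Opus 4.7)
The plan is to deduce the estimate from the two endpoint cases $q=2$ and $q=\infty$ via complex interpolation. First I would identify the matrix entries of $T := M_u F^{-1}M_h F$ in the standard basis $\{\delta_n\}_{n \in \mz}$ of $l^2(\mz)$. Using the formula (\ref{fourier}) and the explicit form of $F^{-1}$, a direct computation yields
$$\langle T\delta_k,\delta_n\rangle \;=\; u_n\,\hat h(n-k), \qquad \hat h(m) := \frac{1}{2\pi}\int_0^{2\pi}h(\theta)\,e^{-im\theta}\,d\theta.$$
At the endpoint $q=2$, summing the squared moduli of these entries and applying Parseval's identity gives
$$\|T\|_{\mathcal{S}_2}^2 \;=\; \|u\|_{l^2}^2\,\sum_{m \in \mz}|\hat h(m)|^2 \;=\; \frac{1}{2\pi}\|u\|_{l^2}^2\,\|h\|_{L^2}^2,$$
which is exactly the claimed inequality for $q=2$. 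At the endpoint $q=\infty$, submultiplicativity of the operator norm together with the unitarity of $F$ gives the trivial bound $\|T\|\le\|u\|_{l^\infty}\|h\|_{L^\infty}$.

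To interpolate, fix $q \in (2,\infty)$ and $u \in l^q$, $h \in L^q$ normalized so that $\|u\|_{l^q}=\|h\|_{L^q}=1$, and introduce the analytic operator family
$$ S(z) := M_{u_z}F^{-1}M_{h_z}F, \quad u_z(n) := \tfrac{u_n}{|u_n|}\,|u_n|^{q(1-z)/2}, \quad h_z(\theta) := \tfrac{h(\theta)}{|h(\theta)|}\,|h(\theta)|^{q(1-z)/2},$$
for $z$ in the strip $0 \le \Re z \le 1$ (with the factors set to zero where $u$ or $h$ vanish). On the line $\Re z = 0$ one has $|u_{iy}|=|u|^{q/2}$ and $|h_{iy}|=|h|^{q/2}$, so that $\|u_{iy}\|_{l^2}=\|h_{iy}\|_{L^2}=1$, and the $q=2$ endpoint gives $\|S(iy)\|_{\mathcal{S}_2}\le(2\pi)^{-1/2}$. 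On the line $\Re z = 1$ one has $|u_{1+iy}|,\,|h_{1+iy}|\le 1$ pointwise, hence $\|S(1+iy)\|\le 1$. Since $S(1-2/q)=T$, Stein's complex interpolation theorem for the Schatten ideals yields $\|T\|_{\mathcal{S}_q}\le(2\pi)^{-(1-\theta)/2}=(2\pi)^{-1/q}$ at $\theta=1-2/q$, and the general inequality follows by rescaling.

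The main obstacle is the standard technical verification that $z \mapsto S(z)$ is an admissible analytic family in the sense required by Stein interpolation on Schatten classes. I would handle this by first establishing the inequality for finitely supported $u$ and simple $h$ --- on which $S(z)$ is entire in $z$ with easily controlled growth on horizontal lines --- and then passing to general $u \in l^q$ and $h \in L^q$ by a density argument combined with the lower semicontinuity of the $\mathcal{S}_q$-norm under strong operator convergence.
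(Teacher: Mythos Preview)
Your proof is correct and follows the standard complex-interpolation route (Hilbert--Schmidt at $q=2$ via Parseval, operator norm at $q=\infty$, then Stein interpolation for Schatten ideals). The paper does not actually prove this lemma: it merely quotes it from \cite{HK09}, Lemma~10, and points to the analogous $L^2(\mr^d)$ result (Lemma~\ref{lem:simon2}), which in turn is quoted from Simon's \emph{Trace Ideals} book. Your argument is precisely the discrete analogue of Simon's proof of that $L^2(\mr^d)$ inequality, so there is nothing to compare --- you have supplied the proof the paper omits, by the expected method.
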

\begin{rem}
For operators on $L^2(\mr^d)$ an analogous result is well-known, see Lemma \ref{lem:simon2} below.
\end{rem}
Since $p \geq 1$ (and so $2p \geq 2$), the previous lemma and (\ref{R1}) imply that
\begin{equation}
  \label{eq:123}
\| UM_{v^{1/2}}R_{J_0}(\lambda)M_{v^{1/2}}\|_{\mathcal{S}_p}^p \leq C(p) \|M_{v^{1/2}}{F}^{-1} M_{|g_\lambda|^{1/2}}{F}\|_{\mathcal{S}_{2p}}^{2p} \leq C(p) \|v\|_{l^p}^p\|g_\lambda\|_{L^p}^p.
\end{equation}
The proof of Lemma \ref{lem:est_g} is completed by an application of
the following result (\cite{HK09}, Lemma 9).
\begin{lemma}\label{lem:resolv1}
  Let $\lambda \in \mc \setminus [-2,2]$ and let $g_\lambda: [0,2\pi) \to \mc$ be defined by (\ref{Q4}). Then the following holds: If $p>1$ then
  \begin{equation}\label{eq:resolv1}
    \| g_\lambda \|_{L^p}^p \leq \frac{C(p)}{\dist(\lambda,[-2,2])^{p-1}|\lambda^2-4|^{1/2 }}.
  \end{equation}
Furthermore, for every $0 < \eps < 1$ we have
\begin{equation}\label{eq:resolv2}
     \| g_\lambda \|_{L^1} \leq \frac{C(\eps)}{\dist(\lambda,[-2,2])^{\eps}|\lambda^2-4|^{{(1-\eps)}/ 2 }}.
\end{equation}
\end{lemma}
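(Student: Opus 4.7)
My plan is to reduce the circular $L^p$-integral to a real integral over $[-2,2]$ via $x = 2\cos\theta$, and then to perform a case analysis based on the location of the nearest point $\lambda_0 \in [-2,2]$ to $\lambda$. Using the symmetry $\theta \mapsto 2\pi - \theta$ together with $\sqrt{4-x^2} = 2\sin\theta$ for $\theta \in (0,\pi)$, one obtains
\begin{equation*}
\|g_\lambda\|_{L^p}^p \;=\; \int_0^{2\pi}\frac{d\theta}{|\lambda-2\cos\theta|^p} \;=\; 2\int_{-2}^{2}\frac{dx}{|\lambda-x|^p\sqrt{4-x^2}}.
\end{equation*}
Writing $d = \dist(\lambda,[-2,2]) = |\lambda-\lambda_0|$ and splitting into the subcases $\Re\lambda\in[-2,2]$ (where the statement is just the Pythagorean theorem) and $\Re\lambda\notin[-2,2]$ (where it follows by completing the square), one verifies the uniform lower bound
\begin{equation*}
|\lambda-x|^2 \;\geq\; d^2 + (x-\lambda_0)^2, \qquad x\in[-2,2].
\end{equation*}

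I would first establish (\ref{eq:resolv1}) and then deduce (\ref{eq:resolv2}) from it by H\"older's inequality. For (\ref{eq:resolv1}), substitute $t = x-\lambda_0$ and split the range into a near part $\{|t|\le\delta\}$ and a far part $\{|t|\ge\delta\}$. The far part is dominated by $\int |t|^{-p}(4-(t+\lambda_0)^2)^{-1/2}\,dt$, which is easily bounded by the target. The near part is where the $d^{1-p}$ blow-up arises: when $\lambda_0$ is bounded away from $\pm 2$, one has $\sqrt{4-x^2}\asymp\sqrt{4-\lambda_0^2}$ on the near region, and the rescaling $t = ds$ yields a main contribution of size $d^{1-p}/\sqrt{4-\lambda_0^2}$. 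Since $|\lambda^2-4|^{1/2}\asymp\sqrt{4-\lambda_0^2}$ in this regime (both $|\lambda\mp 2|$ being bounded below), we obtain exactly the claimed bound.

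The delicate subcase is when $\lambda_0$ lies at or near an endpoint, say $\lambda_0\approx 2$, so that the Chebyshev weight $\sqrt{4-x^2}$ has its own integrable singularity that interacts nontrivially with the singularity of $|\lambda-x|^{-p}$. Writing $\lambda = 2-a+ib$ with $a\ge 0$ and $|b|\le d$, the substitution $u = 2-x$ followed by the rescaling $u = dr$ reduces the critical integral to
\begin{equation*}
\frac{1}{d^{p-1/2}} \int_0^{\infty} \frac{dr}{\sqrt{r}\,\bigl((r-a/d)^2+(b/d)^2\bigr)^{p/2}},
\end{equation*}
up to a bounded factor from the far region. A case analysis on the ratio $a/b$ shows that the inner integral is of order $\sqrt{d}/\sqrt{\max(a,b)}$, and combining with $|\lambda^2-4|^{1/2}\asymp\sqrt{\max(a,b)}$ (valid since $|\lambda+2|$ is bounded while $|\lambda-2| = \sqrt{a^2+b^2}$) yields the claimed bound $1/(d^{p-1}|\lambda^2-4|^{1/2})$. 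This delicate endpoint bookkeeping is the main technical obstacle in the proof.

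For (\ref{eq:resolv2}), I would apply H\"older with conjugate exponents $p = 1/(1-\eps)$ and $q = 1/\eps$ to write $\|g_\lambda\|_{L^1} \le (2\pi)^\eps\,\|g_\lambda\|_{L^p}$. Since the exponents simplify as $(p-1)/p = \eps$ and $1/(2p) = (1-\eps)/2$, substituting (\ref{eq:resolv1}) gives
\begin{equation*}
\|g_\lambda\|_{L^1} \;\leq\; (2\pi)^\eps\,\Bigl(\tfrac{C(p)}{d^{p-1}|\lambda^2-4|^{1/2}}\Bigr)^{1/p} \;=\; \frac{C(\eps)}{d^\eps|\lambda^2-4|^{(1-\eps)/2}},
\end{equation*}
which is precisely the desired estimate (\ref{eq:resolv2}).
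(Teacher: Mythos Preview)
The paper does not actually prove this lemma here; it is quoted from \cite{HK09}, Lemma 9, without argument. So there is no in-paper proof to compare against, and your task is really to give a self-contained argument.

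Your approach is sound and would yield the result. The change of variables $x=2\cos\theta$ reducing to the weighted integral $\int_{-2}^{2}|\lambda-x|^{-p}(4-x^2)^{-1/2}\,dx$, the elementary lower bound $|\lambda-x|^2\ge d^2+(x-\lambda_0)^2$, and the near/far splitting are all correct. The interior case ($\lambda_0$ bounded away from $\pm 2$) is straightforward as you describe, and your H\"older interpolation deriving (\ref{eq:resolv2}) from (\ref{eq:resolv1}) with $p=1/(1-\eps)$ is clean and correct.

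One small gap to close: in your endpoint analysis you parametrize $\lambda=2-a+ib$ with $a\ge 0$, which only covers $\Re\lambda\le 2$. The complementary case $\Re\lambda>2$ (so $\lambda_0=2$ exactly and $d=|\lambda-2|$) should be mentioned separately; there the relevant integral is $\int_0^\delta u^{-1/2}((u+\alpha)^2+\beta^2)^{-p/2}\,du$ with $\alpha\ge 0$, which is in fact easier (no internal peak) and gives the bound $C\,d^{1/2-p}$ directly, matching $d^{1-p}|\lambda^2-4|^{-1/2}$ since $|\lambda^2-4|\asymp d$ in this regime. Also, in the peak estimate for $a>|b|$ you are implicitly using that $\int_{\mr}(s^2+1)^{-p/2}\,ds<\infty$, which needs $p>1$; this is exactly the hypothesis, but it is worth saying so, since it is the reason the $p=1$ case requires the separate treatment via (\ref{eq:resolv2}).
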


\begin{rem}
  In this section we have finally seen why it was advantageous to formulate Theorem \ref{thm:3} in terms of estimates on $M_2R_{A_0}(\lambda)M_1$ (see (\ref{eq:42})) instead of estimates on $MR_{A_0}(\lambda)$ (where $A=A_0+M=A_0+M_1M_2$). Without this decomposition  the estimates in Theorem \ref{THM2} could have been proved for $p \geq 2$ only, due to the restriction to such $p$'s in Lemma \ref{lem:simon}.
\end{rem}

\subsection{Schr\"odinger operators}\label{sec:schroedinger}

In the following we consider Schr\"odinger operators $H=-\Delta + V$ in $L^2(\mr^d)$, where $V \in L^p(\mr^d)$ is a complex-valued potential with
\begin{equation}
  \label{eq:01}
\begin{array}{cl}
  p \geq 1, & \text{if } d =1 \\
  p > 1, & \text{if } d = 2 \\
  p \geq \frac d 2, & \text{if } d \geq 3.
\end{array}
\end{equation}
More precisely, $H$ is the unique $m$-sectorial operator associated to the closed, densely defined, sectorial form
$$\mathcal{E}(f,g)= \langle \nabla f, \nabla g  \rangle + \langle Vf, g \rangle, \quad \dom(\mathcal{E})=W^{1,2}(\mr^d).$$
In particular, there exists $\omega \leq 0$ and $\theta \in [0, \frac \pi 2)$ such that
\begin{equation}
  \label{eq:num_omega}
\sigma(H) \subset \overline{\num}(H) \subset \{ \lambda : |\arg(\lambda-\omega)| \leq \theta \}
\end{equation}
and so (\ref{sec1:num}) implies that
\begin{equation}
  \label{eq:reso_es}
\|R_H(\lambda)\| \leq |\Re(\lambda)-\omega|^{-1}, \qquad \Re(\lambda) < \omega.
\end{equation}
\begin{rem}\label{rem_relcomp}
  We note that for $V \in L^p(\mr^d)$ with $p \geq 2$ if $d \leq 3$ and $p > d/2$ if $d \geq 4$ the multiplication operator $M_V$, defined as
$$ (M_V f)(x) = V(x) f(x), \quad \dom(M_V)=\{ f \in L^2 : Vf \in L^2 \},$$
is relatively compact with respect to $-\Delta$ (see Lemma \ref{lem:8}), so in this case the operator $H$ coincides with the usual operator sum $-\Delta+ M_V$ defined on $\dom(-\Delta)=W^{2,2}(\mr^d)$.
Here, as usual, $-\Delta$ is defined via the Fourier transform $F$ on $L^2$, i.e. $-\Delta=F^{-1} M_{|k|^2} F$.
\end{rem}
It can be shown that the resolvent difference $(-a-H)^{-1}-(-a+\Delta)^{-1}$ is compact for $a>0$ sufficiently large, so Corollary \ref{prop:4} implies that the spectrum of $H$ consists of $[0,\infty)=\sigma(-\Delta)$ and a possible additional set of discrete eigenvalues  which can accumulate at $[0,\infty)$ only. A classical result in the study of these isolated eigenvalues for \emph{selfadjoint} Schr\"odinger operators are
 the Lieb-Thirring (L-T) inequalities, which state that for $V=\overline{V} \in L^p(\mr^d)$ with $p$ satisfying (\ref{eq:01}) one has
\begin{equation}
  \label{eq:lt}
  \sum_{\lambda \in \sigma_d(H), \lambda < 0} |\lambda|^{p-\frac d 2} \leq C(p,d) \|V_-\|_{L^p}^p,
\end{equation}
where $V_-=-\min(V,0)$ denotes the negative part of $V$. These inequalities were a major tool in Lieb and Thirring's proof of the stability of matter \cite{Lieb75} and the search for the optimal constants $C(p,d)$ remains an active field of current research. We refer to \cite{MR1775696, Hundertmark07} for more information on these topics.

In recent times, starting with work of Abramov, Aslanyan and Davies \cite{Abramov01}, there has also been an increasing interest in analogs of the L-T-inequalities for non-selfadjoint Schr\"odinger operators. For instance, Frank, Laptev, Lieb and Seiringer \cite{Frank06} considered the eigenvalues in sectors avoiding the positive half-line. By reduction to a selfadjoint problem (essentially doing what was sketched in Remark \ref{rem:Ouhabaz}) they showed that for $p \geq d/2  +1$ and $\chi > 0$
\begin{equation}\label{lt_frank}
  \sum_{\lambda \in \sigma_d(H), |\Im(\lambda)| \geq \chi \Re(\lambda)} |\lambda|^{p-\frac d 2} \leq C(p,d) \left( 1 + \frac 2 \chi \right)^p \|\Re(V)_-+ i\Im(V)\|_{L^p}^p.
\end{equation}
\begin{rem}
  By a suitable integration of inequality (\ref{lt_frank}) one can obtain an estimate on all discrete eigenvalues of $H$, see Corollary 3 in \cite{DHK08_2}. We will not discuss this result in this review.
\end{rem}
Concerning eigenvalues accumulating to $[0,\infty)$ Laptev and Safronov \cite{Laptev08} proved the following result:  If $\Re(V) \geq 0$ and $V \in L^p(\mr^d)$ for $p \geq 1$ if $d=1$ and $p > \frac d 2$ if $d \geq 2$ then
\begin{equation}\label{lt_laptev}
  \sum_{\lambda \in \sigma_d(H), \Re(\lambda) \geq 0 } \left( \frac{|\Im(\lambda)|}{|\lambda+1|^2+1} \right)^p \leq C(p,d) \|\Im(V)\|_{L^p}^p.
\end{equation}
Finally, let us also mention the recent work of Frank \cite{MR2820160}, which provides conditions for the boundedness of the eigenvalues of $H$ outside $[0,\infty)$, and the related works of Safronov \cite{MR2651940,MR2596049}.

Now let us have a look at what kind of L-T-inequalities we can obtain from Theorem \ref{cor:3} and \ref{thm:num_non2}, respectively,  and how these inequalities will compare to each other and to the inequalities (\ref{lt_frank}) and (\ref{lt_laptev}). We note that the results to follow can be regarded as refinements of our earlier work \cite{DHK08_2} (see also \cite{Hans_diss}).

We start with an application of Theorem \ref{cor:3}, where we require the stronger assumption that $M_V$ is $(-\Delta)$-compact (see Remark \ref{rem_relcomp} above).

\begin{thm}\label{thm:16}
Let $H=-\Delta+V$ be defined as above and let $\omega \leq 0$ be as defined in (\ref{eq:num_omega}). We assume that $V \in L^p(\mr^d)$ with $p \geq 2$ if $d\leq 3$ and $p > d/2$ if $d>4$. Then for $\tau \in (0,1)$ the following holds:
(i) If $\omega < 0$ and $p \geq d -\tau$ then
  \begin{equation}
  \sum_{\lambda \in \sigma_d(H)} \frac{\dist(\lambda,[0,\infty))^{p+\tau}}{|\lambda|^{\frac{d}{2}}(|\lambda|+|\omega|)^{2\tau}}
 \leq C(d,p,\tau) |\omega|^{-\tau} {\|V\|_{L^p}^p}.
  \end{equation}
(ii) If $\omega < 0$ and $p< d-\tau$ then
  \begin{equation}
  \sum_{\lambda \in \sigma_d(H)} \frac{\dist(\lambda,[0,\infty))^{p+\tau}}{|\lambda|^{\frac{p+\tau}{2}}(|\lambda|+|\omega|)^{\frac{d-p+3\tau}{2}}}
 \leq C(d,p,\tau) |\omega|^{-\tau} {\|V\|_{L^p}^p}.
  \end{equation}
(iii) If $\omega = 0$ then for $s> 0$
\begin{small}
  \begin{equation}
  \sum_{\lambda \in \sigma_d(H), |\lambda| > s} \frac{\dist(\lambda,[0,\infty))^{p+\tau}}{|\lambda|^{\frac d 2 + 2\tau}}
 + \sum_{\lambda \in \sigma_d(H), |\lambda| \leq s} \frac{\dist(\lambda,[0,\infty))^{p+\tau}}{|\lambda|^{\frac d 2}s^{2\tau}}
\leq C(d,p,\tau) s^{-\tau} \|V\|_{L^p}^p.
  \end{equation}
\end{small}
\end{thm}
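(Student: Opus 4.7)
The plan is to deduce Theorem \ref{thm:16} from Theorem \ref{cor:3} applied with $H_0 = -\Delta$ and $M = V$. First I would verify the hypotheses of that theorem: by (\ref{eq:num_omega}) the operator $H$ is $m$-sectorial with vertex $\omega \leq 0$, so (\ref{eq:reso_es}) supplies the resolvent bound (\ref{eq:omega2}) with $C_0 = 1$; the conditions on $(d,p)$ assumed in the theorem imply via Remark \ref{rem_relcomp} that $M_V$ is $(-\Delta)$-compact, so that $H$ coincides with the operator sum and the full framework of Assumption \ref{ass:1} is in force. All three parts of the theorem will then follow mechanically once a Schatten estimate of the form (\ref{eq:77}) is established, by reading off the exponents $\eta_0, \eta_1, \eta_2$ from Theorem \ref{cor:3}.

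The analytic heart of the proof is to prove that, for $\lambda \in \mathbb{C} \setminus [0,\infty)$,
\begin{equation*}
  \|V R_{-\Delta}(\lambda)\|_{\mathcal{S}_p}^p \leq C_{d,p} \|V\|_{L^p}^p \frac{|\lambda|^{d/2-1}}{\dist(\lambda, [0,\infty))^{p-1}},
\end{equation*}
which corresponds to (\ref{eq:77}) with $K = C_{d,p}\|V\|_{L^p}^p$, $\alpha = p - 1$, and $\beta = d/2 - 1$. Writing $R_{-\Delta}(\lambda) = F^{-1} M_{g_\lambda} F$ with $g_\lambda(k) = (\lambda - |k|^2)^{-1}$, the Kato--Seiler--Simon inequality (the $L^2(\mathbb{R}^d)$-analog of Lemma \ref{lem:simon}, valid precisely for $p \geq 2$) gives $\|V R_{-\Delta}(\lambda)\|_{\mathcal{S}_p} \leq (2\pi)^{-d/p} \|V\|_{L^p} \|g_\lambda\|_{L^p}$, reducing matters to an $L^p$-integral of the Fourier symbol.

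The main obstacle is the uniform angular analysis of that integral. Passing to polar coordinates and scaling $r = |\lambda|^{1/2} s$ gives $\|g_\lambda\|_{L^p}^p = c_d |\lambda|^{d/2 - p} I(\theta)$, where $\lambda = |\lambda| e^{i\theta}$ and
\begin{equation*}
  I(\theta) = \int_0^\infty s^{d-1} (1 - 2 s^2 \cos\theta + s^4)^{-p/2}\, ds.
\end{equation*}
Convergence at infinity is exactly the hypothesis $p > d/2$; near the zero $s = 1$ of $1 - s^2 e^{-i\theta}$ (relevant only for small $|\theta|$) the integrand is controlled in a local variable $u$ by $(u^2 + \sin^2\theta)^{-p/2}$, yielding $I(\theta) \leq C_{d,p} |\sin\theta|^{1-p}$ for $p > 1$. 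Since $\dist(\lambda, [0,\infty)) = |\lambda||\sin\theta|$ when $\Re(\lambda) \geq 0$ and $= |\lambda|$ otherwise, in both regimes the displayed Schatten bound drops out.

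Finally I would feed the estimate into Theorem \ref{cor:3}. With $\alpha = p - 1 > 0$ (which holds since $p \geq 2$ throughout the hypotheses of Theorem \ref{thm:16}), $\beta = d/2 - 1$, $C_0 = 1$, and $K = C_{d,p}\|V\|_{L^p}^p$, a direct substitution yields $\eta_0 = d/2 - p + \tau$, $\eta_1 = p + \tau$, and $\eta_2 = ((p - d + 1)_+ - 1 + \tau)_+$. In case (i), $p \geq d - \tau$ forces $\eta_2 = p - d + \tau$, and then $(\eta_1 - \eta_2)/2 = d/2$ and $\eta_0 + (\eta_1 + \eta_2)/2 = 2\tau$, matching the stated bound. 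In case (ii), $p < d - \tau$ together with $\tau < 1$ forces $\eta_2 = 0$, producing $(\eta_1 - \eta_2)/2 = (p + \tau)/2$ and $\eta_0 + (\eta_1 + \eta_2)/2 = (d - p + 3\tau)/2$. Case (iii) is read off directly from the $\omega = 0$ branch of Theorem \ref{cor:3} using $\beta + 1 = d/2$ and $\beta + 1 + 2\tau = d/2 + 2\tau$.
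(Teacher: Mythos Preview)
Your proof is correct and follows essentially the same approach as the paper: apply Theorem \ref{cor:3} with $H_0=-\Delta$, $C_0=1$ (from (\ref{eq:reso_es})), and the Schatten bound of Lemma \ref{lem:8} (obtained via Lemma \ref{lem:simon2} and Lemma \ref{lem:7}), then compute $\eta_0,\eta_1,\eta_2$ in the two regimes $p\geq d-\tau$ and $p<d-\tau$. Your polar-coordinate sketch of the $L^p$-bound on the symbol is precisely the content of Lemma \ref{lem:7}, for which the paper simply cites \cite{Hans_diss}.
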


Before presenting the proof of Theorem \ref{thm:16} let us consider what can be obtained by applying Theorem \ref{thm:num_non2} in the present context. Here, as compared to Theorem \ref{thm:16}, we don't need the relative compactness of $M_V$ but can (almost) stick to the more general assumption (\ref{eq:01}). However, now we require that $\Re(V) \geq \omega$ for some $\omega \leq 0$, which was not necessary in the previous result.
\begin{rem}
  Note that $\Re(V) \geq \omega$ is a sufficient but \emph{not} a necessary condition for $\num(H)$ being a subset of  $\{ \lambda: \Re(\lambda) \geq \omega \}$.
\end{rem}

\begin{thm}\label{thm:17}
Let $H=-\Delta+V$ be defined as above, where we assume that $V \in L^p(\mr^d)$ with $p \geq 1$ if $d=1$ and $p > d/2$ if $d\geq 2$. In addition, we assume that $\Re(V) \geq \omega$ for $\omega \leq 0$. Then for $\tau>0$ the following holds: (i) If $\omega<0$ then
\begin{equation}
\sum_{\lambda \in \sigma_d(H)} \frac{\dist(\lambda,[0,\infty))^p}{(|\lambda|+|\omega|)^{\frac d 2 +\tau}} \leq C(d,p,\tau) |\omega|^{-\tau} \|\Re(V)_- +i\Im(V)\|_{L^p}^p.
\end{equation}
(ii) If $\omega=0$ then for $s>0$
\begin{small}
\begin{equation}
\sum_{\lambda \in \sigma_d(H),|\lambda|>s} \frac{\dist(\lambda,[0,\infty))^p}{|\lambda|^{\frac d 2 +\tau}}
+ \sum_{\lambda \in \sigma_d(H), |\lambda|\leq s} \frac{\dist(\lambda,[0,\infty))^p}{|\lambda|^{\frac d 2 -\tau}s^{2\tau}} \leq C s^{-\tau} \|\Re(V)_- +i\Im(V)\|_{L^p}^p,
\end{equation}
\end{small}
where $C=C(d,p,\tau)$.
\end{thm}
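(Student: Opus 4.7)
The strategy is to apply Theorem \ref{thm:num_non2} with a carefully chosen unperturbed operator that absorbs the positive part of $\Re(V)$. Set $H_0 := -\Delta + \Re(V)_+$ and $\tilde{V} := -\Re(V)_- + i\Im(V)$, so that $H = H_0 + \tilde{V}$, the operator $H_0$ is selfadjoint with $\sigma(H_0) \subset [0,\infty)$, and $\|\tilde{V}\|_{L^p} = \|\Re(V)_- + i\Im(V)\|_{L^p}$. The hypothesis $\Re(V) \geq \omega$ implies $\num(H) \subset \{\Re(z) \geq \omega\}$, hence $(-\infty,\omega) \subset \rho(H)$ with the sectorial bound $\|R_H(a)\| \leq (|a|-|\omega|)^{-1}$ for $a<\omega$ (see \eqref{sec1:num}). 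All that remains is to verify the Schatten-norm hypothesis of Theorem \ref{thm:num_non2}.

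Concretely, I aim to prove that for every $a < \omega$,
\[
\|R_H(a) - R_{H_0}(a)\|_{\mathcal{S}_p}^p \leq C(d,p) \|\tilde{V}\|_{L^p}^p \cdot |a|^{d/2 - p} \cdot (|a|-|\omega|)^{-p},
\]
which matches the form required by Theorem \ref{thm:num_non2} with $\alpha = p - d/2$ and $\beta = p$ (both non-negative under the stated assumptions on $p$). A short arithmetic check gives $-\alpha - \beta + 2p + \tau = d/2 + \tau$, so once this Schatten bound is established, Theorem \ref{thm:num_non2} immediately yields both assertions (i) and (ii), with the $\omega<0$ / $\omega=0$ cases distinguished automatically.

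To obtain the Schatten bound I combine three ingredients. First, the second resolvent identity $R_H(a) - R_{H_0}(a) = R_H(a)\tilde{V} R_{H_0}(a)$ together with the sectorial bound on $\|R_H(a)\|$ reduces matters to estimating $\|\tilde{V} R_{H_0}(a)\|_{\mathcal{S}_p}$. Second, since $\Re(V)_+ \geq 0$, a Feynman--Kac (or Trotter product) argument shows that the kernel of $-R_{H_0}(a) = (H_0 + |a|)^{-1}$ is non-negative and pointwise dominated by that of $(-\Delta+|a|)^{-1}$ for $a<0$; via a Schatten-class domination principle this reduces the problem to bounding $\|\tilde{V} R_{-\Delta}(a)\|_{\mathcal{S}_p}$. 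Third, a Kato--Seiler--Simon estimate of the form $\|f(X) g(P)\|_{\mathcal{S}_p} \leq C_{d,p} \|f\|_{L^p}\|g\|_{L^p}$ (the $L^2(\mr^d)$ analogue of Lemma \ref{lem:simon}) applied with $f = \tilde{V}$ and $g(k) = (|k|^2 + |a|)^{-1}$ gives the claim after computing $\|g\|_{L^p}^p = C(d,p)\,|a|^{d/2 - p}$ via the rescaling $k = |a|^{1/2} k'$ (the integral converging precisely when $p > d/2$, in line with the hypothesis).

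The main technical delicacy I expect is the regime $1 \leq p < 2$, relevant for $d \leq 3$, where both the Schatten-class domination principle and the Kato--Seiler--Simon inequality in their textbook forms require $p \geq 2$. In this range one needs a substitute argument, for instance the symmetric Birman--Schwinger-type factorization $\tilde{V} = |\tilde{V}|^{1/2} U |\tilde{V}|^{1/2}$ with $\|U\|_\infty \leq 1$, reducing the task to the positive-kernel operator $|\tilde{V}|^{1/2} R_{-\Delta}(a) |\tilde{V}|^{1/2}$ to which Hilbert--Schmidt-scale arguments (or direct one-dimensional resolvent kernel estimates in $d=1$) can be applied. Once this technical point is absorbed, the rest of the proof is a routine packaging of the ingredients above into Theorem \ref{thm:num_non2}.
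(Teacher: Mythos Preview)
Your overall plan coincides with the paper's: choose $H_0=-\Delta+\Re(V)_+$, set $W=-\Re(V)_-+i\Im(V)$, establish
\[
\|R_H(a)-R_{H_0}(a)\|_{\mathcal{S}_p}^p \leq C(d,p)\,\|W\|_{L^p}^p\,|a|^{d/2-p}\,(|a|-|\omega|)^{-p}, \qquad a<\omega,
\]
and feed this into Theorem~\ref{thm:num_non2} with $\alpha=p-d/2$, $\beta=p$; the arithmetic $-\alpha-\beta+2p+\tau=d/2+\tau$ is correct. The paper also starts with an approximation step (reducing to $V\in L^\infty_0(\mr^d)$) so that the operator-sum second resolvent identity is legitimately available; you should add this.

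Where you diverge from the paper is in proving the displayed $\mathcal{S}_p$-bound. Your route is: write $R_H(a)-R_{H_0}(a)=R_H(a)\,W\,R_{H_0}(a)$, bound $\|R_H(a)\|$ by $(|a|-|\omega|)^{-1}$, replace $R_{H_0}(a)$ by $R_{-\Delta}(a)$ via Feynman--Kac kernel domination, then apply Kato--Seiler--Simon at exponent $p$. The paper instead inserts the factors $(-a-\Delta)^{\pm 1/2}$, obtaining
\[
R_H(a)-R_{H_0}(a)=\bigl[R_H(a)(-a-\Delta)^{1/2}\bigr](-a-\Delta)^{-1/2}M_{|W|^{1/2}}M_{\operatorname{sign}W}M_{|W|^{1/2}}(-a-\Delta)^{-1/2}\bigl[(-a-\Delta)^{1/2}R_{H_0}(a)\bigr],
\]
bounds the two bracketed operators in operator norm by quadratic-form computations (their product of norms is exactly $(|a|-|\omega|)^{-1}$, using $\Re(V)\geq\omega$ for the first and $\Re(V)_+\geq 0$ for the second), and applies Kato--Seiler--Simon at exponent $2p\geq 2$ to $M_{|W|^{1/2}}(-a-\Delta)^{-1/2}$. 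This covers all $p\geq 1$ uniformly, with no kernel comparison needed.

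The gap in your sketch is the ``Schatten-class domination principle''. Pointwise domination of integral kernels does \emph{not} in general imply domination of $\mathcal{S}_p$-norms when $p<2$ (compare the $n\times n$ discrete Fourier matrix with the rank-one matrix having all entries $n^{-1/2}$), and for non-even $p>2$ it is at best a non-trivial fact. You flag the $p<2$ difficulty only for Kato--Seiler--Simon, but the domination step itself already fails there. Your suggested remedy --- the symmetric splitting $W=|W|^{1/2}U|W|^{1/2}$ reducing to a Birman--Schwinger object --- points in the right direction, but to carry it through you still need to sandwich with $(-a-\Delta)^{-1/2}$ so as to land in $\mathcal{S}_{2p}$; once you do that you have exactly the paper's factorization, and the Feynman--Kac detour becomes superfluous.
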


As the reader might already have guessed, the comparison of the estimates obtained in the previous two theorems and the estimates  (\ref{lt_frank}) and (\ref{lt_laptev}) is a quite complex task, requiring
the analysis of a variety of different cases. However, we think it is better not to be too pedantic here, and so will restrict ourselves to a broad sketch of what is going on.

The first thing that is apparent is that the previous two theorems provide estimates which are not restricted to certain subsets of eigenvalues, as was the case with the estimates (\ref{lt_frank}) and (\ref{lt_laptev}). Concerning the $L^p$-assumptions on $V$, Theorem \ref{thm:17} and estimate (\ref{lt_laptev}) are less restrictive than the other two results; on the other hand, both Theorem \ref{thm:17} and estimate (\ref{lt_laptev}) require an additional assumption on the real part of the potential.  Concerning the right-hand sides of the inequalities, estimate (\ref{lt_frank}) stands out, since it is the only estimate which depends on $H$ only through the $L^p$-norm of the potential $V$, all other estimates also depending on $\omega=\omega(H)$. Whether this $\omega$-dependence is indeed necessary if one is considering all eigenvalues of $H$, not restricting oneself to eigenvalues outside sectors, is one among the many open questions on this topic.

Concerning the amount of information on the discrete eigenvalues that can be obtained from the different results, one has to distinguish between sequences of eigenvalues converging to some point in $(0,\infty)$ and to $0$, respectively, quite similarly to the case of Jacobi operators where we also had to distinguish between interior and boundary points of the essential spectrum. Suffice it to say that here, as in the Jacobi case, Theorem \ref{thm:16} (to be obtained via the complex analysis approach) is weaker than Theorem \ref{thm:17} (to be obtained via the operator-theory approach) concerning sequences of eigenvalues converging to some interior point of the essential spectrum $[0,\infty)$, whereas each of the results can be stronger than the other if one is considering eigenvalues converging to the boundary point $0$, depending on the
parameters involved.

\begin{problem}\label{p2}
All of the above results seem to suggest that the most natural generalization of the selfadjoint L-T-inequalities to the non-selfadjoint setting would be an estimate of the form
\begin{equation}\label{eq:goal}
  \sum_{\lambda \in \sigma_d(H)} \frac{\dist(\lambda,[0,\infty))^p}{|\lambda|^{\frac d 2}} \leq C(p,d)  \|V\|_{L^p}^p,
\end{equation}
with $p$ satisfying Assumption (\ref{eq:01}) (this is particularly true of the above estimates in case that $\omega=0$, just formally set $\tau=0$).  The validity or falsehood of estimate (\ref{eq:goal}), without any additional assumptions on $V$, can justly be regarded as one of  the major open problems in this field.\\
\end{problem}

\noindent It remains to present the proofs of Theorem \ref{thm:16} and Theorem \ref{thm:17}. Both  will rely on estimates on the $\mathcal{S}_p$-norm  of operators of the form $M_W(\lambda + \Delta)^{-1}$. Since $(\lambda + \Delta)^{-1}=F^{-1}M_{k_\lambda}F$, where
 \begin{equation}\label{eq:199}
 k_\lambda(x)=(\lambda-|x|^2)^{-1}, \quad x \in \mr^d,
 \end{equation}
as in the case of Jacobi operators this estimate will be reduced to an estimate on the $L^p$-norm of the bounded function $k_\lambda$. We will need the following three lemmas.

\begin{lemma}\label{lem:7}
  Let $d\geq1$. Then for $\lambda\in \mc\setminus [0,\infty)$ and $k_\lambda$ as defined in (\ref{eq:199}) the following holds: If $p > \max(d/2,1)$ then
\begin{equation} \label{eq:152}
\|k_\lambda\|_{L^p}^p\leq C(p,d)\frac{|\lambda|^{\frac d 2 - 1}}{\dist(\lambda,[0,\infty))^{p-1}}.
\end{equation}
\end{lemma}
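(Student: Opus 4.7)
The plan is to reduce the $d$-dimensional $L^p$-integral to a one-dimensional one via spherical coordinates, exploit the natural scaling $k_{c\lambda}(x) = c^{-1} k_\lambda(x/\sqrt{c})$ to separate out the $|\lambda|$-dependence, and then prove a dimensionless angular inequality by case analysis on $\arg\lambda$.

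First, passing to spherical coordinates and substituting $t = r^2$ gives
\[
\|k_\lambda\|_{L^p}^p = \omega_{d-1}\int_0^\infty\frac{r^{d-1}\,dr}{|\lambda - r^2|^p} = \frac{\omega_{d-1}}{2}\int_0^\infty\frac{t^{d/2 - 1}\,dt}{|\lambda - t|^p},
\]
where $\omega_{d-1}$ is the surface area of the unit sphere in $\mr^d$. Writing $\lambda = |\lambda|e^{i\theta}$ with $\theta \in (-\pi,\pi]\setminus\{0\}$ and substituting $t = |\lambda|s$ then yields
\[
\|k_\lambda\|_{L^p}^p = \frac{\omega_{d-1}}{2}\,|\lambda|^{d/2 - p}\,J(\theta), \qquad J(\theta) := \int_0^\infty\frac{s^{d/2 - 1}\,ds}{\bigl((s-\cos\theta)^2 + \sin^2\theta\bigr)^{p/2}}.
\]
Since $\dist(\lambda,[0,\infty)) = |\lambda|\,\dist(e^{i\theta},[0,\infty))$, the target estimate \eqref{eq:152} is equivalent to the $|\lambda|$-free inequality
\[
J(\theta) \leq C(p,d)\,\dist(e^{i\theta},[0,\infty))^{-(p-1)}.
\]

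Second, I would estimate $J(\theta)$ by distinguishing the sign of $\cos\theta$. When $\cos\theta \leq 0$, so $\dist(e^{i\theta},[0,\infty)) = 1$, the elementary inequality $(s-\cos\theta)^2 + \sin^2\theta \geq s^2 + 1$ gives
\[
J(\theta) \leq \int_0^\infty\frac{s^{d/2-1}\,ds}{(s^2+1)^{p/2}} < \infty,
\]
where convergence at infinity uses $p > d/2$ and at the origin uses $d/2 > 0$. When $\cos\theta > 0$, so $\dist(e^{i\theta},[0,\infty)) = \sin\theta$, I would split $J(\theta)$ at the points where $|s-\cos\theta| = \sin\theta$: on the near-singularity window (of length at most $2\sin\theta$) the substitution $s = \cos\theta + \sin\theta\,u$ extracts a prefactor $\sin^{1-p}\theta$ against an integral in $u$ that is bounded via $p > 1$, with the weight $s^{d/2-1}$ controlled trivially by $s \leq \cos\theta + \sin\theta \leq \sqrt{2}$ when $d \geq 2$; on the complement one uses $(s-\cos\theta)^2 + \sin^2\theta \geq (s-\cos\theta)^2$ to reduce matters to $\int s^{d/2-1}|s-\cos\theta|^{-p}\,ds$, which is bounded by $C\sin^{1-p}\theta$ thanks to $p > 1$ near $s = \cos\theta$ and $p > d/2$ at infinity.

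The main obstacle is making these bounds genuinely uniform in $\theta$ in the delicate case $d = 1$, where the weight $s^{-1/2}$ introduces an additional singularity at the origin that can interact with the near-singularity at $s = \cos\theta$ when $\cos\theta$ is small. This is resolved by a further split according to whether $\cos\theta \geq \sin\theta$ or $\cos\theta < \sin\theta$: in the former regime $\cos\theta$ is bounded below by $\sqrt{2}/2$, while in the latter $\sin\theta$ is bounded below by $\sqrt{2}/2$, so $\sin^{-(p-1)}\theta$ already absorbs any fixed constant. Reassembling the regions yields $J(\theta) \leq C(p,d)\sin^{-(p-1)}\theta$ for $\cos\theta > 0$, which together with the first case and the scaling identity completes the proof of \eqref{eq:152}.
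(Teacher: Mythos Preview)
Your argument is correct and complete in outline. The paper itself does not give a proof of this lemma; it simply states ``For the elementary but quite lengthy proof we refer to \cite{Hans_diss}, page 103.'' Your approach --- radial reduction to a one-dimensional integral, the scaling substitution $t=|\lambda|s$ to factor out $|\lambda|^{d/2-p}$, and then a case analysis on $\arg\lambda$ for the dimensionless integral $J(\theta)$ --- is exactly the natural ``elementary but lengthy'' computation alluded to, and almost certainly coincides in spirit with what is in the cited dissertation.

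A few minor points worth tightening if you write this up in full. First, you should work with $|\sin\theta|$ (or declare by symmetry that $\theta\in(0,\pi)$), since $\theta$ ranges over $(-\pi,\pi]\setminus\{0\}$. Second, the phrase ``bounded via $p>1$'' for the $u$-integral on the near-singularity window is slightly misleading: that integral is over the bounded interval $[-1,1]$ and is finite for any $p$; the condition $p>1$ is what makes the \emph{far} region contribute only $C\sin^{1-p}\theta$ after integrating $|s-\cos\theta|^{-p}$. Third, in the $d=1$, $\cos\theta\geq\sin\theta$ subcase, the fact that $\cos\theta\geq\sqrt{2}/2$ does not by itself bound $s^{-1/2}$ uniformly on the near-window (since $s$ can still approach $0$ when $\theta\approx\pi/4$); what actually works is that $\int_{\cos\theta-\sin\theta}^{\cos\theta+\sin\theta}s^{-1/2}\,ds = 2(\sqrt{\cos\theta+\sin\theta}-\sqrt{\cos\theta-\sin\theta}) \leq C\sin\theta/\sqrt{\cos\theta}\leq C\sin\theta$, which combined with the lower bound $\sin^p\theta$ on the denominator gives the desired $C\sin^{1-p}\theta$. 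None of these are gaps in the strategy; they are just places where the write-up should be a bit more explicit.
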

\begin{proof}
For the elementary but quite lengthy proof we refer to \cite{Hans_diss}, page 103.
\end{proof}
The next result has already been hinted at in the study of Jacobi operators (see Lemma \ref{lem:simon}). See Simon \cite{b_Simon05}, Theorem 4.1, for a proof.
\begin{lemma} \label{lem:simon2}
Let $f, g \in L^p(\mr^d)$ where $p \geq 2$. Then the operator $M_f F^{-1}M_{g}F$ is in  $\mathcal{S}_p(L^2(\mr^d))$ and
$$ \|  M_f F^{-1}M_{g}F \|_{\mathcal{S}_p}^p \leq (2\pi)^{-d} \|f\|_{L^p}^p \|g\|_{L^p}^p.$$
\end{lemma}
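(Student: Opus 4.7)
My plan is to prove this classical Kato--Seiler--Simon inequality by interpolating between the two endpoints $p=2$ (Hilbert--Schmidt) and $p=\infty$ (operator norm). Both endpoints admit a direct verification, and the intermediate cases follow from Stein's complex interpolation theorem for Schatten classes applied to a suitable bilinear analytic family.

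The endpoint $p=\infty$ is immediate: since $F$ is unitary on $L^2(\mr^d)$ and $\|M_h\|=\|h\|_{L^\infty}$ for any bounded function $h$, submultiplicativity of the operator norm yields $\|M_f F^{-1} M_g F\| \leq \|f\|_{L^\infty}\|g\|_{L^\infty}$, which matches the claim since $(2\pi)^{-d/p}=1$ at $p=\infty$.

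The endpoint $p=2$ is a short Hilbert--Schmidt computation. With the unitary Fourier convention $(Fh)(\xi) = (2\pi)^{-d/2}\int h(x)e^{-ix\xi}\,dx$, the operator $F^{-1} M_g F$ acts as convolution by $(2\pi)^{-d/2}\check g$, so $M_f F^{-1} M_g F$ is an integral operator with kernel $K(x,y) = (2\pi)^{-d/2} f(x)\check g(x-y)$. Computing the Hilbert--Schmidt norm as the $L^2$-norm of the kernel and applying Plancherel's theorem gives
$$\|M_f F^{-1} M_g F\|_{\mathcal{S}_2}^2 = (2\pi)^{-d}\|f\|_{L^2}^2\|\check g\|_{L^2}^2 = (2\pi)^{-d}\|f\|_{L^2}^2\|g\|_{L^2}^2,$$
which is the claimed bound (with equality) at $p=2$.

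For general $p \in (2,\infty)$, by bilinearity and homogeneity I may assume $\|f\|_{L^p} = \|g\|_{L^p} = 1$, and define the analytic family $B_z := M_{f_z} F^{-1} M_{g_z} F$ on the strip $0 \leq \Re z \leq 1$, where $f_z := f|f|^{pz/2 - 1}$ (with the convention that this is $0$ where $f$ vanishes), and $g_z$ analogously. On the line $\Re z = 0$ one has $|f_z|,|g_z| \leq 1$ pointwise, so the $p=\infty$ bound gives $\|B_{it}\|_{\bdd} \leq 1$; on the line $\Re z = 1$ one has $\|f_z\|_{L^2}^2 = \|f\|_{L^p}^p = 1$ (and similarly for $g_z$), so the $p=2$ bound gives $\|B_{1+it}\|_{\mathcal{S}_2} \leq (2\pi)^{-d/2}$. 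Since $z = 2/p$ gives back $B_{2/p} = M_f F^{-1} M_g F$, Stein's interpolation theorem for Schatten classes produces the estimate with constant $1^{1-2/p}\cdot\bigl((2\pi)^{-d/2}\bigr)^{2/p} = (2\pi)^{-d/p}$, as desired. The main technical obstacle is verifying the admissibility (analyticity in $z$ and appropriate growth on vertical lines) of the family $z \mapsto B_z$, which is a routine but careful argument resting on the pointwise boundedness of $|f_z|$ and $|g_z|$ on each vertical line of the strip.
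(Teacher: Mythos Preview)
Your proof is correct and is precisely the classical Kato--Seiler--Simon argument. The paper does not give its own proof but refers to Simon's \emph{Trace Ideals and Their Applications}, Theorem~4.1, where the proof proceeds exactly by the same route: verify the Hilbert--Schmidt endpoint $p=2$ via the explicit kernel, the trivial operator-norm endpoint $p=\infty$, and then apply complex interpolation for Schatten ideals to the analytic family $z\mapsto M_{f_z}F^{-1}M_{g_z}F$. Your sketch thus coincides with the approach the paper defers to.
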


Combining the previous two lemmas, we obtain a  bound on the $\mathcal{S}_p$-norm of $M_W(\lambda+\Delta)^{-1}$.
\begin{lemma}\label{lem:8} Let  $W \in L^p(\mr^d)$ where $p \geq 2$ if $d \leq 3$ and $p > d/2$ if $d \geq 4$. Then for $\lambda\in \mc\setminus [0,\infty)$ we have
\begin{equation}
\|M_W (\lambda+\Delta)^{-1}\|_{\mathcal{S}_p}^p\leq C(p,d) \|W\|_{L^p}^p  \frac{|\lambda|^{\frac d 2 - 1}}{\dist(\lambda,[0,\infty))^{p-1}}.
\end{equation}
\end{lemma}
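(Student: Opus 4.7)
The plan is to recognize that the statement is essentially a direct composition of the two preceding lemmas, so no substantial new work is required; my proposal is simply to chain Lemma \ref{lem:simon2} with Lemma \ref{lem:7}.

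First, I would use the spectral representation of $-\Delta$ via the Fourier transform to write
\[
   M_W (\lambda + \Delta)^{-1} = M_W\, F^{-1} M_{k_\lambda} F,
\]
with $k_\lambda(x)=(\lambda-|x|^2)^{-1}$ as in \eqref{eq:199}. Before invoking Lemma \ref{lem:simon2} I would check that the hypotheses align: that lemma needs $W,k_\lambda\in L^p$ with $p\geq 2$, and Lemma \ref{lem:7} needs $p>\max(d/2,1)$. Under the standing assumption ``$p\geq 2$ if $d\leq 3$, and $p>d/2$ if $d\geq 4$'', both requirements hold, since for $d\leq 3$ we have $p\geq 2>3/2\geq\max(d/2,1)$, while for $d\geq 4$ we have $p>d/2\geq 2$. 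In particular $k_\lambda\in L^p(\mathbb{R}^d)$ for any $\lambda\in\mathbb{C}\setminus[0,\infty)$ by Lemma \ref{lem:7}, and $W\in L^p(\mathbb{R}^d)$ by hypothesis.

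Next, I would apply Lemma \ref{lem:simon2} with $f=W$ and $g=k_\lambda$ to obtain
\[
   \|M_W (\lambda+\Delta)^{-1}\|_{\mathcal{S}_p}^p
   \leq (2\pi)^{-d}\,\|W\|_{L^p}^p\,\|k_\lambda\|_{L^p}^p.
\]
Finally, I would substitute the bound from Lemma \ref{lem:7},
\[
   \|k_\lambda\|_{L^p}^p\leq C(p,d)\,\frac{|\lambda|^{d/2-1}}{\dist(\lambda,[0,\infty))^{p-1}},
\]
and absorb the dimensional factor $(2\pi)^{-d}$ into the constant $C(p,d)$, which yields exactly the claimed inequality.

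Since the argument is a direct concatenation, there is no substantive obstacle. The only point deserving any attention is the bookkeeping on the admissible ranges of $p$, in particular making sure that the borderline case $d=4$, $p=2$ is correctly excluded (as the stated hypothesis $p>d/2$ for $d\geq 4$ already does) so that Lemma \ref{lem:7} genuinely applies.
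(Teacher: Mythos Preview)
Your proposal is correct and matches the paper's own approach: the paper simply states that Lemma \ref{lem:8} follows by ``combining the previous two lemmas,'' i.e., exactly the chaining of Lemma \ref{lem:simon2} with Lemma \ref{lem:7} that you carry out. Your verification of the admissible range of $p$ is accurate and makes the argument complete.
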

We are now prepared for the
\begin{proof}[Proof of Theorem \ref{thm:16}]
We apply Theorem \ref{cor:3} with $H=-\Delta+M_V$ and $H_0=-\Delta$, taking estimate (\ref{eq:reso_es}) into account. With the notation of that theorem we obtain from the previous lemma
that $\alpha=p-1, \beta = \frac d 2 -1, C_0=1$ and $K=C(p,d) \|V\|_{L^p}^p$. All that remains is to compute the constants $\eta_0, \eta_1$ and $\eta_2$ appearing in Theorem \ref{cor:3}, treating the
cases $p \geq d-\tau$ and $p< d- \tau$ separately (and noting that by assumption $\tau \in (0,1)$).
\end{proof}

The proof of Theorem \ref{thm:17} is a little more involved.

\begin{proof}[Proof of Theorem \ref{thm:17}]
First of all we note that using an approximation argument it is sufficient to prove the theorem assuming that $V \in L^\infty_0(\mr^d)$, the bounded functions with compact support, see \cite[Lemma 5.4]{MR2836430} for more details. In particular, in this case $H=-\Delta+M_V$ since $M_V$ is $(-\Delta)$-compact.

So in the following let $V \in L_0^\infty(\mr^d)$ with $\Re(V) \geq \omega \; (\omega \leq 0)$ and let $H=-\Delta+M_V$ and $H_0 = -\Delta+ M_{\Re(V)_+}$. We are going to show that
for $a < \omega$ and $p\geq 1$ if $d=1$ or $p > \frac d 2$ if $d \geq 2$ we have
\begin{equation}
  \label{eq:fgh}
\| R_{H}(a)-R_{H_0}(a)\|_{\mathcal{S}_p}^p \leq C(p,d) \frac{1}{|a|^{p-\frac d 2}(|a|-|\omega|)^p} \|\Re(V)_-+i\Im(V)\|_p^p.
\end{equation}
If this is done an application of Theorem \ref{thm:num_non2} will conclude the proof.

As a first step in the proof of (\ref{eq:fgh}) we use the second resolvent identity to rewrite the resolvent difference as
\begin{eqnarray*}
 && R_H(a)-R_{H_0}(a) \nonumber \\
&=& (a-H)^{-1}(-a-\Delta)^{1/2}(-a-\Delta)^{-1/2} M_{|W|^{1/2}} M_{\operatorname{sign}(W)} \nonumber \\
&& M_{|W|^{1/2}}(-a-\Delta)^{-1/2}(-a-\Delta)^{1/2}(a-H_0)^{-1},
\end{eqnarray*}
where $W=-\Re(V)_-+i\Im(V)$ and $\operatorname{sign}(W)=W/|W|$. Note that $-\Delta-a \geq -a \geq 0$. We will show below that $(-a-\Delta)^{1/2}(a-H_0)^{-1}$ is bounded on $L^2(\mr^d)$ with
\begin{equation}
  \label{eq:25}
\|(-a-\Delta)^{1/2}(a-H_0)^{-1}\| \leq |a|^{-1/2}.
\end{equation}
Moreover, we will show that for the closure of $(a+H)^{-1}(a-\Delta)^{1/2}$, initially defined on $\operatorname{Dom}((-\Delta)^{1/2})=W^{1,2}(\mr^d)$, we have
\begin{equation}
  \label{eq:26}
  \|\overline{(a-H)^{-1}(-a-\Delta)^{1/2}}\| \leq \frac{|a|^{1/2}}{|a|-|\omega|}.
\end{equation}
Hence, using H\"older's inequality for Schatten norms, the unitarity of $M_{\sign(W)}$ and the fact that the Schatten norm of an operator and its adjoint coincide we obtain
\begin{eqnarray}
&& \|R_H(a)-R_{H_0}(a)\|_{\mathcal{S}_p}^p \nonumber \\
&\leq&  (|a|-|\omega|)^{-p} \|(-a-\Delta)^{-1/2} M_{|W|^{1/2}} M_{\operatorname{sign}(W)}M_{|W|^{1/2}}(-a-\Delta)^{-1/2} \|_{\mathcal{S}_p}^p \nonumber \\
&\leq& (|a|-|\omega|)^{-p} \| M_{|W|^{1/2}} (-a-\Delta)^{-1/2} \|_{\mathcal{S}_{2p}}^{2p}. \label{eq:Ff}
\end{eqnarray}
Since $p\geq1$ and $p > d/2$ we can then apply Lemma \ref{lem:simon2} and Lemma \ref{lem:7}  to obtain
\begin{eqnarray}
&&  \| M_{|W|^{1/2}} (-a-\Delta)^{-1/2} \|_{\mathcal{S}_{2p}}^{2p} = \|M_{|W|^{1/2}} F^{-1} |k_a|^{1/2} F \|_{\mathcal{S}_{2p}}^{2p}  \nonumber \\
&\leq& (2\pi)^{-d} \|W\|_{L^p}^p \|k_a\|_{L^{p}}^{p} \leq  C(p,d) \|W\|_{L^p}^p |a|^{d/2-p}. \label{eq:Fff}
\end{eqnarray}
\begin{rem}
  The validity of the last estimate for $p=1$ and $d=1$ (which is not contained in Lemma \ref{lem:7}) is easily established.
\end{rem}
The estimates (\ref{eq:Ff}) and (\ref{eq:Fff}) show the validity of (\ref{eq:fgh}). It remains to prove (\ref{eq:25}) and (\ref{eq:26}). To prove (\ref{eq:26}), let $f \in L^2(\mr^d)$ with $\|f\|=1$.  Then
  \begin{eqnarray*}
  &&  \|(-a-\Delta)^{1/2}(a-H^*)^{-1}f\|^2 \\
&=& -\langle f, (a-H^*)^{-1}f \rangle - \langle \overline{V} (a-H^*)^{-1}f, (a-H^*)^{-1}f \rangle.
  \end{eqnarray*}
Since $\Re(V) \geq \omega$ we obtain
  \begin{eqnarray}
  &&  \|(-a-\Delta)^{1/2}(a-H^*)^{-1}f\|^2 \nonumber \\
&=& -\Re(\langle f, (a-H^*)^{-1}f \rangle) - \Re(\langle \overline{V} (a-H^*)^{-1}f, (a-H^*)^{-1}f \rangle) \nonumber \\
&\leq& -\Re(\langle f, (a-H^*)^{-1}f \rangle) + |\omega| \| (a-H^*)^{-1}f\|^2 \nonumber \\ 
&\leq& \|(a-H^*)^{-1}\| + |\omega| \|(a-H^*)^{-1}\|^2 \nonumber \\
&\leq& \frac{1}{\dist(a,\overline{\num}(H^*))} + \frac{|\omega|}{\dist(a,\overline{\num}(H^*))^2} \nonumber \\
&\leq& \frac{1}{|a|-|\omega|}  +  \frac{|\omega|}{(|a|-|\omega|)^2}
= \frac{|a|}{(|a|-|\omega|)^2}. \label{eq:27}
  \end{eqnarray}
 But (\ref{eq:27}) implies (\ref{eq:26}) since $$\overline{(a-H)^{-1}(-a-\Delta)^{1/2}}=[(-a-\Delta)^{1/2}(a-H^*)^{-1}]^*.$$ The proof of (\ref{eq:25}) is similar (and even simpler) and is therefore omitted.
\end{proof}

\section{An outlook}\label{sec:outlook}

In this final section we would like to present a short list of possible extensions of the results discussed in this paper, and of some open problems connected to these results  which we think might be worthwhile to pursue.

\begin{enumerate}

\item The majority of results in this paper dealt with non-selfadjoint perturbations of selfadjoint operators, with a particular emphasis on the case where the spectrum of the unperturbed operator is an interval. This choice of operators was sufficient for the applications we had in mind, but there are also two more intrinsic reasons for this restriction. Namely, in this case the closure of the numerical range and the spectrum of the unperturbed operator coincide, which was necessary for a suitable application of the operator-theoretic approach. Moreover, given this restriction the (extended) resolvent set of the unperturbed operator is conformally equivalent to the unit disk, which was important for the complex analysis approach.

Recent developments suggest that the restriction to such operators is not really necessary and that both our methods can be applied in a much wider context. Concerning the operator-theory approach this is a consequence of the fact that estimate (\ref{eq:Kato3}) remains valid (for $p>1$) for arbitrary perturbations of selfadjoint operators (see \cite{Hansmann12}), without any restriction on the spectrum of the selfadjoint operator (i.e. it does not need to be an interval). Concerning the complex analysis approach it follows from the fact that our main tool, the result of Borichev, Golinskii and Kupin (Theorem \ref{thm:c05}) has been generalized to functions acting on finitely connected \cite{golinskii11} and more general domains \cite{golinskii12}. These new results will allow to analyze a variety of interesting operators (like, e.g., periodic Schr\"odinger operators perturbed by complex potentials), and they also lead to the question of the ultimate limits of applicability of our methods.

\item  We have seen that neither of the two methods for studying eigenvalues developed in this paper subsumes the other, in the sense that
each method allows us to prove some results which cannot, at least at the present stage of our knowledge, be obtained from the other. One may
thus wonder whether there is some `higher' viewpoint from which one could obtain all the results which are derived by the two methods. Since our
two methods seem to rely on different ideas, it is not at all clear what such a generalized approach would look like.

\item In Chapter \ref{sec:applications} we have applied our results to Jacobi and Schr\"odinger operators. Many opportunities exist
for applying the results to other concrete classes of operators, e.g. Jacobi-type operators in higher dimensions, systems of partial differential
equations, composition operators and so on. Each application might involve its own technical challenges, which might be interesting in themselves.

\item Many questions remain as to the optimality or sharpness of our results. Such questions are, of course, relative to the precise class of operators considered, and we refer particularly to Problem \ref{p1} regarding Jacobi operators and to Problem \ref{p2} regarding Schr\"odinger operators.
Moreover the question of optimality can be understood in two senses. In the narrow sense, for a particular inequality we want to know that
it cannot be strengthened with respect to the values of the exponents appearing in it. To obtain this it is sufficient to construct a single
 operator for which the distribution of eigenvalues is exactly as implied by the inequality, and no better. In a wider (and much more difficult) sense, one would like to know whether some inequalities completely characterize the possible set of eigenvalues of operators of a particular class
of operators. To show this, one must construct, for {\it{each}} set of complex numbers satisfying the inequality, an operator in the relevant class which has precisely this set of eigenvalues - that is solve an inverse problem. Techniques for constructing operators of certain classes with explicitly known spectrum would thus be very valuable.

\item Another direction which should be interesting and challenging is the generalization of results of the type considered here to operators on Banach spaces.
The notions of Schatten-class perturbations, of infinite determinants and of the numerical range,  which are all central for us, have generalizations to Banach spaces, so that one can hope that at least some of our results can be generalized. This might lead to further information on concrete classes of operators.

\item It should be mentioned that in spectral theory and its applications, the distribution of eigenvalues is only one aspect of interest, and one would also like to learn about the corresponding eigenvectors. In the case of non-selfadjoint operators, the eigenvectors are not orthogonal, and we do not have the spectral theorem which ensures that the Hilbert space is a direct sum of subspaces corresponding to the discrete and the essential spectrum. We would like to know more about the eigenvectors and the subspace generated by them.

\item A related direction somewhat removed from our work, but with which potential connections could be made, is the numerical computation of
eigenvalues of operators of the type that have been considered here. How should one go about in obtaining approximations of eigenvalues
of non-selfadjoint operators which are relatively compact perturbations of an operator with essential spectrum, and can some of the ideas used in our investigations (e.g. the perturbation determinant and complex analysis) be of use in the development of effective algorithms and/or in their analysis?
\end{enumerate}

\section*{List of important symbols} 

\begin{itemize}
\item[] $(.)_\pm$ - positive and negative part of a function/number
\item[] $\langle . , . \rangle$ - scalar product
\item[] $\bdd(\hil)$ - bounded linear operators on $\hil$
\item[] $\cld(\hil)$ - closed linear operators in $\hil$
\item[] $\hat{\mc}$ - extended complex plane
\item[] $\md$ - unit disk in the complex plane
\item[] $d_a,d_a^{Z,Z_0},d_\infty,d_\infty^{Z,Z_0}$ - perturbation determinant (Section \ref{sec:pert-determ})
\item[] $\dom(.)$ - domain of an operator/form
\item[] $(-\Delta)$ - Laplace operator in $L^2(\mr^d)$
\item[] $\hil$ - a complex separable Hilbert space
\item[] $H(\md)$ - holomorphic functions in the unit disk
\item[] $\ker(.)$ - kernel of a linear operator
\item[] $M_V,M_v$ - operator of multiplication by $V,v$ in $L^2(\mr^d), l^2(\mz)$
\item[] $\mathcal{M}, \mathcal{M}(\alpha,\vec{\beta}, \gamma, \vec{\xi}, K)$ - subclass of $H(\md)$ (see Definition \ref{def:c01})
\item[] $N(h,r)$ - number of zeros of $h \in H(\md)$ in closed disk of radius $r$
\item[] $\|.\|_{\mathcal{S}_p}$ - Schatten-$p$-norm
\item[] $\num(.)$ - numerical range of a linear operator
\item[] $P_Z, P_Z(\lambda)$ - Riesz projection
\item[] $\ran(.)$ - range of a linear operator
\item[] $\rank(.)$ - rank of a linear operator
\item[] $R_Z(\lambda)=(\lambda-Z)^{-1}$ - the resolvent
\item[] $\mr_+$ - the interval $[0,\infty)$
\item[] $\rho(.), \hat{\rho}(.)$ - (extended) resolvent set of a linear operator
\item[] $\mathcal{S}_\infty(\hil)$ - compact linear operators on $\hil$
\item[] $\mathcal{S}_p(\hil)$ - Schatten class of order $p$
\item[] $\sigma(.), \sigma_d(.), \sigma_{ess}(.)$- spectrum (discrete, essential) of a linear operator
\item[] $\mt$ - unit circle in the complex plane
\item[] $(\mt^N)_*$ - subset of $\mt^N$ (see Definition \ref{eq:22})
\item[] $\dotcup$ - a disjoint union
\item[] $\mathcal{Z}(.)$ - zero set of a function
\end{itemize}

\bibliographystyle{plain}
\bibliography{Bibliography}

\end{document}